\documentclass{amsart}
\usepackage{amsmath}
\usepackage{amssymb}
\usepackage{mathrsfs}
\usepackage{pst-node}
\usepackage{tikz-cd}
\usepackage{comment}
\usetikzlibrary{shapes.geometric}
\usepackage{enumerate}
\usepackage{amsthm}
\usepackage{enumitem}
\usepackage{fullpage}
\usepackage{euscript}
\usepackage{lettrine}
\usepackage{graphicx}
\usepackage{mathtools}
\usepackage[titletoc]{appendix}
\usepackage{MnSymbol}
\usepackage{GoudyIn}
\definecolor{darkblue}{rgb}{0,0,0.6} 
\usepackage{hyperref}

\usepackage[bbgreekl]{mathbbol}

\hypersetup{
    colorlinks=true,
    citecolor=darkblue,
    filecolor=darkblue,
    linkcolor=darkblue,
    urlcolor=darkblue
}

\usepackage[margin=1.25in,headheight=110pt]{geometry}

\theoremstyle{plain}
\newtheorem{theorem}{Theorem}[section]
\newtheorem*{theorem*}{Theorem}
\newtheorem{proposition}[theorem]{Proposition}
\newtheorem{lemma}[theorem]{Lemma}

\newtheorem{corollary}[theorem]{Corollary}
\newtheorem*{corollary*}{Corollary}
\theoremstyle{definition}
\newtheorem{definition}[theorem]{Definition}
\newtheorem{example}[theorem]{Example}
\newtheorem{remark}[theorem]{Remark}

\DeclareMathSymbol\bbDelta\mathord{bbold}{"01}
\newcommand\nc{\newcommand}
\nc\dmo{\DeclareMathOperator}

\dmo\plex{\cal{P}_\mathrm{lex}}
\dmo\Map{\mathrm{Map}}
\dmo\op{\mathrm{op}}
\dmo\ev{ev}
\dmo\Sp{\cal{S}\kern-.5pt p}
\dmo\exact{\cal{E}\kern-.5pt xact}
\dmo\add{\cal{A}dd}
\dmo\wald{\cal{W}ald}
\dmo\cowald{co\cal{W}ald}
\dmo\Ab{\cal{A}b}
\dmo{\zz}{\mathbb{Z}}
\dmo{\colim}{colim}
\dmo{\eeq}{=\joinrel=}
\dmo{\ipe}{\twoheadleftarrow}
\dmo{\epi}{\twoheadrightarrow}
\dmo{\onom}{\leftarrowtail}
\dmo{\mono}{\rightarrowtail}
\dmo\fun{Fun}
\dmo\funex{Fun^{ex}}
\dmo{\cat}{\cal{C}\kern -.5pt at}
\dmo\catex{\cal{C}\kern -.5pt at^{ex}}
\dmo{\id}{id}
\dmo\st{St}
\dmo\pst{pSt}
\dmo\fib{\mathrm{fib}}
\dmo\cofib{\mathrm{cofib}}
\dmo{\pb}{\arrow[dr, phantom, "\scalebox{1.5}{$\lrcorner$}" , very near start]}

\nc{\bb}[1]{\mathbb{#1}}
\nc{\mrm}[1]{\mathrm{#1}}
\nc{\cal}[1]{\EuScript{#1}}
\nc{\xto}[1]{\xrightarrow{#1}}

\nc{\C}{\cal{C}}
\nc{\E}{\cal{E}}
\nc{\Psh}{\cal{P}}
\nc{\ex}{\mathrm{ex}}
\nc{\lex}{\mathrm{lex}}
\nc{\pr}{\mathrm{pr}}
\nc{\vphi}{\varphi}
\nc{\eps}{\epsilon}
\nc{\inc}{\mathrm{in}}
\nc{\Sps}{\cal{S}}
\nc{\reedy}{\mathrm{reedy}}
\nc{\K}{\mathrm{K}}
\nc\ie{i.e.\ }
\nc\hook{hookrightarrow}
\nc\tail{rightarrowtail}
\nc\head{twoheadrightarrow}
\nc{\Span}{\mathrm{Span}}
\nc{\Ar}{\mathrm{Ar}}
\nc{\fw}{\mathrm{fw}}
\nc{\bw}{\mathrm{bw}}
\nc{\rS}{\mathrm{S}}  
\nc{\Grp}{\mathrm{Grp}}
\nc{\Einf}{\mathbb{E}_{\infty}}
\nc{\cocolon}{\nobreak \mskip6mu plus1mu \mathpunct{}\nonscript\mkern-\thinmuskip {:}\mskip2mu \relax}

\nc{\adj}{\mathbin{
\begin{tikzpicture}[baseline,thick] 
\coordinate (source) at (0ex,.5ex);
\coordinate (target) at (3ex,.5ex);
\draw[->] ([yshift=1ex]source) -- ([yshift=1ex]target); 
\draw[->] ([yshift=-.5ex]target) -- ([yshift=-.5ex]source);
\node at (1.5ex,.8ex) {$\scriptscriptstyle \perp$};
\end{tikzpicture}%
}}

\makeatletter

\renewcommand{\tocsection}[3]{%
\indentlabel{\@ifnotempty{#2}{\parbox[b]{3ex}{\bfseries\ignorespaces#1 #2}}}\bfseries#3} 

\renewcommand{\tocsubsection}[3]{%
\indentlabel{\@ifnotempty{#2}{\hspace{1.6em}\parbox[b]{5ex}{\ignorespaces#1 #2}}}#3}

\renewcommand{\tocsubsubsection}[3]{%
\indentlabel{\@ifnotempty{#2}{\hspace{3.9em}\parbox[b]{5ex}{\ignorespaces#1 #2}}}#3}

\makeatother

\DeclareRobustCommand{\SkipTocEntry}[5]{} 

\nc{\introsubsection}[1]{\addtocontents{toc}{\SkipTocEntry}\subsection*{#1}}
\nc{\bibsubsubsection}[1]{\addtocontents{toc}{\SkipTocEntry}\subsubsection*{#1}}

\title{Keller sequences of exact $\infty$-categories}
\author{Yonatan Harpaz}
\author{Daniel Marlowe}

\begin{document}

\begin{abstract}
    We study a family of fibre-cofibre sequences in the $\infty$-category of exact $\infty$-categories, which we call Keller sequences, generalising the notion of Verdier sequences of stable $\infty$-categories. We show that algebraic K-theory is localising with respect to this family, and use this to give a categorical proof of Quillen's resolution theorem. Our arguments hinge upon a mapping space characterisation of special inclusions as studied by Keller, from which we deduce another proof of Keller's theorem that such inclusions induce fully faithful functors on stable envelopes. 
\end{abstract}

\maketitle

\tableofcontents

\section{Introduction}

The existence of a reasonable theory of bifibre sequences of stable $\infty$-categories, known as Verdier sequences, gives rise to a rich theory of localising invariants, as demonstrated in \cite{BGT13, HLS24, RSW25}. A key feature of the theory is that the algebraic $\K$-theory functor 
\[ \K\colon\catex\to\Sp \] 
which can be defined as the initial additive grouplike functor on the $\infty$-category $\catex$ of stable $\infty$-categories under the groupoid core functor $\cal{C} \mapsto \Sigma^{\infty}\cal{C}^\simeq$, is in fact Verdier localising, that is, sends Verdier sequences of stable $\infty$-categories to spaces. Here, the notion of additivity can either be defined using semi-orthogonal decompositions or by preserving \emph{split} Verdier sequences.
It is however mostly non-split Verdier sequences that underline many of the applications of $\K$-theoretic invariants to algebraic geometry and motivic homotopy theory \cite{TT90, CHN24}.

Now the homotopy theory of stable $\infty$-categories sits inside that of exact $\infty$-categories \cite{Bar15} as the full subcategory $\catex \subset \exact$ spanned by exact $\infty$-categories in which every arrow is both an exact inclusion and an exact projection. By work of Klemenc \cite{Kle22}, this inclusion 
admits a left adjoint
\[
    \st\colon\exact \to \catex,
\]
called the stable envelope, which on ordinary exact categories recovers the bounded derived $\infty$-category \cite[Corollary 7.4.12]{BCKW19}, and under which algebraic $\K$-theory is invariant (see~\cite[Theorem 1.7]{SW25}).

This invariance under passage to the stable envelope, which can be viewed as a version of the Gillet-Waldhausen theorem, allows one to bootstrap the universality of $\K$-theory as an additive invariant on $\catex$ to produce a suitable universal property as an invariant on $\exact$ \cite[Theorem 6.7]{SW25}. To do this, one uses that exact $\infty$-categories support a notion of semi-orthogonal decompositions, which under the stable envelope are taken to semi-orthogonal decompositions of stable $\infty$-categories. It is then natural to ask, either for the sake of formal completeness or for K-theoretic consequences, whether the notion of Verdier sequence admits a generalisation to the realm of exact $\infty$-categories.

For this, it is convenient to identify a class of exact subcategory inclusions for which there is some hope of controlling the cofibre. One such candidate was recently studied by Winges \cite{Win25} as a variant of the (left or right) filtering subcategory inclusions of Schlichting \cite{Sch04}, themselves a generalisation of the notion of a Karoubi filtration on an additive category \cite{Kar70, PD89}. Winges proves a general localisation theorem \cite[Theorem 5.2]{Win25} for strongly filtering inclusions amongst exact $\infty$-categories, using the crucial fact that one may model the cofibre of a right filtering inclusion $\cal{A} \subset \cal{C}$ in \emph{Waldhausen} $\infty$-categories as a Dwyer-Kan localisation (and dually for left filtering inclusions). Since Verdier inclusions among stable $\infty$-categories are in particular both left and right filtering, this recovers the localisation theorem \cite[Theorem 6.1]{HLS24} for algebraic K-theory of stable $\infty$-categories, and a host of other localisation results, among which is Quillen's localisation theorem for Serre subcategories of abelian categories \cite{Qui73}. This approach has however an important drawback, in that the cofibre in Waldhausen $\infty$-categories of a right filtering inclusion 
among exact $\infty$-categories is generally not itself exact (even if the inclusion is both left and right filtering, such as the inclusion of finite abelian and finitely generated abelian groups, whose Waldhausen quotient is torsion free abelian groups), and so this approach does not seem to lead to a useful family of bifibre sequences among exact $\infty$-categories.

Somewhat orthogonally to the notion of filtering inclusions lies the class of special inclusions studied by Keller \cite{keller-derived}, and later Schlichting \cite{Sch04}, which again come in left- and right-handed variants: an extension-closed subcategory $i\colon\cal{A}\subset\cal{C}$ is right-special if for each exact inclusion $a \mono x$ in $\cal{C}$ with $a \in \cal{A}$, there exists some map $x \to b$ with $b \in \cal{A}$ such that the composite $a \to b$ is an exact inclusion with cofibre in $\cal{A}$; $i$ is left-special if $i^{\op}$ is right-special. Any full stable subcategory inclusion in $\catex$ is automatically left and right-special, as is any additive inclusion of split-exact $\infty$-categories.

The key observation, due to Keller \cite[Theorem 12.1]{keller-derived}, is that such special inclusions $i\colon\cal{A}\subset\cal{C}$ are compatible with stabilisation in the sense that the induced exact functor $\st(i)\colon\st(\cal{A})\subset\st(\cal{C})$ on stable envelopes is fully faithful. Left adjointness of $\st(-)$ then implies that $\st$ sends any cofibre sequence $\cal{A} \subset \cal{C} \to \cal{C}/\cal{A}$ extending a special inclusion to a Verdier sequence of stable $\infty$-categories, at least up to retract closure.

This is promising for the candidacy of special inclusions as a class of Verdier-like inclusions among exact $\infty$-categories. It may be troubling however to note the asymmetry inherent in the definition of (left or right) speciality, while any such candidate ought to be invariant under the autoequivalence
\[
    (-)^{\op}\colon\exact\simeq\exact,
\]
particularly if one wishes for this notion to be useful in contexts where duality functors are present. Taking this aesthetic preference seriously, we consider in this paper extension-closed full additive subcategories $\cal{A} \subset \cal{C}$ which are both left and right-special, as well as retract-closed, to which we propose the term \emph{Keller inclusions}. Crucially, we show that the cofibre in $\exact$ of a Keller inclusion $\cal{A} \subset \cal{C}$ is modelled as an appropriate Dwyer-Kan localisation (at either the wide subcategory $\cal{C}_\inc^\cal{A}$ of inclusions with cofibre in $\cal{A}$ or at the wide subcategory $\cal{C}_\pr^\cal{A}$ of projections with fibre in $\cal{A}$). This enables us to show that there are corresponding notions of Keller projections, whose relation to that of Keller inclusions as expressed by the following property:
in any nullcomposite sequence
\[
    \cal{A} \xto i \cal{C} \xto p \cal{B}
\]
of exact $\infty$-categories, $i$ is a Keller inclusion and $p$ is its cofibre if and only if $p$ is a Keller projection and $i$ is its fibre, paralleling the behaviour of Verdier sequences \cite[Corollary A.1.10]{9-authors-II}; see Proposition \ref{prop:verdier}. To our knowledge, this gives the first explicit well-behaved family of non-split bifibre sequences in $\exact$ among non-stable, non-split-exact $\infty$-categories. We also show that such Keller sequences are taken to Verdier sequences by the stable envelope (Corollary \ref{st}), which, when combined with the Gillet-Waldhausen theorem for exact $\infty$-categories gives an a-posteriori proof that $\K$-theory is localising with respect to Keller sequences.

However, as it turns out, the Keller localising property of $\K$-theory admits a proof directly from the definitions, using as a technical input that in this setting the canonical functors
\[
    \cal{C}_\inc^\cal{A} \to \cal{B}^\simeq \leftarrow \cal{C}_\pr^\cal{A}
\]
induce equivalences on geometric realisations, which we prove in Proposition \ref{prop:weak-equivalence}. We then obtain a self-contained proof of the following statement:

\begin{theorem*}[Theorem \ref{thm:localisation}]
    Let $\cal{A} \to \cal{C} \to \cal{B}$ be a Keller sequence. Then the sequence
    \[
        \K(\cal{A}) \to \K(\cal{C}) \to \K(\cal{B})
    \]
    is an exact sequence of spectra.
\end{theorem*}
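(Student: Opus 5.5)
The plan is to prove the statement directly from the definitions, via the $\rS_\bullet$-construction together with a Waldhausen-style fibration theorem. The key input is Proposition \ref{prop:weak-equivalence}: the canonical functors $\cal{C}_\inc^\cal{A} \to \cal{B}^\simeq \leftarrow \cal{C}_\pr^\cal{A}$ induce equivalences on geometric realisations.

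We model $\K$ of an exact $\infty$-category $\cal{E}$ as $\Omega|\rS_\bullet(\cal{E})^\simeq|$, or the corresponding spectrum obtained by iterating $\rS_\bullet$. Since the cofibre sequence in $\exact$ is a Dwyer--Kan localisation $\cal{B} \simeq \cal{C}[(\cal{C}_\inc^\cal{A})^{-1}]$, we first show that this description is compatible with $\rS_\bullet$. Levelwise, $\rS_n(\cal{A}) \subset \rS_n(\cal{C}) \to \rS_n(\cal{B})$ should again be a Keller sequence. It is then natural to check speciality pointwise on filtered objects by induction on $n$, using extension-closure. Granting this, Proposition \ref{prop:weak-equivalence} applied levelwise gives equivalences $|\rS_n(\cal{C})_\inc^{\rS_n(\cal{A})}| \simeq \rS_n(\cal{B})^\simeq$.

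Next, we set up Waldhausen's bisimplicial comparison. Consider the bisimplicial space $\rS_\bullet(\cal{C})^{\cal{A}}_{\inc,\bullet}$, whose $(n,m)$ entry consists of strings of $m$ composable $\cal{A}$-inclusions in $\rS_n(\cal{C})$. Its realisation in the $m$-direction is $|\rS_\bullet(\cal{B})^\simeq|$ by the previous step. It therefore suffices to produce a fibre sequence
\[
|\rS_\bullet(\cal{A})^\simeq| \to |\rS_\bullet(\cal{C})^\simeq| \to |\rS_\bullet \rS_\bullet(\cal{C})_\inc^{\cal{A}}|
\]
after realising. We do this as in Waldhausen's fibration theorem, via the additivity theorem. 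Let $\rS_\bullet(\cal{C};\cal{A})$ denote the relative $\rS$-construction of inclusions $c \mono c'$ with cofibre in $\cal{A}$. Additivity identifies its $\K$-theory with $\K(\cal{C}) \times \K(\cal{A})$, giving a fibre sequence $\K(\cal{A}) \to \K(\cal{C}) \to \K(\rS_\bullet(\cal{C};\cal{A}))$. Then the swallowing/realisation lemma identifies $\K(\rS_\bullet(\cal{C};\cal{A}))$ with the realisation of $\rS_\bullet(\cal{C})_\inc^{\cal{A}}$: the simplicial direction of $\cal{A}$-inclusions is replaced by its realisation. Assembling these yields $\K(\cal{A}) \to \K(\cal{C}) \to \K(\cal{B})$ as a fibre sequence, hence a cofibre sequence of spectra.

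The main obstacle is the first step: showing that $\rS_n$ preserves Keller sequences, or at least that Proposition \ref{prop:weak-equivalence} applies to $\rS_n(\cal{A}) \subset \rS_n(\cal{C})$ with quotient $\rS_n(\cal{B})$. The concern is whether $\rS_n$ commutes with the Dwyer--Kan localisation. I would first try to reduce this to $n=1$, i.e.\ to $\Ar$ of exact inclusions, by showing that a filtered object in $\cal{B}$ lifts up to $\cal{A}$-inclusions using right speciality repeatedly. Left speciality would supply the dual statement needed for essential surjectivity on mapping spaces. If this fails, the fallback is to work with the symmetric description through $\cal{C}_\pr^\cal{A}$ and use Proposition \ref{prop:verdier} to deduce the required identification.
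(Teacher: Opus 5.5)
Your outline is correct and is essentially the paper's proof: your relative construction $\rS_\bullet(\cal{C};\cal{A})$ is the paper's simplicial object $P_\bullet(\cal{C};\cal{A})$ of strings of inclusions with cofibre in $\cal{A}$, whose levelwise $\K$-theory the paper identifies via additivity with the bar construction of $\K(\cal{A}) \to \K(\cal{C})$. The remaining step is the interchange $\rS_n(P_\bullet(\cal{C};\cal{A}))\simeq P_\bullet(\rS_n(\cal{C});\rS_n(\cal{A}))$ followed by Proposition~\ref{prop:weak-equivalence} applied to $\rS_n(\cal{A}) \to \rS_n(\cal{C}) \to \rS_n(\cal{B})$, with no swallowing lemma needed, and the obstacle you flag is settled beforehand as Lemma~\ref{lem:S-n-exact}, a special case of Proposition~\ref{prop:reedy-keller} (speciality checked directly on Reedy cofibrant diagrams, and the cofibre identified using stable envelopes together with the lifting of inclusions along Keller projections).
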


We tie back this circle of ideas by showing that the above localisation theorem, obtained without relying on the Gillet-Waldhausen theorem, can, itself, be used to \emph{prove} the Gillet-Waldhausen theorem by following the approach of \cite[Lecture 19]{Lur14}. More precisely, we still rely on the fact, proven in \cite{SW25}, that stable envelops of exact $\infty$-categories carry heart structures; our alternative argument just handles the passage to the heart (proven in \cite{Sau23} as an $\infty$-categorical analogue of Quillen's resolution theorem). In fact, using this 
approach one can prove the Gillet-Waldhausen theorem for any finitary Keller localising additive invariant on $\exact$:

\begin{corollary*}[Corollary \ref{cor:resolution}]
	Let $\cal{F}\colon \exact \to \cal{A}$ be a filtered colimit preserving Keller localising additive functor valued in an additive presentable $\infty$-category $\cal{A}$. Then the map $\cal{F}(\cal{C}) \to \cal{F}(\st(\cal{C}))$ is an equivalence.
\end{corollary*}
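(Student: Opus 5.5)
Following \cite[Lecture 19]{Lur14}, the plan is to compare $\cal{C}$ with $\st(\cal{C})$ through a filtered colimit of Keller sequences, so that the discrepancy is measured by invariants that vanish by additivity. By \cite{SW25}, $\st(\cal{C})$ carries a bounded heart structure whose heart contains $\cal{C}$ (up to idempotent completion), and $\cal{C}$ is closed under extensions in $\st(\cal{C})$ with the inherited exact structure. For $n \geq 0$, let $\st(\cal{C})_{[0,n]}$ be the full subcategory of objects admitting a finite ``resolution'' by shifts $\Sigma^k c$ with $c \in \cal{C}$ and $0 \le k \le n$, that is, objects of weight/amplitude in $[0,n]$. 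Each such subcategory is extension-closed in $\st(\cal{C})$, and so is exact. Boundedness gives
\[
\st(\cal{C}) \simeq \colim_n \Sigma^{-n}\st(\cal{C})_{[0,2n]},
\]
a filtered colimit in $\exact$. Since $\cal{F}$ preserves filtered colimits, it suffices to show that each inclusion $\st(\cal{C})_{[0,n]} \subset \st(\cal{C})_{[0,n+1]}$ induces an equivalence on $\cal{F}$ modulo contributions that cancel, and to compare the $n=0$ stage with $\cal{C}$.

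The key step is to exhibit, for each $n$, a Keller sequence
\[
\st(\cal{C})_{[0,n]} \to \st(\cal{C})_{[0,n+1]} \to \cal{B}_n .
\]
The inclusion $\st(\cal{C})_{[0,n]} \subset \st(\cal{C})_{[0,n+1]}$ should be a Keller inclusion. Right-speciality comes from the weight truncation maps $x \to x_{\le n}$, and left-speciality from the dual co-truncations. Retract-closure follows from the heart structure. The quotient $\cal{B}_n$ should then be identified, via the Dwyer--Kan localisation description of cofibres of Keller inclusions, with $\Sigma^{n+1}$ of the heart $\cal{C}$, equipped with its split or given exact structure as appropriate.

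Applying Keller-localisation of $\cal{F}$ then gives cofibre sequences $\cal{F}(\st(\cal{C})_{[0,n]}) \to \cal{F}(\st(\cal{C})_{[0,n+1]}) \to \cal{F}(\cal{C})$. Additivity, via the semi-orthogonal decomposition given by truncation, splits these sequences compatibly. Two-out-of-three then identifies $\cal{F}(\st(\cal{C})_{[0,n]})$ with a sum of $n+1$ copies of $\cal{F}(\cal{C})$ with alternating signs built from the shift maps. In the colimit these telescope, since the suspension acts by $-1$ on additive invariants. This is exactly Lurie's argument for the Waldhausen/Quillen comparison, and its output is $\cal{F}(\cal{C}) \simeq \cal{F}(\st(\cal{C}))$. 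The final step is to check that the map so obtained is the canonical one, which is clear since all maps are induced by inclusions.

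I expect the main obstacle to be verifying that the truncation inclusions are genuinely Keller inclusions, and in particular identifying their cofibre in $\exact$ rather than only after stabilisation. Here I would first try to use the mapping space characterisation of special inclusions together with the heart structure axioms. If the direct identification of $\cal{B}_n$ fails, a fallback is to only need $\cal{F}$ of the cofibre and to compute it using Proposition~\ref{prop:weak-equivalence}.
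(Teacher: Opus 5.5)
\textbf{The central step fails.} The inclusion $\st(\cal{C})_{[0,n]} \subseteq \st(\cal{C})_{[0,n+1]}$ is not a Keller inclusion in general, and its cofibre in $\exact$ is not $\Sigma^{n+1}\cal{C}$.

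It is retract-closed and left-special: it detects projections and is resolving, so Example~\ref{ex:resolving-is-special} applies. But it does not detect inclusions. For $a \in \st(\cal{C})_{[n,n]}$, the map $a \to 0$ has cofibre $\Sigma a \in \st(\cal{C})_{[n+1,n+1]}$, so it is an inclusion in $\st(\cal{C})_{[0,n+1]}$. When $\cal{C}$ is an ordinary exact category and $a \neq 0$, it is not an inclusion in $\st(\cal{C})_{[0,n]}$. By Corollary~\ref{cor:detects}, the inclusion is therefore not right-special. Heart structures do not supply truncation maps $x \to x_{\le n}$ that would rescue this, nor semi-orthogonal decompositions by truncation.

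Worse, the cofibre in $\exact$ is $0$. The exact sequences $a \mono 0 \epi \Sigma a$, together with heart decompositions $x' \mono x \epi \Sigma^{n+1}c$, show that any exact functor on $\st(\cal{C})_{[0,n+1]}$ vanishing on $\st(\cal{C})_{[0,n]}$ vanishes identically. So $\st(\cal{C})_{[0,n]} \to \st(\cal{C})_{[0,n+1]} \to 0$ is a cofibre sequence but not a fibre sequence, and Keller localisation says nothing about it.

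Your subsequent bookkeeping is also inconsistent with the goal. Cofibre sequences $\cal{F}(\st(\cal{C})_{[0,n]}) \to \cal{F}(\st(\cal{C})_{[0,n+1]}) \to \cal{F}(\cal{C})$, combined with the statement being proved, would force $\cal{F}(\cal{C}) \simeq 0$. If they were split, the colimit would be an infinite direct sum. The phrase ``suspension acts by $-1$'' has no meaning for the equivalences $\Sigma\colon \st(\cal{C})_{[0,n]} \simeq \st(\cal{C})_{[1,n+1]}$ between different exact categories.

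\textbf{The missing idea} is how to convert the resolving, only left-special, inclusion $\cal{C}_{[0,n-1]} \subseteq \cal{C}_{[0,n]}$ into a genuine Keller sequence.
\begin{enumerate}
\item The paper uses $\cal{E}_n \subseteq \rS_2(\cal{C}_{[0,n]})$, the category of exact sequences $x \mono y \epi z$ with $x,y \in \cal{C}_{[0,n-1]}$. This is equivalent to $\Ar(\cal{C}_{[0,n-1]})$.
\item Proposition~\ref{cofibre} shows that $\cal{C}_{[0,n-1]} \xrightarrow{i_0} \cal{E}_n \xrightarrow{\ev_2} \cal{C}_{[0,n]}$ is Keller. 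Here $i_0(x) = \id_x$ has exact adjoints on both sides (Example~\ref{ex:arrow}). The functor $\ev_2$ is identified with the localisation at projections with fibre in $\cal{C}_{[0,n-1]}$, by a terminal-object argument that uses the resolving property.
\item Additivity gives $\cal{F}(\cal{E}_n) \simeq \cal{F}(\cal{C}_{[0,n-1]})^{\oplus 2}$ via $i_0 \oplus i_1$.
\item Comparing with the split Keller sequence $\cal{C}_{[0,n-1]} \to \cal{C}_{[0,n-1]}^{\oplus 2} \to \cal{C}_{[0,n-1]}$ shows that $\cal{C}_{[0,n-1]} \subseteq \cal{C}_{[0,n]}$ is an $\cal{F}$-equivalence. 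So each step of your filtration is an equivalence on $\cal{F}$, not an extension by $\cal{F}(\cal{C})$.
\end{enumerate}
Only after this do finitarity (to pass to $\cal{C}^{\heartsuit} \subseteq \cal{C}$) and \cite[Proposition 5.4]{SW25} enter, as in your outline.
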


Combining this with the universal property of algebraic $\K$-theory on stable $\infty$-categories one immediately obtains:

\begin{corollary*}
The algebraic $\K$-theory functor $\K\colon \exact \to \Sp$ is the initial Keller localising functor under $\Sigma^{\infty}(-)^{\simeq}$.
\end{corollary*}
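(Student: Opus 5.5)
The corollary follows by combining the Keller localising property of $\K$ (Theorem \ref{thm:localisation}) with Corollary \ref{cor:resolution} and the known universal property of $\K$ on $\catex$.

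\textbf{Step 1: $\K$ is an admissible target.} First I check that $\K\colon\exact\to\Sp$ belongs to the class of functors in question. It is grouplike and additive on $\exact$, by \cite{SW25}. It preserves filtered colimits, since $\K$ of an exact $\infty$-category is built from the $S_\bullet$-construction, which commutes with filtered colimits in $\exact$. It is Keller localising by Theorem \ref{thm:localisation}. It receives the canonical map $\Sigma^\infty(-)^{\simeq}\to\K$.

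\textbf{Step 2: reduction to stable envelopes.} Let $\cal{F}\colon\exact\to\cal{D}$ be any Keller localising functor under $\Sigma^\infty(-)^\simeq$. Here I take "Keller localising" to include additivity and finitarity, so that Corollary \ref{cor:resolution} applies; this reading should be stated explicitly. Then Corollary \ref{cor:resolution} gives that the restriction map
\[\cal{F}\to\cal{F}\circ\st\]
is an equivalence, where this means the composite of $\st$ with the inclusion $\catex\subset\exact$. Hence $\cal{F}$ is left Kan extended, more precisely precomposed, from its restriction $\cal{F}|_{\catex}$ along the left adjoint $\st$. The same holds for $\K$.

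\textbf{Step 3: comparing mapping spaces.} Precomposition with $\st$ and restriction along $\catex\subset\exact$ should then identify the category of Keller localising functors on $\exact$ under $\Sigma^\infty(-)^\simeq$ with the corresponding category of functors on $\catex$. Two things need checking here.

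\emph{From $\catex$ to $\exact$.} A Verdier localising functor $G$ on $\catex$ gives a Keller localising functor $G\circ\st$ on $\exact$. This holds because $\st$ carries Keller sequences to Verdier sequences (Corollary \ref{st}), up to idempotent completion. Here I use that $\K$, and more generally the relevant invariants, are insensitive to that completion in the appropriate sense, or else work with Karoubi-localising variants.

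\emph{From $\exact$ to $\catex$.} A Keller localising functor restricts to a Verdier localising functor on $\catex$, since Verdier sequences of stable $\infty$-categories are Keller sequences.

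The structure maps under $\Sigma^\infty(-)^\simeq$ correspond through the unit $\cal{C}^\simeq\to\st(\cal{C})^\simeq$. Given this equivalence, the initiality of $\K$ on $\catex$ (from \cite{BGT13}/\cite{HLS24}) transports to initiality on $\exact$.

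\textbf{Main obstacle.} I expect the delicate point to be matching the exact class of functors on each side. In particular, the universal property of $\K$ on $\catex$ is usually stated among additive (or localising) functors, and possibly without any filtered-colimit hypothesis. Corollary \ref{cor:resolution}, however, requires finitarity. So I would either phrase the statement within finitary functors on both sides, noting that the universal property of $\K$ among additive functors on $\catex$ already yields a map $\K\to G$ for any such $G$, or verify that the filtered-colimit hypothesis is automatic in the comparison. I would also check carefully the idempotent-completion issue in Corollary \ref{st}.
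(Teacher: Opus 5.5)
For finitary targets your argument is essentially the paper's. If $\cal{G}$ is finitary and Keller localising, Theorem~\ref{thm:gillet-waldhausen} gives $\cal{G}\simeq\cal{G}|_{\catex}\circ\st$. That is, $\cal{G}$ is \emph{right} Kan extended along $\catex\subseteq\exact$, rather than left Kan extended along $\st$. Hence $\Map(\K,\cal{G})\simeq\Map(\K|_{\catex},\cal{G}|_{\catex})$, compatibly with the structure maps from $\Sigma^\infty(-)^\simeq$. Since $\cal{G}|_{\catex}$ is Verdier localising, the universal property on $\catex$ finishes the argument. This only uses that restriction is fully faithful on such functors. So your ``from $\catex$ to $\exact$'' direction is not needed, and neither is the idempotent-completion worry attached to it.

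The gap is that you prove a weaker statement. In the paper, Keller localising means additive and sending Keller sequences to fibre sequences, with no filtered-colimit condition. Building finitarity into the definition therefore only gives initiality among finitary functors.

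Your suggested workaround does not close this. For a non-finitary $\cal{G}$, the universal property on $\catex$ only produces $\K(\cal{C})\simeq\K(\st\cal{C})\to\cal{G}(\st\cal{C})$. The comparison map $\cal{G}(\cal{C})\to\cal{G}(\st\cal{C})$ points the wrong way and need not be invertible. So you obtain neither a map $\K\to\cal{G}$ nor its uniqueness.

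The missing step is how the paper's proof begins, and it uses Corollary~\ref{cor:compactly-generated}. Since $\exact$ is compactly generated, finitary functors $\exact\to\Sp$ form a coreflective subcategory. The coreflection $\cal{G}\mapsto\cal{G}^{\mathrm{fin}}$ is the left Kan extension of the restriction of $\cal{G}$ to compact objects. Since $\K$ and $\Sigma^\infty(-)^\simeq$ are both finitary, maps $\K\to\cal{G}$ under $\Sigma^\infty(-)^\simeq$ are the same as maps $\K\to\cal{G}^{\mathrm{fin}}$ under $\Sigma^\infty(-)^\simeq$. This reduces everything to the finitary case, provided that $\cal{G}^{\mathrm{fin}}$ is again Keller localising, a point the paper leaves implicit.
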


\subsection*{Organisation}

We begin by establishing some preliminaries on exact $\infty$-categories, stable envelopes and heart structures in \S\ref{sec:preliminaries}. In \S\ref{sec:keller-sequences} we introduce and study the main concept of this paper, namely that of Keller sequences. We first recall in \S\ref{sec:keller-inclusions} Keller's notion of special inclusions and establish some of their basic properties. In \S\ref{sec:keller's-theorem} we prove a mapping space characterisation (Corollary~\ref{fact}) of specialty which yields another proof of Keller's theorem (Theorem \ref{thm:fully-faithful}), and provides the key technical foundation for the following sections. In \S\ref{sec:calculus} we show that localisations by left- and right-special subcategories admit respectively a right and left calculus of fractions (Lemma \ref{lem:calculus}), which we use in \S\ref{subsec:keller-sequences} to prove a characterisation of Keller sequences (Proposition \ref{prop:verdier}). A useful test case for the behaviour of Keller sequence is presented by Reedy diagram categories, an example we analyse thoroughly in \S\ref{sec:reedy}. 

The final \S\ref{sec:K-theory} is dedicated to algebraic K-theory of exact $\infty$-categories.
After discussing additivity of $\K$-theory in \S\ref{sec:additivity}, we prove the main $\K$-theoretic result, namely that algebraic $\K$-theory is localising with respect to Keller sequences (Theorem \ref{thm:localisation}) in section \ref{sec:k-theory}. In section \ref{sec:resolution} we use this to give a new proof of the $\infty$-categorical resolution, and consequently Gillet-Waldhausen, theorem (Corollary~\ref{cor:resolution}).

\subsection*{Acknowledgements}

The first author was supported by the European Research Council as part of the project ``Foundations of Motivic Real K-Theory'' (ERC grant no. 949583).
We would like to thank Christoph Winges for valuable comments on a draft of this article, and for kindly allowing us to reproduce his argument for colimits of exact $\infty$-categories in \S\ref{sec:preliminaries}. The beginning of this investigation was conducted during a visit of the second author to Universit\'e Paris Cit\'e, for which he gratefully acknowledges funding from the University of Warwick, and the hospitality of the IMJ-PRG.

\subsection*{LLM declaration} 

No Artificial Intelligence tool was used in the mathematical research or writing of the present paper. 

\section{Preliminaries}\label{sec:preliminaries}

Recall from~\cite{Sau23} that a heart structure on a stable $\infty$-category $\cal{C}$ consists of a pair of full subcategories $\cal{C}_{\geq 0},\cal{C}_{\leq 0} \subseteq \cal{C}$ satisfying the following properties:
\begin{enumerate}
\item
$\cal{C}_{\geq 0}$ is closed under finite colimits, $\cal{C}_{\leq 0}$ is closed under finite limits, and both are closed under extensions.
\item
Every object $x \in \cal{C}$ sits in an exact sequence $z \to x \to y$ with $z \in \cal{C}_{\leq 0}$ and $y \in \Sigma \cal{C}_{\geq 0}$.
\end{enumerate}
The $\infty$-category $\cal{C}^{\heartsuit} := \cal{C}_{\geq 0} \cap \cal{C}_{\leq 0}$ is called the heart of the heart structure. We refer to the tuple $(\cal{C},\cal{C}_{\geq 0},\cal{C}_{\leq 0})$ as a heart $\infty$-category.
Given a heart structure one writes $\cal{C}_{\geq n} := \Sigma^n\cal{C}, \cal{C}_{\leq n} := \Sigma^n\cal{C}_{\leq n}$ and $\cal{C}_{[m,n]} = \cal{C}_{\geq m} \cap \cal{C}_{\leq n}$. For the sake of uniformity of notation it is also convenient to write $\cal{C}_{[n,\infty]} := \cal{C}_{\geq n}$ and $\cal{C}_{[-\infty,n]} := \cal{C}_{\leq n}$, so that the notation $\cal{C}_{[m,n]}$ makes sense for every $-\infty \leq m \leq n \leq \infty$.

We say that a heart structure is bounded if every $x \in \cal{C}$ belongs to $\cal{C}_{[m,n]}$ for some $-\infty < m \leq n < \infty$.

\begin{lemma}\label{lem:closed-under-retracts}
Let $\cal{C}$ be a stable $\infty$-category equipped with an bounded heart structure
$(\cal{C}_{\geq 0},\cal{C}_{\leq 0})$. Then for every $-\infty \leq m \leq n \leq \infty$ the $\infty$-category $\cal{C}_{[m,n]}$ is closed under retracts in $\cal{C}$. In particular, it is weakly idempotent complete.
\end{lemma}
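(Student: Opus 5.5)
Since $\cal{C}_{[m,n]} = \cal{C}_{\geq m} \cap \cal{C}_{\leq n}$ and an intersection of retract-closed subcategories is retract-closed, it suffices to show that $\cal{C}_{\geq m}$ and $\cal{C}_{\leq n}$ are each closed under retracts. Suspension is an autoequivalence, so we may shift and take $m = n = 0$; the cases $\pm\infty$ are trivial, since then the subcategory is all of $\cal{C}$. Passing to $\cal{C}^{\op}$, which swaps the roles of $\cal{C}_{\geq 0}$ and $\cal{C}_{\leq 0}$ and preserves boundedness, we reduce to showing that $\cal{C}_{\geq 0}$ is closed under retracts.

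The plan is an Eilenberg swindle-type argument that uses boundedness. Let $x$ be a retract of $y \in \cal{C}_{\geq 0}$, with $x \to y \to x$ composing to the identity. Then $y \simeq x \oplus x'$, where $x'$ is the fibre of $y \to x$, since retracts split in a stable $\infty$-category. Both $x$ and $x'$ are retracts of $y$. By boundedness, $x \in \cal{C}_{\geq k}$ for some $k$. If $k \geq 0$ we are done, so suppose $k<0$; it suffices to show $x \in \cal{C}_{\geq k+1}$ and then induct. The candidate trick is as follows. We have $x' \in \cal{C}_{\geq k'}$ for some $k'$, and we may take $k' = k$ after lowering both. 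Consider the cofibre sequence $x' \to y \to x$. Since $\cal{C}_{\geq 0} \subseteq \cal{C}_{\geq k+1}$ (because $\cal{C}_{\geq 0}$ is closed under suspension, being closed under finite colimits), and since $\Sigma x' \in \cal{C}_{\geq k+1}$, the cofibre sequence $y \to x \to \Sigma x'$ exhibits $x$ as an extension of $\Sigma x'$ by $y$, both of which lie in $\cal{C}_{\geq k+1}$. Extension closure then gives $x \in \cal{C}_{\geq k+1}$. By symmetry the same argument applies to $x'$, so both improve to level $k+1$ simultaneously, and induction on $k$ terminates at $0$.

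It remains to check the ingredients. First, $\cal{C}_{\geq k+1} = \Sigma^{k+1}\cal{C}_{\geq 0}$ is closed under extensions, because $\Sigma$ preserves cofibre sequences. Second, $\Sigma x' \in \cal{C}_{\geq k+1}$ when $x' \in \cal{C}_{\geq k}$, which holds by definition. Third, $\cal{C}_{\geq 0} \subseteq \cal{C}_{\geq k+1}$ for $k+1 \leq 0$: this uses $\Sigma^{-1}\cal{C}_{\geq 0} \supseteq \cal{C}_{\geq 0}$, equivalently $\Sigma \cal{C}_{\geq 0} \subseteq \cal{C}_{\geq 0}$, which holds since the suspension of $z$ is the pushout of $0 \leftarrow z \to 0$. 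The main subtlety is making sure the bounds on $x$ and $x'$ exist and can be taken equal, which is fine since both are objects of $\cal{C}$ and the subcategories $\cal{C}_{\geq k}$ are nested.

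Weak idempotent completeness then follows: any split idempotent, or retract diagram, in $\cal{C}_{[m,n]}$ splits in the idempotent-complete stable $\cal{C}$ (a retract $x \to y \to x$ with a section gives the complementary summand as a fibre). That summand is a retract of an object of $\cal{C}_{[m,n]}$, so it lies in $\cal{C}_{[m,n]}$ by the first part.
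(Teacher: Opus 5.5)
Your proof is correct. It follows a somewhat different organisation from the paper's.

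The paper works directly with two-sided intervals. It first shows that $\cal{C}_{[m,n]}$ is closed under retracts in $\cal{C}_{[m,n+1]}$. For a retract $x \to y \to x$ with complement $z$, it gets $z \in \cal{C}_{\geq m}$ from closure under cofibres and $z \in \cal{C}_{\leq n}$ from the rotated sequence $\Omega x \to z \to y$. It then gets $x \in \cal{C}_{\leq n}$ from closure of $\cal{C}_{\leq n}$ under fibres. The dual statement and an induction follow, with boundedness used only at the end to place the retract in some finite interval.

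You instead reduce, via intersection, shifting and duality, to the one-sided subcategory $\cal{C}_{\geq 0}$. You then raise the lower bound of both summands simultaneously using $y \to x \to \Sigma x'$. That step needs only extension-closure and $\Sigma$-stability of $\cal{C}_{\geq 0}$, not closure under cofibres and fibres.

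The core trick is the same in both: a summand is an extension of $y$ by a shift of the complementary summand. Your sequence is the connective-side counterpart of the paper's $\Omega x \to z \to y$. Your version is more symmetric and avoids the interval bookkeeping. The paper's version needs an a priori bound only on the retract $x$, not on its complement, deducing the complement's bound along the way. In both, boundedness serves only to start the induction.

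One small inaccuracy in your last paragraph: a stable $\infty$-category need not be idempotent complete. What you actually use, and what always holds, is that a retract diagram $x \to y \to x$ splits with complement $\fib(y \to x)$, i.e.\ weak idempotent completeness. With that rewording the final step is fine.
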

\begin{proof}
We first show that if $n < -\infty$ then $\cal{C}_{[m,n]}$ is closed under retracts in $\cal{C}_{[m,n+1]}$. For this, let $x \to y \to x$ be a retract diagram in $\cal{C}_{[m,n+1]}$ with $y \in \cal{C}_{[m,n]}$ and set $z = \fib[y \to x] =\cofib[x \to y] \in \cal{C}$. Now the exact sequence $x \to y \to z$ implies that $z \in \cal{C}_{\geq m}$ while the exact sequence $\Omega x \to z \to y$ implies that $z \in \cal{C}_{\leq n}$. We conclude that $z \in \cal{C}_{[m,n]}$. The exact sequence $x \to y \to z$ now implies that $x \in \cal{C}_{[m,n]}$, as desired.

Replacing $(\cal{C},\cal{C}_{\geq 0},\cal{C}_{\leq 0})$ with the opposite heart $\infty$-category $(\cal{C}^{\op},(\cal{C}_{\leq 0})^{\op},(\cal{C}_{\geq 0})^{\op})$ we conclude that if $-\infty < m$ then $\cal{C}_{[m,n]}$ is closed under retracts in $\cal{C}_{[m-1,n]}$. 
By induction we conclude that for any $-\infty < m'\leq m \leq n \leq n' < \infty$ the inclusion $\cal{C}_{[m,n]} \subseteq \cal{C}_{[m',n']}$ is closed under retracts. Since the heart structure is assumed bounded it now follows that $\cal{C}_{[m,n]}$ is closed under retracts in $\cal{C}$ for every $-\infty \leq m \leq n \leq \infty$. 
\end{proof}

Let $\exact$ denote the $\infty$-category of small exact $\infty$-categories and exact functors between them. We note that exact $\infty$-categories in which all maps are both inclusions and projections are exactly the stable $\infty$-categories, and the exact $\infty$-categories in which every map is an inclusion are exactly the prestable $\infty$-categories. In addition, in the latter case exact functors correspond exactly the functors preserving finite colimits. The resulting full inclusions of stable and prestable $\infty$-categories in $\exact$ both admit left adjoints, given by the formation of stable and prestable envelopes, which we denote by $\cal{E} \mapsto \st(\cal{E})$ and $\cal{E} \mapsto \pst(\cal{E})$. The exact functors
\[ \cal{E} \to \pst(\cal{E}) \to \st(\cal{E}) \]
are both extension-closed full inclusions which detect exact sequences, see \cite[Proposition 4.25]{Kle22}. In addition, the functor $\pst(\cal{E}) \to \st(\cal{E})$ exhibits its target both as the stable envelope of its source, considered as an exact $\infty$-category, and as the Spanier-Whithead stabilization of its source, considered as a prestable $\infty$-category. Finally, by \cite[Proposition 5.4]{SW25} the stable $\infty$-category $\st(\cal{E})$ carries a canonical bounded heart structure satisfying
\begin{enumerate}
\item
$\st(\cal{E})_{\geq 0} = \pst(\cal{E})$.
\item
$\cal{E} \subseteq \st(\cal{E})^{\heartsuit}$ and exhibits $\st(\cal{E})^{\heartsuit}$ as the weak idempotent completion of $\cal{E}$.
\end{enumerate}
In particular, if $\cal{E}$ is weakly idempotent complete then $\st(\cal{E})^{\heartsuit} = \cal{E}$. Combining this with Lemma~\ref{lem:closed-under-retracts} we conclude:

\begin{corollary}\label{cor:retract-closure}
Let $\cal{E}$ be a weakly idempotent complete exact $\infty$-category. Then $\cal{E}$ is closed under retracts in $\st(\cal{E})$. 
\end{corollary}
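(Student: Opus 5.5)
The plan is to realise $\cal{E}$ as the heart of the canonical bounded heart structure on $\st(\cal{E})$ and then invoke Lemma~\ref{lem:closed-under-retracts} with $m = n = 0$.

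First, recall from \cite[Proposition 5.4]{SW25}, as summarised above, that $\st(\cal{E})$ carries a canonical bounded heart structure with $\st(\cal{E})_{\geq 0} = \pst(\cal{E})$. Under this structure the inclusion $\cal{E} \subseteq \st(\cal{E})^{\heartsuit}$ exhibits the heart as the weak idempotent completion of $\cal{E}$. Since $\cal{E}$ is assumed weakly idempotent complete, this inclusion is an equivalence. To see this, note that the weak idempotent completion of a weakly idempotent complete $\infty$-category is computed by the identity: every object of the completion is the fibre of a split surjection between objects of $\cal{E}$, and such fibres already exist in $\cal{E}$. Hence $\cal{E} = \st(\cal{E})^{\heartsuit} = \st(\cal{E})_{[0,0]}$ as full subcategories of $\st(\cal{E})$.

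Second, apply Lemma~\ref{lem:closed-under-retracts} to this bounded heart structure with $m = n = 0$. It states that $\st(\cal{E})_{[0,0]}$ is closed under retracts in $\st(\cal{E})$, and by the first step this subcategory is exactly $\cal{E}$. This gives the statement.

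The only point requiring care is the identification $\st(\cal{E})^{\heartsuit} = \cal{E}$. Specifically, one must check that the comparison from \cite{SW25} really is the inclusion of the full subcategory $\cal{E}$, so that "equivalence" means equality of full subcategories up to essential image. This is already stated in the text preceding the corollary, so I do not expect a genuine obstacle. The remaining steps are a direct application of the lemma.
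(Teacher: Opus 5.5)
Your proposal is correct and is exactly the paper's argument: weak idempotent completeness together with \cite[Proposition 5.4]{SW25} identifies $\cal{E}$ with the heart $\st(\cal{E})^{\heartsuit} = \st(\cal{E})_{[0,0]}$ of the canonical bounded heart structure, and Lemma~\ref{lem:closed-under-retracts} with $m = n = 0$ then gives closure under retracts. Your side remark about the weak idempotent completion of a weakly idempotent complete $\cal{E}$ being $\cal{E}$ itself is accurate, but the paper simply takes this from the cited result.
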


In the present paper we will frequently consider fibres and cofibres in the category of small exact $\infty$-categories. Though our arguments show the requisite (co)limits exist by constructing them, it is useful to know that $\exact$ admits in general all small (co)limits. While the argument is quite standard for limits, the one for colimits is slightly more involved, and we thank Christop Winges for sharing it with us.

\begin{proposition}
    The $\infty$-category $\exact$ admits all small limits.
\end{proposition}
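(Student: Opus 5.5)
The plan is to compute limits in $\cat_\infty$ and check that the result is exact and that the cone is a limit cone in $\exact$. Let $p\colon I \to \exact$ be a small diagram, $i \mapsto \cal{E}_i$. Set $\cal{E} := \lim_i \cal{E}_i$, the limit taken in the $\infty$-category of small $\infty$-categories. Declare a map in $\cal{E}$ to be an exact inclusion (resp.\ exact projection) if its image in every $\cal{E}_i$ is one. Declare a square in $\cal{E}$ to be exact if its image in every $\cal{E}_i$ is an exact square.

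The first step is to check that $\cal{E}$ is additive and that limits of the relevant shapes are computed pointwise. Since each transition functor is exact, it preserves finite products, zero objects, pushouts along inclusions and pullbacks along projections. A standard result (HTT 5.4.5.5 and its variants) then applies: a limit of $\infty$-categories along functors preserving a given class of (co)limits of fixed shapes has those (co)limits, and the projections preserve and jointly detect them. Concretely, a cospan $x \leftarrow a \mono b$ in $\cal{E}$ whose second leg is pointwise an inclusion has pointwise pushouts. These are compatible with the transition maps, so they assemble to a pushout in $\cal{E}$, and it is preserved by every $\cal{E} \to \cal{E}_i$. The dual statement holds for pullbacks along projections.

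With this, the exact-category axioms for $\cal{E}$ reduce to the corresponding axioms in each $\cal{E}_i$. These axioms are: equivalences and zero maps are inclusions and projections; both classes are closed under composition; cobase change of inclusions exists and is again an inclusion, and dually for projections; and a square is exact iff it is both a pushout along an inclusion and a pullback along a projection. Each axiom can be checked after applying every projection, since both the classes and the relevant (co)limits are detected pointwise. The projections $\cal{E} \to \cal{E}_i$ are then exact by construction.

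For the universal property, take an exact $\infty$-category $\cal{D}$. We have
\[
\Map_{\cat_\infty}(\cal{D},\cal{E}) \simeq \lim_i \Map_{\cat_\infty}(\cal{D},\cal{E}_i),
\]
and a functor $\cal{D} \to \cal{E}$ is exact iff every composite $\cal{D} \to \cal{E}_i$ is exact. This holds because exactness means preserving zero objects, inclusions, projections and exact squares, all of which are detected pointwise. Since $\Map_{\exact}$ is the union of path components of $\Map_{\cat_\infty}$, this gives
\[
\Map_{\exact}(\cal{D},\cal{E}) \simeq \lim_i \Map_{\exact}(\cal{D},\cal{E}_i).
\]
The main point needing care is the first step, namely that pushouts along inclusions exist in the limit and are detected pointwise. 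For this I would invoke the cited limit-of-categories result, applied to the full subcategory of cospans whose second leg is an inclusion.
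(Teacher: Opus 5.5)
Your proposal is correct and follows essentially the same route as the paper: you form the limit in $\cat_\infty$, declare inclusions and projections pointwise, use that (co)limits of diagrams whose pointwise (co)limits exist and are preserved by the transition functors are computed pointwise in the limit, and deduce both the exact-structure axioms and the universal property (a functor into the limit is exact iff each composite is) from this pointwise detection. One harmless slip: the relevant axiom is that $0 \to x$ is an inclusion and $x \to 0$ is a projection for every $x$, not that arbitrary zero maps are both, but since you check the axioms pointwise this does not affect the argument.
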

\begin{proof}
Given a small $\infty$-category $I$ and a diagram 
\[ \cal{C}_\bullet\colon I \to \exact \quad\quad i \mapsto \cal{C}_i ,\] 
let us write $\cal{D} := \lim_IU(\cal{C}_\bullet)$ for its limit in $\cat_\infty$, where $U \colon\exact \to\cat_\infty$ is the forgetful functor. 
It follows from \cite[Proposition 5.1.2.2]{HTT} that a diagram in $\cal{D}$ admits a (co)limit as soon as its image under each of the structure maps $p_i\colon\cal{D} \to U(\cal{C}_i)$ admits a (co)limit which is furthermore preserved under each of the transition functors in $U(\cal{C}_i) \to U(\cal{C}_j)$ in the diagram, in which case the corresponding (co)limit in $\cal{D}$ is computed pointwise. Since all the $U(\cal{C}_i)$ are additive and all the transition functors $U(\cal{C}_i) \to U(\cal{C}_j)$ are direct sum preserving it now follows that $\cal{D}$ is additive and each of the structure maps $p_i$ is direct sum preserving. By a similar argument, if we declare that a map in $\cal{D}$ is an inclusion/projection if and only if its image under all the $p_i$ is an inclusion/projection then the resulting classes of maps form an exact structure on $\cal{D}$. A functor $f\colon\cal{E} \to \cal{D}$ from an exact category $\cal{E}$ is then exact with respect to this structure precisely if each of the composites $p_i\circ f\colon\cal{E} \to \cal{C}_i$ is so, and so this exact structure exhibits $\cal{D}$ as the limit of this diagram in $\exact$.

\end{proof}

For colimits, the following argument is due to Christoph Winges. Let $\wald$ be the $\infty$-category of (small) Waldhausen $\infty$-categories and $\C \in \wald$ be a Waldhausen $\infty$-category.
The $\infty$-category of left-exact presheaves $\Psh_\lex(\C)$ is defined to be the full subcategory of $\Psh(\C)$ spanned by those space-valued presheaves $X \colon \C^{\op} \to \cal{S}$ which send the zero object of $\C$ to $\ast \in \cal{S}$ and pushout squares in $\cal{C}$ with parallel leg inclusions to cartesian squares in $\cal{S}$. Then $\Psh_\lex(\cal{C})$ is presentable and for every presentable $\infty$-category $\cal{E}$, restriction along the Yoneda embedding
\[ i_\cal{C} \colon \cal{C} \to \Psh_\lex(\C) \]
induces an equivalence between colimit preserving functors $\Psh_\lex(\C) \to \cal{E}$ and Waldhausen functors $\C \to \E$ with respect to the maximal Waldhausen structure on $\E$. 
We then define $\Psh_\lex^\mrm{st}(\C) := \Sp(\Psh_\lex(\C))$ to be the associated stabilisation. In particular, $\Psh_\lex^\mrm{st}(\C)$ is stable and enjoys the same type of universal mapping property as $\Psh_\lex(\cal{C})$ for targets which are stable presentable.

\begin{definition}
 Let $\C$ be a small Waldhausen $\infty$-category.
 Define the small exact $\infty$-category $\C^\ex$ as the extension-closed subcategory of $\Psh_\lex^\mrm{st}(\C)$ generated by the essential image of
 \[ i^{\mrm{st}}_\C \colon \C \to \Psh_\lex(\C) \to \Psh_\lex^\mrm{st}(\C). \]
 By definition, $\C^\ex$ comes equipped with an exact functor $u \colon \C \to \C^\ex$.
\end{definition}

\begin{proposition}
    Let $\C$ be a small Waldhausen $\infty$-category and let $\E$ be an exact $\infty$-category.
    Then the restriction functor
    \[ u^* \colon \fun^\ex(\C^\ex,\E) \to \fun^\ex(\C,\E) \]
    is an equivalence. 
    Consequently, the association $\C \mapsto \C^{\ex}$ assembles to form a left adjoint to the 			fully-faithful inclusion $\exact \hookrightarrow \wald$.
\end{proposition}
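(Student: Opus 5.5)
The plan is to reduce the claim to the universal property of $\Psh^{\mrm{st}}_\lex(\C)$, combined with the standard fact that an exact $\infty$-category embeds, extension-closed and detecting exact sequences, into a stable presentable $\infty$-category. For a small exact $\E$, take $j\colon \E \to \Psh^{\mrm{st}}_\lex(\E)$. For $\E$ exact, the Waldhausen structure has cofibrations the inclusions, and left-exact presheaves are those sending exact sequences (pushouts along inclusions) to fibre sequences. It is a standard fact (Klemenc, or via $\E \subset \st(\E) \subset \mathrm{Ind}\,\st(\E)$) that $j$ is fully faithful, extension-closed and reflects exact sequences. I would take this Gabriel–Quillen type embedding as the input; the cleanest route is to identify $\Psh^{\mrm{st}}_\lex(\E)\simeq \mathrm{Ind}(\st(\E))$ by comparing universal properties against stable presentable targets.

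For essential surjectivity of $u^*$, let $f\colon \C\to\E$ be a Waldhausen functor. The composite $j f\colon \C \to \Psh^{\mrm{st}}_\lex(\E)$ is Waldhausen into a stable presentable category. By the universal property it therefore extends to a colimit-preserving exact functor $F\colon \Psh^{\mrm{st}}_\lex(\C)\to \Psh^{\mrm{st}}_\lex(\E)$ with $F i^{\mrm{st}}_\C \simeq jf$. Since $F$ is exact, it preserves extensions. It sends the generators of $\C^\ex$ into $j(\E)$, which is extension-closed, so $F$ restricts to a functor $\C^\ex \to j(\E)\simeq \E$. This restriction is exact, because exact sequences in $\C^\ex$ are cofibre sequences in the stable ambient category, $F$ preserves them, and $j$ detects exact sequences. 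Its restriction along $u$ is $f$.

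For full faithfulness, I would show more strongly that $\fun^\ex(\C^\ex,\E)$ is equivalent, via $j_*$, to the full subcategory of colimit-preserving functors $\Psh^{\mrm{st}}_\lex(\C)\to\Psh^{\mrm{st}}_\lex(\E)$ that carry $\C^\ex$ into $j(\E)$.
\begin{itemize}
\item Every exact functor $g\colon \C^\ex \to \E$ extends uniquely in this way.
\item The key step is checking that the inclusion $\C^\ex \subset \Psh^{\mrm{st}}_\lex(\C)$ induces $\Psh^{\mrm{st}}_\lex(\C^\ex) \simeq \Psh^{\mrm{st}}_\lex(\C)$. Equivalently, a colimit-preserving functor out of $\Psh^{\mrm{st}}_\lex(\C)$ is determined by its restriction to $\C^\ex$, and exact functors out of $\C^\ex$ to stable presentable targets correspond to Waldhausen functors out of $\C$.
\item This holds because $\C^\ex$ is built from $\C$ by iterated extensions. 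The value of an exact functor on an extension is then forced, as a cofibre, by its values on the pieces. An induction over the extension-length filtration of $\C^\ex$ therefore shows that restriction to $\C$ is an equivalence on exact functors to stable targets.
\end{itemize}
Combining this with the full faithfulness of $j_*$ on exact functors into $\E$, which follows from $j$ being fully faithful, gives that $u^*$ is fully faithful.

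The main obstacle is this extension-closure induction. An extension of objects in the image is only determined by a map $\Omega$-shifted into the ambient stable category, and that map need not live in $\C^\ex$. The way around this is to work with colimit-preserving functors on the ambient $\Psh^{\mrm{st}}_\lex$ throughout, where the uniqueness is automatic, and only afterwards restrict.

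Finally, the adjunction statement follows formally. If $\C$ is already exact, $u$ is an equivalence because $j$ is an extension-closed embedding with image $\C^\ex$. So the objectwise universal property gives the left adjoint $\C \mapsto \C^\ex$ to the fully faithful inclusion $\exact\hookrightarrow\wald$.
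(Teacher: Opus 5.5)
Your reduction of the general case to stable presentable targets is fine and matches the last step of the paper's proof: the square comparing $\fun^\ex(-,\E)$ with $\fun^\ex(-,\Psh^{\mrm{st}}_\lex(\E))$ is a pullback because $\E\subseteq\Psh^{\mrm{st}}_\lex(\E)$ is extension-closed and detects exact sequences. Your essential-surjectivity argument is a hands-on version of that step.

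The gap is the step you call key. You need that the inclusion $\iota\colon\C^\ex\hookrightarrow\Psh^{\mrm{st}}_\lex(\C)$ induces $\Psh^{\mrm{st}}_\lex(\C^\ex)\simeq\Psh^{\mrm{st}}_\lex(\C)$, i.e.\ that an exact functor $g\colon\C^\ex\to\cal{D}$ into a stable presentable $\cal{D}$ is recovered from $g|_{\C}$. The induction on extension length does not prove this, for three reasons.
\begin{itemize}
\item For $x\mono y\epi z$ in $\C^\ex$, $g(y)$ is determined by $g(x)$ and $g(z)$ only together with the boundary map $g(z)\to\Sigma g(x)$. The class $z\to\Sigma x$ that would produce it lives in the ambient stable category, where $g$ is not defined.
\item The stages of bounded extension length are not extension-closed, so there is no category of exact functors to induct on.
\item Nothing in the argument addresses coherence of natural transformations.
\end{itemize}
Your suggested fix, working with colimit-preserving functors on $\Psh^{\mrm{st}}_\lex(\C)$ throughout, is circular. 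It presupposes $g\simeq G\iota$, where $G$ is the colimit-preserving extension of $g|_{\C}$, and that is exactly the claim.

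The missing idea is to use extension-closure only for a functor already defined on the whole ambient stable category. Let $U\colon\Psh^{\mrm{st}}_\lex(\C)\to\Psh^{\mrm{st}}_\lex(\C^\ex)$ be the colimit-preserving extension of $i^{\mrm{st}}_{\C^\ex}u$. Let $J\colon\Psh^{\mrm{st}}_\lex(\C^\ex)\to\Psh^{\mrm{st}}_\lex(\C)$ be the colimit-preserving extension of $\iota$, which exists by the universal property of $\Psh^{\mrm{st}}_\lex$ for the exact category $\C^\ex$. Then:
\begin{itemize}
\item $JU\simeq\id$, since both extend $i^{\mrm{st}}_\C$.
\item Since $U$ is exact, $U^{-1}(\C^\ex)$ is extension-closed and contains $\C$, hence contains $\C^\ex$. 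So $U\iota\simeq i^{\mrm{st}}_{\C^\ex}f$ for an exact endofunctor $f$ of $\C^\ex$.
\item Then $\iota\simeq JU\iota\simeq Ji^{\mrm{st}}_{\C^\ex}f\simeq\iota f$, which forces $f\simeq\id$ by full faithfulness of $\iota$.
\item Thus $UJ$ restricts to the identity on $\C^\ex$, and by the universal property of $\Psh^{\mrm{st}}_\lex(\C^\ex)$ it is the identity.
\end{itemize}
This is precisely the paper's argument, and with it in hand the rest of your proposal goes through.
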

\begin{proof}
    From the universal property of $\Psh_\lex^\mrm{st}(\C^\ex)$, we obtain a commutative square
    \[\begin{tikzcd}
        \C\ar[r, "u"]\ar[d, "i^{\mrm{st}}_\C"'] & \C^\ex\ar[d, "i^{\mrm{st}}_{\C^\ex}"] \\
        \Psh_\lex^\mrm{st}(\C)\ar[r, "U"] & \Psh_\lex^\mrm{st}(\C^\ex)
    \end{tikzcd}\]
    where $U$ is a colimit-preserving functor.
    Moreover, the exact inclusion functor $j \colon \C^\ex \to \Psh_\lex^\mrm{st}(\C)$ induces a colimit-preserving functor
    \[ J \colon \Psh_\lex^\mrm{st}(\C^\ex) \to \Psh_\lex^\mrm{st}(\C). \]
    Since $JUi^{\mrm{st}}_\C \simeq Ji^{\mrm{st}}_{\C^\ex}u \simeq ju \simeq i^{\mrm{st}}_\C$, the composite $JU$ is the identity functor.

    We claim that $UJ$ is also the identity.
    To see this, note that $i^{\mrm{st}}_{\C^\ex} \colon \C^\ex \to \Psh_\lex^\mrm{st}(\C^\ex)$ exhibits $\C^\ex$ as an extension-closed subcategory.
    Consequently, its preimage $U^{-1}(\C^\ex)$ is an extension-closed subcategory of $\Psh_\lex^\mrm{st}(\C)$.
    Since $\C$ is contained in $U^{-1}(\C^\ex)$, it follows that $\C^\ex \subseteq U^{-1}(\C^\ex)$, and it follows that $UJ$ restricts to an exact endofunctor $f \colon \C^\ex \to \C^\ex$.
    From $UJi^{\mrm{st}}_{\C^\ex} \simeq i^{\mrm{st}}_{\C^\ex}f$ we conclude that
    \[ j \simeq Ji^{\mrm{st}}_{\C^\ex} \simeq JUJi^{\mrm{st}}_{\C^\ex} \simeq Ji^{\mrm{st}}_{\C^\ex}f \simeq jf. \]
    Since $j$ is fully faithful, it follows that $f$, and thus also $UJ$, is the identity functor. In particular, $J$ and $U$ are inverse equivalences.
    Now from the commutative diagram
    \[\begin{tikzcd}
     \fun^\ex(\C^\ex,\E)\ar[r]\ar[d, , "u^*"'] & \fun^\ex(\C^\ex,\Psh_\lex^\mrm{st}(\E))\ar[d] & \fun^\ex(\Psh_\lex^\mrm{st}(\C^\ex), \Psh_\lex^\mrm{st}(\E))\ar[l ,"\sim"']\ar[d, "U^*", "\sim"'] \\
     \fun^\ex(\C,\E)\ar[r] & \fun^\ex(\C,\Psh_\lex^\mrm{st}(\E)) & \fun^\ex(\Psh_\lex^\mrm{st}(\C), \Psh_\lex^\mrm{st}(\E))\ar[l, "\sim"']
    \end{tikzcd}\]
    we conclude that that the middle vertical arrow is an equivalence.
    Since $\E \to \Psh_\lex^\mrm{st}(\E)$ exhibits $\E$ as an extension-closed subcategory, the left square is a pullback.
    Consequently, $u^*$ is an equivalence.
\end{proof}

Now recall from \cite[Proposition 4.13]{Bar16} that the $\infty$-category $\wald$ is compactly generated, and in particular admits small colimits. Having established that $\exact$ is a reflective subcategory of $\wald$ we conclude:

\begin{corollary}
The $\infty$-category $\exact$ admits small colimits.
\end{corollary}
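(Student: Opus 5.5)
The corollary follows formally from two inputs established just above: $\wald$ is cocomplete (by \cite[Proposition 4.13]{Bar16}, being compactly generated), and the inclusion $\exact \hookrightarrow \wald$ is fully faithful with left adjoint $\C \mapsto \C^{\ex}$ by the preceding proposition. My plan is the standard argument that a reflective full subcategory of a cocomplete $\infty$-category is itself cocomplete, with colimits computed by reflecting the ambient colimit.

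First I fix a small $\infty$-category $I$ and a diagram $\cal{C}_\bullet\colon I \to \exact$. Writing $\iota\colon \exact \hookrightarrow \wald$ for the inclusion, I form the colimit $\cal{W} := \colim_I \iota(\cal{C}_\bullet)$ in $\wald$, which exists since $\wald$ is cocomplete. I then claim that $\cal{W}^{\ex}$, together with the composite cocone $\cal{C}_i \to \cal{W} \to \cal{W}^{\ex}$, is a colimit of $\cal{C}_\bullet$ in $\exact$.

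To check the claim, I take any $\E \in \exact$ and compute mapping spaces, using that left adjoints preserve colimits:
\[ \Map_{\exact}(\cal{W}^{\ex},\E) \simeq \Map_{\wald}(\cal{W},\iota\E) \simeq \lim_I \Map_{\wald}(\iota\cal{C}_i,\iota\E) \simeq \lim_I \Map_{\exact}(\cal{C}_i,\E). \]
The first equivalence is the adjunction of the preceding proposition. The second is the universal property of the colimit in $\wald$. The last is full faithfulness of $\iota$. I also need to check that the composite equivalence is induced by restriction along the described cocone; this holds because the unit of the adjunction is what produces it. Equivalently, one can cite \cite[Proposition 5.2.7.4]{HTT}-type statements on localisations, or simply note that left adjoints preserve colimits and that the counit $\iota(-)^{\ex} \to \id$ is an equivalence on $\exact$.

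I do not expect a genuine obstacle at this stage. All the substance lies in the two cited facts, namely the reflectivity proven above and the cocompleteness of $\wald$. The only point needing care is size: $\wald$ here consists of small Waldhausen $\infty$-categories, and the reflection $\C^{\ex}$ is small, being an extension-closed subcategory generated by a small collection inside $\Psh_\lex^{\mrm{st}}(\C)$. So everything takes place among small objects, and small colimits exist.
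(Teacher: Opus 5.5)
Your proposal is correct and is essentially the paper's argument: $\wald$ admits small colimits by \cite[Proposition 4.13]{Bar16}, and $\exact \subseteq \wald$ is reflective with left adjoint $\C \mapsto \C^{\ex}$, so colimits in $\exact$ are obtained by reflecting the colimit formed in $\wald$. The paper leaves this last standard step implicit, and your mapping-space computation simply writes it out.
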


Furthermore, we claim that:
\begin{corollary}\label{cor:compactly-generated}
The reflective inclusion $\exact \subseteq \wald$ is closed under filtered colimits.
As a consequence, the $\infty$-category $\exact$ is compactly generated and the forgetful functor $\exact \to \cat$ preserves filtered colimits.
\end{corollary}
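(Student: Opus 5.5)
We will show that a filtered colimit in $\wald$ of exact $\infty$-categories is again exact, with inclusions and projections the images of those in the stages. Let $I$ be a filtered $\infty$-category (without loss of generality a filtered poset) and $\cal{C}_\bullet\colon I \to \exact$ a diagram. The key input is the known description of filtered colimits in $\wald$ from \cite{Bar16}: they are computed on underlying $\infty$-categories, i.e.\ the forgetful functor $\wald \to \cat$ preserves filtered colimits. The cofibrations of $\cal{C} := \colim_I \cal{C}_i$ are the maps equivalent to images of cofibrations in some $\cal{C}_i$. Since filtered colimits in $\cat$ commute with finite limits of simplicial sets up to the usual essential-image argument, every finite diagram in $\cal{C}$ lifts to some stage $\cal{C}_i$. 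Moreover, a (co)limit cone in $\cal{C}_i$ that is preserved by all transition functors maps to a (co)limit cone in $\cal{C}$. This is the one technical point I would verify carefully: mapping spaces in a filtered colimit of $\infty$-categories are filtered colimits of mapping spaces, and finite limits of spaces commute with filtered colimits.

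Granting this, I would check the exact $\infty$-category axioms on $\cal{C}$ with the structure above, defining projections dually as images of projections. Additivity holds because direct sums are preserved by all transition functors. Each axiom concerns a finite diagram, so it can be lifted to some stage $\cal{C}_i$, where it holds. Concretely, the axioms are: pushouts along inclusions exist and are bicartesian, pullbacks along projections exist, inclusions and projections are stable under composition and cobase/base change, and a square is an exact sequence. The resulting square is preserved by the transition functors, since these are exact, and hence is bicartesian in $\cal{C}$ by the first paragraph. In particular, the maximal Waldhausen structure restricted to the exact structure on $\cal{C}$ agrees with the colimit Waldhausen structure. The same lifting argument shows that a functor $\cal{C} \to \cal{E}$ into an exact $\infty$-category is exact iff each composite $\cal{C}_i \to \cal{E}$ is. This gives closure of $\exact \subseteq \wald$ under filtered colimits, and also shows that the forgetful functor $\exact \to \wald \to \cat$ preserves them.

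For compact generation, I would argue as follows. Since $\exact$ is reflective in the compactly generated $\wald$ and closed under filtered colimits, the left adjoint $(-)^{\ex}$ sends compact objects to compact objects: $\Map(\cal{W}^{\ex},-) = \Map(\cal{W},-)$ restricted to $\exact$ commutes with filtered colimits. Moreover, $\exact$ is presentable as an accessible localisation (it is closed under $\kappa$-filtered colimits). Finally, the images $\cal{W}^{\ex}$ of a set of compact generators $\cal{W}$ of $\wald$ generate $\exact$ under colimits, since every $\cal{E} \in \exact$ is $(\cal{E})^{\ex} = \colim \cal{W}_\alpha^{\ex}$ for a filtered colimit of compacts $\cal{W}_\alpha$ in $\wald$. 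The main obstacle is the first step, namely correctly identifying filtered colimits in $\wald$ (including that cofibrations and pushouts are computed stagewise). If \cite{Bar16} does not state it directly, I would prove it by showing that the stagewise-defined structure on $\colim_I U(\cal{C}_i)$ satisfies the universal property in $\wald$.
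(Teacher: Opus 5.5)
Your proposal is correct and follows essentially the same route as the paper. Like the paper, you compute the filtered colimit in $\wald$ on underlying $\infty$-categories with inclusions (and, dually, projections) given by stagewise images, then verify the exact-category axioms by lifting the relevant finite diagrams to a single stage, where the transition functors preserve the needed (co)limits. You go further than the paper by spelling out how compact generation follows: the reflector preserves compact objects, the localisation is accessible, and the images of compact generators of $\wald$ generate $\exact$ under filtered colimits. The paper leaves this step implicit in ``As a consequence''.
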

\begin{proof}
Let $I$ be a filtered $\infty$-category and $\cal{C}_\bullet\colon I \to \exact$ a diagram. We wish to show that the colimit $\cal{D}$ of $\{\cal{C}_i\}_{i \in I}$ in $\wald$ is exact. For this, note that filtered colimits in $\wald$ are computed as follows: the underlying $\infty$-category of $\cal{D}$ is the colimit $\colim_i U(\cal{C}_i) \in \cat$ of the underlying $\infty$-categories $U(\cal{C}_i)$. In particular, for each $i$ the resulting functor $U(\cal{C}_i) \to \cal{D}$ preserves all finite (co)limits which exist in $U(\cal{C}_i)$ and which are preserved by the transition functors $U(\cal{C}_i) \to U(\cal{C}_j)$ for all $[i \to j] \in I_{i/}$ (where we note that $I_{i/}$ is filtered and $I_{i/} \to I$ is cofinal, so that we can assume if we wish that the colimit is indexed by $I_{i/}$). In addition, every finite diagram in $\cal{D}$ factors through one of the $U(\cal{C}_i) \to \cal{D}$ by compactness. Consequently, since each $U(\cal{C}_i)$ is additive and each transition functor $U(\cal{C}_i) \to U(\cal{C}_j)$ in the diagram is direct sum preserving we have that $\cal{D}$ is additive. The colimit Waldhausen structure on $\cal{D}$ is then such that a map is an inclusion if and only if it is the image of an inclusion in $\cal{C}_i$ for some $i \in I$. Then every span $x \leftarrow y \mono z$ in $\cal{D}$ with right leg inclusion comes from a span with right leg inclusion in some $\cal{C}_i$, and so we see that this is indeed a Waldhausen structure and that the universal property of colimit (in $\wald$) holds.

Since we already know that $\cal{D}$ is additive, to show that $\cal{D}$ is exact it will suffice to show that there is some exact structure on $\cal{D}$ with this collection of inclusions. Indeed, define a map to be a projection if it is the image of a projection in $\cal{C}_i$ for some $i \in I$. 
Since any of the following types of diagrams
\[
\begin{tikzcd}
x \ar[r,hookrightarrow]\ar[d] & y \\
z &
\end{tikzcd}
\quad\quad
\begin{tikzcd}
x \ar[r,hookrightarrow]\ar[d,->>] & y \\
z &
\end{tikzcd}
\quad\quad
\begin{tikzcd}
& y\ar[d,->>] \\
z\ar[r] & w
\end{tikzcd}
\quad\quad
\begin{tikzcd}
& y\ar[d,->>] \\
z\ar[r,hookrightarrow] & w
\end{tikzcd}
\]
in $\cal{D}$ comes a diagram of the same type in one of the $\cal{C}_i$ we see that this is indeed an exact structure, as desired.
\end{proof}

\section{Keller sequences}\label{sec:keller-sequences}

\subsection{Keller inclusions}\label{sec:keller-inclusions}

In what follows, we will say that an exact functor $i\colon \cal{A} \to \cal{C}$ of exact $\infty$-categories is an \emph{extension-closed full inclusion} if it satisfies the following conditions:
\begin{itemize}
\item
The functor underlying $i$ is fully-faithful.
\item
The essential image of $i$ is closed under extensions.
\item
The functor $i$ detects exact sequences.
\end{itemize}
In particular, if $\cal{C}$ is an exact $\infty$-category then the data of an extension-closed full inclusion $i\colon \cal{E} \to \cal{C}$ simply corresponds to a full subcategory $\cal{A} \subseteq \cal{C}$ closed under extensions, equipped with the induced exact structure in which a map in $\cal{A}$ is an inclusion exactly if its an inclusion in $\cal{C}$ with cofibre in $\cal{A}$ and a projection exactly if its a projection in $\cal{C}$ with cofibre in $\cal{A}$.

\begin{definition}\label{keller-inclusion}
	An extension-closed full inclusion $i\colon \cal{A} \hookrightarrow \cal{C}$ of exact categories is \emph{left-special} if for every projection $p\colon y \epi i(a)$ in $\cal{C}$ with $a \in \cal{A}$ there is a projection $q\colon b \epi a$ in $\cal{A}$ and a factorization $i(b) \to y \to i(a)$ of $i(q)$ through $p$.
Dually, $i\colon \cal{A} \hookrightarrow \cal{C}$ is \emph{right-special} if $i^{\op}$ 
is left-special. We say that $i$ is a \emph{Keller inclusion} if it is closed under retracts and both left and right-special. 
\end{definition}

\begin{example}\label{ex:verdier-inclusion}
If $\cal{A}$ and $\cal{C}$ are both stable $\infty$-categories with the associated maximal exact structure then every fully faithful inclusion $\cal{A} \to \cal{C}$ is both left and right-special. Similarly, if $\cal{A}$ and $\cal{C}$ are both prestable $\infty$-categories with the associated exact structures in which all maps are inclusions then every fully faithful inclusion $\cal{A} \to \cal{C}$ is right-special.
\end{example}

\begin{example}\label{ex:adjoints}
If $i \colon \cal{A} \to \cal{C}$ admits a right adjoint $r\colon \cal{C} \to \cal{A}$ which preserves projections then $i$ is left-special: indeed, given a projection $y \epi i(a)$ we can take $b = r(y) \epi r(i(a)) \simeq a$, in which case the map $i(b) \to i(a)$ factors as $i(b) = ir(y) \to y \epi i(a)$ where the first map is the counit.
Similarly, if $i$ admits an inclusion preserving left adjoint then $i$ is right-special. 
\end{example}

\begin{example}\label{ex:arrow}
Let $\cal{E}$ be an exact $\infty$-category and $\mathrm{Ar}(\cal{E}) = \fun(\Delta^1,\cal{E})$ the arrow category of $\cal{E}$ with the pointwise exact structure. Then the extension-closed exact inclusion $i\colon \cal{E} \to \mathrm{Ar}(\cal{E})$ sending $x$ to $\id\colon x \to x$ admits exact left and right adjoints given by the target and source projections, respectively, and so $i$ is both left and right-special by Example~\ref{ex:adjoints}. On the other hand, the extension-closed inclusion $x \mapsto [x \to 0]$ is right-special but not left-special, and the extension-closed inclusion $y \mapsto [0 \to y]$ is left-special but not right-special.
\end{example}

\begin{example}\label{ex:resolving-is-special}
If $i\colon \cal{A} \to \cal{C}$ detects projections and every object in $\cal{C}$ admits a projection from an object of $\cal{A}$ then $i$ is left-special. Similarly, if $i$ detects inclusions and every object of $\cal{C}$ admits an inclusion into an object of $\cal{A}$ then $i$ is right-special. For example, for any exact $\infty$-category $\cal{E}$ the inclusions $\cal{E} \to \st(\cal{E})_{[0,n]}$ and $\cal{E} \to \st(\cal{E})_{[m,0]}$ are left-special and right-special, respectively, for every $-\infty \leq m \leq 0$ and $0 \leq n \leq \infty$.
\end{example}

In a similar spirit, we also have the following:
\begin{lemma}\label{lem:square-left-special}
Let
\[ 
\begin{tikzcd}
\cal{A} \ar[r,hookrightarrow]\ar[d,hookrightarrow] & \cal{C} \ar[d,hookrightarrow] \\
\cal{A}'\ar[r,hookrightarrow] & \cal{C}'
\end{tikzcd}
\]
be a commutative square of exact $\infty$-categories in which all arrows are extension-closed inclusions and the vertical arrows detect projections. Assume that every object in $\cal{A}'$ admits a projection from an object in the image of $\cal{A}$ and every object in $\cal{C}'$ admits a projection from an object in the image of $\cal{C}$. If the top horizontal map is left-special then so is the bottom horizontal map.
\end{lemma}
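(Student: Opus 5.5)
The plan is to build the required factorization directly, by passing from $\cal{C}'$ to $\cal{C}$ with the two covering hypotheses, using left-specialty of $\cal{A} \subseteq \cal{C}$ there, and then mapping the result back. To lighten notation I will suppress all four inclusion functors and treat the square as a square of full subcategories. So let $p\colon y \epi a'$ be a projection in $\cal{C}'$ with $a' \in \cal{A}'$. We must find a projection $q'\colon b' \epi a'$ in $\cal{A}'$ together with a factorization of $q'$ through $p$.

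First, by hypothesis there is a projection $r\colon a \epi a'$ in $\cal{A}'$ with $a \in \cal{A}$. Since $\cal{A}' \to \cal{C}'$ is exact, $r$ is also a projection in $\cal{C}'$. In the exact $\infty$-category $\cal{C}'$, projections are stable under pullback along arbitrary maps. So the pullback $y'' := y \times_{a'} a$ exists, and its projection $p''\colon y'' \epi a$ is a projection in $\cal{C}'$.

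Next, by the second hypothesis, choose a projection $s\colon c \epi y''$ in $\cal{C}'$ with $c \in \cal{C}$. The composite $p'' \circ s\colon c \epi a$ is a projection in $\cal{C}'$ between objects in the image of $\cal{C}$. Since $\cal{C} \to \cal{C}'$ detects projections, $p'' \circ s$ is a projection in $\cal{C}$ whose target $a$ lies in $\cal{A}$. Left-specialty of $\cal{A} \subseteq \cal{C}$ now supplies a projection $q\colon b \epi a$ in $\cal{A}$ together with a map $b \to c$ through which $q$ factors as $b \to c \to a$.

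Finally, set $b' := b$ and $q' := r \circ q\colon b \epi a'$. Here $q$ is a projection in $\cal{A}$, hence in $\cal{A}'$ because $\cal{A} \to \cal{A}'$ is exact, and $r$ is a projection in $\cal{A}'$, so $q'$ is a projection in $\cal{A}'$ as a composite of projections. The desired factorization of $q'$ through $p$ is the composite $b \to c \xrightarrow{s} y'' \to y \xrightarrow{p} a'$. Commutativity of the pullback square gives
\[ p \circ (y'' \to y) \simeq r \circ p'', \]
and therefore this composite agrees with $r \circ p'' \circ s \circ (b \to c) \simeq r \circ q = q'$.

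The only point needing care is the bookkeeping of which category each map is a projection in. We need that a projection in $\cal{A}$ remains one in $\cal{A}'$, and that a projection in $\cal{C}'$ between objects of $\cal{C}$ is already one in $\cal{C}$. The first is exactness of $\cal{A} \to \cal{A}'$; the second is exactly the detection hypothesis on the vertical maps. I expect no genuine obstacle beyond this. In particular, the induced exact structure on $\cal{A}$ never has to be invoked, because left-specialty is applied in $\cal{C}$ to an honest projection of $\cal{C}$ with target in $\cal{A}$.
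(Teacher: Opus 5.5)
Your proposal is correct and follows essentially the same route as the paper. You cover $a'$ by a projection from some $a\in\cal{A}$, pull back the given projection along it, cover the pullback by an object of $\cal{C}$, use detection of projections to apply left-specialty of $\cal{A}\subseteq\cal{C}$, and then compose the resulting projection $b \epi a$ with $a \epi a'$. Your bookkeeping of which category each map is a projection in is accurate and simply makes explicit what the paper's terser proof leaves implicit.
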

\begin{corollary}
If $\cal{A} \hookrightarrow \cal{C}$ is left-special then $\st(\cal{A})_{[0,n]} \to \st(\cal{C})_{[0,n]}$ is left-special for every $0\le n \leq \infty$ and if $\cal{A} \hookrightarrow \cal{C}$ is right-special then $\st(\cal{A})_{[-m,0]} \to \st(\cal{C})_{[-m,0]}$ is right-special for every $-\infty \leq m \leq 0$.
\end{corollary}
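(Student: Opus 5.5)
The plan is to apply Lemma~\ref{lem:square-left-special} to the square
\[
\begin{tikzcd}
\cal{A} \ar[r,hookrightarrow]\ar[d,hookrightarrow] & \cal{C} \ar[d,hookrightarrow] \\
\st(\cal{A})_{[0,n]}\ar[r,hookrightarrow] & \st(\cal{C})_{[0,n]}
\end{tikzcd}
\]
and then deduce the right-special statement by duality.

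First I check that the diagram makes sense. The functor $\st(\cal{A}) \to \st(\cal{C})$ is exact. It sends $\st(\cal{A})_{\geq 0} = \pst(\cal{A})$ into $\pst(\cal{C})$, since $\pst(\cal{A})$ is generated from $\cal{A}$ under finite colimits and extensions. By the analogous argument with the opposite heart structure, it sends $\st(\cal{A})_{\leq 0}$ into $\st(\cal{C})_{\leq 0}$. Hence it restricts to a functor $\st(\cal{A})_{[0,n]} \to \st(\cal{C})_{[0,n]}$, and the square commutes.

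Next I verify the hypotheses of the lemma. The vertical maps are extension-closed inclusions that detect projections, and every object of $\st(\cal{A})_{[0,n]}$ admits a projection from an object of $\cal{A}$; both facts are what Example~\ref{ex:resolving-is-special} records. If I had to prove them directly, I would take $x \in \st(\cal{E})_{[0,n]}$ with $n\ge 1$ and write $x$ as a cofibre of $\Omega$-shifted pieces. Using $\pst(\cal{E})$ as the prestable envelope, every object of $\pst(\cal{E})$ receives a map $e \to x$ from $e \in \cal{E}$ whose cofibre lies in $\Sigma\pst(\cal{E}) = \st_{\geq 1}$; this is a projection in the induced exact structure on $\st(\cal{E})_{[0,n]}$ because its fibre lies in $\st_{[0,n]}$. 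That the fibre lies in $\st_{[0,n]}$ follows from the exact sequences of the heart structure.

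The main obstacle is the remaining hypothesis: the bottom horizontal map must be an extension-closed full inclusion. Fullness is the genuinely nontrivial input, since it is essentially Keller's theorem (Theorem~\ref{thm:fully-faithful}) restricted to the connective part. To avoid circularity, I would first try to prove fullness on $[0,n]$ directly by resolution. For $x \in \st(\cal{A})_{[0,n]}$, I would present $x$ via a resolution by objects of $\cal{A}$, so that mapping spaces into $\st(\cal{A})_{[0,n]}$ become totalisations of mapping spaces between objects of $\cal{A}$. Left-specialness would then be used, as in the lemma's intended proof, to lift these resolutions in $\cal{C}$ back into $\cal{A}$. If this fails, I would check whether the lemma's proof actually uses only faithfulness-type statements, in which case fullness can be sidestepped. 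Extension-closure and detection of exact sequences follow once fullness is in hand, because both subcategories are extension-closed in the ambient stable categories.

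Finally, I apply the lemma to conclude that the bottom map is left-special for every $0\le n\le\infty$. For the second statement, pass to opposites. If $\cal{A}\hookrightarrow\cal{C}$ is right-special, then $\cal{A}^{\op}\hookrightarrow\cal{C}^{\op}$ is left-special, and $\st(\cal{E}^{\op}) \simeq \st(\cal{E})^{\op}$ with the opposite heart structure, under which $\st(\cal{E})_{[-m,0]}^{\op}$ corresponds to $\st(\cal{E}^{\op})_{[0,m]}$. The first part then gives the claim.
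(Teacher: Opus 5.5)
Your overall strategy is the paper's. The corollary is stated there without a separate proof, as a direct application of Lemma~\ref{lem:square-left-special} to the square with vertical maps $\cal{A}\to\st(\cal{A})_{[0,n]}$ and $\cal{C}\to\st(\cal{C})_{[0,n]}$, using Example~\ref{ex:resolving-is-special} for the resolving and projection-detecting hypotheses, and then passing to opposites. Your checks of heart-exactness, extension-closure, detection of exact sequences and the duality step are fine.

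The one point you leave open is fullness of the bottom arrow, and as written your proposal does not settle it. There is no circularity to avoid. Theorem~\ref{thm:fully-faithful} is Keller's theorem, which the paper recalls from the literature (\cite[Theorem 1.2]{SW25} for exact $\infty$-categories). Its proof in the paper goes through Lemma~\ref{lem:factor}, Lemma~\ref{catoffact}, Corollary~\ref{fact} and Corollary~\ref{cor:prestable-keller}, and never uses the present corollary. It applies to left-special as well as right-special inclusions, so it makes $\st(\cal{A})\to\st(\cal{C})$, and hence its restriction to $[0,n]$, fully faithful in both halves of the statement. This is exactly how the paper uses the corollary later: Corollary~\ref{cor:heartwise-keller} cites Theorem~\ref{thm:fully-faithful} for fullness before invoking Lemma~\ref{lem:square-left-special}.

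Your two alternatives do not work. The ``resolution/totalisation'' sketch is too vague to count as an argument: it never says what lifting resolutions back into $\cal{A}$ means or why it gives an equivalence on mapping spaces. The fallback of sidestepping fullness is unavailable. The proof of Lemma~\ref{lem:square-left-special} does not use fullness of the bottom arrow, but the conclusion ``left-special'' presupposes, by Definition~\ref{keller-inclusion}, that the bottom arrow is an extension-closed full inclusion. Replace that paragraph with a citation of Theorem~\ref{thm:fully-faithful} and the proof is complete.
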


\begin{proof}[Proof of Lemma~\ref{lem:square-left-special}]
Let $c' \epi a'$ be a projection in $\cal{C}'$ with $a'\in \cal{A}'$. By assumption there exists a projection $a \to a'$ in $\cal{A}'$ with $a \in \cal{A}$. For the pullback square
\[
\begin{tikzcd}
d \ar[r,->>]\ar[d,->>] & a \ar[d,->>] \\
c' \ar[r,->>] & a'
\end{tikzcd}
\]
in $\cal{C'}$. By assumption there exists a projection $c \epi d$ with $c \in \cal{C}$. The composite $c \epi d \epi a$ is then a projection in $\cal{C}'$ among objects in $\cal{C}$ and hence a projection in $\cal{C}$ by assumption. Since $\cal{A} \hookrightarrow \cal{C}$ is left-special there exists a projection $b \epi a$ in $\cal{A}$ whose image in $\cal{C}$ factors through $c$. The composite $b \epi a \epi a'$ is then a projection in $\cal{A}'$ whose image in $\cal{C'}$ factors through $c'$, and so the bottom horizontal arrow is left-special.
\end{proof}

\begin{lemma}\label{spec}
	Let $i\colon \cal{A} \hookrightarrow \cal{C}$ an extension-closed full inclusion of exact categories. Then the following are equivalent:
	\begin{enumerate}[label=(\roman*)]
		\item\label{i} $i$ is left-special.
		\item\label{ii} Every map $f\colon \Sigma^{-1}i(a) \to x$ in $\st(\cal{C})$, for $x \in \cal{C}$ and $a \in \cal{A}$, factors as $\Sigma^{-1}i(a) \to i(b) \to x$ for some map $g\colon \Sigma^{-1}a \to b$ in $\st(\cal{A})$ with $b \in \cal{A}$. 
	\end{enumerate}
\end{lemma}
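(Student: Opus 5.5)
The plan is to translate between projections $y \epi i(a)$ in $\cal{C}$ and maps $\Sigma^{-1}i(a) \to x$ in $\st(\cal{C})$ with $x\in\cal{C}$. The dictionary is the one provided by the extension-closed inclusion $\cal{C} \subseteq \st(\cal{C})$, which detects exact sequences (\cite[Proposition 4.25]{Kle22}). Concretely, a map $f\colon \Sigma^{-1}i(a) \to x$ corresponds to the exact sequence $x \to y \to i(a)$ in $\st(\cal{C})$ with $y = \cofib(f)$. Since $\cal{C}$ is extension-closed in $\st(\cal{C})$, we get $y \in \cal{C}$, and since the inclusion detects exact sequences, $x \mono y \epi i(a)$ is an exact sequence in $\cal{C}$. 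Conversely, every projection $p\colon y\epi i(a)$ in $\cal{C}$ has a fibre $x\in\cal{C}$ and so yields such an $f$ as the boundary map. The same dictionary applies inside $\cal{A}\subseteq\st(\cal{A})$, and we use that $\st(i)$ is exact and commutes with these constructions.

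For \ref{i}$\Rightarrow$\ref{ii}, start from $f\colon \Sigma^{-1}i(a)\to x$ and form the projection $p\colon y\epi i(a)$ with fibre $x$. Left-speciality gives a projection $q\colon b'\epi a$ in $\cal{A}$ and a factorisation $i(b')\xto{h} y\xto{p} i(a)$ of $i(q)$. Let $b=\fib(q)\in\cal{A}$, and let $g\colon\Sigma^{-1}a\to b$ be the boundary map in $\st(\cal{A})$ of the sequence $b\mono b'\epi a$. The map $h$ over $i(a)$ induces a map of fibre sequences in $\st(\cal{C})$, namely $i(b)\to x$ on fibres. By naturality of boundary maps, $f$ is then the composite $\Sigma^{-1}i(a)\xto{\st(i)(g)} i(b)\to x$.

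For \ref{ii}$\Rightarrow$\ref{i}, start from a projection $p\colon y\epi i(a)$ with fibre $x$ and boundary $f$, and factor $f$ as $\Sigma^{-1}i(a)\xto{g} i(b)\xto{k} x$. Set $b'=\cofib(g)$ in $\st(\cal{A})$. Then $b'$ lies in $\cal{A}$ by extension-closure of $\cal{A}\subseteq\st(\cal{A})$, and $b\mono b'\epi a$ is exact in $\cal{A}$, so $q\colon b'\epi a$ is a projection. The map $k$ together with the identity on $\Sigma^{-1}i(a)$ gives a map of cofibre sequences, which induces $i(b')=\cofib(\st(i)g)\to\cofib(f)=y$ compatible with the maps to $i(a)$. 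This is exactly the factorisation of $i(q)$ through $p$ required by Definition~\ref{keller-inclusion}.

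I expect the only real subtlety to be bookkeeping. One must check that $\st(i)$ carries the canonical inclusion $\cal{A}\subseteq\st(\cal{A})$ to the one for $\cal{C}$, which holds by naturality of the unit, and that the boundary maps and the induced maps on (co)fibres are compatible up to the standard sign. Signs are harmless here, since we only need existence of factorisations and can absorb a sign into $g$ or $k$. The rest is formal manipulation of fibre sequences in a stable $\infty$-category.
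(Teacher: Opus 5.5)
Your proposal is correct and is essentially the paper's argument. Both proofs identify maps $\Sigma^{-1}i(a)\to x$ with projections $y \epi i(a)$ having fibre $x$, using that $\cal{C}$ is extension-closed in $\st(\cal{C})$, and then pass between the two kinds of factorisation via $b=\fib[b'\epi a]$ and $b'=\cofib[\Sigma^{-1}a\to b]$; you simply spell out the naturality of boundary maps that the paper leaves implicit in its diagram.
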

\begin{proof}
	\ref{i} $\Leftrightarrow$ \ref{ii}: Since $\cal{C}$ is closed under extensions in $\st(\cal{C})$ the data of a map $\Sigma^{-1}i(a) \to x$ in $\st(\cal{C})$ with $a \in \cal{A},c \in \cal{C}$ is equivalent to that of an exact sequence $x \mono y \epi i(a)$ in $\cal{C}$ with $a \in \cal{A}$, which in turn is equivalent to the data of a projection $y \epi i(a)$ in $\cal{C}$ with $a \in \cal{A}$. Then in the diagram
	\[\begin{tikzcd}
		\Sigma^{-1}i(a) \ar[r] \ar[d, equal] & x \ar[r, rightarrowtail] & y \ar[r, twoheadrightarrow] & i(a) \ar[d, equal] \\
		\Sigma^{-1}i(a) \ar[r,dotted] & i(b) \ar[u, dotted] \ar[r, rightarrowtail] & i(b') \ar[u, dashed] \ar[r, twoheadrightarrow,dashed] & i(a)
	\end{tikzcd}\]
	the existence of the dashed and dotted extensions with $b,b'\in \cal{A}$ is equivalent, as one can pass between them by taking $b = \fib[b'\epi a]$ and $b'=\cofib[\Sigma^{-1}a \to b]$. 
\end{proof}

\begin{remark}
	Dually, $i\colon \cal{A} \hookrightarrow \cal{C}$ is right-special if and only if any map $x \to \Sigma i(a)$ in $\st(\cal{C})$ with $x \in \cal{C}$ and $a \in \cal{A}$ factors as $x \to i(b) \to \Sigma i(a)$ for some map $b \to \Sigma a$ in $\st(\cal{A})$ with $b \in \cal{A}$.  
\end{remark}

\subsection{Keller's theorem}\label{sec:keller's-theorem}

Recall Keller's theorem~\cite[Theorem 12.1]{keller-derived} (proven in the context of exact $\infty$-categories in~\cite[Theorem 1.2]{SW25}):

\begin{theorem}[Keller]\label{thm:fully-faithful}
Let $i\colon \cal{A} \hookrightarrow \cal{C}$ be an either right-special or left-special inclusion. Then the induced functor $\st(\cal{A}) \to \st(\cal{C})$ is fully faithful.
\end{theorem}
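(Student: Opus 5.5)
By duality (replacing $i$ with $i^{\op}$ and using $\st(\cal{E}^{\op}) \simeq \st(\cal{E})^{\op}$), it suffices to treat the case where $i$ is left-special. Since $\st(\cal{A})$ is generated under finite colimits and shifts by $\cal{A}$, a standard thick-subcategory argument reduces full faithfulness of $\st(i)$ to the claim that
\[
\Map_{\st(\cal{A})}(\Sigma^{-n}a, b) \to \Map_{\st(\cal{C})}(\Sigma^{-n}i(a), i(b))
\]
is an equivalence for all $a,b \in \cal{A}$ and all $n \in \zz$. Here we fix the source and vary the target through exact sequences, then fix the target and vary the source. For $n<0$ both sides vanish, because $\cal{A}$ and $\cal{C}$ sit in the hearts of the canonical bounded heart structures and $\Map(x,\Sigma^{k}y)$ is zero for $k>0$ between heart objects. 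For $n=0$ the map is an equivalence because $\cal{A} \to \cal{C}$ is fully faithful and both include fully faithfully into their stable envelopes. The remaining work is to induct on $n \geq 1$.

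For the induction step, I would describe $\pi_0\Map(\Sigma^{-n}a,b)$ in the stable envelope as "$n$-fold Yoneda extensions" and compare them. A map $\Sigma^{-1}a \to x$ with $x$ in the heart corresponds to an exact sequence $x \mono y \epi a$, as in the proof of Lemma~\ref{spec}. More intrinsically, for $x \in \cal{C}$ and $a \in \cal{A}$ there is a fibre sequence of mapping spaces
\[
\Map(\Sigma^{-1}a, x) \simeq \colim_{y\epi a} \ldots,
\]
but I would avoid colimits and argue by dévissage instead. Fix the target $b$. Given a map $f\colon \Sigma^{-n}i(a) \to i(b)$ in $\st(\cal{C})$, view it as $\Sigma^{-1}(\Sigma^{-(n-1)}i(a)) \to i(b)$. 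The key input is a higher analogue of Lemma~\ref{spec}: every map $\Sigma^{-n}i(a)\to x$ with $x\in\cal{C}$ factors through $\Sigma^{-(n-1)}i(a')$ for some projection $a' \epi$ onto... More concretely, I would use Lemma~\ref{spec}(ii) applied iteratively. Write $\Sigma^{-n}i(a)\to i(b)$ as the composite of $\Sigma^{-n}i(a) \to \Sigma^{-(n-1)} i(c)$ with $\Sigma^{-(n-1)} i(c) \to i(b)$. To arrange this, choose, using left-speciality, a projection $a_1 \epi a$ in $\cal{A}$ with kernel $k \in \cal{A}$, compatible with a projection in $\cal{C}$ that kills $f$ after precomposition. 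Surjectivity on $\pi_0$ then follows from the exact sequence $k \to a_1 \to a$ and induction. Injectivity, and equivalence on higher homotopy, follows by applying the same argument to $\Omega$-loops, i.e.\ to $\pi_j$ via replacing $n$ by $n+j$ with the five lemma.

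A cleaner way to organise this, which I would try first, is the following. For fixed $b\in\cal{A}$, consider the functors $F = \Map_{\st\cal{A}}(-,b)$ and $G=\Map_{\st\cal{C}}(i(-),i(b))$ on $\st(\cal{A})^{\op}$, together with the natural map $F\to G$. Both are exact, and the map is an equivalence on $\Sigma^{-n}\cal{A}$ for $n\le 0$. Let $\theta$ be a class in $\pi_0$ of the cofibre of $F \to G$ at $\Sigma^{-n}a$. Lemma~\ref{spec} says precisely that any class in $G(\Sigma^{-1}a)$ in degree $1$ is killed by restricting along some $\Sigma^{-1}a\to b'$, i.e.\ by passing to the cofibre of $\Sigma^{-1}a \to b'$, which lies in $\cal{A}$. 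Iterating this, a class in degree $n$ is killed after restriction along a map to an object of lower "length". An induction on $n$ then shows that the cofibre vanishes.

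The main obstacle is making the iteration precise for $n\ge2$. Lemma~\ref{spec} is stated only for maps out of $\Sigma^{-1}i(a)$ into objects of $\cal{C}$, not into $\Sigma^{-(n-1)}$-shifts. The plan there is to induct on $n$ by splitting a degree-$n$ map as a degree-$1$ map into some $\Sigma^{-(n-1)}$-object. Concretely, a map $\Sigma^{-n}a\to b$ in $\st(\cal{C})$ factors through $\Sigma^{-(n-1)}x$ with $x \in \cal{C}$, by the heart-structure truncation and boundedness of $\st(\cal{C})$. Applying Lemma~\ref{spec}(ii) to the first factor then replaces $x$ by an object of $\cal{A}$, compatibly, which completes the induction.
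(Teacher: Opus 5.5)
Your route is the classical degree-by-degree Ext comparison: set $E^n_{\cal{X}}(a,b):=\pi_0\Map_{\st(\cal{X})}(\Sigma^{-n}a,b)$ for $a,b\in\cal{A}$ and induct on $n$. The surjectivity half, in the form of your last paragraph, is sound. A map $\Sigma^{-n}i(a)\to i(b)$ with $n\geq 1$ does factor through some $\Sigma^{-(n-1)}x$ with $x\in\cal{C}$; this is Lemma~\ref{spec} for the left-special inclusion $\cal{C}\subseteq\pst(\cal{C})$, exactly the first step in the proof of Lemma~\ref{lem:factor}. Lemma~\ref{spec}(ii) then replaces $x$ by some $i(a')$ via a map $\Sigma^{-1}a\to a'$ in $\st(\cal{A})$, and induction handles the remaining degree-$(n-1)$ factor.

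The genuine gap is injectivity. Your sentence about ``applying the same argument to $\Omega$-loops \ldots with the five lemma'' does not supply it, for three reasons. First, the factorisation argument only ever produces preimages. Second, passing to $\pi_j$ merely shifts the degree (to $n-j$, not $n+j$), so it again asks for $\pi_0$-injectivity in some degree. Third, a five-lemma on the long exact sequence of an extension $a_1\mono a_2\epi a$ is circular, since it needs injectivity in degree $n$ at $a_2$. Your ``cleaner'' variant has the same hole. Classes in $\pi_0$ of $\cofib(F\to G)$ at $\Sigma^{-n}a$ come not only from $\pi_0G(\Sigma^{-n}a)$ but also from $\ker(E^{n+1}_{\cal{A}}(a,b)\to E^{n+1}_{\cal{C}}(a,b))$, and Lemma~\ref{spec} says nothing about effacing those.

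The missing step is a diagram chase using a factorisation chosen \emph{inside} $\st(\cal{A})$. By the same heart-decomposition argument in $\st(\cal{A})$, any $g\in E^n_{\cal{A}}(a,b)$ with $n\geq 1$ can be written as $g=g_2\circ\Sigma^{-(n-1)}\partial$. Here $\partial\colon\Sigma^{-1}a\to a_1$ is the boundary of an exact sequence $a_1\xrightarrow{\iota} a_2\epi a$ in $\cal{A}$, and $g_2\in E^{n-1}_{\cal{A}}(a_1,b)$. Suppose $i(g)=0$.
\begin{itemize}
\item Exactness of $E^{n-1}_{\cal{C}}(a_2,b)\xrightarrow{\iota^*}E^{n-1}_{\cal{C}}(a_1,b)\xrightarrow{\partial^*}E^{n}_{\cal{C}}(a,b)$ gives $i(g_2)=\iota^*h$ for some $h$.
\item Since $a_1,a_2\in\cal{A}$, bijectivity in degree $n-1$ yields $h=i(h')$ and then $g_2=\iota^*h'$.
\item Hence $g=h'\circ\Sigma^{-(n-1)}(\iota\circ\partial)=0$.
\end{itemize}
With this, the induction closes: surjectivity and injectivity in degree $n$ both follow from bijectivity in degree $n-1$.

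A minor error: your claim that both sides vanish for $n<0$ is false for exact $\infty$-categories. Indeed $\pi_0\Map(\Sigma^k a,b)=\pi_k\Map_{\cal{A}}(a,b)$, which is nonzero e.g.\ for $\cal{A}$ stable. It is harmless, since $\Map(\Sigma^k a,b)\simeq\Omega^k\Map(a,b)$ reduces $n<0$ to the case $n=0$.

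So repaired, your proof is a valid and more elementary alternative to the paper's. The paper instead proves that the factorisation categories $\cal{A}_{c//A}$ are weakly contractible (Lemma~\ref{catoffact}). From this it deduces a left Kan extension formula for whole mapping spaces (Corollary~\ref{fact}) and full faithfulness already on prestable envelopes (Corollary~\ref{cor:prestable-keller}). That heavier route pays off because these statements are reused later, e.g.\ in Corollary~\ref{conservativity} and Proposition~\ref{prop:preserves-cocartesian}.
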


In this section we give another proof of Keller's theorem. Along the way, we develop some of the machinery we will use later on.

\begin{lemma}\label{lem:factor}
Let $i\colon \cal{A} \hookrightarrow \cal{C}$ be an extension-closed exact functor among weakly idempotent complete exact $\infty$-categories.
\begin{enumerate}
\item
If $i$ is right-special then for every $n \geq 0$ and every map $c \to i(A)$ with $c \in \cal{C}$ and $A \in \st(\cal{A})_{[0,n]}$ factors as a composite $c \to i(a) \xrightarrow{i(g)} i(A)$ for some $g\colon a \to A$ with $a \in \cal{A}$ and $\cofib[a \to A] \in \st(\cal{A})_{[1,n]}$.
\item
If $i$ is left-special then for every $m \leq 0$ and every map $i(A) \to c$ with $c \in \cal{C}$ and $A \in \st(\cal{A})_{[m,0]}$ factors as a composite $i(A) \xrightarrow{i(g)} i(a)$ for some $g\colon A \to a$ with $a \in \cal{A}$ and $\fib[A \to a] \in \st(\cal{A})_{[m,-1]}$.
\end{enumerate}
\end{lemma}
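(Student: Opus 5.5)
Part (2) follows from part (1) applied to $i^{\op}\colon \cal{A}^{\op} \hookrightarrow \cal{C}^{\op}$. Passing to opposites turns left-special into right-special and exchanges the two halves of the heart structure. So I concentrate on (1) and argue by induction on $n$; the case $n=\infty$ follows from the finite cases, since the heart structure is bounded. Throughout I use that $\st(\cal{A})$ carries the bounded heart structure of \cite[Proposition 5.4]{SW25}. Because $\cal{A}$ is weakly idempotent complete, its heart is exactly $\cal{A}$, and similarly for $\cal{C}$.

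\emph{Base case $n=0$.} Here $A \in \st(\cal{A})^{\heartsuit}=\cal{A}$, so we take $a=A$ and $g=\id$. The cofibre is $0$, which lies in $\st(\cal{A})_{[1,0]}$, interpreted as $\{0\}$.

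\emph{Decomposing $A$.} For $n\geq 1$, apply the axiom (2) of a heart structure to $\Sigma^{-1}A$ and shift. This gives a fibre sequence $z \to A \to y$ with $z \in \st(\cal{A})_{\leq 0}$ and $y \in \st(\cal{A})_{\geq 1}$. Two rotations of the sequence then place the pieces:
\begin{itemize}
\item rotating to $\Omega y \to z \to A$, with $A$ and $\Omega y$ in $\st(\cal{A})_{\geq 0}$, shows $z\in \st(\cal{A})_{\geq 0}$, hence $z \in \st(\cal{A})^{\heartsuit}=\cal{A}$;
\item rotating to $A \to y \to \Sigma z$, with $\Sigma z \in \st(\cal{A})_{\leq 1}$, shows $y \in \st(\cal{A})_{[1,n]}$.
\end{itemize}
So $y \simeq \Sigma B$ with $B \in \st(\cal{A})_{[0,n-1]}$.

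\emph{Reduction and gluing.} Given $f\colon c \to i(A)$, it suffices to factor the composite $c \to i(A) \to \Sigma i(B)$ as $c \to i(b) \xrightarrow{i(h)} \Sigma i(B)$, with $b \in \cal{A}$ and $\cofib(h) \in \st(\cal{A})_{[1,n]}$. Granting this, set $a := A \times_{\Sigma B} b$, formed in $\st(\cal{A})$.
\begin{itemize}
\item The object $a$ is an extension of $b$ by $z$, both in $\cal{A}$. Since $\cal{A}$ is extension-closed in $\st(\cal{A})$, we get $a\in\cal{A}$.
\item The map $g\colon a \to A$ has $\cofib(g)\simeq\cofib(h) \in \st(\cal{A})_{[1,n]}$.
\item The map $c\to i(A)$ lifts to $i(a)$, because $i$ is exact, so $\st(i)$ preserves the pullback.
\end{itemize}

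\emph{Main obstacle.} The hard step is this auxiliary factorisation through a map into $\Sigma i(B)$. For $n=1$ we have $B \in \cal{A}$, and the factorisation is exactly the Remark after Lemma~\ref{spec}, the reformulation of right-specialty. The resulting cofibre of $b\to\Sigma B$ is an extension of $\Sigma B$ by $\Sigma b$, hence lies in $\st(\cal{A})_{[1,1]}$.

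For $n>1$, I would prove the statement of (1) and the auxiliary statement "maps $c\to\Sigma i(B)$ with $B\in\st(\cal{A})_{[0,n-1]}$ factor through some $b\to\Sigma B$ with cofibre in $[1,n]$" by a simultaneous induction on $n$. To handle $\Sigma B$, my first attempt would be:
\begin{enumerate}
\item Use the fibre of $c \to \Sigma i(B)$ in $\st(\cal{C})$, namely an extension $w$ of $c$ by $i(B)$, which lies in $\st(\cal{C})_{[0,n-1]}$.
\item Apply the inductive hypothesis (1) at level $n-1$ to $B$, after decomposing $B$ itself as above.
\item Use right-specialty once more at the bottom layer.
\end{enumerate}
If this bookkeeping becomes unwieldy, the fallback is an alternative decomposition: the top truncation $z \to A \to \Sigma^n e$, with $z\in\st(\cal{A})_{[0,n-1]}$ and $e\in\cal{A}$. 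One then factors maps into $\Sigma^n i(e)$ by iterating the Remark $n$ times, peeling off one suspension at a time.
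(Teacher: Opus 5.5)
\textbf{There is a genuine gap at the step you call the main obstacle.}

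\emph{What works.} The bottom truncation $z \to A \to \Sigma B$ with $z\in\cal{A}$ and $B\in\st(\cal{A})_{[0,n-1]}$ is correct. The gluing by the pullback $a = A\times_{\Sigma B} b$ is also correct, and your argument does prove the case $n=1$.

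\emph{Why the reduction is circular for $n\geq 2$.} For any $b\in\cal{A}$ and any $h\colon b\to\Sigma B$, the cofibre of $h$ is an extension of $\Sigma b$ by $\Sigma B$. It therefore lies in $\st(\cal{A})_{[1,n]}$ automatically. So your auxiliary claim is exactly the lemma itself, restricted to targets in $\st(\cal{A})_{[1,n]}\subseteq\st(\cal{A})_{[0,n]}$. The reduction makes no progress, and the whole content sits in the step you leave open.

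\emph{Why neither sketch closes it.} The induction hypothesis at level $n-1$ needs a map from an object of $\cal{C}$ into $B$, and none is available. The only map at hand is $\Sigma^{-1}c\to i(B)$, and the fibre $w$ receives $i(B)$ rather than mapping to it. Decomposing $B$ further only produces a map $c\to\Sigma^2 i(B')$, a class of degree $2$. The fallback of iterating the Remark after Lemma~\ref{spec} fails for the same reason. Right-specialty of $\cal{A}\subseteq\cal{C}$ only factors maps from an object of $\cal{C}$ into $\Sigma i(\cal{A})$. It cannot be applied to $c\to\Sigma^n i(e)$ for $n\geq 2$, and iterating it never produces intermediate objects of $\cal{C}$.

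\emph{The missing ingredient.} It is a property of $\cal{C}$ in its envelope, not of the inclusion $\cal{A}\subseteq\cal{C}$. The inclusion $\cal{C}\subseteq\pst(\cal{C})$ is left-special by Example~\ref{ex:resolving-is-special}. So by Lemma~\ref{spec}, every map $c\to\Sigma X$ with $c\in\cal{C}$ and $X\in\pst(\cal{C})$ factors through $\Sigma c_1$ for some $c_1\in\cal{C}$. Iterating this, any map $c\to\Sigma^n i(e)$ becomes a Yoneda composite $c\to\Sigma c_1\to\cdots\to\Sigma^{n-1}c_{n-1}\to\Sigma^n i(e)$.

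\emph{How the paper uses it.} The paper starts from the top truncation $A'\to A\to\Sigma^n a$, with $A'\in\st(\cal{A})_{[0,n-1]}$ and $a\in\cal{A}$.
\begin{enumerate}
\item It decomposes $c\to\Sigma^n i(a)$ as such a Yoneda composite.
\item It applies right-specialty once, to $c_{n-1}\to\Sigma i(a)$. This gives a map $\Sigma^{n-1}b\to\Sigma^n a$ with $b\in\cal{A}$ and cofibre in $\Sigma^n\cal{A}$.
\item It pulls back to $B=\Sigma^{n-1}b\times_{\Sigma^n a}A\in\st(\cal{A})_{[0,n-1]}$. The map $c\to i(A)$ factors through $i(B)$, and $\cofib(B\to A)\in\Sigma^n\cal{A}$.
\item It then inducts on $n$.
\end{enumerate}

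\emph{Completing your route instead.} With the same ingredient your bottom-truncation route also closes. Take $B\in\st(\cal{A})_{[0,n-1]}$ with $n\geq 2$.
\begin{enumerate}
\item Truncate $B$ to get $\Sigma z'\to\Sigma B\to\Sigma^2 B'$, with $z'\in\cal{A}$ and $B'\in\st(\cal{A})_{[0,n-2]}$.
\item Factor $c\to\Sigma^2 i(B')$ through $\Sigma c_1$ with $c_1\in\cal{C}$.
\item Apply your auxiliary claim at level $n-1$ to $c_1\to\Sigma i(B')$.
\item Pull back. The result is an extension of $\Sigma b_1$ by $\Sigma z'$, i.e.\ it is $\Sigma a'$ with $a'\in\cal{A}$, and $c\to\Sigma i(B)$ lifts to $\Sigma i(a')$.
\item Finish with the Remark after Lemma~\ref{spec}, applied to $c\to\Sigma i(a')$.
\end{enumerate}
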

\begin{proof}
We prove the first claim, the second then follows by passing to opposite exact $\infty$-categories.
Choose a heart decomposition
	\[
		A' \to A \to \Sigma^n a
	\] 
	with $A' \in \st(\cal{A})_{[0,n-1]}$ and $a \in \cal{A}$. Since the inclusion $\cal{C} \subseteq \pst(\cal{C})$ is left-special by Example~\ref{ex:resolving-is-special}, Lemma~\ref{spec} inductively yields a factorization of the composite $c \to \Sigma^ni(a)$ as 
	\[ c \to \Sigma c_1 \to ... \to \Sigma^{n-1} c_{n-1} \to \Sigma^ni(a) \]
	for some $c_1,...,c_{n-1} \in \cal{C}$.
Now since $\cal{A} \subseteq \cal{C}$ is right-special, the last of these maps moreover factors by (the dual of) Lemma~\ref{spec}\ref{ii} as $\Sigma^{n-1}c' \to \Sigma^{n-1}i(b) \to \Sigma^ni(a)$ for some $b\in\cal{A}$ and map $\Sigma^{n-1}b \to \Sigma a$ in $\pst(\cal{A})$, so that $\cofib(\Sigma^{n-1}b \to \Sigma^na) \simeq \Sigma^na'$ for some $a' \in \cal{A}$. The pullback $B := \Sigma^{n-1}b \times_{\Sigma^na}A$ then participates in an exact sequence $A' \to B \to \Sigma^{n-1}b$ and hence lies in $\st(\cal{A})_{[0, n-1]}$, and moreover there is an equivalence $\cofib(B \to A) \simeq \cofib(\Sigma^{n-1}b \to \Sigma^na) \in \Sigma^n\cal{A} \subseteq \st(\cal{A})_{[1,n]}$. By the universal property of pullbacks the map $\phi\colon c \to i(A)$ factors as $c \to i(B) \to i(A)$.
\end{proof}

\begin{lemma}\label{catoffact}
	Let $i\colon \cal{A} \hookrightarrow \cal{C}$ be a right-special inclusion of exact categories. Given a map $\phi\colon c \to i(A)$ in $\pst(\cal{C})$ with $c \in \cal{C}$ and $A \in \pst(\cal{A})$, the $\infty$-category 
	\[ \cal{A}_{c//A} := \cal{A}_{/A} \times_{\pst(\cal{C})_{/i(A)}}(\pst(\cal{C})_{/A})_{\phi/}\] 
of factorisations of $\phi$ through an object of $\cal{A}$, is weakly contractible.
\end{lemma}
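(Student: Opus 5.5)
We show that $\cal{A}_{c//A}$ is filtered, hence weakly contractible. An object of $\cal{A}_{c//A}$ is a factorisation $c \to i(a) \to i(A)$ of $\phi$ with $a \in \cal{A}$. Because $\cal{A}$ and $\cal{C}$ are full in $\pst(\cal{C})$, the mapping spaces in $\cal{A}_{c//A}$ are fibres of maps between mapping spaces in $\pst(\cal{C})$. Filteredness means that every map $K \to \cal{A}_{c//A}$ from a finite simplicial set extends over the cone $K^{\triangleright}$.

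\textbf{Step 1: nonemptiness.} Since $A \in \pst(\cal{A})$ and the heart structure is bounded, $A \in \st(\cal{A})_{[0,n]}$ for some $n$. Lemma~\ref{lem:factor}(1) then factors $\phi$ as $c \to i(a) \to i(A)$. Weak idempotent completeness is needed to invoke that lemma; if necessary we pass to weak idempotent completions, which leaves $\st$ and $\pst$ unchanged.

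\textbf{Step 2: reduction to a single-object problem.} A diagram $K \to \cal{A}_{c//A}$ with $K$ finite amounts to a diagram $a_\bullet \colon K \to \cal{A}$ together with a cocone on $K$ under $c$ into $i(a_\bullet)$ and a cone over $A$ out of $i(a_\bullet)$, all compatible with $\phi$. We form $X := \colim_K a_\bullet$ in $\pst(\cal{A})$, which exists because $\pst$ has finite colimits. Since $i$ preserves finite colimits on prestable envelopes, the data become:
\begin{itemize}
\item a map $\psi\colon X \to A$ in $\pst(\cal{A})$;
\item a map $c \to i(X)$ lifting $\phi$, i.e.\ a factorisation $c \to i(X) \to i(A)$.
\end{itemize}
More precisely, the map out of $c$ is the composite $c \to i(a_k) \to i(X)$ for any vertex $k$, made coherent by the cocone data. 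One might first worry whether a cone under $c$ over $K$ determines a map $c \to i(X)$. It does: a compatible family of maps $c \to i(a_k)$ is exactly a map from the constant diagram, hence a map $c \to \colim = i(X)$ after composing with the colimit cocone. Extending the diagram over $K^{\triangleright}$ therefore amounts to factoring $c \to i(X)$ through some $i(a)$ with $a\in\cal{A}$, via a map $X \to a$ compatible with $\psi$.

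\textbf{Step 3: the main obstacle.} The difficulty is a \emph{relative} factorisation. Given $X \to A$ in $\pst(\cal{A})$ and $c \to i(X)$ over $i(A)$, we must find $X \to a \to A$ with $a\in\cal{A}$ and a compatible factorisation of the map out of $c$.

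The first attempt is to apply Lemma~\ref{lem:factor}(1) not to $A$ but to the relative object. Since $\pst(\cal{A})$ is prestable, consider $A'' := \cofib(X \to A)$, or alternatively work in the prestable $\infty$-category $\pst(\cal{A})_{X/}$. The composite $c \to i(X) \to i(A)$ is compared with $i(A) \to i(A'')$. The difficulty is that the lemma produces factorisations through objects of $\cal{A}$, not through objects under $X$.

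The second attempt is cleaner, and is the one I would follow. Apply Lemma~\ref{lem:factor}(1) in its iterated form, inducting on $n$ and mirroring its proof. Choose a heart decomposition of $A$ and factor through the pullback $B$, using that the pushout of $X \to B$ along the cone maps is again controlled. The induction shows that the set of factorisations is cofinal: every factorisation through $i(X)$, for $X$ a finite colimit of objects of $\cal{A}$, is dominated by one through $\cal{A}$. This is where right-speciality of $i$ enters, through the dual of Lemma~\ref{spec}. It is applied to the map $c \to \Sigma i(a')$ measuring the failure of $X$ to lie in $\cal{A}$, and uses extension-closure of $\cal{A}$ to absorb the resulting cofibres.

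Once filteredness is established, weak contractibility follows, since filtered $\infty$-categories are weakly contractible.
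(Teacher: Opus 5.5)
\textbf{There is a genuine gap: the category $\cal{A}_{c//A}$ is not filtered in general, and the ``relative factorisation'' your Step 3 needs does not exist.}

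Here is a counterexample. Take $\cal{A}=\cal{C}=\cal{E}$ to be finite-dimensional vector spaces over a field $k$, with the split exact structure, and $i=\mathrm{id}$, which is trivially right-special. Let $c=0$ and $A=\Sigma k\in\pst(\cal{E})$.

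\begin{itemize}
\item Since $0$ is initial, $\cal{A}_{c//A}$ is equivalent to the category of objects of $\cal{E}$ over $\Sigma k$.
\item In $\st(\cal{E})$, the bounded derived category of $k$, the space $\Map(k,\Sigma k)$ is connected with $\pi_1\cong k$.
\item Hence there are two morphisms $u,v\colon (k\xrightarrow{0}\Sigma k)\to(0\to\Sigma k)$ over $\Sigma k$. They have the same underlying map $k\to 0$, but their witnessing null-homotopies differ by $1\in k$.
\item The colimit in $\pst(\cal{E})$ of the two parallel arrows $k\to 0$ is $X=\Sigma k$. The map $\psi\colon X\to A$ induced by $u,v$ is $\pm\mathrm{id}_{\Sigma k}$.
\item A cocone on $\{u,v\}$ would be a factorisation $\Sigma k\to a\to\Sigma k$ of $\psi$ with $a\in\cal{E}$. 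This is impossible, because $\pi_0\Map(\Sigma k,a)=0$.
\end{itemize}

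The underlying reason is that Lemma~\ref{lem:factor} factors maps \emph{out of objects of $\cal{C}$}, which lie in the heart. Once you form $X=\colim_K a_\bullet$ you have left the heart. Nothing then forces $X\to A$ to factor through $\cal{A}$; in the example it is an equivalence onto an object not in $\cal{A}$. So no induction can complete Step 3, and filteredness is the wrong mechanism.

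\textbf{What is missing.} Factorisations are objects \emph{over} $A$, so they should be combined by fibre products over $A$, giving co-siftedness rather than filteredness. You also need a device that keeps these fibre products in range. The paper supplies both as follows.

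\begin{itemize}
\item Fix $A\in\st(\cal{A})_{[0,n]}$. The inclusions $(\st(\cal{A})_{[0,m]})_{c//A}\subseteq(\st(\cal{A})_{[0,m+1]})_{c//A}$ are weak equivalences, because their comma categories are factorisation categories of the same type.
\item The last category in this chain has a terminal object, $c\to i(A)=i(A)$.
\item Together these reduce the lemma to weak contractibility of $(\st(\cal{A})_{[0,n-1]})_{c//A}$ for $A\in\st(\cal{A})_{[0,n]}$.
\item For this, the paper restricts to the full subcategory of factorisations $c\to i(B)\to i(A)$ with $\cofib(B\to A)\in\st(\cal{A})_{[1,n]}$.
\item This subcategory is non-empty by Lemma~\ref{lem:factor}.
\item It is closed under fibre products over $A$, because $\fib(B\to A)\in\st(\cal{A})_{[0,n-1]}$ and that subcategory is extension-closed. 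Since $\Map(c,i(-))$ preserves these fibre products, they are binary products in the factorisation category, so the subcategory is co-sifted and hence weakly contractible.
\item Its inclusion into $(\st(\cal{A})_{[0,n-1]})_{c//A}$ is cofinal, by the same argument applied to its comma categories.
\end{itemize}
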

\begin{proof}
Choose $n\ge 0$ such that $A$ lies in $\st(\cal{A})_{[0,n]}\subseteq\st(\cal{C})_{[0,n]}$, which exists since the heart structure on $\st(\cal{A})$ is bounded (and the functor $\st(\cal{A})\subseteq\st(\cal{C})$ is heart exact). For every $m \in \{0,...,n\}$ consider the factorization $\infty$-category
\[ (\st(\cal{A})_{[0,m]})_{c//A} := (\st(\cal{A})_{[0,m]})_{/A} \times_{\st(\cal{C})_{/i(A)}}(\st(\cal{C})_{/A})_{\phi/}. \] 
We claim that the inclusion
\[ (\st(\cal{A})_{[0,m]})_{c//A} \to (\st(\cal{A})_{[0,m+1]})_{c//A}\]
is cofinal for every $m=0,...,n-1$, and in particular a weak homotopy equivalence. Since $(\st(\cal{A})_{[0,n]})_{c//A}$ admits a terminal object this will finish the proof. Next we note that the comma $\infty$-categories associated to these inclusions are of the form $(\st(\cal{A})_{[0,m]})_{c//A'}$ for some $A'\in \cal{A}_{[0,m+1]}$. Replacing $m$ with $n$ it will hence suffice to show that $(\st(\cal{A})_{[0,n-1]})_{c//A}$ is weakly contractible.

Let $(\st(\cal{A})_{[0,n-1]})^{\circ}_{c//A} \subseteq (\st(\cal{A})_{[0,n-1]})_{c//A}$ be the full subcategory spanned by those factorizations $c \to i(B) \to i(A)$ such that $\cofib[B \to A] \in \st(\cal{A})_{[1,n]}$. Then $(\st(\cal{A})_{[0,n-1]})^{\circ}_{c//A}$ is non-empty by Lemma~\ref{lem:factor}. We now claim that $(\st(\cal{A})^{\circ}_{[0,n-1]})_{c//A}$ admits binary cartesian products, and is hence co-sifted, and in particular weakly contractible. Indeed, since the functor $\Map_{\pst(\cal{C})}(c,i(-))$ is product preserving it will suffice by~\cite[Lemma 6.3.3.6]{HTT} to show that the full subcategory $(\pst(\cal{A})^{\circ}_{[0,n-1]})_{/A} \subseteq  (\st(\cal{A})^{\circ}_{[0,n]})_{/A}$ spanned by those $B \to A$ whose cofibre lies in $\st(\cal{A})_{[1,n]}$ has binary products. Indeed, this is a full subcategory of the over category $\st(\cal{A})_{/A}$ spanned by the maps $B \to A$ such that $B$ and $\fib[B \to A]$ belong to $\st(\cal{A})_{[0,n]}$; this over category then has finite products and the full subcategory in question is closed under binary products since $\st(\cal{A})_{[0,n-1]}$ is closed under extensions in $\st(\cal{A})$. 

It is left to show that $(\st(\cal{A})_{[0,n-1]})^{\circ}_{c//A} \subseteq (\st(\cal{A})_{[0,n-1]})_{c//A}$ is cofinal. Indeed, inspecting the associated comma $\infty$-categories we see that they are of the same form $(\st(\cal{A})_{[0,n-1]})^{\circ}_{A'//A}$ (defined this time with respect to the identity $\cal{A} \to \cal{A}$ in place of $i$) with $A' \in \st(\cal{A})_{[0,n-1]}$. These are then again weakly contractible by the above argument, and so the proof is complete.
\end{proof}

\begin{corollary}\label{cor:detects}
Let $i\colon \cal{A} \hookrightarrow \cal{C}$ be a retract-closed right-special inclusion. If a map $f\colon a \to b$ in $\cal{A}$ is such that $i(f)$ is an inclusion in $\cal{C}$, then $f$ is an inclusion in $\cal{A}$. Dually, any retract-closed left-special inclusion detects projections.
\end{corollary}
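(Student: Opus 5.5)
The plan is to show that the cofibre $c$ of $i(f)$ in $\cal{C}$ lies in the essential image of $\cal{A}$. Since inclusions in $\cal{A}$ are exactly the inclusions in $\cal{C}$ between objects of $\cal{A}$ whose cofibre lies in $\cal{A}$, this proves the claim. The dual statement then follows by applying the first to $i^{\op}$.

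First I apply right-speciality, in the form dual to Definition~\ref{keller-inclusion}, to the inclusion $i(f)\colon i(a) \mono i(b)$ in $\cal{C}$. This gives an inclusion $g\colon a \mono a'$ in $\cal{A}$ together with a factorization $i(a) \to i(b) \to i(a')$ of $i(g)$ through $i(f)$. Since $i$ is fully faithful, the second map is $i(h)$ for some $h\colon b \to a'$ in $\cal{A}$, with $hf \simeq g$.

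Next I form, in $\cal{C}$, the pushout $P := b \sqcup_a a'$ of $f$ and $g$, which exists because both legs are inclusions in $\cal{C}$. In the resulting cocartesian square, the map $b \mono P$ is an inclusion with cofibre $\cofib(g) \in \cal{A}$. Because $\cal{A}$ is closed under extensions in $\cal{C}$, it follows that $P \in \cal{A}$. Similarly, the map $a' \mono P$ is an inclusion with cofibre $\cofib(f) = c$.

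The key step is to produce a retraction of $a' \mono P$. The pair of maps $h\colon b \to a'$ and $\id\colon a' \to a'$ agree on $a$, because $hf \simeq g = \id \circ g$. They therefore induce, by the universal property of the pushout, a map $r\colon P \to a'$ whose restriction to $a'$ is the identity. Hence the exact sequence $a' \mono P \epi c$ splits, so $P \simeq a' \oplus c$ and $c$ is a retract of $P \in \cal{A}$. Since $\cal{A}$ is retract-closed in $\cal{C}$, we get $c \in \cal{A}$, which concludes the argument.

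The only step needing real care is this splitting. One must check that the homotopy $hf \simeq g$ furnished by the factorization is exactly the coherence datum needed to define $r$ on the pushout. One must also check that a split inclusion in an exact $\infty$-category exhibits its cofibre as a direct summand, which can be verified inside $\st(\cal{C})$.
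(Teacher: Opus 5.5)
Your proof is correct, but it takes a more hands-on route than the paper. The paper forms $A=\cofib(f)$ in $\pst(\cal{A})$, observes that $i(A)\simeq\cofib(i(f))=c$ lies in $\cal{C}$, and then invokes Lemma~\ref{catoffact}. That lemma says the $\infty$-category of factorisations of $\id_c\colon c\to i(A)$ through objects of $\cal{A}$ is weakly contractible, in particular non-empty, so $c$ is a retract of an object of $\cal{A}$.

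Only the non-emptiness is actually used, and your pushout produces such a factorisation directly from the definition of right-speciality. Unwind the proof of Lemma~\ref{lem:factor} in the case $n=1$, using the canonical heart decomposition $b\to A\to\Sigma a$. The object $B=\cofib(g)\times_{\Sigma a}A$ built there is equivalent to your $P=b\sqcup_a a'$. The resulting factorisation $c\to B\to A$ of $\id_c$ corresponds to the section of $P\to c$ that you obtain from the retraction $r$. So the two arguments share the same core.

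Yours needs only cobase change of inclusions in $\cal{C}$, extension- and retract-closure of $\cal{A}$, and the splitting of a retracted inclusion, checked in $\st(\cal{C})$. The paper's passes through prestable envelopes, heart structures and the cofinality argument of Lemma~\ref{catoffact}. That lemma's proof relies on Lemma~\ref{lem:factor}, which is stated under weak idempotent completeness hypotheses that the Corollary itself does not impose; your route avoids that dependence. In return, the paper's version is a one-line consequence of machinery it needs anyway for Corollary~\ref{fact} and Keller's theorem.
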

\begin{proof}
Let $A := \cofib(f)$. Then $c = i(A) = \cofib(i(f)) \in \pst(\cal{C})$ lies in $\cal{C}$. By Lemma~\ref{catoffact} the identity on $c$ factors through $i(a)$ for some $a \in \cal{A}$. Since $i$ is retract closed we get that $\cofib(i(f)) \in \cal{A}$, and hence $f$ is an inclusion in $\cal{A}$. The dual statement follows by replacing $i$ with $i^{\op}$.
\end{proof}

\begin{corollary}\label{fact}
	Given a right-special inclusion $i\colon \cal{A} \hookrightarrow \cal{C}$ and a map $c \to C$ in $\pst(\cal{C})$ with $c \in \cal{C}$, the functor 
	\[ \Map_{\pst(\cal{C})_{/C}}(c, i(-))\colon\pst(\cal{A})_{/C} \to \cal{S}\] 
	is left Kan extended from its restriction to $\cal{A}_{/C} \subseteq \pst(\cal{A})_{/C}$, where $\pst(\cal{A})_{/C} := \pst(\cal{A}) \times_{\pst(\cal{C})} \pst(\cal{C})_{/C}$, and similarly for $\cal{A}_{/C}$.
\end{corollary}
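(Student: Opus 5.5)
We unwind the definition of left Kan extension pointwise. Let $F := \Map_{\pst(\cal{C})_{/C}}(c, i(-))\colon \pst(\cal{A})_{/C} \to \cal{S}$, and fix an object $(A, \psi\colon i(A) \to C)$ of $\pst(\cal{A})_{/C}$. We must show that the canonical map
\[ \colim_{(a \to A) \in (\cal{A}_{/C})_{/(A,\psi)}} F(a) \to F(A) \]
is an equivalence. Since $\cal{A}_{/C}$ is defined as a pullback of $\pst(\cal{A})_{/C}$, the indexing $\infty$-category $(\cal{A}_{/C})_{/(A,\psi)}$ identifies with $\cal{A}_{/A}$, the slice of $\cal{A}$ over $A$ taken in $\pst(\cal{A})$. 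Similarly, $F(a)$ identifies with the fibre of $\Map(c, i(a)) \to \Map(c, C)$ over the given map $c \to C$.

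The plan is to use the straightening/unstraightening description of colimits of space-valued functors. The colimit of $F$ restricted to $\cal{A}_{/A}$ is the weak homotopy type of the total space of the left fibration classified by $F|_{\cal{A}_{/A}}$. The map to $F(A)$ is a map over $F(A)$, so it suffices to show that for each point $\phi' \in F(A)$ the fibre is weakly contractible. Unwinding, $\phi'$ is a map $\phi\colon c \to i(A)$ in $\pst(\cal{C})$ together with a homotopy between $\psi\circ\phi$ and the structure map $c \to C$. The fibre over $\phi'$ is the $\infty$-category of triples $(a, g\colon a \to A, h\colon c \to i(a))$ with a homotopy $i(g)\circ h \simeq \phi$. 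The compatibility with $C$ is then automatic, being induced through $\psi$. This fibre is precisely the factorisation $\infty$-category $\cal{A}_{c//A}$ of Lemma~\ref{catoffact}.

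Concretely, one uses that a map of spaces $X \to Y$ is an equivalence iff all its homotopy fibres are contractible. Colimits in $\cal{S}$ are universal, so the fibre of $\colim_{\cal{A}_{/A}} F \to F(A)$ over $\phi'$ is $\colim_{(a\to A)} \mathrm{fib}_{\phi'}(F(a) \to F(A))$. That is the realisation of the left fibration over $\cal{A}_{/A}$ whose fibres are the factorisations of $\phi$ through $i(a) \to i(A)$, whose total space is $\cal{A}_{c//A}$. Lemma~\ref{catoffact} says that $\cal{A}_{c//A}$ is weakly contractible, which concludes the argument.

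The main point requiring care is the bookkeeping identification of the fibre of the colimit with $\cal{A}_{c//A}$. In particular, one must check that passing to slices over $C$ does not alter the fibres: the extra datum relative to $C$ is fixed once $\phi'$ is, since $\Map_{/C}$ is a fibre of $\Map$ and fibres commute with the pullbacks in question. A secondary issue is hypotheses. Lemma~\ref{catoffact} is stated for a right-special inclusion with $A \in \pst(\cal{A})$, which matches here, but its proof invokes Lemma~\ref{lem:factor}, which assumed weak idempotent completeness. If that hypothesis is needed, one should either add it or note that the factorisation argument of Lemma~\ref{lem:factor} does not actually use it.
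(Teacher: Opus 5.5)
Your proposal is correct and follows essentially the same route as the paper. The pointwise formula reduces the indexing category to $\cal{A}_{/A}$ (the comma categories over $C$ agree with those over $A$). Universality of colimits in $\cal{S}$ then reduces the claim to the fibres over each $\phi$, where, as you note, the slice over $C$ drops out. The colimit of the fibre functor is the realisation of its unstraightening $\cal{A}_{c//A}$, which is weakly contractible by Lemma~\ref{catoffact}.

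Your side remark is also fair. Lemma~\ref{catoffact} relies on Lemma~\ref{lem:factor}, which is stated under a weak idempotent completeness hypothesis. That concerns the input lemma, though, not the deduction of this corollary.
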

\begin{proof}
Since $\pst(\cal{A})_{/C} \to \pst(\cal{A})$ is a right fibration and $\cal{A}_{/C} \to \cal{A}$ is its base change along $\cal{A} \to \pst(\cal{A})$ the comma $\infty$-categories of the inclusion $\cal{A}_{/C} \subseteq \pst(\cal{A})_{/C}$ are the same as those of $\cal{A} \subseteq \pst(\cal{A})$.
	By the pointwise formula for Kan extensions, we then need to show that for every map $A \to C$ with $A \in \pst(\cal{A})$, the natural map
	\[
		\underset{\cal{A}_{/A}}\colim\ \Map_{\cal{C}_{/C}}(c, i(-)) \to \Map_{\pst(\cal{C})_{/C}}(c,i(A))
	\]
	is an equivalence. 
	By universality of colimits in spaces, this is equivalent to the statement that for each $\phi\colon c \to i(A)$ in $\pst(\cal{C})_{/C}$ 
	the functor
    	\begin{align*}
		\cal{A}_{/A} \to \cal{S}, \quad (a \to A) \mapsto& \Map_{\pst(\cal{C})_{/C}}(c, i(a)) \times_{\Map_{\pst(\cal{C})_{/C}}(c, i(A))} \{\phi\} \\ 
		=& \Map_{\pst(\cal{C})}(c, i(a)) \times_{\Map_{\pst(\cal{C})}(c, i(A))} \{\phi\}
	\end{align*}
	has a contractible colimit. By~\cite[Corollary\ 3.3.4.6]{HTT}, it suffices to show that the unstraightening of this functor is weakly contractible. Indeed, this is exactly what is shown in Lemma~\ref{catoffact}.
\end{proof}

\begin{corollary}\label{cor:beck-chevalley}
Let $i\colon \cal{A} \hookrightarrow \cal{C}$ be a right-special inclusion and $\cal{E}$ some $\infty$-category which admits small colimits. 
Then the square
\[
\begin{tikzcd}[column sep = 50pt]
\cal{A} \ar[r,"i"]\ar[d,"j_{\cal{A}}"'] & \cal{C} \ar[d,"j_{\cal{C}}"] \\
\pst(\cal{A}) \ar[r,"\pst(i)"] & \pst(\cal{C}) 
\end{tikzcd}
\]
satisfies the following Beck-Chevalley properties:
\begin{enumerate}[leftmargin=*]
\item\label{item:functors}
For every functor $\vphi\colon \C \to \cal{E}$ the Beck-Chevalley transformation
\[ (j_{\cal{A}})_!i^*\vphi \Rightarrow \pst(i)^*(j_{\cal{C}})_!\vphi  \]
is an equivalence, where $(-)^*$ denotes restriction and $(-)_!$ left Kan extension.
\item\label{item:presheaves}
For every presheaf $\vphi\colon \pst(\cal{A})^{\op} \to \cal{E}$ the Beck-Chevalley transformation
\[ i_!j_{\cal{A}}^*\vphi \Rightarrow j_{\cal{C}}^*\pst(i)_!\vphi \]
is an equivalence, where by abuse of notation we denoted by $(-)^*$ and $(-)_!$ also restriction and left Kan extension of presheaves.
\end{enumerate}
\end{corollary}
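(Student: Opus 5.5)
Both Beck-Chevalley transformations are maps between colimits computed by the pointwise formula for left Kan extensions, and in each case the comparison is induced by restricting the indexing diagram along a functor between comma $\infty$-categories. So the plan is to show that each such functor is cofinal (respectively initial) by Quillen's Theorem A. The relevant comma $\infty$-categories will turn out to be the factorisation $\infty$-categories $\cal{A}_{c//A}$, which Lemma~\ref{catoffact} shows are weakly contractible. This is the same input that underlies Corollary~\ref{fact}, which is essentially the presheaf-valued form of the statement.

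For \ref{item:functors}, I would evaluate both sides at an object $A \in \pst(\cal{A})$. Since $j_{\cal{A}}$ and $j_{\cal{C}}$ are fully faithful, the pointwise formula gives
\[ ((j_{\cal{A}})_!i^*\vphi)(A) \simeq \colim_{(a \to A) \in \cal{A}_{/A}} \vphi(i(a)), \qquad (\pst(i)^*(j_{\cal{C}})_!\vphi)(A) \simeq \colim_{(c \to i(A)) \in \cal{C}_{/i(A)}} \vphi(c). \]
The Beck-Chevalley map is the map on colimits induced by the functor $\cal{A}_{/A} \to \cal{C}_{/i(A)}$ that applies $i$, together with the identification $\vphi \circ i$ on the source. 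It therefore suffices to show that this functor is cofinal. By Quillen's Theorem A (\cite[Theorem 4.1.3.1]{HTT}), this amounts to checking that for every object $\phi\colon c \to i(A)$ of $\cal{C}_{/i(A)}$ the $\infty$-category $\cal{A}_{/A} \times_{\cal{C}_{/i(A)}} (\cal{C}_{/i(A)})_{\phi/}$ is weakly contractible. This $\infty$-category is exactly the factorisation $\infty$-category $\cal{A}_{c//A}$ of Lemma~\ref{catoffact}, so it is weakly contractible as required.

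For \ref{item:presheaves}, I would dually evaluate at $c \in \cal{C}$. The left Kan extension of presheaves along $i$ is computed by the colimit of $\vphi(a)$ over $(c \downarrow i)^{\op}$, where $c \downarrow i$ has objects the pairs $(a, c \to i(a))$ with $a \in \cal{A}$. Similarly, $\pst(i)_!\vphi$ evaluated at $j_{\cal{C}}(c)$ is the colimit of $\vphi(A)$ over $(c \downarrow \pst(i))^{\op}$, whose objects are the pairs $(A, c \to i(A))$ with $A \in \pst(\cal{A})$. The comparison is induced by the evident inclusion $c\downarrow i \to c \downarrow \pst(i)$, so I need this functor to be initial, i.e.\ its opposite cofinal. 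By the dual form of Theorem A, this requires that for each $(A,\phi\colon c \to i(A))$ the slice $(c\downarrow i)\times_{c\downarrow \pst(i)} (c\downarrow\pst(i))_{/(A,\phi)}$ be weakly contractible. Unwinding, its objects are triples consisting of $a \in \cal{A}$, a map $a \to A$, and a factorisation $c \to i(a) \to i(A)$ of $\phi$. This is again $\cal{A}_{c//A}$, so Lemma~\ref{catoffact} applies. Alternatively, \ref{item:presheaves} can be deduced from Corollary~\ref{fact} via the coend description of $\pst(i)_!$: writing $\Map(c, i(-))$ as a left Kan extension from $\cal{A}$ and applying $\vphi$ gives the same identification.

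The only delicate point I anticipate is the bookkeeping in identifying these slice $\infty$-categories with $\cal{A}_{c//A}$ on the nose. Specifically, one has to check that the mapping spaces of both $\infty$-categories are the appropriate fibre products over $\Map_{\pst(\cal{C})}(c,i(A))$, which follows from the fact that $j_{\cal{C}}$ and $\pst(i)$ are fully faithful. I would also note that Lemma~\ref{catoffact}, although phrased with $\st$-truncations, applies to $A \in \pst(\cal{A}) = \st(\cal{A})_{\geq 0}$, since boundedness places $A$ in $\st(\cal{A})_{[0,n]}$ for some $n$. Once these identifications are in place, the rest is formal.
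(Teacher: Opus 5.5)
Your proposal is correct and is essentially the paper's proof. The paper likewise uses the pointwise formula to reduce (1) to cofinality of $\cal{A}_{/A} \to \cal{C}_{/A}$ and (2) to coinitiality of $\cal{A}_{c/} \to \pst(\cal{A})_{c/}$, and in both cases Quillen's Theorem A reduces to weak contractibility of the factorisation $\infty$-categories $\cal{A}_{c//A}$ of Lemma~\ref{catoffact}. One small point: identifying those comma $\infty$-categories with $\cal{A}_{c//A}$ is just a rearrangement of fibre products, so you do not need fully faithfulness of $\pst(i)$ at that step.
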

\begin{proof}
For~\eqref{item:functors}, we claim that for every $A \in \pst(\cal{A})$ the induced functor on comma $\infty$-categories $\cal{A}_{/A} \to \C_{/A}$ (which appear in the pointwise formula for the relevant left Kan extensions) is cofinal, and for~\eqref{item:presheaves} we claim that for every $c \in \C$ the induced functor on comma categories $\cal{A}_{c/} \to \pst(\cal{A})_{c/}$ is coinitial; both statements are equivalent to the statement that for any map $c \to A$ in $\pst(\C)$ from an object of $\cal{C}$ to an object of $\pst(\cal{A})$ the $\infty$-category of factorizations $\cal{A}_{c//A}$ is weakly contractible, which we have established in Lemma~\ref{catoffact}.
\end{proof}

\begin{remark}\label{rem:statements-equiv}
In the proof of Corollary~\ref{cor:beck-chevalley} we gave an argument that directly implies both~\eqref{item:functors} and~\eqref{item:presheaves}. For the sake of completeness, let us point out that these statements are formally equivalent for any  square of $\infty$-categories; more precisely, the statement of~\eqref{item:functors} for a given $\cal{E}$ is equivalent to the statement of~\eqref{item:presheaves} for $\cal{E}^{\op}$. Indeed, starting from a commutative square with vertical restriction functors and horizontal left Kan extension functors we can pass to right adjoints everywhere to obtain a commutative square with vertical right Kan extension functors and horizontal restriction functors, and we can view right Kan extension of $\cal{E}$-valued functors as a left Kan extension of $\cal{E}^{\op}$-valued presheaves.
\end{remark}

\begin{remark}\label{rem:beck-chevalley}
In Corollary~\ref{cor:beck-chevalley}, passing to opposites turns right-special inclusions into left-special ones and replaces the prestable envelope $\pst(-) = \st(-)_{[0,\infty]}$ with the non-positive part $\st(-)_{[-\infty,0]}$ of the stable envelop. We hence conclude that if $i\colon \cal{A} \to \cal{C}$ is a left-special inclusion then the square
\[
\begin{tikzcd}[column sep = 50pt]
\cal{A} \ar[r,"i"]\ar[d,"j_{\cal{A}}"'] & \cal{C} \ar[d,"j_{\cal{C}}"] \\
\st(\cal{A})_{[-\infty,0]} \ar[r,"\st(i)_{[-\infty,0]}"] & \st(\cal{C})_{[-\infty,0]} 
\end{tikzcd}
\]
satisfies the Beck-Chevalley properties of Corollary~\ref{cor:beck-chevalley}, only that~\eqref{item:functors} holds for pre-sheaves and~\eqref{item:presheaves} for functors.
\end{remark}

The following is pre-stable version of Keller's theorem, which then immediately implies Keller theorem:
\begin{corollary}\label{cor:prestable-keller}
Let $i\colon \cal{A} \hookrightarrow \cal{C}$ be a right-special inclusion. Then the induced functor $\pst(\cal{A}) \to \pst(\cal{C})$ is fully faithful.
\end{corollary}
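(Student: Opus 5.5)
We show that for fixed $A' \in \pst(\cal{A})$ the map
\[ \Map_{\pst(\cal{A})}(A',A) \to \Map_{\pst(\cal{C})}(i(A'),i(A)) \]
is an equivalence for every $A \in \pst(\cal{A})$. We first reduce to $A' = a \in \cal{A}$, and then deduce the case $a \in \cal{A}$ from Corollary~\ref{fact} (equivalently Lemma~\ref{catoffact}).

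For the case $A' = a\in\cal{A}$, apply Corollary~\ref{fact} with $C = i(A)$ and $c = i(a)$, after passing to slices. A cleaner way to say this uses Corollary~\ref{cor:beck-chevalley}\eqref{item:functors} with $\cal{E}=\cal{S}$ and $\vphi = \Map_{\pst(\cal{C})}(i(a),j_{\cal{C}}(-))\colon \cal{C}\to\cal{S}$. Since $\pst(\cal{C})$ is generated under colimits by $\cal{C}$ in a suitable sense, the left Kan extension $(j_{\cal{C}})_!\vphi$ should agree with $\Map_{\pst(\cal{C})}(i(a),-)$ on $\pst(\cal{C})$. Rather than rely on that, argue pointwise: by Lemma~\ref{catoffact} applied to each $\phi\colon i(a)\to i(A)$, the colimit formula gives
\[ \Map_{\pst(\cal{C})}(i(a),i(A)) \simeq \colim_{(b\to A)\in \cal{A}_{/A}} \Map_{\cal{C}}(i(a),i(b)). \]
This is exactly the computation in the proof of Corollary~\ref{fact}, with $C$ terminal or with $C=i(A)$ and then forgetting the slice. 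Since $i$ is fully faithful, the right-hand side is $\colim_{\cal{A}_{/A}}\Map_{\cal{A}}(a,b)$. Applying the same argument to the identity inclusion $\cal{A}\subseteq\cal{A}$, which is trivially right-special, identifies this colimit with $\Map_{\pst(\cal{A})}(a,A)$. Naturality of the comparison maps then shows that the map induced by $\pst(i)$ is the resulting equivalence.

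For general $A'$, use that $\pst(\cal{A})$ is generated by $\cal{A}$ under finite colimits, specifically suspensions and cofibres of maps. Every object of $\pst(\cal{A})$ lies in some $\st(\cal{A})_{[0,n]}$ and admits the heart decomposition used in Lemma~\ref{lem:factor}. Thus $A'$ is built from objects of $\cal{A}$ by finitely many cofibres and suspensions. The functors $\Map(-,A)$ and $\Map(i(-),i(A))$ convert these finite colimits into limits, because $\pst(i)$ is right exact. The class of $A'$ for which the comparison map is an equivalence for all $A$ is therefore closed under finite colimits, and since it contains $\cal{A}$ it is everything. Note that the claim for suspensions $\Sigma a$ requires the statement for all $A$, including $A$ replaced by loops when these exist. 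To avoid needing loops, treat it directly: $\Map(\Sigma a, A)$ is the fibre of $\Map(a,0)\to\Map(a,A)$ only in stable settings. Instead, use the filtration $\st(\cal{A})_{[0,m]}$ and induct on $m$, writing $A'$ as a cofibre $\Sigma^{-1}X\to Y\to A'$ in $\st$ and working in $\st(\cal{C})$.

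The main obstacle is this last induction: prestable mapping spaces do not turn suspensions into loops in the target variable. I would handle it by noting that $\pst(\cal{C})\to\st(\cal{C})$ is fully faithful, and that the induction is cleanest if one proves full faithfulness of $\st(\cal{A})_{[0,n]}\to\st(\cal{C})_{[0,n]}$ on mapping spectra, using the first step for all pairs $(a,A)$ with $A$ ranging over $\st(\cal{A})_{[k,n]}$, that is, applying the first step to $\Sigma^{-k}$-shifts.
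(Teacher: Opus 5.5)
Your first step and the first half of your closure argument are exactly the paper's proof. Corollary~\ref{fact} with $c=i(a)$ and trivial slice, together with the same statement for the identity of $\cal{A}$, gives $\Map_{\pst(\cal{A})}(a,A)\simeq\Map_{\pst(\cal{C})}(i(a),i(A))$ for all $A\in\pst(\cal{A})$. One then extends to arbitrary sources using that $\pst(\cal{A})$ is generated by $\cal{A}$ under finite colimits and that $\pst(i)$ preserves them. The proof is complete at that point.

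The ``main obstacle'' you raise afterwards is not real, and the claim you use to justify it is false. With the target $A$ fixed, only the contravariant variable moves, and $\Map(-,A)$ sends every colimit to a limit in any $\infty$-category. In particular, in any pointed $\infty$-category with suspensions one has
\[ \Map(\Sigma a,A)\simeq \ast\times_{\Map(a,A)}\ast\simeq\Omega\Map(a,A), \]
which is exactly the fibre of $\ast\simeq\Map(a,0)\to\Map(a,A)$ over the zero map. No stability is needed, and what appears is the loop space of the mapping space, not an object $\Omega A$ of $\pst(\cal{A})$. Hence the class of $A'$ for which $\Map(A',A)\to\Map(i(A'),i(A))$ is an equivalence, for this fixed $A$, is closed under pushouts. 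It contains $0$ and $\cal{A}$, so it is all of $\pst(\cal{A})$.

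Your fallback through mapping spectra in $\st$ would also work. The equivalences $\Map(a,\Sigma^jA)\simeq\Map(i(a),\Sigma^j i(A))$ for all $j\geq 0$ give an equivalence of mapping spectra, and bi-exactness then yields full faithfulness of $\st(\cal{A})\to\st(\cal{C})$. This essentially reproves Theorem~\ref{thm:fully-faithful} directly and restricts to $\pst$. But it is unnecessary, and as sketched (``applying the first step to $\Sigma^{-k}$-shifts'') it is imprecise.
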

\begin{proof}
Applying Corollary~\ref{fact} with $c = i(a)$ for some $a \in \cal{A}$ one concludes that the induced map
\[ \Map_{\pst(\cal{A})}(a, A) \to \Map_{\pst(\cal{C})}(i(a),i(A)) \]
is an equivalence for every $A \in \pst(\cal{A})$. Since every object in $\pst(\cal{A})$ is generated under finite colimits by objects in $\cal{A}$ it follows that the map
\[ \Map_{\pst(\cal{A})}(B, A) \to \Map_{\pst(\cal{C})}(i(B),i(A)) \]
is an equivalence for every $B \in \pst(\cal{A})$.
\end{proof}

\begin{proof}[Proof of Theorem~\ref{thm:fully-faithful}]
For right-special inclusions apply Corollary~\ref{cor:prestable-keller} and use the fact that fully faithful functors induces fully faithful functors on Spanier-Whitehead stabilizations. For left-special inclusions pass to opposites.
\end{proof}

Having established that $\st(\cal{A}) \to \st(\cal{C})$ is fully faithful for left- or right-special inclusions, we now address the question of closure under retracts and behaviour with respect to the associated heart structures.

\begin{corollary}\label{retract}
	Let $\cal{A} \hookrightarrow \cal{C}$ be a retract-closed full inclusion of weakly idempotent complete exact categories which is either left or right-special. 
	Then the induced full inclusion $\st(\cal{A}) \hookrightarrow \st(\cal{C})$ is closed under retracts.
\end{corollary}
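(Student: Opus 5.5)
By symmetry under $(-)^{\op}$ it suffices to treat the right-special case. Passing to opposites exchanges left- and right-special inclusions, preserves retract-closure and weak idempotent completeness, and replaces $\st(-)$ with $\st(-)^{\op}\simeq\st((-)^{\op})$.

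So let $i\colon\cal{A}\hookrightarrow\cal{C}$ be right-special. By Theorem~\ref{thm:fully-faithful}, $\st(\cal{A})\to\st(\cal{C})$ is fully faithful. Let $X\in\st(\cal{C})$ be a retract of $i(A)$ with $A\in\st(\cal{A})$. The plan is to show that $X$ lies in the essential image by induction on the heart-amplitude of $A$.

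\textbf{Reduction to connective objects.} After shifting we may assume $A\in\st(\cal{A})_{[0,n]}$. The canonical heart structures are compatible: $\st(i)$ sends $\st(\cal{A})_{[m,n]}$ into $\st(\cal{C})_{[m,n]}$. By Lemma~\ref{lem:closed-under-retracts}, $X$ is a retract of an object of $\st(\cal{C})_{[0,n]}$ and hence lies in $\st(\cal{C})_{[0,n]}$. We now induct on $n$.

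\textbf{Base case $n=0$.} Here $X\in\st(\cal{C})^\heartsuit=\cal{C}$, since $\cal{C}$ is weakly idempotent complete. Likewise $A\in\cal{A}$, and $X$ is a retract of $i(A)$ inside $\cal{C}$. Since $\cal{A}$ is retract-closed in $\cal{C}$, we get $X\in\cal{A}$.

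\textbf{Inductive step.} This is the main obstacle. Given the retraction $X\xrightarrow{s} i(A)\xrightarrow{r} X$ with $A\in\st(\cal{A})_{[0,n]}$, I would take the heart decomposition $X'\to X\to \Sigma^n x$ in $\st(\cal{C})$ with $x\in\cal{C}$ and $X'\in\st(\cal{C})_{[0,n-1]}$. The aim is to show that $\Sigma^n x$ is a retract of some $\Sigma^n i(a)$ and that $X'$ is a retract of something in $\st(\cal{A})_{[0,n-1]}$. Extension-closure of the essential image of $\st(\cal{A})$ would then conclude, since $\st(\cal{A})$ is a stable full subcategory.

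Decompositions into truncations are not functorial for heart structures, so I would avoid truncating and instead use the idempotent $e=sr$ on $i(A)$. Being a map in the image, $e$ comes from an idempotent on $A$ by full faithfulness. The task is then to show that the essential image of $\st(\cal{A})$ is closed under splitting idempotents whose splitting exists in $\st(\cal{C})$.

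For this I would use the standard fact that an idempotent $e$ on an object $Y$ of a stable category splits as soon as the cofibre of $(1-e)\oplus e$ type maps does. Concretely, $X\oplus \Sigma X$ is equivalent to the cofibre of a suitable self-map of $\bigoplus_{\mathbb{N}}$. That construction is infinite, so the finite variant I would try first is different: $X$ is a retract of $i(A)$, hence $X\oplus X''\simeq i(A)$ where $X''$ is the complementary summand. Then $X\oplus\Sigma X$ is a cofibre of the map $i(A)\to i(A)$ given by $1-e$ followed by a shift. This shows $X\oplus\Sigma X$ lies in the image, which by Thomason-type arguments is still not enough.

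Since the finite approach stalls, I would fall back on the truncation argument. Using Lemma~\ref{lem:factor}, factor $s$ on $\Sigma^n$-parts to produce compatible heart decompositions of $A$ and $X$. Apply the base case to the top layer $\Sigma^n x$, obtained as a retract of $\Sigma^n$ of an object of $\cal{C}$ built from $\cal{A}$. Apply the induction hypothesis to $X'$. The delicate point is ensuring that the retraction data descend to the chosen decompositions; I expect to handle this by the weak contractibility of factorisation categories from Lemma~\ref{catoffact}.
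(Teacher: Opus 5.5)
Your reduction to the right-special case, the use of Theorem~\ref{thm:fully-faithful}, the induction on amplitude and the base case are all fine and agree with the paper. The gap is the inductive step, which is the whole content of the statement and is left unproved.

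The route you fall back on does not work as stated. You decompose $X$ as $X'\to X\to\Sigma^n x$, make this ``compatible'' with a decomposition of $A$, and let the retraction descend via Lemma~\ref{catoffact}. But heart decompositions are not functorial, so a retraction $X\to i(A)\to X$ induces no retraction on the pieces. Weak contractibility of the category of factorisations of a single map through $\cal{A}$ does not produce the homotopy $r\circ s\simeq\id$ on chosen pieces. Your intermediate goal is in fact false for an arbitrary choice of decomposition. For $X=0$ one may take $\Sigma^{n-1}c\to 0\to\Sigma^n c$ with $c\in\cal{C}\setminus\cal{A}$, and then the top layer $\Sigma^n c$ is not a retract of any $\Sigma^n i(a)$. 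You give no recipe for choosing a good decomposition of $X$.

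The missing idea is to decompose $A$ inside $\st(\cal{A})$ rather than $X$, so that no compatibility is ever needed.

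\begin{itemize}
\item Write $i(A)\simeq X\oplus Y$ and take a heart decomposition $a\xrightarrow{h}A\to B$ in $\st(\cal{A})$ with $a\in\cal{A}$ and $B\in\st(\cal{A})_{[1,n]}$.
\item Let $f\colon a\to X$ and $g\colon a\to Y$ be the components of $h$. Then $h$ is the diagonal $a\to a\oplus a$ followed by $f\oplus g$.
\item The cofibre sequence of this composite reads $a\to B\to\cofib(f)\oplus\cofib(g)$.
\item Hence $\cofib(f)\oplus\cofib(g)$, being an extension of $B$ and $\Sigma a$, lies in $\st(\cal{A})_{[1,n]}$. So $\cofib(f)$ is a retract of an object of smaller amplitude.
\item The induction hypothesis (after desuspending) puts $\cofib(f)$ in $\st(\cal{A})$. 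The sequence $a\to X\to\cofib(f)$ then gives $X\in\st(\cal{A})$.
\end{itemize}

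This is the paper's argument. The dual version, closer to your top-layer orientation, also works. Take $A'\to A\to\Sigma^n a$ in $\st(\cal{A})$ and factor $A\to\Sigma^n a$ through the fold map $\Sigma^n a\oplus\Sigma^n a\to\Sigma^n a$. This exhibits $\fib(X\to\Sigma^n a)$ as a summand of an extension of $A'$ and $\Sigma^{n-1}a$, which lies in $\st(\cal{A})_{[0,n-1]}$.

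In this argument speciality enters only through full faithfulness of $\st(\cal{A})\to\st(\cal{C})$. Your idempotent-splitting detour can be dropped; you correctly identified it as a dead end.
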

\begin{proof}
	Consider some retract diagram $X \to A \to X$ in $\st(\cal{C})$ with $A \in \st(\cal{A})$. By shifting, we may suppose $A \in \st(\cal{A})_{[0, n]}$ for some $n \ge 0$. We prove that $X \in \st(\cal{A})_{[0,n]}$ as well by induction on $n$. For $n=0$, this follows from the fact that $\cal{A}$ is closed under retracts in $\cal{C}$, and $\cal{C}$ under retracts in $\st(\cal{C})$ by Corollary~\ref{cor:retract-closure}.
For $n>0$, write $Y$ for the complementary summand to $X$ in $A$, which exists by weak idempotent completeness of $\st(\cal{C})$, and consider a heart decomposition
	\[
		a \xto{h} A \to B,
	\]
	for $a \in \cal{A}$ and $B \in \st(\cal{A})_{[1,n]}$. The map $h$ factors as $a \to a \oplus a \to X \oplus Y = A$, where the first map is the diagonal and the second the direct sum of the composites $f\colon a \to X$ and $g\colon a \to Y$ of $h$ with the canonical projections. The associated exact sequence of cofibres is then
	\[
		a \to B \to \cofib(f) \oplus \cofib(g),
	\]
	so that $\cofib(f)\oplus\cofib(g)$ is an extension of $B$ and $\Sigma a$, and hence lies in $\st(\cal{A})_{[1,n]}$. By the induction hypothesis $\cofib(f) \in \st(\cal{A})_{[1,n]}$. The exact sequence $a \to X \to \cofib(f)$ then show that $X$ belongs to $\st(\cal{A})_{[0, n]}$.
\end{proof}

We note the following useful feature of left- or right-special inclusions $\cal{A} \subseteq \cal{C}$ which are additionally closed under retracts. Call an exact functor of stable $\infty$-categories with heart structure $f\colon \cal{D} \to \cal{E}$ left (resp.\ right) \emph{heart conservative} if for $x \in \cal{D}$ with $f(x) \in \cal{E}_{\le 0}$ (resp.\ $f(x) \in \cal{E}_{\ge 0}$), we have $x \in\cal{D}_{\le 0}$ (resp.\ $x \in\cal{D}_{\ge 0}$). The functor $f$ is heart conservative if it is both left and right heart conservative.

\begin{corollary}\label{conservativity}
	Let $\cal{A} \hookrightarrow \cal{C}$ is a right-special retract-closed inclusion of exact categories. Then the induced fully faithful functor $\st(\cal{A}) \hookrightarrow \st(\cal{C})$ is left heart-conservative. In particular, if $\cal{C}$ is weakly idempotent complete and $\cal{A} \hookrightarrow \cal{C}$ is a Keller inclusion then $\st(\cal{A}) \cap \cal{C} = \cal{A}$.
\end{corollary}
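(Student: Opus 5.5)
The plan is to prove left heart conservativity by induction on the upper bound in $\st(\cal{C})$. Then we deduce the equality $\st(\cal{A}) \cap \cal{C} = \cal{A}$ by combining this with its dual. Throughout we use that $\st(\cal{A}) \to \st(\cal{C})$ is fully faithful (Theorem~\ref{thm:fully-faithful}) and heart exact, i.e.\ it sends $\st(\cal{A})_{\geq 0}=\pst(\cal{A})$ into $\pst(\cal{C})$ and $\st(\cal{A})_{\leq 0}$ into $\st(\cal{C})_{\leq 0}$.

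Let $X \in \st(\cal{A})$ with image in $\st(\cal{C})_{\leq 0}$. By boundedness, $X \in \st(\cal{A})_{\geq m}$ for some $m$, hence $X \in \st(\cal{C})_{[m,0]}$. After shifting it suffices to prove the following claim by induction on $k \geq 0$: \emph{if $Z \in \st(\cal{A})_{\geq 0}$ and $Z \in \st(\cal{C})_{[0,k]}$, then $Z \in \st(\cal{A})_{[0,k]}$.}

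For the base case $k=0$, we have $Z \in \st(\cal{C})^{\heartsuit}$, the weak idempotent completion of $\cal{C}$. So $Z$ is a retract $Z \to c \to Z$ of some $c \in \cal{C}$. Choose $n$ with $Z \in \st(\cal{A})_{[0,n]}$. By Lemma~\ref{lem:factor}(1), the map $c \to Z$ factors as $c \to i(a) \to Z$ with $a \in \cal{A}$. Hence $Z \to c \to a \to Z$ is the identity, and $Z$ is a retract in $\st(\cal{A})$ of $a \in \cal{A} \subseteq \st(\cal{A})^{\heartsuit}$. Lemma~\ref{lem:closed-under-retracts} then gives $Z \in \st(\cal{A})_{[0,0]}$.

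For the inductive step $k \geq 1$, take a heart decomposition $a \to Z \to W$ in $\st(\cal{A})$ with $a \in \cal{A}$ and $W \in \st(\cal{A})_{\geq 1}$; this is the one used in the proof of Corollary~\ref{retract}. In $\st(\cal{C})$, $W$ is an extension of $Z \in \st(\cal{C})_{\leq k}$ and $\Sigma a \in \st(\cal{C})_{\leq 1} \subseteq \st(\cal{C})_{\leq k}$. So $W \in \st(\cal{C})_{\leq k}$, and by heart exactness also $W \in \st(\cal{C})_{\geq 1}$. Thus $\Sigma^{-1}W$ satisfies the hypotheses of the claim for $k-1$, and so $W \in \st(\cal{A})_{[1,k]}$. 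Since $\st(\cal{A})_{[0,k]}$ is extension-closed, $Z$ lies in it.

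For the last assertion, let $\cal{A} \hookrightarrow \cal{C}$ be a Keller inclusion with $\cal{C}$ weakly idempotent complete. Applying the above to $i^{\op}$, which is right-special because $i$ is left-special, shows that $\st(\cal{A}) \to \st(\cal{C})$ is also right heart conservative. Hence any $X \in \st(\cal{A}) \cap \cal{C} \subseteq \st(\cal{C})^{\heartsuit}$ lies in $\st(\cal{A})^{\heartsuit}$. So $X$ is a retract in $\st(\cal{A})$, and hence in $\cal{C}$, of an object of $\cal{A}$. Since $\cal{A}$ is retract-closed in $\cal{C}$, we get $X \in \cal{A}$.

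The step I expect to be delicate is the base case. One must make sure that Lemma~\ref{lem:factor} applies: it needs $\cal{A}$ weakly idempotent complete, which can be arranged by passing to weak idempotent completions, since these do not change stable envelopes. One must also check that the factorisation exhibits $Z$ as a retract of $a$ inside $\st(\cal{A})$ and not merely inside $\st(\cal{C})$; full faithfulness handles this.
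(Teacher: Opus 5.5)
Your proof is correct, and its key input is the paper's: Lemma~\ref{lem:factor}, which the paper uses through Lemma~\ref{catoffact}. It differs in how you reduce to the key case and in how you close the base case.

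\textbf{The reduction.} The paper does not induct. It takes one heart decomposition $X' \to X \to X''$ in $\st(\cal{A})$ with $X' \in \st(\cal{A})_{\leq -1}$ and $X'' \in \st(\cal{A})_{\geq 0}$. Since $X''$ sits in an exact sequence $X \to X'' \to \Sigma X'$, it already lies in $\st(\cal{C})_{\leq 0}$, hence in $\st(\cal{C})^{\heartsuit} \cap \st(\cal{A})_{\geq 0}$. This is exactly your base case, and the extension $X' \to X \to X''$ then finishes. Splitting $X$ once at degree $0$, rather than peeling off heart layers from the bottom one at a time, removes the need for the lower bound $m$, the shift and the induction on $k$. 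You could simply apply your base-case argument to $X''$.

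\textbf{The base case.} Both proofs show that the identity of a heart object of $\st(\cal{C})$ lying in $\st(\cal{A})_{\geq 0}$ factors through an object of $\cal{A}$. The paper cites Lemma~\ref{catoffact}, but only non-emptiness is used, which is the direct output of Lemma~\ref{lem:factor}. The paper then concludes using retract-closedness of $\cal{A}$ in $\cal{C}$, tacitly identifying $\st(\cal{C})^{\heartsuit}$ with $\cal{C}$. You instead move the retraction into $\st(\cal{A})$ by full faithfulness and apply Lemma~\ref{lem:closed-under-retracts}. Your route therefore shows that left heart conservativity does not use retract-closedness at all. That hypothesis is needed only for $\st(\cal{A}) \cap \cal{C} = \cal{A}$.

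\textbf{The final equality.} Your derivation of $\st(\cal{A}) \cap \cal{C} = \cal{A}$ from the dual statement for $i^{\op}$ makes explicit what the paper leaves implicit. You rightly use both left- and right-specialness there.

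\textbf{One loose end.} Your reduction to weakly idempotent complete categories, a point the paper glosses over, should include a check that right-specialness passes to weak idempotent completions. It does: add complementary summands from $\cal{C}$ so that all three terms of the relevant exact sequence lie in $\cal{C}$, then apply right-specialness there.
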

\begin{proof}
Suppose given some $X \in \st(\cal{A}) \cap \st(\cal{C})_{\le 0}$. Take a heart decomposition 
\[ X' \to X \to X''\] 
in $\st(\cal{A})$ with $X' \in \st(\cal{A})_{\le -1}$ and $X'' \in \st(\cal{A})_{\ge 0}$. Then the image of $X''$ in $\st(\cal{C})$ lies in $\st(\cal{A})_{\ge 0} \cap \st(\cal{C})_{\le 0} = \cal{C} \cap \st(\cal{A})_{\ge 0}$. In particular, the identity $\id_{X''}$ is a map from an object of $\cal{C}$ to an object of $\st(\cal{A})_{\ge 0}$, and so factors through an object of $\cal{A}$ by Lemma~\ref{catoffact}. But $\cal{A}$ is closed under retracts in $\cal{C}$, so $X'' \in \cal{A}$, and $X$ is an extension of objects in $\st(\cal{A})_{\le 0}$ and itself in $\st(\cal{A})_{\le 0}$.
\end{proof}

\subsection{Calculus of fractions}\label{sec:calculus}

Throughout this section let us fix an extension-closed full inclusion $\cal{A} \to \cal{C}$. We denote by $\cal{C}^{\cal{A}}_{\inc}$ and $\cal{C}^{\cal{A}}_{\pr}$ the wide subcategories of $\cal{C}$ whose morphisms are the inclusions with cofibre in $\cal{A}$ and projections with fibre in $\cal{A}$, respectively (where we note that the extension-closed condition assures that these classes of maps are closed under composition). We then consider the associated localisations (ignoring exact structures)
\[ \cal{B}^{l} := \cal{C}[(\cal{C}^{\cal{A}}_{\inc})^{-1}] \quad\text{and}\quad \cal{B}^{r} := \cal{C}[(\cal{C}^{\cal{A}}_{\pr})^{-1}] \]
and write $p^l\colon \cal{C} \to \cal{B}^{l}$ and $p^r\colon \cal{C} \to \cal{B}^r$ for the associated localisation functors. In this section we develop some basic properties of these localisations related to their associated calculus of fractions.

\begin{lemma}\label{lem:calculus}
The localisation $p^l\colon \cal{C} \to \cal{B}^l$ admits a left calculus of fractions, that is, for $x,y \in \cal{C}$ the map
\[ \colim_{[y \mono y'] \in y\rtimes \cal{A}}\Map_{\cal{C}}(x,y') \to \Map_{\cal{B}^l}(p(x),p(y)) \]
is an equivalence, where $y\rtimes \cal{A} \subseteq \cal{C}_{y/}$ is the full subcategory spanned by the inclusions $y \mono y'$ with cofibre in $\cal{A}$. 
In particular, if $f\colon x \to y$ is a map in $\cal{B}^l$ then for every $\tilde x\in \cal{C}$ lifting $x$ we can lift $f$ to a map $\tilde f\colon \tilde x \to \tilde y$ with domain $\tilde x$. Dually, the localisation $p^r\colon \cal{C} \to \cal{B}^r$ admits a right calculus of fractions and every map in $\cal{B}^r$ admits a lift in $\cal{C}$ given any prescribed lift of its codomain.
\end{lemma}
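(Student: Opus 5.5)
The plan is to check the hypotheses of a general recognition criterion for left calculi of fractions in $\infty$-categories. I would use the class $W=\cal{C}^{\cal{A}}_{\inc}$ and Cisinski's criterion \emph{(Higher Categories and Homotopical Algebra, \S7.2)}, or equivalently its reformulation by Nuiten in the language of $\infty$-categories with cofibrations. The main point is that $W$ behaves like a class of trivial cofibrations.

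\emph{Step 1: closure properties of $W$.} The class $W$ contains the equivalences and is closed under composition, since $\cal{A}$ is closed under extensions. It is also stable under cobase change along \emph{arbitrary} maps. Indeed, given an inclusion $y\mono y'$ with cofibre $a\in\cal{A}$ and any map $y\to z$, the pushout $z\mono z\sqcup_y y'$ exists in $\cal{C}$ by the axioms of an exact $\infty$-category. It is again an inclusion, and its cofibre is still $a$. In particular the comma category $W_{y/}$ has an initial object $\id_y$ and admits pushouts $y'\sqcup_y y''$.

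\emph{Step 2: the candidate mapping spaces.} I would define $M(x,y):=\colim_{[y\mono y']\in y\rtimes\cal{A}}\Map_{\cal{C}}(x,y')$. Functoriality in $y$ along maps of $\cal{C}$ comes from cobase change: a map $y\to z$ induces $y\rtimes\cal{A}\to z\rtimes\cal{A}$ via $y'\mapsto z\sqcup_y y'$. The statement is then equivalent to two claims. First, $M(-,y)$ inverts $W$ in the first variable. Second, $M(x,-)$ inverts $W$ in the second variable, so that $p^l$ corepresents it; this amounts to showing that $y\rtimes\cal{A}\to z\rtimes\cal{A}$ is cofinal for $y\mono z$ in $W$, since it admits the retraction-like functor $(z\mono z')\mapsto (y\mono z')$.

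For the first claim, take $w\colon x\mono x'$ in $W$ and a map $x\to y'$. Pushing out gives $y'\mono y'\sqcup_x x'$ in $W$, through which the map extends to $x'$. Cofinality of $y\rtimes\cal{A}$ under these pushouts then yields that $M(x',y)\to M(x,y)$ is an equivalence. The universal property of the localisation, checked via the Yoneda lemma on $\fun(\cal{C}^{\op},\cal{S})$, then identifies $M$ with $\Map_{\cal{B}^l}(p(-),p(-))$.

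\emph{Step 3: the main obstacle.} The subtle point is that the indexing category $y\rtimes\cal{A}$ is the \emph{full} subcategory of $\cal{C}_{y/}$. Its morphisms $y'\to y''$ under $y$ need not lie in $W$, so one must verify that it is filtered, or at least that it gives the same colimit as $W_{y/}$. I would first try to show that $W_{y/}\to y\rtimes\cal{A}$ is cofinal. The reason: for any $y'\in y\rtimes\cal{A}$ and any map $y'\to y''$ under $y$, the mapping cylinder trick replaces $y''$ by $y''\oplus y'$. This is an object of $y\rtimes\cal{A}$, because its cofibre is an extension of objects of $\cal{A}$. The map $y'\to y''\oplus y'$ is an inclusion, split on the second factor, with cofibre $y''/y\in\cal{A}$; hence it lies in $W$. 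Filteredness of $W_{y/}$ itself would come from the existence of pushouts there together with the same mapping-cylinder replacement to coequalise parallel maps.

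Finally, the lifting statement follows immediately: every point of the colimit is represented by some $\tilde x\to y'$. The statements about $p^r$ follow by applying everything to $\cal{C}^{\op}$, which exchanges inclusions and projections.
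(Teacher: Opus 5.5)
Your Step 1 together with the citation is already the paper's entire proof. The paper only notes that inclusions with cofibre in $\cal{A}$ are closed under composition (by extension-closure) and under cobase change. It then applies \cite[Theorem 7.2.16 and 7.2.8]{Cis19} (or \cite[Corollary 3.14]{nuiten16}), which give the formula as stated, indexed by the \emph{full} subcategory $y\rtimes\cal{A}\subseteq\cal{C}_{y/}$. So you could stop after Step 1.

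Step 3, which you single out as the main obstacle, rests on a misconception, and the fix you propose for it is false. Fullness of $y\rtimes\cal{A}$ is not a defect to be repaired; it is exactly what makes the formula hold. No filteredness is needed, and $y\rtimes\cal{A}$ need not be filtered. The mechanism behind the cited result is as follows. Take $s\colon x\mono x'$ in $W$. Restriction along $s$, from pairs $(y\mono y',\,x'\to y')$ to pairs $(y\mono y',\,x\to y')$, has a left adjoint $y'\mapsto y'\sqcup_x x'$. Its counit is the fold map $y'\sqcup_x x'\to y'$, which is a morphism of $y\rtimes\cal{A}$ but in general not of $W$. The adjunction gives an equivalence on realisations, so $F_y=\colim_{y\rtimes\cal{A}}\Map_{\cal{C}}(-,y')$ inverts $W$. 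Moreover, $\Map_{\cal{C}}(-,y)\to F_y$ becomes an equivalence after $p_!$, because $y\rtimes\cal{A}$ has an initial object and $p$ inverts all of its arrows.

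Replacing $y\rtimes\cal{A}$ by $W_{y/}$ (morphisms in $W$) actually changes the answer. Take $\cal{A}=\cal{C}$ to be finite-dimensional vector spaces over a field $k$ with the split exact structure, and set $y=0$, $x=k$.
\begin{itemize}
\item We have $0\rtimes\cal{A}=\cal{C}$, and $\colim_{V\in\cal{C}}\Map_{\cal{C}}(k,V)\simeq\ast$, since its category of elements $\cal{C}_{k/}$ has an initial object.
\item $W_{0/}$ has only injections as morphisms, so $k\oplus k$ is not a coproduct of $k$ and $k$ there: the fold map $k\oplus k\to k$ is not injective.
\item $W_{0/}$ is not filtered, since two distinct injections $k\rightrightarrows k^2$ cannot be equalised by an injection.
\item The colimit of $\Map_{\cal{C}}(k,-)$ over $W_{0/}$ has two components, because injections preserve whether a vector is zero.
\end{itemize}
Hence $W_{0/}\to 0\rtimes\cal{A}$ is not cofinal.

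The mapping-cylinder computation is also wrong. For $u\colon y'\to y''$ under $y$, the map $(u,\id)\colon y'\to y''\oplus y'$ has cofibre $y''$, not $y''/y$. The cofibre of $y\to y''\oplus y'$ sits in an exact sequence $y'/y\to(-)\to y''$, so it need not lie in $\cal{A}$.

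A smaller slip in Step 2: the cofinal functor is the precomposition functor $z\rtimes\cal{A}\to y\rtimes\cal{A}$, which is right adjoint to cobase change, not the cobase-change functor itself. Drop Steps 2--3, or replace them by the adjunction argument above.
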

\begin{proof}
Apply \cite[Theorem 7.2.16 and 7.2.8]{Cis19} (see also~\cite[Corollary 3.14]{nuiten16}) using that the collection of inclusions with cofibre in $\cal{A}$ is closed under composition (since $\cal{A}$ is closed under extensions) and cobase change.
\end{proof}

\begin{lemma}\label{lem:lift-inclusions}
Suppose that the localisation $p^l\colon \cal{C} \to \cal{B}^l$ sends cocartesian squares in $\cal{C}$ with parallel maps inclusions to cocartesian squares in $\cal{B}^l$.
If $f\colon x \to y$ in $\cal{B}^l$ is the image of some inclusion in $\cal{C}$ then for every $\tilde x\in \cal{C}$ lifting $x$ we can lift $f$ to an inclusion $\tilde f\colon \tilde x \mono \tilde y$ with domain $\tilde x$. 

Dually, if $p^r\colon \cal{C} \to \cal{B}^r$ sends cartesian squares in $\cal{C}$ with parallel maps projections to cartesian squares in $\cal{B}^r$ then any map in $\cal{B}^{r}$ which is the image of some projection in $\cal{C}$ can be lifted to a projection in $\cal{C}$ with any prescribed lift of its codomain.
\end{lemma}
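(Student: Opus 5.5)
We prove the first claim; the dual follows by passing to opposite exact $\infty$-categories (projections become inclusions, cartesian squares become cocartesian ones, and $p^r$ becomes $p^l$ for $\cal{C}^{\op}$ with $\cal{A}^{\op}$).

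Suppose $f\colon x \to y$ in $\cal{B}^l$ is the image of an inclusion $g\colon x' \mono y'$ in $\cal{C}$, so $p^l(x') \simeq x$ and $p^l(y') \simeq y$. Let $\tilde x$ be another lift of $x$. The composite equivalence $p^l(\tilde x) \simeq x \simeq p^l(x')$ is a map in $\cal{B}^l$. By Lemma~\ref{lem:calculus} it lifts to a map $h\colon \tilde x \to x''$ in $\cal{C}$ together with an inclusion $s\colon x' \mono x''$ with cofibre in $\cal{A}$, such that the equivalence is represented by the left fraction $p^l(s)^{-1}p^l(h)$. Thus $p^l(h)$ is an equivalence.

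Next push $g$ out along $s$. Set $y'' := x'' \cup_{x'} y'$. The map $x'' \mono y''$ is an inclusion, being a cobase change of $g$, and $y' \mono y''$ is an inclusion with cofibre $\cofib(s) \in \cal{A}$, being a cobase change of $s$. Hence $p^l(y' \to y'')$ is an equivalence, and under the identifications the image of $x'' \mono y''$ is $f$. We have now replaced the lift $x'$ by $x''$, which receives a map $h$ from $\tilde x$ that becomes invertible in $\cal{B}^l$. The remaining task is to transport the inclusion $x'' \mono y''$ along $h$.

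For this, form the cocartesian square $\tilde y := \tilde x \cup_{x''} \dots$. This is the delicate point: we cannot push out along $h$ in that direction, since $h$ points into $x''$. Instead, form the pushout of the span $\tilde x \xleftarrow{} ?$.

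Here we use the hypothesis. Consider the map $\tilde x \oplus x'' \to x''$ or, more cleanly, factor $h$ as $\tilde x \mono \tilde x \oplus y'' \to \dots$. The plan is to use the standard trick of replacing $h$ by the inclusion $(h, \iota)\colon \tilde x \to x''\oplus\tilde x$... I expect this step to be the main obstacle, and I would try the following. Define $\tilde y := \fib$-free construction
\[ \tilde y := y'' \oplus \tilde x \quad\text{with}\quad \tilde f = (g'' h, \id)\colon \tilde x \mono y''\oplus \tilde x ,\]
which is a split inclusion with cofibre $y''$. This has the wrong image, however.

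The better route is to form the pushout square of $x'' \mono y''$ along $x'' \to x''\oplus \tilde x$ and apply the hypothesis that $p^l$ preserves cocartesian squares with parallel inclusions. Concretely, the map
\[ \tilde x \xrightarrow{(h,\id)} x'' \oplus \tilde x \]
is a split inclusion, and its cobase change along $x''\oplus\tilde x \mono y''\oplus\tilde x$ gives $\tilde f\colon \tilde x \mono \tilde y := y''\oplus\tilde x$ composed appropriately. One then uses preservation of cocartesian squares, together with the invertibility of $p^l(h)$, to check that $p^l(\tilde f)$ is identified with $f$. If that identification fails, I would instead apply the calculus of fractions to $p^l(h)^{-1}$ to obtain a further inclusion $x'' \mono x'''$ with cofibre in $\cal{A}$ that factors through $\tilde x$ up to the localisation, and then iterate the pushout step.
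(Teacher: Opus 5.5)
Your proposal has a genuine gap. It never actually produces the lift, and the cause is that you chose the fraction in the wrong direction.

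Writing the equivalence as $\tilde x \xrightarrow{h} x'' \xleftarrow{s} x'$ and pushing $g$ out along $s$ is legitimate but gains nothing. A cobase change along a map of $\cal{C}^{\cal{A}}_{\inc}$ is automatically inverted, so the hypothesis is never used. It also leaves you needing to transport $x'' \mono y''$ backwards along $h\colon \tilde x \to x''$, which no pushout can do. The later attempts do not repair this:
the map $(g''h,\id)\colon \tilde x \to y''\oplus\tilde x$ has the wrong image, as you observe yourself;
the ``better route'' yields the same composite again;
the fallback misidentifies which leg of the new fraction is the inclusion (it is the leg out of $\tilde x$, not out of $x''$), and it proposes iterating a pushout step that still never invokes the hypothesis.

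The missing idea is to apply the left calculus of fractions of Lemma~\ref{lem:calculus} to the inverse equivalence $p^l(x') \simeq p^l(\tilde x)$. That is, represent it by a zig-zag $x' \xrightarrow{u} w \xleftarrow{v} \tilde x$ with $v$ an inclusion with cofibre in $\cal{A}$, so that $p^l(u)$ is an equivalence. Now push the inclusion $g\colon x' \mono y'$ out along the arbitrary map $u$, obtaining an inclusion $g'\colon w \mono z$ and a map $y' \to z$.

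This is exactly where the hypothesis is needed, since $u$ is not in $\cal{C}^{\cal{A}}_{\inc}$. The square has parallel legs $g, g'$ inclusions, so it stays cocartesian in $\cal{B}^l$. Hence $p^l(y' \to z)$ is an equivalence, being a cobase change of the equivalence $p^l(u)$. The composite $g'\circ v\colon \tilde x \to z$ is then an inclusion whose image in $\cal{B}^l$ is identified with $f$.

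This is a single step with no iteration, and your preliminary pushout along $s$ can be dropped. Your reduction of the dual statement to the first one by passing to opposites is fine.
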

\begin{proof}
Suppose given a map $f\colon x \to y$ in $\cal{B}^l$ which is the image of an inclusion $f'\colon x'\mono y'$ in $\cal{C}$ and let $\tilde x \in \cal{C}$ be a lift of $x$. 
Then the equivalence $p(\tilde x) \simeq p(x')$ can be represented by a zig zag $x' \rightarrow w \hookleftarrow \tilde x$ with the second arrow an inclusion with cofibre in $\cal{A}$ and the first arrow mapping to an equivalence in $\cal{B}^l$. We may then consider the cobase change 
\[
\begin{tikzcd}
& x' \ar[r,"f'",hookrightarrow ]\ar[d] & y'\ar[d] \\
\tilde x \ar[r,hookrightarrow] & w \ar[r, "f''",hookrightarrow] & z 
\end{tikzcd}
\]
of $f'$. The square on the right is then also cocartesian in $\cal{B}^l$ by assumption, and so its right vertical arrow is also an equivalence in $\cal{B}^l$. The composite inclusion $\tilde x \mono w \mono z$ then provides the desired lift of $f$.
\end{proof}

\begin{proposition}\label{prop:preserves-cocartesian}
If $\cal{A} \to \cal{C}$ is left-special then $p^l\colon \cal{C} \to \cal{B}^l$ sends cocartesian square in $\cal{C}$ with parallel legs inclusions to cocartesian squares in $\cal{B}^l$. Dually, if $\cal{A} \to \cal{C}$ is right-special then $p^r\colon \cal{C} \to \cal{B}^r$ sends cartesian square in $\cal{C}$ with parallel legs projections to cartesian squares in $\cal{B}^r$. In particular, in these cases the lifting property of Lemma~\ref{lem:lift-inclusions} holds.
\end{proposition}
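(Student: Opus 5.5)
The plan is to use the left calculus of fractions from Lemma~\ref{lem:calculus} to compute mapping spaces out of the images of the square, and to reduce cocartesianness in $\cal{B}^l$ to a statement about a colimit of spaces. Since $p^l$ is a localisation, every object of $\cal{B}^l$ has the form $p(t)$ with $t \in \cal{C}$. So it suffices to prove the following. Fix a cocartesian square in $\cal{C}$ with parallel legs inclusions,
\[ \begin{tikzcd} x \ar[r,rightarrowtail]\ar[d] & y \ar[d] \\ w \ar[r,rightarrowtail] & z, \end{tikzcd} \]
and some $t \in \cal{C}$. We must show that
\[ \Map_{\cal{B}^l}(p z, p t) \to \Map_{\cal{B}^l}(p y,p t)\times_{\Map_{\cal{B}^l}(p x,p t)}\Map_{\cal{B}^l}(p w,p t) \]
is an equivalence. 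By Lemma~\ref{lem:calculus} each term is a colimit over $t\rtimes\cal{A}$ of $\Map_{\cal{C}}(-,t')$. Since the square is cocartesian in $\cal{C}$, we have $\Map_{\cal{C}}(z,t') \simeq \Map_{\cal{C}}(y,t')\times_{\Map_{\cal{C}}(x,t')}\Map_{\cal{C}}(w,t')$ pointwise in $t'$. The claim therefore reduces to showing that $\colim_{t\rtimes\cal{A}}$ commutes with this particular pullback.

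If $t\rtimes\cal{A}$ were filtered this would be automatic. It is nonempty, and it has cobase changes, hence "pushouts" of inclusions under $t$ with cofibre in $\cal{A}$. It lacks coequalizers in general, however, and this is where left-specialness must enter. I would therefore argue fibrewise, via universality of colimits in $\cal{S}$ as in the proof of Corollary~\ref{fact}. Fix points of the two factors, represented by maps $y \to t'$ and $w \to t''$ whose restrictions to $x$ become identified in the colimit. After passing to a common refinement using pushouts in $t\rtimes\cal{A}$, we may assume both maps land in one $t'$, and the two restrictions $x \rightrightarrows t'$ agree after further refinement. The goal is then to show that the $\infty$-category of such "gluing data" is weakly contractible, and in particular nonempty and connected.

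The key step, and the main obstacle, is to produce an equalizing refinement. Two maps $x \to t'$ under the same object differ, in $\st(\cal{C})$, by a map. Because everything happens along inclusions with cofibre in $\cal{A}$, such a discrepancy is measured by a map out of a shift of an object $a \in \cal{A}$, namely a map $\Sigma^{-1} i(a) \to t'$ arising from the fibre sequence of the inclusion. By Lemma~\ref{spec}\ref{ii}, left-specialness lets us factor this as $\Sigma^{-1} i(a) \to i(b) \to t'$ with $b \in \cal{A}$ and $\Sigma^{-1}a\to b$ in $\st(\cal{A})$. Pushing out $t'$ along the resulting inclusion into an object with cofibre $\cofib(\Sigma^{-1}a \to b) \in \cal{A}$ then kills the discrepancy while staying inside $t\rtimes\cal{A}$.

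To upgrade this from $\pi_0$-level existence to weak contractibility of the relevant factorisation categories, I would mimic Lemma~\ref{catoffact}. First restrict to the full subcategory of "good" refinements, which should admit binary coproducts under $t$ and hence be sifted. Then show that its inclusion is cofinal, using the factorisation above to construct the needed maps in each comma category.

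The dual statement, for $p^r$ and cartesian squares along projections, follows by applying this argument to $\cal{A}^{\op}\subseteq\cal{C}^{\op}$. The final assertion is then immediate from Lemma~\ref{lem:lift-inclusions}.
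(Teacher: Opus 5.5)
Your first reduction is sound: Lemma~\ref{lem:calculus} turns the claim into commuting $\colim_{t\rtimes\cal{A}}$ with the pullback, and the paper starts from the same point. The problem is the hypothesis you then try to use.

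\textbf{The pairing is reversed.} The paper's proof of the $p^l$ assertion begins ``We prove the right-special case''. It then runs through Corollary~\ref{fact}, i.e.\ Lemma~\ref{catoffact}, which needs $\cal{A}\hookrightarrow\cal{C}$ to be \emph{right}-special. So the left/right labels in the printed statement are interchanged: right-special goes with $p^l$ and inclusions, left-special with $p^r$ and projections. Remark~\ref{rem:calculus} uses this corrected pairing.

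\textbf{Under left-specialness alone the $p^l$ claim is false}, so no argument built on Lemma~\ref{spec}(ii) can succeed. Take $\cal{C}=\Ar(\mathrm{Vect}_k)$ and $\cal{A}=\{[0\to y]\}$, which is left- but not right-special (Example~\ref{ex:arrow}). Consider the pushout of $[k\xrightarrow{\id}k]\leftarrow[0\to k]\to[k\xrightarrow{\id}k]$ along pointwise inclusions; it is $[k^2\xrightarrow{+}k]$. Set $t=[k\xrightarrow{\id}k]$.
\begin{itemize}
\item The index category $t\rtimes\cal{A}$ has an initial object.
\item Both $[t\mono t']\mapsto\Map_{\cal{C}}([k\xrightarrow{\id}k],t')$ and $[t\mono t']\mapsto\Map_{\cal{C}}([k^2\xrightarrow{+}k],t')$ are constant with value $k$.
\item $p([0\to k])\simeq 0$.
\end{itemize}
Hence the comparison map is the diagonal $k\to k\times k$, and the image square is not cocartesian in $\cal{B}^l$.

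\textbf{Where your sketch breaks.} The step where left-specialness is meant to enter is vacuous. Any $g\colon\Sigma^{-1}i(a)\to t'$ is already killed by $t'\to\cofib(g)$, and this map is an inclusion with cofibre $a\in\cal{A}$ because $\cal{C}$ is extension-closed in $\st(\cal{C})$ (the correspondence in the proof of Lemma~\ref{spec}). The real obstruction to filteredness of $t\rtimes\cal{A}$ is coequalising two maps $u,v\colon t_1\rightrightarrows t_2$ under $t$. Their difference is a map $\cofib(t\mono t_1)\to t_2$ out of an \emph{unshifted} object of $\cal{A}$, and your plan never addresses it. Your reduction to ``the two restrictions agree after further refinement'' already presupposes it: in a non-filtered colimit, being identified only means being joined by a zig-zag.

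\textbf{The missing idea} is the paper's. Identify $t\rtimes\cal{A}\simeq\cal{A}_{/\Sigma t}$ and enlarge it to $t\rtimes\pst(\cal{A})\simeq\pst(\cal{A})_{/\Sigma t}$, which has finite colimits and is therefore filtered. By Corollary~\ref{fact} (this is where right-specialness is used), the functor $[t\mono t']\mapsto\Map_{\pst(\cal{C})}(x,t')$ on $t\rtimes\pst(\cal{A})$ is left Kan extended from $t\rtimes\cal{A}$. So $\Map_{\cal{B}^l}(p(-),p(t))$ is a filtered colimit of the presheaves $\Map_{\pst(\cal{C})}(-,t')$. Each of these sends the square to a pullback, because $\cal{C}\to\pst(\cal{C})$ is exact, and filtered colimits commute with pullbacks. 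No coequalisers in $t\rtimes\cal{A}$ are ever needed. The $p^r$ statement, under left-specialness, then follows by passing to opposites.
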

\begin{proof}
We prove the right-special case, the left-special case can be deduced by passing to opposite exact $\infty$-categories.
We view the pre-stable envelope $\pst(\cal{C})$ as an exact $\infty$-category in which all maps are inclusions. Given $y \in \cal{C}$, we claim that for every $x \in \cal{C}$, the functor
\[ y\rtimes \pst(\cal{A}) \to \Sps \quad\quad [y \mono y'] \mapsto \Map_{\pst(\cal{C})}(x,y') \]
is left Kan extended from its restriction to $y \rtimes \cal{A}$. Indeed, viewing $\Sigma y$ as an object of $\st(\cal{C})$, note that the functor $[y \mono y'] \mapsto \cofib[y \mono y'] \to \Sigma y$ induces an equivalence 
\[ y\rtimes \cal{A} \simeq \cal{A}_{/\Sigma y} = \cal{A} \times_{\st(\cal{C})} \st(\cal{C})_{/\Sigma y},\] 
and similarly for $y\rtimes \pst(\cal{A})$. In addition, under these equivalences the functor $[y \mono y'] \mapsto \Map_{\pst(\cal{C})}(x,y')$ corresponds to the functor $[z \to \Sigma y] \mapsto \Map_{\cal{C}_{/\Sigma y}}(x,z)$, where $x$ is viewed as the object of $\cal{C}_{/\Sigma y}$ corresponds to the zero map $x \xrightarrow{0} \Sigma y$. The left Kan extension claim is then an instance of Corollary~\ref{fact}. Now since $\pst(\cal{C})$ admits finite colimits and the inclusion $\pst(\cal{A}) \to \pst(\cal{C})$ preserves finite colimits it follows that $y\rtimes \pst(\cal{A})$ admits finite colimits, and is hence filtered. We conclude that the presheaf
\[ \Map_{\cal{C}'}(-,p(y)) \simeq \colim_{[y \to y'] \in y\rtimes \cal{A}} \Map_{\cal{C}}(-,y') \simeq \colim_{[y \to y'] \in y\rtimes \pst(\cal{A})} \Map_{\pst(\cal{C})}(-,y') \]
is a filtered colimit of presheaves of the form $\Map_{\pst(\cal{C})}(-,y')$. Now since the inclusion $\cal{C} \to \pst(\cal{C})$ is an exact functor every cocartesian square in $\cal{C}$ with parallel legs inclusions is also cocartesian in $\pst(\cal{C})$, and hence each of the presheaves $\Map_{\pst(\cal{C})}(-,y')$ send such a square to a pullback square. We hence conclude that $\Map_{\cal{C}'}(-,p(y))$ sends such squares to pullback squares. It follows that $p^l$ sends cocartesian square in $\cal{C}$ with parallel legs inclusions to cocartesian squares in $\cal{B}^{l}$, as desired.
\end{proof}

\subsection{Keller sequences}\label{subsec:keller-sequences}

In this section we show that the class of Keller inclusions yields a well-behaved class of fibre-cofibre sequences 
\[ \cal{A} \to \cal{C} \to \cal{B} \]
in $\exact$, which we call \emph{Keller sequences}. The arrow $\cal{C} \to \cal{B}$ is then called a Keller projection, and admit explicit characterization. In particular, we show that every Keller inclusion admits a cofibre in $\exact$ which is a Keller projection, and is furthermore the fibre of its cofibre. Similarly, the fibre of every Keller projection is a Keller inclusion and every Keller projection is the cofibre of its fibre. This is analogous to the behaviour of Verdier sequences of stable $\infty$-categories. 

In what follows, by an \emph{exact localisation functor} $p\colon \cal{C} \to \cal{B}$ we will mean an exact functor whose underlying functor is a Dwyer-Kan localisation and such that a map in $\cal{B}$ is an inclusion resp.\ projection if and only if it is the essential image of an inclusion resp.\ projection under $p$.
\begin{definition}\label{specproj}
	A map $p:\cal{C} \to \cal{B}$ in $\exact$ is a \emph{left-special projection} if the following hold:
	\begin{enumerate}[label=(\roman*)]
		\item\label{iLR} $p$ is an exact localisation functor.
	\end{enumerate}
	\begin{enumerate}[label=(\roman*.L)]
		\setcounter{enumi}{1}
		\item\label{iiL} for every map $f\colon x \to y$ in $\cal{C}$ such that $p(f)$ is an equivalence there exists an $a \in \ker(p)$	 and a factorisation
		\[\begin{tikzcd}
		& x \oplus a \ar[rd, "q", twoheadrightarrow] \\
		x \ar[ru, rightarrowtail] \ar[rr, "f"] && y,
		\end{tikzcd}\]
		such that $q$ is a projection and the left diagonal map is the corresponding summand inclusion. 
	\end{enumerate}
	Dually, $p\colon\cal{C} \to \cal{B}$ is a \emph{right-special projection} if \ref{iLR} holds, and:
	\begin{enumerate}[label=(\roman*.R)]
		\setcounter{enumi}{1}
		\item\label{iiR} for each map $f\colon x \to y$ in $\cal{C}$ such that $p(f)$ is an equivalence there exists an $b \in \ker(p)$ and a factorisation
		\[\begin{tikzcd}
			& y \oplus b \ar[rd, twoheadrightarrow] \\
			x \ar[rr, "f"] \ar[ru, "j", rightarrowtail] && y,
		\end{tikzcd}\]
		such that $j$ is an inclusion and right diagonal map is the corresponding summand projection.  	\end{enumerate}
	Finally, $p\colon \cal{C} \to \cal{B}$ is a \emph{Keller projection} if it is both left and right-special.
\end{definition}

\begin{remark}\label{rem:calculus}
Let $p\colon \cal{C} \to \cal{B}$ be an exact localisation functor and set $\cal{A} = \ker(p) \subseteq \cal{C}$, viewed as an extension-closed exact subcategory of $\cal{C}$ with the induced exact structure. 
If $p$ is a left-special projection then $p$ also exhibits $\cal{B}$ as the localisation of $\cal{C}$ by $\cal{C}^{\cal{A}}_{\pr}$, and hence $p$ enjoys a right calculus of fraction as in Lemma~\ref{lem:calculus}. In addition, projections in $\cal{B}$ lift to projections in $\cal{C}$ with prescribed lifts of their codomains by Lemma~\ref{lem:lift-inclusions}. Dually, 
If $p$ is a right-special projection then $p$ also exhibits $\cal{B}$ as the localisation of $\cal{C}$ by $\cal{C}^{\cal{A}}_{\inc}$, enjoys the associated left calculus of fraction, and inclusions in $\cal{B}$ lift to inclusions in $\cal{C}$ with prescribed lift of their domain.
In particular, when $p$ is a Keller projection it enjoys simultaneously all the above properties.
\end{remark}

\begin{example}\label{ex:verdier-projection}
An exact functor $\cal{C} \to \cal{D}$ among stable $\infty$-categories is a Keller projection with respect to the respective maximal exact structures if and only if it is a Verdier projection. 
\end{example}

\begin{proposition}\label{prop:verdier}
	Given a nullcomposite sequence
	\[\begin{tikzcd}
		\cal{A} \ar[r] \ar[d] & \cal{C} \ar[d] \\
		0 \ar[r] & \cal{B}
	\end{tikzcd}\]
	of exact categories, the following are equivalent:
	\begin{enumerate}[label=(\roman*)]
		\item\label{cof} $i$ is a Keller inclusion and $p\colon \cal{C} \to \cal{B}$ is the cofibre of $i$ in $\exact$;
		\item\label{fib} $p$ is a Keller projection and $i\colon\cal{A} \to \cal{C}$ is the kernel of $p$ in $\exact$.
	\end{enumerate}
\end{proposition}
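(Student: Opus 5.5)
The plan is to construct the cofibre of a Keller inclusion explicitly as the Dwyer--Kan localisation $\cal{B}^l = \cal{C}[(\cal{C}^{\cal{A}}_{\inc})^{-1}]$. We will show that it coincides with $\cal{B}^r$ and carries a canonical exact structure making $\cal{C} \to \cal{B}$ a Keller projection with kernel $\cal{A}$. Both implications then follow from a uniqueness argument.

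\emph{Step 1: the localisations agree.} Let $i$ be a Keller inclusion. We first show that a map $f$ of $\cal{C}$ becomes invertible in $\cal{B}^l$ if and only if it does so in $\cal{B}^r$. This also identifies the class of maps inverted with the maps admitting the factorisations (ii.L) and (ii.R) of Definition~\ref{specproj}. An inclusion $x \mono y$ with cofibre $a \in \cal{A}$ can be rewritten through the split pieces. Using left-speciality applied to $y \epi a$, we obtain $b \epi a$ in $\cal{A}$ factoring through $y$. The pullback $x' := b\times_a y$ then gives a projection $x \oplus b \epi y$, because $\fib[x\oplus b \to y] \simeq \fib[b\to a] \in \cal{A}$. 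So $f$ factors as the summand inclusion followed by a projection with fibre in $\cal{A}$, and the summand inclusion is inverted in $\cal{B}^r$ since $x \oplus b \epi x$ has fibre $b$. The dual argument handles projections. Hence $\cal{B}^l \simeq \cal{B}^r =: \cal{B}$, both calculi of fractions of Lemma~\ref{lem:calculus} hold, and conditions (ii.L) and (ii.R) follow from this factorisation.

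\emph{Step 2: exact structure.} Declare inclusions and projections in $\cal{B}$ to be the images of those of $\cal{C}$. By Proposition~\ref{prop:preserves-cocartesian}, $p$ preserves pushouts along inclusions and pullbacks along projections. By Lemma~\ref{lem:lift-inclusions}, inclusions lift with prescribed domain and projections with prescribed codomain. These facts let us lift any span or cospan of the relevant shapes to $\cal{C}$, form the pushout or pullback there, and map it down, which verifies the axioms of an exact structure. Additivity of $\cal{B}$ follows because $p$ is a localisation at a class closed under direct sums. For the universal property, an exact functor $F\colon\cal{C}\to\cal{E}$ killing $\cal{A}$ sends inclusions with cofibre in $\cal{A}$ to inclusions with zero cofibre, that is, to equivalences. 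It therefore factors through $\cal{B}$, and the induced functor is exact by the definition of the exact structure. Hence $p$ is the cofibre in $\exact$.

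\emph{Step 3: the kernel is $\cal{A}$.} We must show that $p(x) \simeq 0$ implies $x \in \cal{A}$. Using the left calculus, $0 \to x$ is inverted, so $x \mono x'$ with $x' \in \cal{A}$ up to the factorisation above. Then $x$ is a retract-type summand of objects built from $\cal{A}$: from Step 1, $x\oplus b \epi 0$ is a projection with fibre in $\cal{A}$, so $x\oplus b\in\cal{A}$. Retract-closure of $i$ then gives $x\in\cal{A}$. This establishes (i)$\Rightarrow$(ii).

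\emph{Step 4: (ii)$\Rightarrow$(i).} Given a Keller projection $p$ with kernel $\cal{A}$, we check that $\cal{A}$ is extension-closed, retract-closed, and full; these hold because it is a kernel of an exact functor. To check left-speciality, take $y \epi a$ with $a\in\cal{A}$. Then $p$ inverts $\fib \to y$, and the factorisation (ii.R) of Definition~\ref{specproj}, applied to $\fib \mono y$, produces the required object of $\cal{A}$ mapping to $y$. Right-speciality is dual. By Remark~\ref{rem:calculus}, $p$ is the localisation at $\cal{C}^{\cal{A}}_{\inc}$, with exact structure given by images. This matches Steps 1--2, so $p$ is the cofibre. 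I expect Step 2, verifying the exact-category axioms in $\cal{B}$ (especially that pullbacks of projections along arbitrary maps exist and stay projections), to be the main obstacle. There I would combine the right calculus, to lift the arbitrary map with prescribed codomain, with the lifting of projections from Lemma~\ref{lem:lift-inclusions}.
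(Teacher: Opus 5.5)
\textbf{There is a genuine gap in the direction (i)$\Rightarrow$(ii): you do not prove that $\cal{A}$ is the fibre, nor that $p$ is a Keller projection.}

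Your Step~1 computation is correct. It gives a pleasantly elementary proof that $\cal{C}^{\cal{A}}_{\inc}$ and $\cal{C}^{\cal{A}}_{\pr}$ generate the same localisation. But it only produces the factorisations of Definition~\ref{specproj} for these \emph{generating} maps. Conditions (ii.L) and (ii.R) must hold for \emph{every} $f$ with $p(f)$ invertible, i.e.\ for the saturation.

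For instance, if $g$ and $gf$ are inclusions with cofibre in $\cal{A}$, then $p(f)$ is invertible. Yet $f$ need not be an inclusion or a projection: its cofibre is $\fib(\cofib(gf)\to\cofib(g))$. Nothing in your argument shows that such an $f$ admits either factorisation. This is not a formality. If $f$ satisfies (ii.R), then $\cofib(f)\in\st(\cal{C})$ is an extension of $\Sigma b$ by $\cofib(j)$, both in $\ker(p)$. So your claim really contains the non-formal statement that cofibres of $p$-equivalences lie in $\st(\cal{A})$.

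Step~3 inherits this gap in circular form. You apply the Step~1 factorisation to $x\to 0$, but that map is only known to be inverted by $p$. Knowing that it lies in $\cal{C}^{\cal{A}}_{\pr}$ is precisely the assertion $x\in\cal{A}$. Also, the left calculus only gives some map $x\to a'$ with $a'\in\cal{A}$, not an inclusion.

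The missing ingredient is the stable envelope, which is where the paper's main technical input enters. The paper localises at the class $W$ of maps whose cofibre in $\st(\cal{C})$ lies in $\st(\cal{A})$.
\begin{itemize}
\item For $f\in W$, Corollary~\ref{conservativity} puts $\cofib(f)$ in $\st(\cal{A})_{[0,1]}$. Lemma~\ref{lem:factor} and its dual then yield both factorisations, so $\cal{C}[W^{-1}]$ is the localisation at either generating class.
\item Since $\st$ preserves cofibres, $\st(\cal{B})\simeq\st(\cal{C})/\st(\cal{A})$. Keller's theorem (Theorem~\ref{thm:fully-faithful}) plus retract-closure (Corollary~\ref{retract}) identify the kernel of $\st(\cal{C})\to\st(\cal{B})$ with $\st(\cal{A})$, via \cite[Corollary A.1.10]{9-authors-II}.
\item Hence $p(f)$ invertible forces $f\in W$, which gives saturation.
\item The fibre is $\cal{C}\cap\st(\cal{A})=\cal{A}$ by Corollary~\ref{conservativity}, with the induced exact structure by Corollary~\ref{cor:detects}.
\end{itemize}
Most of this rests on the mapping-space analysis of Lemma~\ref{catoffact}. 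Your proposal only uses that lemma indirectly, through Proposition~\ref{prop:preserves-cocartesian}, and never for the fibre or saturation statements.

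Separately, Step~4 should use (ii.L) rather than (ii.R). Condition (ii.R) is vacuous for the inclusion $F:=\fib(y\epi a)\mono y$, since you may take $b=0$. Condition (ii.L) gives $F\mono F\oplus b\epi y$. The square formed by $F\oplus b\to b$ and $y\to a$ has equal horizontal fibres, hence is cartesian. This shows that $b\to a$ is a projection with fibre in $\ker(p)=\cal{A}$ that factors through $y$, as in the paper.
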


We will refer to a sequence satisfying one of the equivalent conditions of Proposition~\ref{prop:verdier} as a \emph{Keller sequence}.

\begin{remark}
A Keller sequence among stable $\infty$-categories is just a Verdier sequence.
\end{remark}

The following proposition establishes half of Proposition~\ref{prop:verdier}.

\begin{proposition}\label{prop:keller-cof}
Let $i\colon \cal{A} \hookrightarrow \cal{C}$ be a Keller inclusion among weakly idempotent complete exact $\infty$-categories. Let $W$ denote the collection of maps in $\cal{C}$ whose cofibre in $\st(\cal{C})$ lies in $\st(\cal{A})$. Then the localisation $\cal{B} := \cal{C}[W^{-1}]$ inherits an exact structure in which a map is an inclusion (resp. projection) if and only if it is the image of an inclusion (resp. projection) in $\cal{C}$, and the resulting projection $\cal{C} \to \cal{B}$ is exact and a Keller projection. Finally, the resulting null-composite sequence
\[ \cal{A} \xrightarrow{i} \cal{C} \xrightarrow{p} \cal{B} \]
is both cartesian and cocartesian. In particular, Keller inclusions admit cofibres which are Keller projections, and are furthermore the fibres of said projections. 
\end{proposition}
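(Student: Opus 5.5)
The plan is to first identify the localisation $\cal{C}[W^{-1}]$ with both $\cal{B}^l$ and $\cal{B}^r$ from \S\ref{sec:calculus}, then transport the exact structure, and finally verify the universal properties of fibre and cofibre.

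\emph{Step 1: comparing the three localisations.} Inclusions with cofibre in $\cal{A}$ and projections with fibre in $\cal{A}$ obviously lie in $W$. Conversely, take $f\colon x \to y$ in $W$, so $A := \cofib(f) \in \st(\cal{A})$. Since $x \in \cal{C}$, the map $y \to A$ composed with $A \to \Sigma x$ has source in $\cal{C}$. I would use right-specialty, via Lemma~\ref{lem:factor} applied to suitable shifts together with Lemma~\ref{catoffact}, to factor $y \to i(A)$ through the heart. Concretely, I would aim for a factorisation $f = q\circ j$ with $j$ an inclusion with cofibre in $\cal{A}$ and $q$ a projection with fibre in $\cal{A}$. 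Corollary~\ref{conservativity}, which gives $\st(\cal{A})\cap\cal{C}=\cal{A}$, should ensure that the cofibres landing in $\cal{C}$ actually lie in $\cal{A}$. Granting this, $\cal{C}[W^{-1}]$ is obtained from both $\cal{B}^l$ and $\cal{B}^r$ by inverting further maps that are already inverted there, since the factors $j$ and $q$ are inverted in each. Hence $\cal{B}^l \simeq \cal{C}[W^{-1}] \simeq \cal{B}^r$. As a consequence, $p$ enjoys both the left and the right calculus of fractions of Lemma~\ref{lem:calculus}, together with the lifting properties of Proposition~\ref{prop:preserves-cocartesian}. I expect this step to be the main obstacle: producing the factorisation of an arbitrary $W$-map into the two special classes and keeping track of the heart amplitudes.

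\emph{Step 2: the exact structure on $\cal{B}$.} Declare the inclusions and projections of $\cal{B}$ to be the images of those in $\cal{C}$. Lemma~\ref{lem:lift-inclusions} lets us lift a span $x \leftarrow w \mono z$ in $\cal{B}$ to $\cal{C}$. The left map lifts by the left calculus with prescribed domain, and the inclusion lifts with prescribed domain as well. We then push out in $\cal{C}$ and apply Proposition~\ref{prop:preserves-cocartesian}, which shows the pushout stays cocartesian in $\cal{B}$; dually for pullbacks along projections. Closure under composition, the fact that pushouts of inclusions are exactly the exact squares, and additivity of $\cal{B}$ all follow by the same lifting arguments. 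It follows that $p$ is an exact localisation functor. Condition \ref{iiL} of Definition~\ref{specproj} comes from Step 1. Given $f \in W$ written as $q\circ j$ with $j\colon x \mono x'$ having cofibre $a\in\cal{A}$, I would use the left-special lift of Lemma~\ref{spec} to split $x'$ up to a projection from $x\oplus a'$. Condition \ref{iiR} is the dual. I also check here that $\ker(p) = \cal{A}$: if $p(x)=0$ then $0\to x$ is in $W$, so $x \in \st(\cal{A})\cap\cal{C}=\cal{A}$.

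\emph{Step 3: bicartesianness.} For the cofibre property, let $F\colon\cal{C}\to\cal{E}$ be exact with $F|_{\cal{A}}\simeq 0$. Then $F$ inverts inclusions with cofibre in $\cal{A}$, so it factors uniquely through $\cal{B}=\cal{B}^l$. The factored functor preserves inclusions and projections because these are images of such in $\cal{C}$. It preserves exact squares because every exact square in $\cal{B}$ lifts to one in $\cal{C}$ by the lifting procedure of Step 2. For the fibre property, the limit in $\exact$ is $\cal{C}\times_{\cal{B}}0$, which is the full subcategory $\ker(p)=\cal{A}$. Its exact structure is the one detected by $\cal{C}$, and this agrees with the structure on $\cal{A}$ because $\cal{A}$ is extension closed; Corollary~\ref{cor:detects} covers the remaining direction, namely detection of inclusions and projections.
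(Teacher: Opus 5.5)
Your outline follows the paper's: factor $W$-maps through the two special classes, use both calculi of fractions to lift the exact-structure diagrams, and check the cofibre property through universal properties. The gap is that you never prove $W$ is \emph{saturated}, i.e.\ that every map $f$ with $p(f)$ an equivalence already lies in $W$. You use this twice.

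\begin{itemize}
\item Your check of $\ker(p)=\cal{A}$ starts from ``if $p(x)=0$ then $0\to x$ is in $W$''. That is exactly saturation for maps out of $0$, and your fibre statement in Step 3 rests on it.
\item Conditions \ref{iiL} and \ref{iiR} of Definition~\ref{specproj} quantify over all maps inverted by $p$. You only produce split factorisations for $f\in W$.
\end{itemize}

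This is not a formality. Take a stable subcategory $\cal{A}\subseteq\cal{C}$ that is not closed under retracts; it is still left and right special by Example~\ref{ex:verdier-inclusion}. Then $\cal{C}[W^{-1}]$ is the Verdier quotient, whose kernel is the retract closure of $\cal{A}$, so $W$ is not saturated.

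The paper closes this with stable envelopes. Once the cofibre property is known, $\st(\cal{B})\simeq\st(\cal{C})/\st(\cal{A})$, because $\st$ is a left adjoint. Theorem~\ref{thm:fully-faithful} and Corollary~\ref{retract} make $\st(\cal{A})\to\st(\cal{C})$ fully faithful and retract-closed. By \cite[Corollary A.1.10]{9-authors-II} it is therefore the whole kernel of $\st(\cal{C})\to\st(\cal{B})$. Hence if $p(f)$ is invertible then $\cofib(f)\in\st(\cal{A})$, i.e.\ $f\in W$. Also $\ker(p)=\cal{C}\cap\st(\cal{A})=\cal{A}$ by Corollary~\ref{conservativity}, after which Corollary~\ref{cor:detects} gives the fibre sequence in $\exact$ as you say.

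A smaller issue is in Step 1. An unsplit factorisation $f=q\circ j$ does not by itself give $\cal{C}[W^{-1}]\simeq\cal{B}^l\simeq\cal{B}^r$. Nothing you say shows that $q\in\cal{C}^{\cal{A}}_{\pr}$ is inverted in $\cal{B}^l$, or that $j$ is inverted in $\cal{B}^r$. What does the job is the split form $x\mono y\oplus b\to y$: the summand projection is inverted in $\cal{B}^l$ because its section $y\mono y\oplus b$ has cofibre $b\in\cal{A}$. Dually one uses $x\to x\oplus a\epi y$.

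The paper obtains the split form directly, and this also settles the amplitude issue you flag:
\begin{itemize}
\item Corollary~\ref{conservativity} and its dual place $A=\cofib(f)$ in $\st(\cal{A})_{[0,1]}$.
\item Lemma~\ref{lem:factor} with $n=1$ then factors $y\to A$ as $y\to a\to A$ with $\cofib(a\to A)\simeq\Sigma b$.
\item The pullback $z=y\times_A a$ has $\cofib(x\to z)\simeq a$ and $\fib(z\to y)\simeq b$.
\item Since $y\to A$ factors through $a$, the map $z\to y$ has a section, so $z\simeq y\oplus b$.
\end{itemize}
Your left-speciality argument in Step 2, together with its right-special dual, would also produce both split forms. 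It has to come before you identify the localisations, not after.
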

\begin{proof}
Note first that since the subcategory $W$ is closed under direct sum the localisation $\cal{B}$ is additive and the map $\cal{C} \to \cal{B}$ is direct sum preserving.
Let now $f\colon x \to y$ be a map in $W$, so that the cofibre $A = \cofib[f] \in \st(\cal{C})$ lies in $\st(\cal{A})$, and hence in $\st(\cal{A})_{[0,1]}$ by Corollary~\ref{conservativity}. We claim that there exists a $b \in \cal{A}$ such that $f$ factors as a composite $x \mono b \oplus y \epi y$ with the second map being the projection to $y$. To see this, note that by Lemma~\ref{lem:factor}
the map $y \to A$ factors as $y \to a \to A$ with $a \in \cal{A}$ and $\cofib[a \to A] = \Sigma b$ for some $b \in \cal{A}$. 
Let $z$ be the fibre product in the right square in
\[
\begin{tikzcd}
x\ar[r]\ar[d,equal] & z \ar[r]\ar[d] & a \ar[d] \\
x\ar[r] & y \ar[r] & A
\end{tikzcd}
\]
where the map $x \to z$ is induced by the universal property pullback from the map $f\colon x \to y$ and the zero map $0\colon x \to a$. Then $\fib[z \to y] \simeq b \in \cal{A}$ and so $z$ belongs to $\cal{C}$ and the map $z \to y$ is a projection in $\cal{C}$. At the same time, the cofibre of $x \to z$ in $\st(\cal{C})$ is $a$, and so $x \to z$ is an inclusion in $\cal{C}$.
Now since the map $y \to A$ factors through $a$ by construction the map $z \to y$ admits a section and hence splits as the summand projection $y \oplus b \to y$. Applying this argument to the opposite exact $\infty$-categories we similarly get that any $f\colon x \to y$ in $W$ factors as a composite $x \mono x \oplus a \epi y$, where the first map is the associated summand inclusion.

We conclude that the projection $\cal{C} \to \cal{B}$ also exhibits $\cal{B}$ as the localisation of $\cal{C}$ by $\cal{C}^{\cal{A}}_{\inc}$, and also as the localisation of $\cal{C}$ by $\cal{C}^{\cal{A}}_{\pr}$. In particular, by Lemma~\ref{lem:calculus} this localisation has both a left calculus of fractions with respect to $\cal{C}^{\cal{A}}_{\inc}$ and a right calculus of fractions with respect to $\cal{C}^{\cal{A}}_{\pr}$. Furthermore, by Lemma~\ref{prop:preserves-cocartesian}
the map $p\colon \cal{C} \to \cal{B}$ preserves cocartesian squares with parallel legs inclusions and cartesian squares with parallel legs projections. Using that the axioms of exact structures are satisfied in $\cal{C}$, to show that under the proposed collection of inclusions and projections the $\infty$-category $\cal{B}$ and the functor $p$ are exact it will hence suffice to show that any of the following types of diagrams
\[
\begin{tikzcd}
x \ar[r,hookrightarrow]\ar[d] & y \\
z &
\end{tikzcd}
\quad\quad
\begin{tikzcd}
x \ar[r,hookrightarrow]\ar[d,->>] & y \\
z &
\end{tikzcd}
\quad\quad
\begin{tikzcd}
& y\ar[d,->>] \\
z\ar[r] & w
\end{tikzcd}
\quad\quad
\begin{tikzcd}
& y\ar[d,->>] \\
z\ar[r,hookrightarrow] & w
\end{tikzcd}
\]
in $\cal{B}$ lifts to a diagram of the same type in $\cal{C}$. All of these follow from Lemmas \ref{lem:calculus} and \ref{lem:lift-inclusions}. Indeed, since we assume that $\cal{A} \hookrightarrow \cal{C}$ is both left- and right-special these lemmas together tell us that we can lift any map in $\cal{B}$ to a map in $\cal{C}$ given a prescribed lift of either its domain or its target, that we can lift any inclusion in $\cal{B}$ to an inclusion in $\cal{C}$ given a prescribed lift of its domain, and that we can lift any projection in $\cal{B}$ to a projection in $\cal{C}$ given a prescribed lift of its target. To obtain the first two diagram we first lift $z$ to $\tilde z$, then lift the map $x \to z$ to map $\tilde x \to \tilde z$ using the right calculus of fractions (which, in the second case we can choose to be a projection), and then lift the inclusion $x \mono y$ to an inclusion $\tilde x \to \tilde y$. For the latter two diagrams, we first lift $z$ to $\tilde z$, then lift the map $z \to w$ to a map $\tilde z \to \tilde w$ (which in the second case we can choose to be an inclusion), and then lift $z \epi w$ to a projection $\tilde z \epi \tilde w$. We have thus established that $\cal{B}$ is an exact $\infty$-category with respect to the proposed structure and that $p\colon \cal{C} \to \cal{B}$ is exact. 

We now claim that the resulting null-composite sequence
\begin{equation}\label{eq:keller-sequence} 
\cal{A} \xrightarrow{i} \cal{C} \xrightarrow{p} \cal{B} 
\end{equation}
is both cartesian and cocartesian. For this, note first that by the argument above, the cocartesian squares with parallel legs inclusions in $\cal{B}$ are the exactly the images of the same types of squares in $\cal{C}$, and similarly for cartesian squares with parallel legs projections. The universal property of localisations then implies that for any exact $\infty$-category $\cal{E}$, the induced functor
\[ \funex(\cal{B},\cal{E}) \to \funex(\cal{C},\cal{E}) \]
is fully faithful with essential image those exact functors $\cal{C} \to \cal{E}$ which send the arrows in $W$ to equivalences. Clearly any such exact functor vanishes on $\cal{A}$ (since the maps from $0$ to/from any object in $\cal{A}$ is in $W$), but on the other hand since we have shown in the beginning of the proof that any map in $W$ is a composite of an inclusion with cofibre in $\cal{A}$ and a projection with fibre in $\cal{A}$ we see that any exact functor that vanishes on $\cal{A}$ also inverts all arrows in $W$. Comparing universal properties we hence conclude that the sequence~\eqref{eq:keller-sequence} exhibits $\cal{B}$ as the cofibre of $\cal{A} \hookrightarrow \cal{C}$.

Now since forming stable envelops is a colimit preserving operation we have $\st(\cal{B}) = \st(\cal{C})/\st(\cal{A})$. At the same time, by Corollary~\ref{retract} the induced functor $\st(\cal{A}) \to \st(\cal{C})$ is fully faithful and closed under retracts, and so the map $\st(\cal{A}) \to \fib[\st(\cal{C}) \to \st(\cal{B})]$ is an equivalence by \cite[Corollary A.1.10]{9-authors-II}. Since $\cal{C}$ embeds in $\st(\cal{C})$ and $\cal{A} = \cal{C} \cap \st(\cal{A})$ by Corollary~\ref{conservativity} we now conclude that the sequence~\eqref{eq:keller-sequence} is a fibre sequence in $\cat$, and eventually in $\exact$ by Corollary~\ref{cor:detects}.

It is left to show that $p$ is a Keller projection. Note that since the fibre of $p$ was shown to coincide with $\cal{A}$, the factorization properties we proved for $W$ in the beginning of the proof imply that $p$ satisfies~\ref{iiR} and~\ref{iiL} as soon as we show that $W$ is saturated, that is, that every map $f\colon x \to y$ such that $p(f)$ is an equivalence is in $W$. For this, note that if $p(f)$ is an equivalence then $\cofib[f] \in \st(\cal{C})$ lies in the fibre of $\st(\cal{C}) \to \st(\cal{B})$. But we have already saw above that this fibre is $\st(\cal{A})$, and so the proof is complete.
\end{proof}

\begin{proof}[Proof of Proposition \ref{prop:verdier}]
The implication \ref{cof} $\Rightarrow$ \ref{fib} follows from Proposition~\ref{prop:keller-cof}.
We now show that \ref{fib} $\Rightarrow$ \ref{cof}. 
For this we first show that if $p$ is a left-special projection with kernel $i$ then $i$ is a left-special inclusion (and hence, passing to opposite exact $\infty$-categories, that if $p$ is a right-special projection with kernel $i$ then $i$ is a right-special inclusion). 
	Given a projection $x \epi a$ in $\cal{C}$ with $a \in \cal{A}$, write $f:y \mono x$ for its fibre. Since $p$ is a left-special projection, 	there exists a $b \in \cal{A}$ and factorization 	\[\begin{tikzcd}
		& y \oplus b \ar[rd, "q", twoheadrightarrow] \\
		y \ar[rr, rightarrowtail, "f"] \ar[ru, rightarrowtail] && x
	\end{tikzcd}\]
	where $q$ is a projection with fibre in $\cal{A}$ and the left diagonal map is the corresponding summand inclusion.  
	We claim that the composite $b \to y\oplus b \epi x \epi a$ is a projection in $\cal{A}$. Indeed, there is a map of exact sequences
	\[\begin{tikzcd}
		y \ar[r, rightarrowtail] \ar[d, equal] & y \oplus b \ar[r, twoheadrightarrow] \ar[d, twoheadrightarrow] \ar[r] & b \ar[d] \\
		y \ar[r, rightarrowtail, "f"] & x \ar[r, twoheadrightarrow] & a 
	\end{tikzcd}\]
	where the upper row is split. Since the right square induces an equivalence on horizontal fibres it is cartesian in $\st(\cal{C})$, and so also induces an equivalence on vertical fibres in $\st(\cal{C})$. Since $q$ is a projection these vertical fibres are in $\cal{C}$ and $b \to a$ is a projection in $\cal{C}$. Furthermore, since the fibre of $q$ is assumed in $\cal{A}$ we have that the fibre of $b \to a$ is in $\cal{A}$, so that $b \to a$ is a also a projection in $\cal{A}$. 
 
	Now since we assume that $\cal{A} \to \cal{C}$ is the inclusion of the kernel of $p$ it is closed under retracts, and hence a Keller inclusion.  
It is left to show that the sequence exhibits $\cal{B}$ as the cofibre of $\cal{A} \to \cal{C}$. Let $W$ denote the collection of maps sent to equivalences by $p$. Since the underlying functor of $p$ is a localisation by $W$ and the inclusions/projections in $\cal{B}$ are exactly the images of the inclusions/projections in $\cal{C}$ we have that for any exact $\infty$-category $\cal{E}$ the induced functor
\[ \funex(\cal{B},\cal{E}) \to \funex(\cal{C},\cal{E}) \]
is fully faithful with essential image those exact functors $\cal{C} \to \cal{E}$ which send the arrows in $W$ to equivalences. Now since the arrow from $0$ to (or from) any object in $\cal{A}$ is in $W$ we have that such exact functors vanish on $\cal{A}$. On the other hand, either of the axioms~\ref{iiL}, \ref{iiR} implies that any exact functor $\cal{C} \to \cal{E}$ which vanishes on $\cal{A}$ inverts all the arrows in $W$. Comparing universal properties we conclude that $p$ exhibits $\cal{B}$ as the cofibre of $i\colon \cal{A} \to \cal{C}$, as desired.
\end{proof}

\begin{corollary}\label{st}
	$\st(-)$ sends Keller sequences to Verdier sequences in $\catex$.
\end{corollary}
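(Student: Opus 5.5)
Let $\cal{A} \xrightarrow{i} \cal{C} \xrightarrow{p} \cal{B}$ be a Keller sequence. By Proposition~\ref{prop:verdier} we may assume it satisfies condition~\ref{cof}: $i$ is a Keller inclusion and $p$ exhibits $\cal{B}$ as the cofibre of $i$ in $\exact$. The plan is to combine two facts. First, $\st(-)$ is a left adjoint, so it preserves cofibres. Second, Keller's theorem shows that $\st(i)$ is fully faithful. A cofibre sequence of stable $\infty$-categories whose first map is fully faithful and retract-closed is a Verdier sequence by \cite[Corollary A.1.10]{9-authors-II}.

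\textbf{Step 1: $\st(p)$ is the cofibre of $\st(i)$.} Since $\st\colon \exact \to \catex$ is left adjoint to the inclusion, it preserves pushouts and the zero object. Hence $\st(\cal{A}) \to \st(\cal{C}) \to \st(\cal{B})$ is a cofibre sequence in $\catex$, so $\st(\cal{B}) \simeq \st(\cal{C})/\st(\cal{A})$.

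\textbf{Step 2: $\st(i)$ is fully faithful.} This is Theorem~\ref{thm:fully-faithful}, because $i$ is in particular right-special.

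\textbf{Step 3: the fibre of $\st(p)$ is $\st(\cal{A})$.} The point is to show that $\st(\cal{A}) \to \fib[\st(\cal{C}) \to \st(\cal{B})]$ is an equivalence; without retract-closure one only gets the fibre as the retract closure of $\st(\cal{A})$. If $\cal{A}$ and $\cal{C}$ are weakly idempotent complete, Corollary~\ref{retract} gives retract-closure, \cite[Corollary A.1.10]{9-authors-II} identifies the fibre with $\st(\cal{A})$, and the sequence is Verdier. This is exactly the argument already run inside the proof of Proposition~\ref{prop:keller-cof}.

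\textbf{Main obstacle: removing weak idempotent completeness.} To handle general $\cal{A}$ and $\cal{C}$, I would pass to weak idempotent completions $\cal{A}^{\natural} \subseteq \cal{C}^{\natural}$. This does not change stable envelopes, since $\st(\cal{E})^{\heartsuit}$ is the weak idempotent completion of $\cal{E}$ and $\cal{E} \to \cal{E}^{\natural}$ induces an equivalence on $\st$. One then checks that $\cal{A}^{\natural} \subseteq \cal{C}^{\natural}$ is still a Keller inclusion: retract-closure is arranged by taking retracts inside $\cal{C}^{\natural}$, and speciality transfers because any projection onto a retract of $a$ can be completed by adding a complementary summand. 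Once this holds, the Verdier sequence produced by Step 3 for $\cal{A}^{\natural} \subseteq \cal{C}^{\natural}$ is identified with $\st(\cal{A}) \to \st(\cal{C}) \to \st(\cal{B})$, using Step 1. This verification of speciality for the completions is where I expect the real work to lie.

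If that verification becomes delicate, the fallback is to state the corollary for weakly idempotent complete input. This is implicitly the setting of Proposition~\ref{prop:keller-cof}, and the corollary as stated is plausibly only meant to hold up to retract closure in the general case, as the introduction suggests.
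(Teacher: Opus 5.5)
Your Steps 1--3 are the paper's proof. $\st$ preserves the cofibre, and $\st(i)$ is fully faithful by Theorem~\ref{thm:fully-faithful}, so the cofibre is a Verdier projection whose kernel is the retract closure of $\st(\cal{A})$. Corollary~\ref{retract} then identifies that closure with $\st(\cal{A})$. The paper's one-line argument also goes through Corollary~\ref{retract}, so it tacitly uses weak idempotent completeness exactly as your Step 3 does. Under that hypothesis your argument is complete.

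Your proposed removal of that hypothesis cannot work, and the obstruction is retract-closure, not speciality. If $\cal{A}$ is closed under retracts in $\cal{C}$, it does not follow that $\cal{A}^{\natural}$ is closed under retracts in $\cal{C}^{\natural}$. Enlarging $\cal{A}^{\natural}$ to its retract closure changes its stable envelope, so your final identification with $\st(\cal{A}) \to \st(\cal{C}) \to \st(\cal{B})$ breaks. In fact the conclusion itself fails, as the following example shows.

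Let $k$ be a field. Let $\cal{C} \subseteq \mathrm{Vect}_k^{\times 3}$ be the full subcategory of triples $(V_1,V_2,V_3)$ with $\dim V_2 \leq \dim V_1 \leq \dim V_2 + \dim V_3$. Let $\cal{A} \subseteq \cal{C}$ consist of the triples with $V_3 = 0$, which forces $\dim V_1 = \dim V_2$. Give both the split exact structure.

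\begin{itemize}
\item Speciality is automatic for split structures.
\item A retract in $\cal{C}$ of an object of $\cal{A}$ again has $V_3 = 0$, so it lies in $\cal{A}$.
\item Hence $\cal{A} \subseteq \cal{C}$ is a Keller inclusion, and $\cal{A}$ is even weakly idempotent complete.
\end{itemize}

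However, $\cal{C}$ is not weakly idempotent complete. The object $(k,0,0)$ is the kernel of the split retraction $(k,0,k) \to (0,0,k)$ in $\cal{C}$. It lies in $\cal{C}^{\natural} = \st(\cal{C})^{\heartsuit}$, and there it is a retract of $(k,k,0) \in \cal{A}$.

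It is not in $\st(\cal{A})$. Under $\st(\cal{C}) \to \st(\mathrm{Vect}_k^{\times 3})$, every object of $\st(\cal{A})$ is built from $\cal{A}$ by cofibres and shifts, so its Euler characteristics satisfy $\chi_1 = \chi_2$. The object $(k,0,0)$ has $\chi = (1,0,0)$.

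So $\st(\cal{A})$ is not retract-closed in $\st(\cal{C})$. The sequence $\st(\cal{A}) \to \st(\cal{C}) \to \st(\cal{C}/\cal{A})$ is then a cofibre sequence whose fibre strictly contains $\st(\cal{A})$, so it is not a Verdier sequence. Your fallback is therefore the correct reading of the statement, not just a convenience: require weak idempotent completeness, as in Proposition~\ref{prop:keller-cof} and Corollary~\ref{retract}.
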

\begin{proof}
	Given a Keller sequence $(i,p)\colon\cal{A} \to \cal{C} \to \cal{B}$, the induced map $\st{p}\colon\st(\cal{C}) \to \st(\cal{B})$ is a Verdier projection by Proposition \ref{prop:verdier} since $\st(-)$ is left adjoint, with kernel the retract-closure of $\st(\cal{A})$ in $\st(\cal{C})$, which by Corollary \ref{retract} coincides with $\st(\cal{A})$.
\end{proof}

\begin{corollary}\label{cor:heartwise-keller}
Let $\cal{A} \to \cal{C} \to \cal{B}$ be a Keller sequence. Then for every $-\infty \leq m \leq n \leq \infty$ the sequence
\[ \st(\cal{A})_{[m,n]} \to \st(\cal{C})_{[m,n]} \to \st(\cal{B})_{[m,n]} \]
is a Keller sequence.
\end{corollary}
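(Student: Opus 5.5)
We reduce to Proposition~\ref{prop:keller-cof} applied to the inclusion $\st(\cal{A})_{[m,n]} \hookrightarrow \st(\cal{C})_{[m,n]}$, with its exact structure induced as an extension-closed subcategory of a stable $\infty$-category. By Corollary~\ref{st} the sequence $\st(\cal{A}) \to \st(\cal{C}) \to \st(\cal{B})$ is a Verdier sequence, and by Corollary~\ref{retract} the image of $\st(\cal{A})$ is retract-closed.

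\textbf{Step 1: $\st(i)$ restricts to a well-behaved inclusion $\st(\cal{A})_{[m,n]} \hookrightarrow \st(\cal{C})_{[m,n]}$.} Full faithfulness follows from Theorem~\ref{thm:fully-faithful}. For heart conservativity in both directions we use Corollary~\ref{conservativity} and its dual. Together these give
\[
\st(\cal{A}) \cap \st(\cal{C})_{[m,n]} = \st(\cal{A})_{[m,n]}.
\]
Consequences: the subcategory is closed under extensions, it detects exact sequences, and it is closed under retracts by Lemma~\ref{lem:closed-under-retracts} and Corollary~\ref{retract}. Both categories are weakly idempotent complete by Lemma~\ref{lem:closed-under-retracts}.

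\textbf{Step 2: the inclusion is a Keller inclusion.} We need left- and right-specialty. For right-specialty, take an inclusion $A \mono X$ in $\st(\cal{C})_{[m,n]}$ with cofibre in $\st(\cal{A})_{[m,n]}$. Equivalently, take a map $X \to \Sigma A$ with $A \in \st(\cal{A})_{[m,n]}$; we must factor it through some $B \in \st(\cal{A})_{[m,n]}$ so that the fibre stays in the window.

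The approach is to run a stable analogue of Lemma~\ref{lem:factor} using Verdier localisation. Since $\st(\cal{A})$ is a Verdier subcategory, maps $X \to \Sigma A$ can be approximated using the heart filtration of $X$. We then repeatedly apply the specialty of $\cal{A}\subseteq\cal{C}$ layer by layer, as in Lemma~\ref{lem:factor} and Lemma~\ref{lem:square-left-special}, to keep the factoring object inside $[m,n]$. Concretely, we generalise Lemma~\ref{lem:factor}: first factor through an object of $\st(\cal{A})$, then truncate to $[m,n]$ by pulling back and pushing out along heart decompositions inside $\st(\cal{A})$. This uses that heart decompositions in $\st(\cal{A})$ map to heart decompositions in $\st(\cal{C})$.

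For the two-sided windows $[m,n]$ with both bounds finite, we expect to combine the left- and right-special factorisations, one truncation at each end. We then appeal to duality $(-)^{\op}$, which exchanges left and right and $[m,n]$ with $[-n,-m]$.

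\textbf{Step 3: identify the cofibre.} By Proposition~\ref{prop:keller-cof}, the cofibre of $\st(\cal{A})_{[m,n]} \hookrightarrow \st(\cal{C})_{[m,n]}$ is the localisation at maps whose cofibre lies in $\st(\st(\cal{A})_{[m,n]})$. Its kernel and maps are then computed via calculus of fractions (Lemma~\ref{lem:calculus}).

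We must compare this with $\st(\cal{B})_{[m,n]}$, using the Verdier localisation $\st(\cal{C}) \to \st(\cal{B})$. The plan is to check directly that $\st(\cal{C})_{[m,n]} \to \st(\cal{B})_{[m,n]}$ is a Keller projection with kernel $\st(\cal{A})_{[m,n]}$, and then invoke Proposition~\ref{prop:verdier}. This needs three facts:
\begin{itemize}
\item it is essentially surjective, i.e.\ every object of $\st(\cal{B})_{[m,n]}$ lifts to $\st(\cal{C})_{[m,n]}$;
\item it is a localisation onto its image;
\item inclusions and projections lift.
\end{itemize}
Here we plan to use that the heart structure on $\st(\cal{B})$ is generated by the image of $\cal{B}$, which is the image of $\cal{C}$.

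\textbf{Main obstacle.} We expect Step 3's essential surjectivity to be the hardest part, together with the claim that the heart structure on $\st(\cal{B})$ is the image one. This amounts to showing that $\st(\cal{C})_{[m,n]}$ surjects onto $\st(\cal{B})_{[m,n]}$ up to equivalence. We would first try induction on $n-m$ via heart decompositions, lifting extensions using the lifting of inclusions from Lemma~\ref{lem:lift-inclusions}.
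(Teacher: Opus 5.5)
\textbf{There is a genuine gap in Step 3.} Once Step 2 is done, Proposition~\ref{prop:keller-cof} already gives you the cofibre $\cal{Q}=\st(\cal{C})_{[m,n]}/\st(\cal{A})_{[m,n]}$, and it is a Keller projection. So everything reduces to showing that the comparison functor $\cal{Q}\to\st(\cal{B})_{[m,n]}$ is an equivalence of exact $\infty$-categories. The substantive part of this is full faithfulness together with agreement of exact structures. Your bullet ``it is a localisation onto its image'' comes with no method for this. Your alternative, checking directly that $\st(\cal{C})_{[m,n]}\to\st(\cal{B})_{[m,n]}$ is a Keller projection, would in addition require \ref{iiL} and \ref{iiR}, which you do not list.

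The missing idea is to pass to stable envelopes:
\begin{itemize}
\item by \cite[Theorem 2.9]{Sau23}, the inclusion $\st(\cal{C})_{[0,n]}\subseteq\st(\cal{C})$ exhibits $\st(\cal{C})$ as the stable envelope of the window, and likewise for $\cal{A}$ and $\cal{B}$;
\item $\st$ preserves cofibres;
\item $\st(\cal{C})/\st(\cal{A})\simeq\st(\cal{B})$ by Corollary~\ref{st}.
\end{itemize}
Hence the comparison functor becomes an equivalence after applying $\st$. Every exact $\infty$-category sits in its stable envelope as an extension-closed full subcategory that detects exact sequences \cite[Proposition 4.25]{Kle22}. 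It follows that $\cal{Q}\to\st(\cal{B})_{[m,n]}$ is automatically fully faithful and detects exact sequences.

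Only essential surjectivity then remains, and it is not the main obstacle you take it to be. Induct on $n$ with $m=0$. The Keller projection $\st(\cal{C})_{[0,n-1]}\to\st(\cal{B})_{[0,n-1]}$ lets you lift any map to a zig-zag $\tilde x\to w\leftarrowtail\tilde y$. Every object of $\st(\cal{B})_{[0,n]}$ is the cofibre of a map in $\st(\cal{B})_{[0,n-1]}$, so it lifts. For $n=\infty$ the paper argues differently: the cofibre of $\pst(\cal{A})\to\pst(\cal{C})$ has all maps inclusions, so it is prestable, and since $\pst$ is a left adjoint this cofibre is $\pst(\cal{B})$.

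\textbf{Step 2 is vaguer than it needs to be.} Heart structures have no functorial truncations, so ``truncating by pulling back and pushing out along heart decompositions'' is not yet an argument. You do not need it:
\begin{itemize}
\item Lemma~\ref{lem:square-left-special}, applied to the square with top row $\cal{A}\to\cal{C}$ and vertical maps into the windows $[0,n]$, gives left-specialty of $\st(\cal{A})_{[0,n]}\to\st(\cal{C})_{[0,n]}$.
\item Its dual, applied to the windows $[-n,0]$ and transported along $\Sigma^{n}$, gives right-specialty.
\item For $n=\infty$, right-specialty is automatic by Example~\ref{ex:verdier-inclusion}.
\end{itemize}
Step 1 (full faithfulness, two-sided heart conservativity and retract closure) is fine and matches the paper.
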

\begin{proof}
The map on the left is fully faithful by Theorem~\ref{thm:fully-faithful} and closed under retracts 
by Corollaries~\ref{conservativity} and~\ref{retract}.
Now if $m=-\infty$ and $n=\infty$ then this is Corollary~\ref{st} (see Examples \ref{ex:verdier-inclusion} and \ref{ex:verdier-projection}). If $n=\infty$ and $m \neq \infty$ then we may as well assume that $m=0$. Then $\st(\cal{A})_{[0,\infty]} = \pst(\cal{A}) \to \pst(\cal{C}) = \st(\cal{C})_{[0,\infty]}$ is left-special by Lemma~\ref{lem:square-left-special} and is right-special since all maps are inclusions on both sides. By Proposition~\ref{prop:verdier} we then have that the map $\pst(\cal{C}) \to \pst(\cal{C})/\pst(\cal{A})$ to the cofibre in $\exact$ is a Keller projection. Since all maps in $\pst(\cal{C})$ are inclusions it then follows that all the maps in $\pst(\cal{C})/\pst(\cal{A})$ are inclusions, so that this cofibre is also the cofibre in the full subcategory $\exact^{\pst} \subseteq \exact$ spanned by the exact $\infty$-categories in which all maps are inclusions, which is just the $\infty$-category of pre-stable $\infty$-categories and finite colimit preserving functors. Since $\pst(-)$ constitutes a left adjoint to the inclusion $\exact^{\pst} \subseteq \exact$ we conclude that the functor $\pst(\cal{C})/\pst(\cal{A}) \to \pst(\cal{C}/\cal{A}) = \pst(\cal{B})$ is an equivalence, and so
\[ \pst(\cal{A}) \to \pst(\cal{B}) \to \pst(\cal{C}) \]
is a Keller sequence. The case where $m=-\infty$ and $n\neq \infty$ follows in a dual manner.

Finally, consider the case where $m \neq -\infty$ and $n \neq \infty$. Without loss of generality we may assume that $m=0$. Applying both Lemma~\ref{lem:square-left-special} and its dual we see that $\st(\cal{A})_{[0,n]} \to \st(\cal{C})_{[0,n]}$ is a Keller inclusion. To finish the proof we need to show that the induced exact functor
\begin{equation}\label{eq:induced-cofibre}
\st(\cal{C})_{[0,n]}/\st(\cal{A})_{[0,n]} \to \st(\cal{B})_{[0,n]} 
\end{equation}
is an equivalence. Now since the inclusion $\cal{C} \to \st(\cal{C})_{[0,n]}$ induces an equivalence on stable envelopes by~\cite[Theorem 2.9]{Sau23} (and its proof), the inclusion $\st(\cal{C})_{[0,n]} \subseteq \st(\cal{C})$ exhibits its target as the stable envelope of its source, and since taking stable envelopes preserves cofibres we conclude that~\eqref{eq:induced-cofibre} induces an equivalence on stable envelopes. The exact functor~\eqref{eq:induced-cofibre} is hence an extension-closed inclusion by~\cite[Proposition 4.25]{Kle22}. It will hence suffice to show that $\st(\cal{C})_{[0,n]} \to \st(\cal{B})_{[0,n]}$ is essentially surjective. 
We now argue by induction on $n$. For $n=0$ this is clear. Now suppose that $n$ is such that the claim is true for $n-1$. Then $\st(\cal{C})_{[0,n-1]} \to \st(\cal{B})_{[0,n-1]}$ is a Keller projection and in particular every map $f\colon x \to y$ in $\st(\cal{B})_{[0,n-1]}$ is the image of a map in $\st(\cal{C})_{[0,n-1]}$ in the sense that it lifts to a zig-zag $\tilde{x} \to w \leftarrowtail \tilde{y}$ with right-hand map an inclusion with cofibre in $\st(\cal{A})_{[0,n-1]}$ (so the image of the left-hand map in $\st(\cal{B})_{[0,n-1]}$ is equivalent to $f$). Since any object in $\st(\cal{B})_{[0,n]}$ is a cofibre of a map in $\st(\cal{B})_{[0,n-1]}$ it now follows that $\st(\cal{C})_{[0,n]} \to \st(\cal{B})_{[0,n]}$ is essentially surjective, as desired.
\end{proof}

The following corollary shows that in the definition of Keller projections, Properties \ref{iiL} and \ref{iiR} could be strengthened so that the \ref{iiL} applies to all maps in $\cal{C}$ lying over projections in $\cal{B}$ and \ref{iiR} applies to all maps in $\cal{C}$ lying over inclusions in $\cal{B}$, giving an a priori stronger notion but a posteriori equivalent notion of Keller projections:

\begin{corollary}\label{cor:stronger-ii}
Let $p\colon \cal{C} \to \cal{B}$ be an exact localisation functor among weakly idempotent complete $\infty$-categories and set $\cal{A} =: \ker(p)$. If $p$ is a left-special projection then for every map $f\colon x \to y$ in $\cal{C}$ such that $p(f)$ is a projection there exists an $a \in \cal{A}$ and a factorisation
\[\begin{tikzcd}
& x \oplus a \ar[rd, "q", twoheadrightarrow] \\
x \ar[ru, rightarrowtail] \ar[rr, "f"] && y,
\end{tikzcd}\]
such that $q$ is a projection and the left diagonal map is the corresponding summand inclusion. Dually, if $p$ is a right-special projection then for every map $f\colon x \to y$ in $\cal{C}$ such that $p(f)$ is an inclusion there exists an $a \in \cal{A}$ and a factorisation
\[\begin{tikzcd}
& y \oplus b \ar[rd, twoheadrightarrow] \\
x \ar[ru, rightarrowtail,"j"] \ar[rr, "f"] && y,
\end{tikzcd}\]
such that $j$ is an inclusion and right diagonal map is the corresponding summand projection. 
\end{corollary}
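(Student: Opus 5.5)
We prove the left-special statement; the right-special one follows by passing to opposite exact $\infty$-categories.

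\textbf{Step 1: setup.} Let $p$ be a left-special projection and $\cal{A} = \ker(p)$. By the proof of Proposition~\ref{prop:verdier}, $\cal{A} \hookrightarrow \cal{C}$ is a retract-closed left-special inclusion. By Remark~\ref{rem:calculus}, $p$ exhibits $\cal{B}$ as $\cal{C}[(\cal{C}^{\cal{A}}_{\pr})^{-1}]$ and enjoys a right calculus of fractions. By Lemma~\ref{lem:lift-inclusions} together with Proposition~\ref{prop:preserves-cocartesian}, projections in $\cal{B}$ lift to projections in $\cal{C}$ with any prescribed lift of their codomain.

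\textbf{Step 2: replace $f$ by a projection up to an equivalence.} Let $f\colon x \to y$ with $p(f)$ a projection. Lift $p(f)$ to a projection $q'\colon x' \epi y$ with codomain exactly $y$. The equivalence $p(x) \simeq p(x')$ over $p(y)$ is represented, via the right calculus of fractions, by a zig-zag $x \leftarrow w \to x'$. Here $s\colon w \epi x$ lies in $\cal{C}^{\cal{A}}_{\pr}$, and the composite $w \to x' \epi y$ agrees with $f\circ s$ in $\cal{C}$. To get equality in $\cal{C}$ rather than only in $\cal{B}$, first precompose with a further projection in $\cal{C}^{\cal{A}}_{\pr}$; this is possible because in a right calculus of fractions, maps equal in $\cal{B}$ become equal after precomposing with some map in the class. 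The map $w \to x'$ becomes an equivalence in $\cal{B}$, so axiom~\ref{iiL} factors it as $w \mono w\oplus a' \epi x'$. Hence $f\circ s$ factors as the summand inclusion $w \mono w \oplus a'$ followed by a projection $w\oplus a' \epi y$ (a composite of projections).

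\textbf{Step 3: descend from $w$ to $x$.} This is the main obstacle. We have a projection $g\colon w\oplus a' \epi y$ whose restriction to $w$ is $f\circ s$, where $s\colon w \epi x$ has fibre $k \in \cal{A}$. We want to replace $w$ by $x$. The plan is to push out along $s\oplus \id$, which amounts to quotienting by $k$ embedded in $w\oplus a'$. This works cleanly if $g$ vanishes on $k$, which it does, since $f\circ s$ kills $k$. But "quotienting by $k$" requires the map $k \to w \oplus a'$ to be an inclusion whose cofibre is $x \oplus a'$; since it is the composite of the inclusion $k \mono w$ with a summand inclusion, it is an inclusion, and its cofibre is indeed $x\oplus a'$. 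So $g$ descends to $\bar g\colon x \oplus a' \to y$, restricting to $f$ on $x$.

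It remains to see that $\bar g$ is a projection. We have $\bar g \circ (s\oplus\id) = g$, with $g$ a projection and $s\oplus \id$ a projection with fibre in $\cal{A}$. Passing to $\st(\cal{C})$, the fibre of $\bar g$ is an extension of $\fib(g)$ by $\Sigma k$. It therefore lies in $\st(\cal{C})_{[0,1]}$, and we need it to lie in $\cal{C}$. If this direct approach stalls, I would instead use the obscure-axiom style argument: since $g = \bar g \circ (s\oplus \id)$ is a projection and $\cal{C}$ is weakly idempotent complete, the standard cancellation (the dual of "if $ba$ is a projection then $b$ is") shows that $\bar g$ is a projection. This cancellation holds in weakly idempotent complete exact $\infty$-categories, since $\cal{C}$ is closed under retracts in $\st(\cal{C})$ by Corollary~\ref{cor:retract-closure}. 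With $a := a'$, this yields the required factorisation $x \mono x\oplus a \epi y$.
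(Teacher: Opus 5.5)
There is a genuine gap in Step~2, at the claim that ``in a right calculus of fractions, maps equal in $\cal{B}$ become equal after precomposing with some map in the class''. This is false in the present setting. Already for ordinary exact categories, where projections are epimorphisms, the claim would force $p$ to be faithful. But for $0\neq a\in\cal{A}$ the maps $\id_a$ and $0$ agree in $\cal{B}$, whereas $\id_a\circ s'=s'\neq 0$ for every projection $s'\colon w'\twoheadrightarrow a$.

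The colimit in Lemma~\ref{lem:calculus} is not indexed by a cofiltered category. Its identifications come from zigzags: for instance, restricting $(u,v)\colon w\oplus a\to y$ along the two sections $(\id,\gamma)$ and $(\id,0)$ of $w\oplus a\twoheadrightarrow w$ identifies $u$ with $u+v\gamma$. So your zigzag $x\leftarrow w\to x'$ only tells you that $fs$ and $q't$ differ by some $h\colon w\to y$ with $p(h)\simeq 0$. Step~3, however, needs $g|_w\simeq fs$ on the nose. To repair this you would have to show that such an $h$ factors through some $a''\in\cal{A}$; you could then enlarge $a'$ to $a'\oplus a''$ and twist by a shear automorphism. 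Nothing you cite gives this factorisation. Proving it needs substantially more input, e.g.\ the Verdier quotient $\st(\cal{C})\to\st(\cal{B})$ of Corollary~\ref{st} together with Lemma~\ref{lem:factor}.

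The paper avoids any comparison ``modulo $\cal{B}$'' by building the factorisation directly in $\cal{C}$:
\begin{enumerate}
\item It sets $C=\cofib(f)\in\st(\cal{C})_{[0,1]}$.
\item It uses that $\st(\cal{C})_{[0,1]}\to\st(\cal{B})_{[0,1]}$ is a Keller projection (Corollary~\ref{cor:heartwise-keller}) to obtain a projection $b\twoheadrightarrow C$ with $b\in\cal{A}$.
\item It sets $z=y\times_C b$ and lets $f$ together with the zero map $x\to b$ induce $x\to z$.
\end{enumerate}
Then $f$ is literally the composite $x\to z\twoheadrightarrow y$. The map $x\to z$ is an inclusion with cofibre $b$, hence inverted by $p$, and applying \ref{iiL} to it finishes the proof.

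Two smaller points. Your Step~3 is sound on its own. The cancellation you need is ``if $\beta\alpha$ is a projection then so is $\beta$'' (for $\alpha\colon X\to Y$, $\beta\colon Y\to Z$), not its dual. It holds because the pullback of $\beta\alpha$ along $\beta$ splits as $X\oplus\fib(\beta)$ in $\st(\cal{C})$, so $\fib(\beta)\in\cal{C}$ by Corollary~\ref{cor:retract-closure}. Also, in Step~1 the pullback preservation needed for Lemma~\ref{lem:lift-inclusions} comes from exactness of $p$. Proposition~\ref{prop:preserves-cocartesian} would give it only under right-specialness.
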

\begin{proof}
We prove the claim for left-special projections. The case of right-special projections follows by taking opposite exact $\infty$-categories. Let $f\colon x \to y$ be a map such that $p(f)$ is a projection and let $C = \cofib[f] \in \st(\cal{C})$, so that $C$ lies in $\st(\cal{C})_{[0,1]}$. Since $p(f)$ is a projection we have $p(C) \in \Sigma\cal{B}$ belongs to $\cal{B}$. It follows that the map $0 \to p(C)$ is a projection in $\st(\cal{B})_{[0,1]}$. 
Since $\st(\cal{C})_{[0,1]} \to \st(\cal{B})_{[0,1]}$ is a Keller projection with kernel $\st(\cal{A})_{[0,1]}$ by Corollary~\ref{cor:heartwise-keller} it follows from Remark~\ref{rem:calculus} that there exists a projection $A \epi C$ in $\st(\cal{C})_{[0,1]}$ with $A \in \st(\cal{A})_{[0,1]}$. Choose a heart decomposition $b \to A \to \Sigma b'$ with $b,b' \in \cal{A}$. Then $b \to A$ is also a projection in $\st(\cal{A})_{[0,1]}$ and so the composite $b \epi A \epi C$ is a projection in $\st(\cal{C})_{[0,1]}$. Let $z$ be the fibre product in the right square in
\[
\begin{tikzcd}
x \ar[d,equal]\ar[r] & z \ar[r]\ar[d,->>] & b \ar[d,->>] \\
x\ar[r] & y \ar[r] & C
\end{tikzcd}
\]
Then the map $z \epi y$ is a projection in $\st(\cal{C})_{[0,1]}$ with fibre $b' \in \cal{A} \subseteq \cal{C}$ and hence $z$ belongs to $\cal{C}$ and $z \epi y$ is a projection in $\cal{C}$. The cofibre of $x \to z$ is $b$ by construction and so this map is in particular an inclusion in $\cal{C}$ which is sent to an equivalence in $\cal{B}$. By Property~\eqref{iiL} there exists an $a\in \cal{A}$ and a factorization of $x \to z$ as $x \mono x \oplus a \epi z$ with the first map being the summand inclusion. The triangle
\[\begin{tikzcd}
& x \oplus a \ar[rd, "q", twoheadrightarrow] \\
x \ar[ru, rightarrowtail] \ar[rr, "f"] && y,
\end{tikzcd}\]
hence gives the desired factorization of $f$.
\end{proof}

We end this section by recording the following version of Corollary~\ref{cor:beck-chevalley} for Keller projections:
\begin{corollary}\label{cor:beck-chevalley-2}
Let $p\colon \cal{C} \hookrightarrow \cal{B}$ be a Keller projection and $\cal{E}$ some $\infty$-category which admits small colimits. 
Then the square
\[
\begin{tikzcd}[column sep = 50pt]
\cal{C} \ar[r,"p"]\ar[d,"j_{\cal{C}}"'] & \cal{B} \ar[d,"j_{\cal{B}}"] \\
\pst(\cal{C}) \ar[r,"\pst(p)"] & \pst(\cal{B}) 
\end{tikzcd}
\]
satisfies the following Beck-Chevalley properties:
\begin{enumerate}[leftmargin=*]
\item\label{item:functors-2}
For every functor $\vphi\colon \cal{B} \to \cal{E}$ the Beck-Chevalley transformation
\[ (j_{\cal{C}})_!p^*\vphi \Rightarrow \pst(p)^*(j_{\cal{B}})_!\vphi  \]
is an equivalence. 
\item\label{item:presheaves-2}
For every presheaf $\vphi\colon \pst(\cal{C})^{\op} \to \cal{E}$ the Beck-Chevalley transformation
\[ p_!j_{\cal{C}}^*\vphi \Rightarrow j_{\cal{B}}^*\pst(p)_!\vphi \]
is an equivalence. 
\end{enumerate}
\end{corollary}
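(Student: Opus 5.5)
Following the proof of Corollary~\ref{cor:beck-chevalley}, I would reduce both statements to a weak contractibility assertion about $\infty$-categories of factorisations, and then prove that assertion using the calculus of fractions for $p$ together with Lemma~\ref{catoffact}. By Remark~\ref{rem:statements-equiv} the two properties are formally equivalent, so it suffices to treat one of them.

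\textbf{Reduction.} For~\eqref{item:functors-2} I would use the pointwise formula for left Kan extensions. For $X \in \pst(\cal{C})$ it suffices to show that the functor $\cal{C}_{/X} \to \cal{B}_{/\pst(p)(X)}$ is cofinal. Its comma $\infty$-categories are indexed by maps $\beta\colon b \to \pst(p)(X)$ with $b \in \cal{B}$. So the claim is that for every such $\beta$ the $\infty$-category
\[ \cal{C}_{b//X} := \cal{C}_{/X}\times_{\cal{B}_{/\pst(p)X}} (\cal{B}_{/\pst(p)X})_{\beta/} \]
is weakly contractible. Its objects are $c \to X$ with $c \in \cal{C}$, together with a factorisation of $\beta$ through $p(c)$. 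For~\eqref{item:presheaves-2} the dual reduction yields the same kind of category, so I would prove just this one.

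\textbf{Identifying $\pst(\cal{B})$.} By Corollary~\ref{cor:heartwise-keller} with $m=0$, $n=\infty$, the functor $\pst(p)$ is a Keller projection with kernel $\pst(\cal{A})$, where $\cal{A} = \ker p$. By Remark~\ref{rem:calculus}, $\pst(\cal{B})$ is then the localisation of $\pst(\cal{C})$ at $\pst(\cal{C})^{\pst(\cal{A})}_{\inc}$, and this localisation has a left calculus of fractions. So $\beta$ is represented by a zig-zag $\tilde b \to X' \hookleftarrow X$ in $\pst(\cal{C})$, with $\tilde b\in\cal{C}$ lifting $b$ (possible by essential surjectivity) and $X \mono X'$ an inclusion with cofibre in $\pst(\cal{A})$. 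Using Lemma~\ref{lem:calculus}, I would write $\cal{C}_{b//X}$ as a colimit over $X\rtimes\pst(\cal{A})$ together with lifts of $b$ in $\cal{C}$, and mapping spaces in $\cal{B}$ as filtered colimits over $\cal{C}^{\cal{A}}_{\inc}$. This should reduce the claim to weak contractibility of factorisation categories $\cal{C}_{\tilde b//X'}$ of a map $\tilde b\to X'$ in $\pst(\cal{C})$ through objects of $\cal{C}$, twisted by the $\cal{A}$-localisation.

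\textbf{Main step.} The core point is that $\cal{C} \subseteq \pst(\cal{C})$ is right-special by Example~\ref{ex:resolving-is-special}, since every object of $\pst(\cal{C})$ admits a map from $\cal{C}$ and all maps are inclusions. Lemma~\ref{catoffact} therefore gives weak contractibility of the untwisted factorisation categories. What remains is to compare the twisted category $\cal{C}_{b//X}$ with the untwisted one. I would do this by showing that the forgetful functor from untwisted factorisations of $\tilde b \to X'$ to $\cal{C}_{b//X}$ is cofinal. Its comma categories are again filtered, being indexed by $y\rtimes\cal{A}$, which is filtered by the argument in Proposition~\ref{prop:preserves-cocartesian} via Corollary~\ref{fact}. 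I expect this compatibility of the two filtered-colimit descriptions to be the main obstacle.

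As a fallback I would argue more globally. Since both $p$ and $\pst(p)$ are localisations, $p^*$ and $\pst(p)^*$ are fully faithful onto functors inverting the respective weak equivalences. It then suffices to show that $(j_{\cal{C}})_!$ sends functors inverting $W$ to functors inverting $\pst(\cal{C})^{\pst(\cal{A})}_{\inc}$. Once that holds, the Beck–Chevalley map is a map between left Kan extensions along the localisation and can be checked on $\cal{C}$, where it is the identity. The inversion property itself would follow from Corollary~\ref{cor:beck-chevalley} applied to the right-special inclusions $\cal{A}\subseteq\cal{C}$ and $\pst(\cal{A})\subseteq\pst(\cal{C})$, together with the factorisation of maps in $W$ from Proposition~\ref{prop:keller-cof}.
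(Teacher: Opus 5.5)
There is a genuine gap. Your reduction to weak contractibility of $\cal{C}_{b//X}$ is correct: it is the common comma category behind both \eqref{item:functors-2} and \eqref{item:presheaves-2}. The formal part of your fallback is also fine. But neither route supplies the actual input.

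\textbf{The main route.} First, $\cal{C}\subseteq\pst(\cal{C})$ is not right-special. Example~\ref{ex:resolving-is-special} makes it \emph{left}-special, and right-specialness fails already for an ordinary exact $\cal{C}$: for $a\neq 0$, every composite $a\to\Sigma c\to b$ with $b\in\cal{C}$ is zero, and the zero map $a\to b$ is not an inclusion.

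More to the point, the untwisted categories of factorisations of $\tilde b\to X'$ through objects of $\cal{C}$ have an initial object, namely $\tilde b\xrightarrow{\id}\tilde b\to X'$, because $\tilde b\in\cal{C}$. So their contractibility is trivial and carries no information. All the content sits in the comparison with $\cal{C}_{b//X}$, which you leave open. The comparison functor is not even defined:
\begin{itemize}
\item the left calculus represents $\beta$ through $X'$, not $X$;
\item a factorisation $\tilde b\to c\to X'$ gives no map $c\to X$;
\item pulling back along $X\to X'$ in general leaves $\pst(\cal{C})$, since $\fib[X\to X']=\Sigma^{-1}\cofib[X\to X']$ with $\cofib[X\to X']\in\pst(\cal{A})$.
\end{itemize}
(Also, Proposition~\ref{prop:preserves-cocartesian} shows that $y\rtimes\pst(\cal{A})$ is filtered, not $y\rtimes\cal{A}$.) Notice that right-specialness of $\cal{A}=\ker(p)$ in $\cal{C}$, which is what makes the statement true, never enters this route.

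\textbf{The fallback.} The property that $(j_{\cal{C}})_!(\vphi\circ p)$ inverts maps with cofibre in $\pst(\cal{A})$ is just a reformulation of the same cofinality problem. Corollary~\ref{cor:beck-chevalley} for $\cal{A}\subseteq\cal{C}$ only tells you that this functor is constant with value $\vphi(0)$ on $\pst(\cal{A})$. That says nothing about a map $X\to X'$ with $X\notin\pst(\cal{A})$. Proposition~\ref{prop:keller-cof} factors maps of $\cal{C}$, not of $\pst(\cal{C})$. And since $\vphi$ is arbitrary, there is no exactness with which to propagate information off $\pst(\cal{A})$.

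\textbf{The missing idea.} You need to make the kernel appear, and to use the \emph{right} calculus of fractions. The paper proves \eqref{item:presheaves-2} as follows.
\begin{itemize}
\item $p$ and $\pst(p)$ are left-special projections, the latter by Corollary~\ref{cor:heartwise-keller}.
\item The right calculus of fractions of Remark~\ref{rem:calculus} then makes $\cal{A}\rtimes x\to\cal{C}_{p(x)/}$ and $\pst(\cal{A})\rtimes x\to\pst(\cal{C})_{p(x)/}$ coinitial.
\item Hence left Kan extensions of presheaves along $p$ and $\pst(p)$ are computed at $p(x)$ as colimits over $(\cal{A}\rtimes x)^{\op}$ and $(\pst(\cal{A})\rtimes x)^{\op}$.
\item So it suffices that $\cal{A}\rtimes x\to\pst(\cal{A})\rtimes x$ is coinitial.
\item After shifting by $\Sigma^{-1}$ in $\st(\cal{C})$, its comma categories are the factorisation categories $\cal{A}_{\Sigma^{-1}x//A}$ of maps $\Sigma^{-1}x\to A$, with $A\in\pst(\cal{A})$, through objects of $\cal{A}$.
\item These are weakly contractible by Lemma~\ref{catoffact} applied to the right-special composite $\cal{A}\to\cal{C}\to\st(\cal{C})_{[-1,0]}$.
\end{itemize}
That last step is where right-specialness of $\ker(p)$ (Proposition~\ref{prop:verdier}) is used. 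It also explains the choice of calculus: a roof $x\twoheadleftarrow x'\to X$ stays over $X$. The only remaining task is then to shrink the fibre of $x'\to x$ from $\pst(\cal{A})$ to $\cal{A}$, which is exactly what Lemma~\ref{catoffact} provides.
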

\begin{proof}
The statements~\eqref{item:functors-2} and~\eqref{item:presheaves-2} are equivalent, see Remark~\ref{rem:statements-equiv}.
We now prove~\eqref{item:presheaves-2}.
Set $\cal{A} = \ker(p)$.
Recall from Remark~\ref{rem:calculus} that $p$ admits a right calculus of fractions with respect to $\cal{C}_{\pr}^{\cal{A}}$, so that for every $x,y \in \cal{C}$ the induced map
\[ \colim_{[x' \epi x] \in \cal{A} \rtimes x}\Map(x',y) \to \Map(p(x),p(y)) \]
is an equivalence of spaces, where $\cal{A} \rtimes x \subseteq \cal{C}_{/x}$ is the full subcategory spanned by the projections $x'\epi x$ with fibre in $\cal{A}$. This means that for every map $f\colon p(x) \to p(y)$ we have $\colim_{x' \epi x \in \cal{A} \rtimes x}\Map_f(x',y) \simeq \ast$, where $\Map_f(x',y)$ is the fibre of $\Map(x',y) \to \Map(p(x),p(y))$ over $f$. Viewing the last colimit as the comma category of the composed functor
\[ F\colon  \cal{A}\rtimes x \to \cal{C} \times_{\cal{B}}\{p(x)\} \to \cal{C}_{p(x)/} \]
over $(x,f)$, we conclude that $F$ is coinitial, and so for every $\phi\colon \cal{C}^{\op} \to \E$ we have
\[ (p_!\phi)(p(x)) = \colim_{[x' \epi x] \in \cal{A} \rtimes x}\phi(x) .\]
Noting that $\pst(\cal{C}) \to \pst(\cal{B})$ is again a Keller projection with kernel $\pst(\cal{A})$ by Corollary~\ref{cor:heartwise-keller}, 
to prove the claim it will now suffice to show that for every $x \in \cal{C}$ the induced functor
\[ \cal{A} \rtimes x \to \pst(\cal{A}) \rtimes x \]
is coinitial. Shifting exact sequences inside the stable envelop of $\cal{C}$ we may identify this functor with the inclusion
\[ \cal{A}\times_{\st(\cal{C})} \st(\cal{C})_{\Sigma^{-1}x/} \to \pst(\cal{A})\times_{\st(\cal{C})}\st(\cal{C})_{\Sigma^{-1} x/}, \]
whose comma category over a given $g\colon \Sigma^{-1} x \to A$ with $A \in \pst(\cal{A})$ is the category of factorizations $\cal{A}_{\Sigma^{-1}x//A}$ of $g$ through an object of $\cal{A}$. This category of factorizations is indeed weakly contractible, as can be obtained by applying Lemma~\ref{catoffact} to the composed right-special inclusion $\cal{A} \to \cal{C} \to \st(\cal{C})_{[-1,0]}$ (see Example~\ref{ex:resolving-is-special}).
\end{proof}

\begin{remark}
As in Remark~\ref{rem:beck-chevalley}, passing to opposites we can deduce from Corollary~\ref{cor:beck-chevalley-2} that if $p\colon \cal{C} \to \cal{B}$ is a Keller projection then the square
\[
\begin{tikzcd}[column sep = 50pt]
\cal{C} \ar[r,"p"]\ar[d,"j_{\cal{C}}"'] & \cal{B} \ar[d,"j_{\cal{B}}"] \\
\st(\cal{C})_{[-\infty,0]} \ar[r,"\st(p)_{[-\infty,0]}"] & \st(\cal{B})_{[-\infty,0]} 
\end{tikzcd}
\]
satisfies the Beck-Chevalley properties of Corollary~\ref{cor:beck-chevalley-2}, only that~\eqref{item:functors-2} holds for presheaves and~\eqref{item:presheaves-2} for functors.
\end{remark}

\subsection{Reedy functor categories}\label{sec:reedy}

In this section we show that Keller sequences are stable under taking Reedy functor categories.

Let $\cal{I}$ be a finite poset and let $\cal{E}$ be an exact $\infty$-category. A natural transformation $\vphi \Rightarrow \psi$ of functors $\vphi,\psi\colon \cal{I} \to \cal{E}$ is a Reedy cofibration if for every $i \in \cal{I}$ the square
\[
\begin{tikzcd}
\colim_{j < i} \vphi(j) \ar[r]\ar[d] & \colim_{j < i} \psi(j) \ar[d]\\
\vphi(j) \ar[r] & \psi(j)
\end{tikzcd}
\]
in $\st(\cal{E})$ has total cofibre in $\cal{E}$. We say that $\vphi$ is Reedy cofibrant if the map $0 \to \vphi$ is a Reedy cofibration. Note that if $\vphi$ is Reedy cofibrant then $\vphi|_{\cal{I}'}$ is Reedy cofibrant for any downward closed subposet $\cal{I}'\subseteq \cal{I}$.
We denote by $\fun^{\reedy}(\cal{I},\cal{E}) \subseteq \fun(\cal{I},\cal{E})$ the full subcategory spanned by the Reedy cofibrant objects. This inclusion is closed under extensions and we will consider $\fun^{\reedy}(\cal{I},\cal{E})$ as endowed with the induced exact structure.

\begin{lemma}\label{lem:reedy-inc-pr}
In the induced exact structure on $\fun^{\reedy}(\cal{I},\cal{E})$ a map $\vphi \Rightarrow \psi$ is an inclusion if and only if it is a Reedy cofibration, and a projection if and only if it is pointwise a projection in $\cal{E}$.
\end{lemma}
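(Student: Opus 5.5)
Both characterisations reduce to computations in $\st(\cal{E})$. There, for functors $\vphi,\psi\colon\cal{I}\to\cal{E}$, cofibres and fibres in $\fun(\cal{I},\st(\cal{E}))$ are computed pointwise. By the definition of the induced exact structure, a map $\vphi\Rightarrow\psi$ between Reedy cofibrant functors is an inclusion (resp. projection) if and only if it is pointwise an inclusion (resp. projection) in $\cal{E}$ and its pointwise cofibre (resp. fibre) $\chi$ is again Reedy cofibrant. The key observation is that latching objects $L_i\vphi := \colim_{j<i}\vphi(j)$, computed in $\st(\cal{E})$, are exact in $\vphi$. Hence for a cofibre sequence $\vphi\to\psi\to\chi$ of functors we obtain a cofibre sequence of relative latching maps
\[ \cofib[L_i\vphi\to\vphi(i)] \to \cofib[L_i\psi\to\psi(i)] \to \cofib[L_i\chi\to\chi(i)]. \]
Moreover, the total cofibre of the square defining the Reedy cofibration condition for $\vphi\Rightarrow\psi$ at $i$ is exactly $\cofib[L_i\chi\to\chi(i)]$.

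For inclusions I would first show that a map in $\fun^{\reedy}(\cal{I},\cal{E})$ is a Reedy cofibration if and only if its pointwise cofibre $\chi$ (in $\st(\cal{E})$) is Reedy cofibrant with values in $\cal{E}$. The "Reedy cofibrant" part is immediate from the identification of total cofibres above. For values in $\cal{E}$, I would induct along $\cal{I}$: assuming $\chi|_{\{j<i\}}$ is Reedy cofibrant with values in $\cal{E}$, the object $L_i\chi$ lies in $\cal{E}$. For this I need a sublemma: if $\chi$ is Reedy cofibrant on a finite downward closed poset, then its colimit lies in $\cal{E}$. This follows by induction, adding a maximal element $k$ and writing the colimit as the pushout of $\colim_{\cal{I}'}\chi \leftarrow L_k\chi \to \chi(k)$. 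Here the right leg has cofibre in $\cal{E}$, so it is an inclusion in $\cal{E}$, and exact $\infty$-categories are closed under pushouts along inclusions inside $\st(\cal{E})$. Then $\chi(i)$ is an extension of $L_i\chi$ and $\cofib[L_i\chi\to\chi(i)]$, hence lies in $\cal{E}$ because $\cal{E}$ is extension-closed in $\st(\cal{E})$. Given this, a Reedy cofibration is pointwise an inclusion with cofibre $\chi\in\fun^{\reedy}$, i.e. an inclusion in the induced structure. Conversely, an inclusion in the induced structure has Reedy cofibrant cofibre, hence is a Reedy cofibration.

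For projections, one direction is trivial. For the other, let $\vphi\Rightarrow\psi$ be a pointwise projection between Reedy cofibrant functors with pointwise fibre $\kappa\in\fun(\cal{I},\cal{E})$; I must show $\kappa$ is Reedy cofibrant. Using the fibre sequence above,
\[ \cofib[L_i\kappa\to\kappa(i)] \to \cofib[L_i\vphi\to\vphi(i)] \to \cofib[L_i\psi\to\psi(i)], \]
so it suffices that $\cofib[L_i\vphi\to\vphi(i)] \to \cofib[L_i\psi\to\psi(i)]$ is a projection in $\cal{E}$. Equivalently, the square formed by $L_i\vphi\to L_i\psi$ and $\vphi(i)\to\psi(i)$ should induce a projection $\vphi(i)\to\psi(i)\times_{L_i\psi}L_i\vphi$ on the relative matching-type map. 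I expect this step to be the main obstacle, since a pointwise projection need not obviously have this property in a general exact $\infty$-category.

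I would attack it by induction on $\cal{I}$, simultaneously proving that $L_i\vphi\to L_i\psi$ is a projection with fibre $L_i\kappa\in\cal{E}$, via the pushout decomposition from the sublemma together with the fact that pushouts along inclusions of projections are projections (a standard exact-category lemma, checked in $\st(\cal{E})$). Granting that, the cofibre map is the cobase change of the projection $\vphi(i)\to\psi(i)$ along $\vphi(i)\mono\cofib$-type inclusions. Comparing fibres then shows that the fibre $\cofib[L_i\kappa\to\kappa(i)]$ is the cokernel of the inclusion $L_i\kappa\to\kappa(i)$. Finally, I would verify that this is an inclusion by a $3\times3$-lemma argument in $\st(\cal{E})$, using extension-closure of $\cal{E}$.
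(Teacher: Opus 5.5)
Your inclusion half is sound and agrees with part (2) of the paper's proof. It is in fact slightly more complete: your colimit sublemma (the paper's Lemma~\ref{lem:colim}) shows that a Reedy cofibration is pointwise an inclusion, which the paper's reduction to an already-given pointwise exact sequence takes for granted.

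The projection half has a genuine gap, at exactly the step you flag. Write $A:=\cofib[L_i\kappa\to\kappa(i)]$, and let $B\to C$ be the induced map between the relative latching cofibres of $\vphi$ and $\psi$. You need $A\in\cal{E}$, and your intermediate steps do not get you there. The map $\vphi(i)\to\psi(i)\times_{L_i\psi}L_i\vphi$ is not defined: there is no map $\psi(i)\to L_i\psi$, since these poset-indexed Reedy diagrams have latching objects but no matching objects. Nor is $B\to C$ a cobase change of $\vphi(i)\to\psi(i)$: the map $\vphi(i)\to B$ is a projection, not an inclusion, and the square has total cofibre $\Sigma^2L_i\kappa$, which is nonzero in general.

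More fundamentally, no $3\times3$ argument relying only on extension-closure can finish. What the exact sequences in $\st(\cal{E})$ give you is two bounds. First, $A\in\st(\cal{E})_{\geq 0}$, because $A$ is the cofibre of a map between objects of $\pst(\cal{E})$; this needs no induction, since $L_i\kappa$ is a finite colimit of objects of $\cal{E}$. Second, $A\in\st(\cal{E})_{\leq 0}$, because $A$ is the fibre of $B\to C$ with $B,C\in\cal{E}$. So $A\in\st(\cal{E})^{\heartsuit}$. But $\st(\cal{E})^{\heartsuit}$ is only the weak idempotent completion of $\cal{E}$.

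The missing input is therefore $\st(\cal{E})^{\heartsuit}=\cal{E}$, that is, weak idempotent completeness of $\cal{E}$. With it, the two-sided bound above is the paper's entire proof of the projection half; the paper uses this identification tacitly when it concludes ``and so $A\in\cal{E}$''.

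Without that hypothesis the claim is actually false. Take $\cal{I}=\{0<1\}$ and $\cal{E}$ the split exact category of finitely generated free $R$-modules, where $P\oplus R\cong R^n$ with $P$ not free. Consider the map $(R\to R\oplus P\oplus R)\Rightarrow(0\to R)$, given by the zero map at $0$ and the projection onto the last summand at $1$. It is a pointwise projection between Reedy cofibrant diagrams. However, its pointwise fibre $(R\to R\oplus P)$ has relative latching cofibre $P\notin\cal{E}$. So you should add weak idempotent completeness and replace your inductive, cobase-change route with the heart argument.
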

\begin{proof}
We need to show that if 
\[ \psi \Rightarrow \vphi \Rightarrow \phi \]
is a pointwise exact sequence in $\fun(\cal{I},\cal{E})$ with $\vphi$ Reedy cofibrant then the following holds:
\begin{enumerate}
\item
If $\phi$ is Reedy cofibrant then $\psi$ is Reedy cofibrant.
\item
If $\psi$ is Reedy cofibrant then $\phi$ is Reedy cofibrant if and only if $\psi \Rightarrow \vphi$ is a Reedy cofibration.
\end{enumerate}
For $i \in \cal{I}$ consider the commutative diagram
\[
\begin{tikzcd}
\colim_{j < i}\psi(j) \ar[d,hookrightarrow]\ar[r,hookrightarrow] & \colim_{j < i}\vphi(j) \ar[r]\ar[d,hookrightarrow] & \colim_{j < i}\phi(j)\ar[d,hookrightarrow] \\
\psi(i)\ar[r,hookrightarrow]\ar[d] & \vphi(i) \ar[r]\ar[d] & \phi(i)\ar[d] \\
A \ar[r] & B \ar[r] & C
\end{tikzcd}
\]
in $\st(\cal{E})$ where the last row is obtained by taking vertical cofibres. In particular, all three rows are exact, and since we assume that $\vphi$ is Reedy cofibrant we have $B \in \cal{E}$. Now if $\phi$ is also Reedy cofibrant then $C$ is also in $\cal{E}$, in which case $A \in \st(\cal{E})_{\leq 0}$. But $A$ also lies in $\st(\cal{E})_{\geq 0}$ (since it is by definition the cofibre of a map in $\st(\cal{E})_{\geq 0}$) and so $A \in \cal{E}$. This shows the first claim. For the second claim, assume that $\psi$ is Reedy cofibrant, so that $A$ is in $\cal{E}$. The second claim can then be restated as saying that $A \to B$ is an inclusion in $\cal{E}$ if and only if $C$ is in $\cal{E}$, which holds since $\cal{E} \hookrightarrow \st(\cal{E})$ preserves and detects exact sequences.
\end{proof}

\begin{lemma}\label{lem:colim}
Let $\vphi\colon \cal{I} \to \cal{E}$ be a Reedy cofibrant diagram. Then $\colim_{\cal{I}}\vphi \in \st(\cal{E})$ belongs to $\cal{E}$. As a result, the functor $\colim$ factors through an exact functor
\[ \colim\colon \fun^{\reedy}(\cal{I},\cal{E}) \to \cal{E} .\]
\end{lemma}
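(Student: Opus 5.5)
We will argue by induction on the size of the finite poset $\cal{I}$, computing colimits in the stable $\infty$-category $\st(\cal{E})$. If $\cal{I}$ is empty the colimit is $0 \in \cal{E}$. Otherwise, choose a maximal element $i \in \cal{I}$ and set $\cal{I}' := \cal{I} \setminus \{i\}$. This is a downward closed subposet, so $\vphi|_{\cal{I}'}$ is again Reedy cofibrant and, by induction, $\colim_{\cal{I}'}\vphi \in \cal{E}$. Since $i$ is maximal, $\cal{I}$ is obtained from $\cal{I}'$ by gluing $\{i\}$ along $\cal{I}'_{<i} = \{j < i\}$. Hence there is a pushout square in $\st(\cal{E})$
\[
\begin{tikzcd}
\colim_{j < i}\vphi(j) \ar[r]\ar[d] & \vphi(i) \ar[d] \\
\colim_{\cal{I}'}\vphi \ar[r] & \colim_{\cal{I}}\vphi .
\end{tikzcd}
\]
By Reedy cofibrancy, the cofibre of the top map lies in $\cal{E}$. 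The cofibre of the bottom map is the same object, so $\colim_{\cal{I}}\vphi$ is an extension of an object of $\cal{E}$ by $\colim_{\cal{I}'}\vphi \in \cal{E}$. Since $\cal{E} \subseteq \st(\cal{E})$ is closed under extensions, $\colim_{\cal{I}}\vphi \in \cal{E}$.

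For the second assertion, $\colim\colon \fun(\cal{I},\st(\cal{E})) \to \st(\cal{E})$ is an exact functor of stable $\infty$-categories. By the first part, its restriction to $\fun^{\reedy}(\cal{I},\cal{E})$ lands in $\cal{E}$. It is additive, and it remains to check that it preserves exact sequences. By Lemma~\ref{lem:reedy-inc-pr}, an exact sequence in $\fun^{\reedy}(\cal{I},\cal{E})$ is a pointwise exact sequence $\psi \Rightarrow \vphi \Rightarrow \phi$ of Reedy cofibrant diagrams, which is a cofibre sequence in $\fun(\cal{I},\st(\cal{E}))$. Applying $\colim$ gives a cofibre sequence in $\st(\cal{E})$ whose three terms lie in $\cal{E}$. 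Since $\cal{E} \to \st(\cal{E})$ detects exact sequences, this is an exact sequence in $\cal{E}$.

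The only point needing care is the pushout decomposition of the colimit over $\cal{I}$, i.e.\ that $\colim_{\cal{I}} = \colim_{\cal{I}'} \sqcup_{\colim_{\cal{I}_{<i}}} \vphi(i)$ when $i$ is maximal. This is the standard left Kan extension argument along $\cal{I}' \subseteq \cal{I}$ in $\st(\cal{E})$: the Kan extension to $i$ is $\colim_{j<i}\vphi(j)$, since $\cal{I}'_{/i} = \cal{I}_{<i}$. We expect this step to be routine but the one to state carefully.
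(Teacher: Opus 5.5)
Your proposal is correct and follows essentially the same route as the paper. The paper also inducts on the size of $\cal{I}$ via a maximal element $i$, writes $\colim_{\cal{I}}\vphi$ as the pushout of $\colim_{\cal{I}\setminus\{i\}}\vphi$ and $\vphi(i)$ along $\colim_{j<i}\vphi(j)$, and concludes by extension-closure of $\cal{E}$ in $\st(\cal{E})$; it justifies the pushout by decomposing the nerve of $\cal{I}$ along those of $\cal{I}\setminus\{i\}$ and $\cal{I}_{\leq i}$ rather than by your Kan extension argument, and for exactness it simply observes that an exact functor of stable $\infty$-categories carrying one extension-closed subcategory into another is automatically exact for the induced structures, which is exactly what your detection-of-exact-sequences check verifies.
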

\begin{proof}
We prove by induction on the size of $\cal{I}$. Clearly the claim holds if $\cal{I}$ is empty. Now suppose that $\cal{I}$ is non-empty and that the claim has been proven for all posets of strictly smaller size. Since $\cal{I}$ is finite and non-empty there exists an $i \in \cal{I}$ which is maximal in the sense that there does not exist a $j \in \cal{I}$ strictly bigger than $i$. Then the complement $\cal{I} \setminus \{i\}$ is downward closed and $\cal{I}$ is the union of its two downward closed full subposets $\cal{I} \setminus \{i\}$ and $\cal{I}_{\leq i} = \{j \in \cal{I}\;|\; j \leq i\}$, whose intersection is $\cal{I}_{< i} = \{j \in \cal{I}\;|\; j < i\}$. Passing to nerves we then obtain the nerve of $\cal{I}$ is the pushout of the nerve of $\cal{I}_{j \neq i}$ and the nerve of $\cal{I}_{\leq i}$ along the nerve of $\cal{I}_{< i}$, which implies that the colimit of $\vphi$ over $\cal{I}$ fits in a pushout square 
\[
\begin{tikzcd}
\colim_{j < i}\vphi(i)\ar[r]\ar[d] & \colim_{j \leq i}\vphi(j)\ar[d] \ar[r,equal] & \vphi(i)  \\
\colim_{j \neq i}\vphi(j) \ar[r] & \colim_{j}\vphi(j). &
\end{tikzcd}
\]
Now $\cal{I}\setminus \{i\}$ and $\cal{I}_{< i}$ are finite posets of strictly smaller size than $\cal{I}$. By the induction hypothesis the terms in the left column are hence in $\cal{E}$. At the same time, since $\vphi$ is a Reedy cofibration the cofibre of the top horizontal map is in $\cal{E}$, and so the same holds for the cofibre of the bottom horizontal map. We conclude that $\colim_{j}\vphi(j) \in \cal{E}$, as desired. For the last claim, note that $\colim$ is induced by an exact functor among stable $\infty$-categories 
\[ \fun(\cal{I},\st(\cal{E})) \to \st(\cal{E})\] 
and so once we know that it sends the extension-closed full subcategory $\fun^{\reedy}(\cal{I},\cal{E})$ on the left to the extension-closed full subcategory $\cal{E}$ on the right the induced functor
\[ \fun^{\reedy}(\cal{I},\cal{E}) \to \cal{E} \]
is automatically exact with respect to the induced exact structures on both sides.
\end{proof}

We next show that the inclusion $\fun^{\reedy}(\cal{I},\cal{E})  \subseteq \fun(\cal{I},\st(\cal{E}))$ exhibits its target as the stable envelope of the source. To see this, it suffices by~\cite{Sau23} to exhibit a bounded heart structure on $\fun(\cal{I},\st(\cal{E}))$ whose heart is exactly $\fun^{\reedy}(\cal{I},\cal{E})$. The full subcategories
\[ \fun(\cal{I},\st(\cal{E}))_{\geq m} \subseteq \fun(\cal{I},\st(\cal{E})) \supseteq \fun(\cal{I},\st(\cal{E}))_{\leq n} \]
are defined as follows: a functor $\vphi\colon \cal{I} \to \st(\cal{E})$ belongs to $\fun(\cal{I},\st(\cal{E}))_{\geq m}$ exactly when $\vphi(i)$ belongs to $\st(\cal{E})_{\geq m}$ for every $i$, and belongs to $\fun(\cal{I},\st(\cal{E}))_{\leq n}$ exactly when the cofibre of $\colim_{j < i} \vphi(j) \to \vphi(i)$ belongs to $\st(\cal{E})_{\leq n}$ for every $i \in \cal{I}$. 

\begin{lemma}\label{lem:heart-reedy}
The full subcategories $\fun(\cal{I},\st(\cal{E}))_{\geq 0}$ and $\fun(\cal{I},\st(\cal{E}))_{\leq 0}$ defined above form a heart structure on $\fun(\cal{I},\cal{E})$. If $\cal{E}$ is weakly idempotent complete then the heart of this structure coincides with $\fun^{\reedy}(\cal{I},\cal{E})$.
\end{lemma}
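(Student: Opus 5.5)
We have to verify the two axioms of a heart structure on $\cal{D} := \fun(\cal{I},\st(\cal{E}))$ and then identify the heart. Throughout we write $L_i\vphi := \colim_{j<i}\vphi(j)$ for the latching object and $\lambda_i\vphi := \cofib[L_i\vphi \to \vphi(i)]$ for the latching cofibre. Both $L_i$ and $\lambda_i$ are exact functors $\cal{D} \to \st(\cal{E})$.

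\emph{Axiom (1).} The subcategory $\cal{D}_{\geq 0}$ is defined pointwise. It is therefore closed under finite colimits and extensions, because $\st(\cal{E})_{\geq 0}$ is. The subcategory $\cal{D}_{\leq 0}$ is defined by $\lambda_i\vphi \in \st(\cal{E})_{\leq 0}$ for all $i$. Since each $\lambda_i$ is exact and $\st(\cal{E})_{\leq 0}$ is closed under finite limits and extensions, $\cal{D}_{\leq 0}$ inherits both closure properties.

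\emph{Axiom (2).} We build the decomposition $z \to \vphi \to y$, with $z \in \cal{D}_{\leq 0}$ and $y \in \Sigma\cal{D}_{\geq 0}$, by induction on $|\cal{I}|$. We remove a maximal element $i$, exactly as in the proof of Lemma~\ref{lem:colim}. Suppose a decomposition $z' \to \vphi' \to y'$ has been constructed on $\cal{I}' = \cal{I}\setminus\{i\}$, where $\vphi' = \vphi|_{\cal{I}'}$. Then $L_i z' \to L_i\vphi$ is determined, and we must choose $z(i)$ together with maps $L_i z' \to z(i) \to \vphi(i)$. For this we first form $P := \vphi(i) \times_{L_i\vphi} L_i\vphi$; this is just $\vphi(i)$, and the real content is the following relative step. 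Consider the object
\[ Q := \cofib[L_i z' \to \vphi(i)] \in \st(\cal{E}). \]
We heart-decompose $Q$ as $Q' \to Q \to Q''$ with $Q' \in \st(\cal{E})_{\leq 0}$ and $Q'' \in \Sigma\st(\cal{E})_{\geq 0}$. We then set $z(i) := \vphi(i)\times_Q Q'$, which is an extension of $L_i z'$ by $Q'$. By construction $\lambda_i z = Q' \in \st(\cal{E})_{\leq 0}$. For the cofibre we compute $y(i) = \cofib[z(i)\to\vphi(i)] \simeq Q''$, which lies in $\Sigma\st(\cal{E})_{\geq 0}$. The values $y(j)$ for $j\neq i$ are unchanged from $y'$. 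Since the conditions defining $\cal{D}_{\leq 0}$ at $j \neq i$ only involve $\cal{I}_{<j}\subseteq\cal{I}'$, and the conditions defining $\Sigma\cal{D}_{\geq 0}$ are pointwise, this completes the induction. The main care needed is the functoriality of the extension from $\cal{I}'$ to $\cal{I}$. For this we use that a functor on $\cal{I}$ is the same as a functor on $\cal{I}'$ together with an object under $L_i$, which follows from the pushout decomposition of nerves already used in Lemma~\ref{lem:colim}.

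\emph{The heart.} The heart consists of those $\vphi$ for which every $\vphi(i)$ lies in $\st(\cal{E})_{\geq 0}$ and every $\lambda_i\vphi$ lies in $\st(\cal{E})_{\leq 0}$. If $\vphi$ is Reedy cofibrant, then Lemma~\ref{lem:colim}, applied to $\cal{I}_{<i}$, gives $L_i\vphi\in\cal{E}$, and $\lambda_i\vphi\in\cal{E}$ by definition. Hence $\vphi(i)\in\cal{E}$, so $\vphi$ lies in the heart. For the converse we induct on the poset again; this is the step I expect to be the main obstacle. Assume $\vphi$ is in the heart and that $\vphi|_{\cal{I}'}$ is Reedy cofibrant. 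Then $L_i\vphi\in\cal{E}\subseteq\st(\cal{E})_{\geq 0}$ by Lemma~\ref{lem:colim}, and $\vphi(i)\in\st(\cal{E})_{\geq 0}$ by assumption. It follows that $\lambda_i\vphi\in\st(\cal{E})_{\geq 0}\cap\st(\cal{E})_{\leq 0}=\st(\cal{E})^{\heartsuit}$. Weak idempotent completeness gives $\st(\cal{E})^{\heartsuit}=\cal{E}$, so $\lambda_i\vphi\in\cal{E}$. Finally, $\vphi(i)$ is an extension of $\lambda_i\vphi$ by $L_i\vphi$, both in $\cal{E}$, so $\vphi(i)\in\cal{E}$. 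Thus $\vphi$ is Reedy cofibrant. This is exactly where weak idempotent completeness enters.
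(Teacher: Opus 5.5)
Your proposal is correct and follows the paper's argument: your $Q=\cofib[L_i z'\to\vphi(i)]$ is exactly the paper's pushout $X=\vphi(i)\sqcup_{L_i\vphi}L_i\psi^+$, and your $z(i)=\vphi(i)\times_Q Q'\simeq\fib[\vphi(i)\to Q'']$ is the paper's $Y^-$. The differences are only in presentation. You run the induction by deleting a maximal element, which makes the gluing of the extensions cleaner than the paper's phrasing. You also spell out the inductive identification of the heart via Lemma~\ref{lem:colim}, which the paper leaves as ``clear from the definitions''.
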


\begin{corollary}\label{cor:envelop-reedy}
Let $\cal{E}$ be a weakly idempotent complete exact $\infty$-category. Then the inclusion $\fun^{\reedy}(\cal{I},\cal{E})  \subseteq \fun(\cal{I},\st(\cal{E}))$ exhibits its target as the stable envelope of the source.
\end{corollary}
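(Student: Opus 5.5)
The plan is to deduce the corollary from Lemma~\ref{lem:heart-reedy} together with the recognition principle cited just before it: by~\cite{Sau23}, if a stable $\infty$-category carries a bounded heart structure, then the inclusion of the heart exhibits the stable $\infty$-category as the stable envelope of the heart. Since $\cal{E}$ is weakly idempotent complete, Lemma~\ref{lem:heart-reedy} identifies the heart of the structure $(\fun(\cal{I},\st(\cal{E}))_{\geq 0},\fun(\cal{I},\st(\cal{E}))_{\leq 0})$ with $\fun^{\reedy}(\cal{I},\cal{E})$. Two things then remain to check: that this heart structure is bounded, and that the exact structure on $\fun^{\reedy}(\cal{I},\cal{E})$ induced as a heart agrees with the one we fixed.

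For the second point, both are the structures induced from an extension-closed inclusion into $\fun(\cal{I},\st(\cal{E}))$. In our definition, $\fun^{\reedy}(\cal{I},\cal{E})$ inherits its exact structure from $\fun(\cal{I},\cal{E})$, whose structure is pointwise; and $\cal{E}\subseteq \st(\cal{E})$ detects exact sequences. So in both descriptions a sequence in $\fun^{\reedy}(\cal{I},\cal{E})$ is exact if and only if it is a pointwise cofibre sequence in $\st(\cal{E})$, and the two exact structures coincide.

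For boundedness, let $\vphi\colon\cal{I}\to\st(\cal{E})$ be arbitrary. Since $\cal{I}$ is finite and the heart structure on $\st(\cal{E})$ is bounded, there are finitely many objects $\vphi(i)$, and hence finitely many latching cofibres $\cofib[\colim_{j<i}\vphi(j)\to\vphi(i)]$ (finite colimits in $\st(\cal{E})$). Each lies in some $\st(\cal{E})_{[m_i,n_i]}$. Taking $m$ the minimum of the $m_i$ (over the values $\vphi(i)$) and $n$ the maximum of the $n_i$ (over the latching cofibres) gives $\vphi\in\fun(\cal{I},\st(\cal{E}))_{\geq m}\cap\fun(\cal{I},\st(\cal{E}))_{\leq n}$. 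Here I use that the shifted subcategories are exactly the shifts of the defining conditions, since $\Sigma$ commutes with colimits and cofibres.

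Finally, I apply the cited criterion from~\cite{Sau23} (in the form used in the proof of Corollary~\ref{cor:heartwise-keller}: a bounded heart structure exhibits the ambient category as the stable envelope of its heart). I expect no real obstacle, since all the substance lies in Lemma~\ref{lem:heart-reedy}. The only delicate point is making sure the cited theorem requires nothing beyond boundedness, such as weak idempotent completeness of the heart. That condition holds here anyway: it follows from Lemma~\ref{lem:closed-under-retracts}, or directly from the hypothesis on $\cal{E}$.
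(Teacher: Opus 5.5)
Your proposal is correct and takes the same route as the paper, which deduces the corollary from Lemma~\ref{lem:heart-reedy} via the criterion of~\cite{Sau23}: a bounded heart structure exhibits the ambient stable $\infty$-category as the stable envelope of its heart. Your two additional checks fill in details the paper leaves implicit. The first is boundedness, which follows since $\cal{I}$ is finite, so there are finitely many values and latching cofibres, each bounded in $\st(\cal{E})$. The second is that the exact structure on $\fun^{\reedy}(\cal{I},\cal{E})$ agrees with the one induced from $\fun(\cal{I},\st(\cal{E}))$.
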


\begin{proof}[Proof of Lemma~\ref{lem:heart-reedy}]
It is clear that $\fun(\cal{I},\st(\cal{E}))_{\geq 0}$ is closed under finite colimits, $\fun(\cal{I},\st(\cal{E}))_{\leq 0}$ under finite limits , and both are closed under extensions, simply because $\st(\cal{E})_{\geq 0}$ and $\st(\cal{E})_{\leq 0}$ have the analogous properties (where we note that finite limits commute with finite colimits in $\st(\cal{E})$ by stability). It is also clear from the definitions that $\fun(\cal{I},\st(\cal{E}))_{\geq 0} \cap \fun(\cal{I},\st(\cal{E}))_{\leq 0}$ coincides with Reedy cofibrant diagram to $\st(\cal{E})^{\heartsuit}$, and hence to $\cal{E}$ when $\cal{E}$ is weakly idempotent complete. 

It is left to show that there is a sufficient supply of heart decompositions.
Let $\vphi\colon \cal{I} \to \cal{E}$ be some diagram. We wish to construct an exact sequence $\psi^- \Rightarrow \vphi \Rightarrow \psi^+$ with $\psi^- \in \fun(\cal{I},\st(\cal{E}))_{\leq 0}$ and $\psi^+ \in \fun(\cal{I},\st(\cal{E}))_{\geq 1}$. We construct $\psi^-$ and $\pst^+$ by induction on the poset $\cal{I}$. Let $i \in \cal{I}$. Suppose that $i \in \cal{I}$ is such that we have already constructed an exact
\[ \psi^- \Rightarrow \vphi|_{\cal{I}_{<i}} \Rightarrow \psi^+ \]
with $\psi^- \in \fun(\cal{I}_{< i},\st(\cal{E}))_{\leq 0}$ and $\psi^+ \in \fun(\cal{I}_{< i},\st(\cal{E}))_{\geq 1}$, where $\cal{I}_{<i} = \{j \in \cal{I} \;|\; j < i\}$ (note that there is always at least one element in $\cal{I}$ for which $\cal{I}_{<i} = \emptyset$, so the induction can always start somewhere). 
Note that this implies that $\colim_{j < i}\psi^-(j) \in \st(\cal{E})_{\leq 0}$ and $\colim_{j < i}\psi^+(j) \in \st(\cal{E})_{\geq 1}$. In particular we have maps
\[
\begin{tikzcd}
\colim_{j < i}\psi^-(j) \ar[r] & \colim_{j < i}\vphi(j) \ar[r]\ar[d] & \colim_{j < i}\psi^+(j)  \\
& \vphi(i) &
\end{tikzcd}
\]
such that the horizontal row is a heart decomposition for $\colim_{j < i}\vphi(j)$. 
Now consider the pushout $X := \vphi(i) \displaystyle\mathop{\coprod}_{\colim_{j < i}\vphi(j)} \colim_{j < i}\psi^+(j)$ and let
\[ X^- \to X \to X^+ \]
be a heart decomposition with $X^- \in \st(\cal{E})_{\leq 0}$ and $X^+ \in \st(\cal{E})_{\geq 1}$, and set $Y^-:=\fib[\vphi(i) \to X^+] $. 
Consider the resulting diagram
\[
\begin{tikzcd}
\colim_{j < i}\psi^-(j) \ar[r]\ar[d] & \colim_{j < i}\vphi(j) \ar[r]\ar[d] & \colim_{j < i}\psi^+(j)  \ar[d] \\
Y^- \ar[r] & \vphi(i)\ar[r] & X^+ 
\end{tikzcd}
\]
with exact rows. By construction, the total cofibre of the right square is $\cofib[X \to X^+] = \Sigma X^-$. It follows that the cofibre of the left vertical map is equivalent to $X^-$, and is hence in $\st(\cal{E})_{\leq 0}$.
The right hand side then encodes an extension of $\psi^+$ to $\fun(\cal{I}_{\leq i},\st(\cal{E}))_{\geq 1}$ with $\psi^+(i) = X^+$, and the left hand side encodes an extension of $\psi^-$ to $\fun(\cal{I}_{\leq i},\st(\cal{E}))_{\leq 0}$. We have thus extended our desired heart decomposition from $\cal{I}_{< i}$ to $\cal{I}_{\leq i}$. Since $\cal{I}$ is a finite poset this process terminates after finitely many steps, yielding the desired heart decomposition on all of $\cal{I}$.
\end{proof}

\begin{proposition}\label{prop:reedy-keller}
Let $\cal{I}$ be a finite poset and
\[ \cal{A} \to \cal{C} \to \cal{B} \]
a Keller sequence. Then the induced sequence
\[ \fun^{\reedy}(\cal{I},\cal{A}) \to \fun^{\reedy}(\cal{I},\cal{C)} \to \fun^{\reedy}(\cal{I},\cal{B}) \]
is again Keller.
\end{proposition}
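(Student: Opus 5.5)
We want $\fun^{\reedy}(\cal{I},\cal{A}) \to \fun^{\reedy}(\cal{I},\cal{C}) \to \fun^{\reedy}(\cal{I},\cal{B})$ to be a Keller sequence. The plan is to pass through stable envelopes, where the statement becomes a statement about Verdier sequences, and then come back to hearts using Corollary~\ref{cor:envelop-reedy} and the heart-wise machinery already established. We first reduce to the case where $\cal{A},\cal{C},\cal{B}$ are weakly idempotent complete, so that Corollary~\ref{cor:envelop-reedy} applies to all three.

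\textbf{Stable step.} By Corollary~\ref{st}, the sequence $\st(\cal{A}) \to \st(\cal{C}) \to \st(\cal{B})$ is a Verdier sequence. Applying $\fun(\cal{I},-)$ preserves Verdier sequences for $\cal{I}$ finite:
\begin{itemize}
\item it preserves fully faithfulness, retract closure and kernels pointwise;
\item it preserves the Dwyer--Kan localisation property, since $\fun(\cal{I},-)$ commutes with Verdier quotients (both the left and right calculus of fractions of a Verdier projection pass to finite diagram categories).
\end{itemize}
Hence $\fun(\cal{I},\st(\cal{A})) \to \fun(\cal{I},\st(\cal{C})) \to \fun(\cal{I},\st(\cal{B}))$ is a Verdier sequence. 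By Corollary~\ref{cor:envelop-reedy} these three categories are the stable envelopes of the Reedy categories, and the functors are compatible with the Reedy heart structures of Lemma~\ref{lem:heart-reedy}.

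\textbf{Keller inclusion.} Next we show that $\fun^{\reedy}(\cal{I},\cal{A}) \to \fun^{\reedy}(\cal{I},\cal{C})$ is a Keller inclusion.
\begin{itemize}
\item \emph{Full, extension closed, retract closed.} Fullness is clear. Extension and retract closure follow because, by Corollary~\ref{conservativity}, $\fun^{\reedy}(\cal{I},\cal{A}) = \fun^{\reedy}(\cal{I},\cal{C}) \cap \fun(\cal{I},\st(\cal{A}))$: a Reedy cofibrant diagram in $\cal{C}$ with values in $\st(\cal{A})$ has its latching cofibres in $\cal{C} \cap \st(\cal{A}) = \cal{A}$. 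Exact structures are detected pointwise and latching-wise, via Lemma~\ref{lem:reedy-inc-pr}.
\item \emph{Left and right specialty.} I would check right specialty using the mapping-space criterion, i.e.\ the dual of Lemma~\ref{spec}. Given a Reedy cofibrant $\vphi$ in $\cal{C}$, a Reedy cofibrant $\alpha$ in $\cal{A}$, and a map $\vphi \to \Sigma\alpha$, we build the factorisation $\vphi \to \beta \to \Sigma\alpha$ by induction over $\cal{I}$, as in the proof of Lemma~\ref{lem:heart-reedy}. At stage $i$, the already constructed $\beta|_{\cal{I}_{<i}}$ together with the relative latching data reduces the problem to factoring a single map from an object of $\cal{C}$ into the suspension of an object of $\cal{A}$, relative to a pushout. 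This is provided by Lemma~\ref{lem:factor}, or by a pushout-modified application of right specialty.
\item \emph{Alternative.} It may be cleaner to use Example~\ref{ex:resolving-is-special}-type arguments together with Lemma~\ref{lem:square-left-special}, applied to the square formed with the pointwise functor categories $\fun(\cal{I},\st(-)_{[0,n]})$.
\end{itemize}

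\textbf{Cofibre.} It remains to identify the cofibre. By Proposition~\ref{prop:keller-cof}, the cofibre $\cal{Q}$ of the Keller inclusion exists and $\st(\cal{Q}) = \fun(\cal{I},\st(\cal{C}))/\fun(\cal{I},\st(\cal{A})) \simeq \fun(\cal{I},\st(\cal{B}))$. The comparison $\cal{Q} \to \fun^{\reedy}(\cal{I},\cal{B})$ therefore induces an equivalence on stable envelopes, so it is an extension-closed inclusion by \cite[Proposition 4.25]{Kle22}. It then suffices to prove essential surjectivity, i.e.\ that every Reedy cofibrant diagram in $\cal{B}$ lifts to a Reedy cofibrant diagram in $\cal{C}$. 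This goes by induction along $\cal{I}$:
\begin{itemize}
\item lift the latching object;
\item lift the inclusion $\colim_{j<i}\psi(j) \mono \psi(i)$ to an inclusion with prescribed domain, using Lemma~\ref{lem:lift-inclusions} together with Remark~\ref{rem:calculus};
\item the colimit of the lifted restricted diagram lies in $\cal{C}$ by Lemma~\ref{lem:colim}, and maps to the colimit in $\cal{B}$, since $p$ is exact and preserves these pushouts by Proposition~\ref{prop:preserves-cocartesian}.
\end{itemize}

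The main obstacle I expect is the specialty step for Reedy diagrams: making the inductive factorisation compatible with latching maps, so that the constructed $\beta$ is genuinely Reedy cofibrant in $\cal{A}$.
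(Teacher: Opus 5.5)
\textbf{Gap: the specialty of the Reedy inclusion is not established.} Your treatment of the stable envelopes and of the cofibre matches the paper: the comparison map is an equivalence on stable envelopes, hence an extension-closed inclusion, and essential surjectivity follows by inductively lifting latching inclusions with prescribed domain via Remark~\ref{rem:calculus}. The problem is the specialty of $\fun^{\reedy}(\cal{I},\cal{A})\subseteq\fun^{\reedy}(\cal{I},\cal{C})$. This is the real content of the proposition, and your proposal leaves it open.

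\emph{Left-specialty is never proved, and it is not dual to right-specialty.} By Lemma~\ref{lem:reedy-inc-pr}, inclusions are Reedy cofibrations but projections are merely pointwise. Passing to opposites therefore turns $\fun^{\reedy}(\cal{I},-)$ into a category of Reedy \emph{fibrant} diagrams, so right-specialty does not give left-specialty. Your suggested alternative via Lemma~\ref{lem:square-left-special} points the wrong way. That lemma transfers specialty from a pair to a larger pair which it resolves (for instance from Reedy diagrams to pointwise ones), not back. The paper instead exploits the asymmetry directly:
\begin{itemize}
\item For each $i$, left-specialty of $\cal{A}\subseteq\cal{C}$ gives a map $b_i\to\vphi(i)$ whose composite to $\psi(i)$ is a projection in $\cal{A}$.
\item Take the free diagram $\phi(i)=\bigoplus_{j\le i}b_j$. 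Its latching maps $\bigoplus_{j<i}b_j\to\bigoplus_{j\le i}b_j$ are split inclusions with cofibre $b_i$, so $\phi$ is Reedy cofibrant.
\item The composite $\phi\Rightarrow\vphi\Rightarrow\psi$ is a pointwise projection, hence a projection.
\end{itemize}
No induction is needed.

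\emph{For right-specialty, the inductive step as you phrase it does not work.} Write $L_i\vphi=\colim_{j<i}\vphi(j)$. Factoring $\vphi(i)\sqcup_{L_i\vphi}L_i\beta\to\Sigma\alpha(i)$ through some $\beta(i)\in\cal{A}$ (by Lemma~\ref{lem:factor} or the dual of Lemma~\ref{spec}) gives no reason for $L_i\beta\to\beta(i)$ to be an inclusion with cofibre in $\cal{A}$. That is exactly the obstacle you name, and it is where an idea is missing. The paper uses specialty in its inclusion form and pushes out first. Suppose given a Reedy cofibration $\psi\Rightarrow\vphi$ with $\psi$ in $\cal{A}$, and $\phi$ already built on $\cal{I}_{<i}$.
\begin{itemize}
\item Put $a=\psi(i)\sqcup_{L_i\psi}L_i\phi\in\cal{A}$ and $c=\vphi(i)\sqcup_{L_i\vphi}L_i\phi\in\cal{C}$.
\item The map $a\to c$ is an inclusion, since its cofibre is the relative latching cofibre of $\psi\Rightarrow\vphi$.
\item Right-specialty of $\cal{A}\subseteq\cal{C}$ yields $c\to b$ with $a\mono b$ an inclusion in $\cal{A}$.
\item Setting $\phi(i):=b$ works. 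The latching map $L_i\phi\mono a\mono b$ is a composite of inclusions in $\cal{A}$, and the relative latching cofibre of $\psi\Rightarrow\phi$ at $i$ is $\cofib(a\to b)\in\cal{A}$.
\end{itemize}
In your $\Sigma$-formulation this amounts to three steps. Replace $\vphi\to\Sigma\alpha$ by the corresponding extension $\alpha\mono\gamma\epi\vphi$, run the construction above on $\alpha\mono\gamma$, and take $\beta=\cofib(\alpha\Rightarrow\phi)$.
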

\begin{proof}
We first show that the inclusion $\fun^{\reedy}(\cal{I},\cal{A}) \to \fun^{\reedy}(\cal{I},\cal{C)}$ is left-special. Let $\vphi\colon \cal{I} \to \cal{C}$ be a Reedy cofibrant diagram and $\vphi\Rightarrow \psi$ a pointwise projection with $\psi$ a Reedy cofibrant diagram taking values in $\cal{A}$. Since $\cal{A} \to \cal{C}$ is left-special there exists, for every $i \in \cal{I}$, an object $b_i$ equipped with a map $b_i \to \vphi(i)$ such that the composite $b_i \to \vphi(i) \to \psi(i)$ is an inclusion in $\cal{A}$. Define $\phi\colon \cal{I} \to \cal{A}$ by the formula
\[ \phi(i) = \oplus_{j < i} a_j .\]
Note that this formula indeed defines a functor, since the association $i \mapsto S_i := \{j \in \cal{I} \;|\; j < i\}$ is a functor from $\cal{P}$ to sets. Let us now view this functor to $\infty$-groupoids which happens to take values in discrete ones. We then observe that for every $j \in\cal{I}_{< i}$ the poset $\cal{J}_j := \{k \in \cal{I}_{< i} \;|\; j \in S_k\}$ contains $j$ as an initial object, and is hence weakly contractible. We conclude that for every $i \in \cal{I}$ the maps $S_j \to S_i \setminus \{i\}$ for $j \in \cal{I}_{< i}$ exhibit $S_i \setminus \{i\}$ as the colimit of the functor $\cal{I}_{< i} \to \Sps$ sending $j$ to $S_j$. Consequently, for every $i \in \cal{I}$ the induced map $\colim_{j < i}\phi(j) \to \phi(i)$ is equivalent to the map $\oplus_{j < i} b_i \to \oplus_{j \leq i} b_i$, which is an inclusion with cofibre $b_i$. We conclude that $\phi$ is Reedy cofibrant. The composite $\phi \Rightarrow \vphi \Rightarrow \psi$ is then a pointwise projection by construction and hence a projection by Lemma~\ref{lem:reedy-inc-pr}.

We now show that the inclusion $\fun^{\reedy}(\cal{I},\cal{A}) \to \fun^{\reedy}(\cal{I},\cal{C)}$ is right-special. Let $\vphi\colon \cal{I} \to \cal{C}$ be a Reedy cofibrant diagram and $\psi\Rightarrow \vphi$ a Reedy cofibration with $\psi$ a Reedy cofibrant diagram taking values in $\cal{A}$. We construct a diagram $\phi\colon \cal{I} \to \cal{A}$ and a map $\vphi\Rightarrow \phi$ such that the composite $\psi \Rightarrow \vphi \Rightarrow \phi$ is a Reedy cofibration. As in the proof of Lemma~\ref{lem:heart-reedy} we construct $\phi$ by induction on the poset $\cal{I}$. In particular, given an $i \in \cal{I}$ such that we have already constructed $\phi\colon \cal{I}_{<i} \to \cal{A}$ with the desired property with respect to $\vphi|_{I_{<i}}$, we show how to extend $\phi$ from $\cal{I}_{<i}$ to $\cal{I}_{\leq i}$. Consider the diagram
\[
\begin{tikzcd}
\colim_{j < i}\psi(j) \ar[dr, hookrightarrow]\ar[dd,hookrightarrow]\ar[rr,hookrightarrow] && \colim_{j < i}\vphi(j) \ar[dd, dashed,hookrightarrow]\ar[dr] & \\
& \colim_{j < i}\phi(j)\ar[rr,equal]\ar[dd,hookrightarrow] && \colim_{j < i}\phi(j)\ar[dd,hookrightarrow] \\
\psi(i)\ar[rr,hookrightarrow,dashed]\ar[dr,hookrightarrow] && \vphi(i)\ar[dr] &  \\
& a \ar[rr] && c
\end{tikzcd}
\]
where the map $a \to c$ is obtained by forming pushouts in the left and right faces. Here, all terms on the left face are in $\cal{A}$, all terms on the right face are in $\cal{C}$, and all indicated maps are inclusions, as can be deduced from Lemma~\ref{lem:colim} and the assumptions on $\vphi,\psi$ and $\phi$.
In addition, since the front-top horizontal map is the identity the $\cofib[a \to c] \in \st(\cal{C})$ coincides with the total cofibre of the back face, which lies in $\cal{C}$ since $\psi \Rightarrow \vphi$ is a Reedy cofibration. We conclude that $a \to c$ is an inclusion.
Now since $\cal{A} \to \cal{C}$ is right-special there exists a map $c \to b$ in $\cal{C}$ such that the composite $a \to c \to b$ is an inclusion. The rightmost vertical map in the resulting diagram
\[
\begin{tikzcd}
\colim_{j < i}\psi(j) \ar[d,hookrightarrow]\ar[r,hookrightarrow] & \colim_{j < i}\vphi(j) \ar[r]\ar[d,hookrightarrow] & \colim_{j < i}\phi(j)\ar[d,hookrightarrow] \\
\psi(i)\ar[r,hookrightarrow] & \vphi(i) \ar[r] & b
\end{tikzcd}
\]
then encodes an extension $\tilde\phi$ of $\phi$ from $\cal{I}_{< i}$ to $\cal{I}_{\leq i}$ with $\phi(i) = b$, and $\tilde\phi$ is Reedy cofibrant since the rightmost vertical map is an inclusion. In addition, the total cofibre of the external rectangle is equivalent to $\cofib[a \hookrightarrow b]$, which lies in $\cal{A}$, and so the resulting composite $\psi|_{\cal{I}_{\leq i}} \Rightarrow \vphi|_{\cal{I}_{\leq i}} \Rightarrow \tilde\phi$ is a Reedy cofibration.

We have thus established that $\fun^{\reedy}(\cal{I},\cal{A}) \to \fun^{\reedy}(\cal{I},\cal{C)}$ is left- and right-special. It is also closed under retracts, since $\cal{A}$ is closed under retracts in $\cal{C}$, and hence a Keller inclusion. By Proposition~\ref{prop:verdier} it will now suffice to show that 
\[ \fun^{\reedy}(\cal{I},\cal{A}) \to \fun^{\reedy}(\cal{I},\cal{C)} \to \fun^{\reedy}(\cal{I},\cal{B}) \]
is a cofibre sequence. Now by Corollary~\ref{cor:envelop-reedy} the induced sequence on stable envelopes is given by
\[ \fun(\cal{I},\st(\cal{A})) \to \fun(\cal{I},\st(\cal{C})) \to \fun(\cal{I},\st(\cal{B})) ,\]
which is a Verdier sequence by Corollary~\ref{st} and \cite[Proposition 6.5.10]{9-authors-I}, and in particular a cofibre sequence. Since forming stable envelopes is colimit preserving it follows that the induced exact functor from $\cofib[\fun^{\reedy}(\cal{I},\cal{A}) \to \fun^{\reedy}(\cal{I},\cal{C)}]$ to $\fun^{\reedy}(\cal{I},\cal{B})$ is an equivalence on stable envelopes, and hence in particular fully faithful. To finish the proof it will hence suffice to show that 
\[ \fun^{\reedy}(\cal{I},\cal{C)} \to \fun^{\reedy}(\cal{I},\cal{B}) \]
is essentially surjective. Given a Reedy cofibrant diagram in $\vphi\colon \cal{I} \to \cal{B}$ we may construct a $\phi\colon \cal{I} \to \cal{C}$ such that $p(\phi) \simeq \vphi$ by induction on $\cal{I}$ as above. In particular, given a $\phi$ constructed on $\cal{I}_{<i}$ for some $i \in \cal{I}$, extending $\phi$ to $\cal{I}_{\leq i}$ requires solving the following lifting problem: given an object $c \in \cal{C}$ and an inclusion $j\colon p(c) \mono b$ in $\cal{B}$, we need to find an inclusion $\tilde{j}\colon c \mono \tilde{b}$ such that $p(\tilde{j}) = j$. Indeed, this lifting property holds for Keller projections, see Remark~\ref{rem:calculus}.
\end{proof}

\section{Algebraic K-theory of exact \texorpdfstring{$\infty$-}{}categories}\label{sec:K-theory}

In this section we consider the algebraic K-theory functor 
\[ \K\colon \exact \to \Sp .\] 
It can be defined by either by Waldhausen's S-construction, or by Quillen's Q-construction, adapted to exact $\infty$-categories by Barwick~\cite{Bar13}. In the latter formalism one first constructs the associated span $\infty$-category $\Span^{\dagger}(\cal{C}) := \Span(\cal{C},\cal{C}^{\inc},\cal{C}^{\pr})$ whose objects are the objects of $\cal{C}$ and whose morphisms are spans of the form
\[ 
\begin{tikzcd}
& w\ar[dl,->>,"p"']\ar[dr,hookrightarrow,"i"] & \\
x && y
\end{tikzcd}
\]
such that $p$ is a projection and $i$ an inclusion in $\cal{C}$ (which are sometimes referred to as \emph{admissible spans}). One can then define $\K(\cal{C})$ as the connective spectrum whose underlying $\Einf$-group valued functor $\cal{K} = \Omega^{\infty}\K$ is given by the formula
\[ \cal{K}(\cal{C}) = \Omega|\Span^{\dagger}(\cal{C})| \]
where $|-|\colon \cat \to \Sps$ denotes the geometric realization (or $\infty$-groupoid completion) functor, and the $\Einf$-structure comes from the symmetric monoidal structure on $\Span^{\dagger}(\cal{C})$ induced by direct sums in $\cal{C}$.

\begin{remark}\label{rem:opposite}
Let $\cal{C}$ be an exact $\infty$-category and equip its opposite $\cal{C}^{\op}$ with the opposite exact structure, in which the new inclusions are the opposites of the old projections, and similarly for the new projections. We claim that there is a natural equivalence
\[ \Span^{\dagger}(\cal{C}) \simeq \Span^{\dagger}(\cal{C}^{\op}) .\]
Indeed, the (admissible) spans in $\cal{C}^{\op}$ correspond to cospans in $\cal{C}$ of the form
\[ 
\begin{tikzcd}
x \ar[dr,hookrightarrow] && y\ar[dl,->>] \\
& w. &
\end{tikzcd}
\]
But to any such cospan we can take its fibre product to obtain an admissible span in $\cal{C}$ from $x$ to $y$, and this process can be reversed by the taking the pushout of an admissible span. In particular, this yields a natural equivalence $\K(\cal{C}) \simeq \K(\cal{C}^{\op})$.
\end{remark}

\subsection{Additivity for K-theory}\label{sec:additivity}

Recall from~\cite{SW25} that a pair
\[ j\colon \cal{A} \hookrightarrow \cal{C} \hookleftarrow \cal{B} \cocolon i \]
of extension-closed full inclusions among exact $\infty$-categories is said to be a \emph{semi-orthogonal decomposition} of $\cal{C}$ if the functor
    \[
        E(\cal{A},\cal{C},\cal{B}) \to \cal{C}, \quad (a \mono x \epi b) \mapsto x
    \]
    is an equivalence of exact categories, where $E(\cal{A},\cal{C},\cal{B})\subset S_2(\cal{C})$ is the full subcategory spanned by those exact sequences with first term in $\cal{A}$, and last term in $\cal{B}$. By \cite[Proposition 6.2]{SW25}, this is equivalent to the following:
\begin{enumerate}
\item\label{item:exact-right}
$j$ admits an exact right adjoint $q\colon \cal{C} \to \cal{A}$ such that the components of the counit $jq(x) \to x$ are all inclusions and such that $\cal{B} = \ker(q)$.
\item\label{item:exact-left}
$i$ admits an exact left adjoint $p\colon \cal{C} \to \cal{B}$ such that the components of the unit $x \to ip(x)$ are all projections and such that $\cal{A} = \ker(p)$.
\end{enumerate}
In addition, when these equivalent conditions hold the counit of $j \dashv q$ and the unit of $p \dashv i$ fit in an essentially unique manner into an exact sequence
\[ jq(x) \mono x \epi ip(x) ,\]
for every $x \in \cal{C}$. For such a semiorthogonal decomposition, we fix the notation $(\cal{C};\cal{A},\cal{B})$ as referring to the following data:
    \[\begin{tikzcd}
        \cal{A} & \cal{C} \ar[l, shift right=.6ex, bend right=2ex, "j"', "\rotatebox{90}{$\vdash$}", hookleftarrow] \ar[l, shift left=.6ex, bend left=2ex, "q"] \ar[r, bend right=2ex, shift right=.6ex, "i"', hookleftarrow] \ar[r, shift left=.6ex, bend left=2ex, "p", "\rotatebox{90}{$\vdash$}"'] & \cal{B}.
    \end{tikzcd}\]

\begin{remark}\label{rem:semiorthogonal-decompositions-detect}
    Suppose given a semiorthogonal decomposition $(\cal{C};\cal{A},\cal{B})$ of exact $\infty$-categories. Then it follows from the exact sequence of functors $jq \mono \id \epi ip$ and extension closedness of $\cal{C}$ in $\st(\cal{C})$ that the functors $p$, $q$ preserve and jointly detect inclusions and projections.
\end{remark}

\begin{example}\label{ex:universal-semi-orthogonal}
The proto-typical example of semi-orthogonal decomposition is
\[ j\colon \cal{E} \hookrightarrow \rS_2(\cal{E}) \hookleftarrow \cal{E} \cocolon i \]
where $\rS_2(\cal{E})$ is the $\rS$-construction in degree 2, which can be described as the $\infty$-category of exact sequences $x \mono y \epi z$ in $\cal{E}$. Here, the functors $j$ and $i$ are given by $j(x) = [x = x \epi 0]$ and $i(z) = [0 \mono z = z]$, and their exact adjoints $q,p$ as above are given by $q(x \mono y \epi z) = x$ and $p(x \mono y \epi z) = z$.
\end{example}

\begin{example}\label{ex:arrow-semi-orthogonal}
A similar but not isomorphic example of a semi-orthogonal decomposition is
\[ j\colon \cal{E} \hookrightarrow \Ar(\cal{E}) \hookleftarrow \cal{E} \cocolon i \]
where $\Ar(\cal{E})$ is the arrow category of $\cal{E}$ with the pointwise exact structure, $j(y) = [0 \to y]$ and $i(x) = [x \to 0]$. The exact adjoints $p,q$ as above are then given by $q(x\to y) =y$ and $p(x \to y) = x$.
\end{example}

\begin{lemma}
Let $(\cal{C};\cal{A},\cal{B})$ be a semi-orthogonal decomposition.
Then the following holds:
\begin{enumerate}
\item\label{item:split-special-inclusion}
$j\colon \cal{A} \to \cal{C}$ is a left-special inclusion and $i\colon \cal{B} \to \cal{C}$ is a right-special inclusion.
\item\label{item:split-special-projection}
$p\colon \cal{C} \to \cal{B}$ is a left-special projection and $q\colon \cal{C} \to \cal{A}$ is a right-special projection. 
\item\label{item:split-sequences}
The sequences
\[ 
\cal{A} \xrightarrow{j} \cal{C} \xrightarrow{p} \cal{B} 
\quad\text{and}\quad
\cal{B} \xrightarrow{i} \cal{C} \xrightarrow{q} \cal{A} 
\]
are both fibre-cofibre sequences in $\exact$.
\end{enumerate}
\end{lemma}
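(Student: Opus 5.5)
For \eqref{item:split-special-inclusion} I would use Example~\ref{ex:adjoints}. The inclusion $j$ has the exact right adjoint $q$, and exact functors preserve projections, so $j$ is left-special. Dually, $i$ has the exact left adjoint $p$, which preserves inclusions, so $i$ is right-special. By the symmetry $(-)^{\op}$, which exchanges the roles of $(\cal{A},j,q)$ and $(\cal{B},i,p)$ and turns the semi-orthogonal decomposition $(\cal{C};\cal{A},\cal{B})$ into $(\cal{C}^{\op};\cal{B}^{\op},\cal{A}^{\op})$, it will then suffice in the remaining parts to treat the sequence $\cal{A} \xrightarrow{j} \cal{C} \xrightarrow{p} \cal{B}$.

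For \eqref{item:split-special-projection}, I first check that $p$ is an exact localisation functor. Since $pi \simeq \id$, the functor $p$ is a reflective localisation, hence a Dwyer-Kan localisation at the maps it inverts. Projections and inclusions in $\cal{B}$ are images of projections and inclusions in $\cal{C}$: given one in $\cal{B}$, apply $i$ (which is exact) and then $p$. Conversely, $p$ is exact, so it carries inclusions and projections of $\cal{C}$ to those of $\cal{B}$. For condition \ref{iiL}, take $f\colon x \to y$ with $p(f)$ an equivalence. I would set $a := jq(y) \in \ker(p)$ and consider the map $(f, \epsilon)\colon x \oplus jq(y) \to y$, where $\epsilon$ is the counit inclusion $jq(y) \mono y$. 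To show this map is a projection, I would use Remark~\ref{rem:semiorthogonal-decompositions-detect}, which says that $p$ and $q$ jointly detect projections. After applying $p$ we get $p(f)$, an equivalence. After applying $q$ we get $q(x)\oplus q(y) \to q(y)$, which is the identity on the second summand and therefore a split projection. The summand inclusion $x \mono x\oplus a$ composed with this map is $f$, as required. I expect this to be the main point needing care: one has to confirm that the joint detection of projections is legitimate here, i.e.\ that a map is a projection as soon as both $p$ and $q$ send it to projections. I would verify this using $\cal{C}\subseteq\st(\cal{C})$ and the functorial exact sequences $jq(-) \mono \id \epi ip(-)$, applying a $3\times 3$ argument to the cofibres. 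The right-special statement for $q$ is the dual of this argument.

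For \eqref{item:split-sequences}, the sequence is null-composite, and I would show it is a fibre sequence directly. The fibre of $p$ in $\exact$ is $\ker(p)$, and this is the full subcategory $\cal{A}$ because the semi-orthogonal decomposition has $\cal{A} = \ker(p)$. Its exact structure is the induced one, since $j$ is an extension-closed full inclusion and $\ker(p)$ is closed under extensions and under fibres of projections. For the cofibre, I would repeat the argument from the proof of Proposition~\ref{prop:verdier}, direction \ref{fib}$\Rightarrow$\ref{cof}. That argument uses only that $p$ is an exact localisation together with axiom \ref{iiL}: exact functors $\cal{B}\to\cal{E}$ correspond to exact functors $\cal{C}\to\cal{E}$ inverting the maps sent to equivalences by $p$, and by \ref{iiL} these are exactly the exact functors vanishing on $\cal{A}$. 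This identifies $\cal{B}$ as the cofibre of $j$. The sequence $\cal{B} \xrightarrow{i} \cal{C} \xrightarrow{q} \cal{A}$ then follows by passing to opposites.
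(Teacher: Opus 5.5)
Your proof is correct. It differs from the paper's in two justifications and, more substantially, in the cofibre half of (3).

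\textbf{Where you agree with the paper.} Part (1), the reduction to one sequence by passing to $(\cal{C}^{\op};\cal{B}^{\op},\cal{A}^{\op})$, and the fibre half of (3) are as in the paper. In (2) you use the same witness $a=jq(y)$ and the same map $x\oplus jq(y)\to y$.

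\textbf{Why that map is a projection.} The paper looks at the map of exact sequences $jq(-)\mono(-)\epi ip(-)$ induced by $f$. Since $ip(f)$ is an equivalence, the left square is a pushout along an inclusion. Its associated exact sequence then exhibits $x\oplus jq(y)\epi y$ as a projection with fibre $jq(x)$. You instead use the joint detection of Remark~\ref{rem:semiorthogonal-decompositions-detect}. That avoids the pushout argument but does not identify the fibre. You don't need it, since it lies in $\ker(p)$ automatically. You also check condition (i) of Definition~\ref{specproj}, which the paper leaves implicit.

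\textbf{The cofibre half of (3).} The paper does not use (2) here. From $p\dashv i$ it gets an adjunction $i^*\dashv p^*$ on exact functor categories whose counit is invertible. So $p^*$ is fully faithful, with image the exact $\varphi$ that invert every unit $x\epi ip(x)$. By the exact sequence $jq(x)\mono x\epi ip(x)$, these are exactly the $\varphi$ vanishing on $\cal{A}$.

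You instead derive the cofibre statement from (2), by rerunning the last step of the proof of Proposition~\ref{prop:verdier}. That step needs only an exact localisation with kernel $\cal{A}$ satisfying (ii.L).

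\textbf{What each route buys.} Yours shows that split sequences are literally an instance of the general principle that such a localisation is the cofibre of its kernel. The paper's argument is shorter and independent of (2). It also never needs that step's implicit claim that exact functors out of $\cal{B}$ are exactly the $W$-inverting exact functors out of $\cal{C}$. That claim is harmless here, because $i$ lifts any diagram in $\cal{B}$ (for instance a pushout along an inclusion) to one in $\cal{C}$, and $i$ is exact.
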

\begin{proof}
For~\eqref{item:split-special-inclusion} this is an instance of Example~\ref{ex:adjoints}. As for~\eqref{item:split-special-projection} let us just give the argument for $p$, the one for $q$ being completely analogous. Suppose given a map $f\colon x \to y$ in $\cal{C}$ such that $p(f)$ is an equivalence in $\cal{B}$. Consider the commutative diagram
\[ 
\begin{tikzcd}
jq(x) \ar[r,hookrightarrow]\ar[d,"jq(f)"'] & x \ar[d,"f"]\ar[r,->>] & ip(x) \ar[d,"ip(f)","\simeq"'] \\
jq(y) \ar[r,hookrightarrow] & y \ar[r,->>] & ip(y)
\end{tikzcd}
\]
in which the rows are exact sequences. Then the left square is a pushout, so that the induced map $jq(y) \oplus x \to y$ is a projection with kernel $jq(x)$. Setting $a := jq(y)$ we then obtain a factorization
\[
\begin{tikzcd}
& x \oplus a \ar[rd, twoheadrightarrow] \\
x \ar[ru, rightarrowtail] \ar[rr, "f"] && y,
\end{tikzcd}
\]
as a summand inclusion with complement in $\ker(p)$ followed by a projection, so that $p$ is a left-special projection as desired. 

We now prove~\eqref{item:split-sequences}. Since $\ker(q)=\cal{B}$ and $\ker(p)=\cal{A}$ we have that the sequences in question are fibre sequences.
We now show that the sequence
\[ \cal{A} \xrightarrow{i} \cal{C} \xrightarrow{q} \cal{B} \]
is also cofibre sequences in $\exact$; the argument for the second sequence is completely analogous. Let $\cal{E} \in \exact$ be a test object. Then the adjunction $p\dashv i$ induces via pre-composition an adjunction
\[ i^* \colon \funex(\cal{C},\cal{E}) \adj \funex(\cal{B},\cal{E})\cocolon p^* \]
on the level of exact functor categories, whose counit is again an equivalence. In particular, $p^*$ is fully-faithful, and an arbitrary exact functor $\vphi\colon \cal{C} \to \cal{E}$ is in the image of $p^*$ if and only if the unit is an equivalence on $\vphi$, that is, if and only if the map $\vphi(x) \to \vphi(ip(x))$ is an equivalence for every $x$. Now since the unit fits in an exact sequence
\[ jq(x) \mono x \epi ip(x) \]
we have that such an exact $\vphi$ inverts the map $x \epi ip(x)$ if and only if it vanishes on $jq(x)$. We conclude that an exact $\vphi\colon \cal{C} \to \cal{E}$ is in the image of $p^*$ if and only if it vanishes on $jq(x)$ for every $x$, which is equivalent to saying that $\vphi$ vanishes on the essential image of $j$. We conclude that $p$ exhibits $\cal{B}$ as the cofibre of $j\colon \cal{A} \to \cal{C}$, as desired.
\end{proof}

Finally, let us also record the following, which is a version of \cite[Lemma 2.6.1]{9-authors-II} in the setting of exact categories: 
\begin{lemma}\label{lem:p-cartesian}
Let $(\cal{C};\cal{A},\cal{B})$ be a semi-orthogonal decomposition.
Then 
\begin{enumerate}
\item
$p\colon \cal{C} \to \cal{B}$ is a cartesian fibration and an edge $x\to y$ in $\cal{C}$ is $p$-cartesian if and only if the associated square
    \[\begin{tikzcd}
        x\ar[r] \ar[d, \head] & y \ar[d, \head] \\
        ip(x) \ar[r] & ip(y)
    \end{tikzcd}\]
is pullback. 
\item
$q\colon \cal{C} \to \cal{A}$ is a cocartesian fibration and an edge $x\to y$ in $\cal{C}$ is $q$-cocartesian if and only if the associated square
    \[\begin{tikzcd}
        jq(x)\ar[r] \ar[d, \hook] & jq(y) \ar[d, \hook] \\
        x \ar[r] & y
    \end{tikzcd}\]
is pushout.
\end{enumerate}
\end{lemma}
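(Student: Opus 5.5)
The plan is to prove (1) directly and obtain (2) from it by duality. Passing to opposite exact $\infty$-categories interchanges inclusions and projections and turns $(\cal{C};\cal{A},\cal{B})$ into a semi-orthogonal decomposition of $\cal{C}^{\op}$ in which the roles of $(j,q)$ and $(i,p)$ are swapped. Cartesian edges then become cocartesian edges and pullbacks become pushouts.

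For (1), fix $y \in \cal{C}$ and a map $g\colon b \to p(y)$ in $\cal{B}$. I define the candidate cartesian lift $x := y \times_{ip(y)} i(b)$, the pullback of $ig$ along the unit $y \epi ip(y)$. This pullback exists in $\cal{C}$ because the unit is a projection, so it can be pulled back along any map. The resulting map $x \epi i(b)$ is again a projection with fibre $jq(y) \in \cal{A}$.

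Next I would identify $p(x) \simeq b$. Since $p$ is exact it preserves pullbacks along projections, so $p(x) \simeq p(y) \times_{pip(y)} pi(b) \simeq p(y)\times_{p(y)} b \simeq b$, using $pi \simeq \id$. Equivalently, applying the unit to $x$ recovers $x \epi i(b)$, since the fibre of this map is $jq(y)$, which lies in $\ker(p)$. Either way, the square in the statement for the edge $x \to y$ is the defining pullback.

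The main step is to show that any edge $x \to y$ whose associated square is a pullback is $p$-cartesian. Take $z \in \cal{C}$; I need
\[ \Map(z,x) \to \Map(z,y) \times_{\Map(p z,p y)} \Map(p z,p x) \]
to be an equivalence. By the adjunction $p \dashv i$ we have $\Map(pz,px) \simeq \Map(z,ip(x))$, and similarly for $y$. The right-hand side is therefore $\Map(z,y)\times_{\Map(z,ip(y))}\Map(z,ip(x))$. This equals $\Map(z,x)$ because the square is a pullback and $\Map(z,-)$ preserves pullbacks. Some care is needed to check that the comparison map is indeed the one induced by the unit, but this is the standard naturality of the adjunction.

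Combining the construction with this step, every map $b \to p(y)$ admits a cartesian lift, so $p$ is a cartesian fibration. For the converse characterisation, any $p$-cartesian edge $x' \to y$ over $g$ is equivalent to the constructed lift $x \to y$ by uniqueness of cartesian lifts. Hence its square is equivalent to a pullback square, and is itself a pullback.

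I expect no serious obstacle. The only delicate point is the first step: using that $p$ is exact, so that it preserves the pullback along the projection $y \epi ip(y)$, in order to identify $p(x)\simeq b$ compatibly with $g$.
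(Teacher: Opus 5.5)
Your proposal is correct and follows essentially the same route as the paper: you produce lifts by pulling back along the unit $y \epi ip(y)$, which is a projection, and you show that such pullback edges are cartesian via the mapping-space criterion combined with the adjunction $p \dashv i$, with (2) obtained by passing to opposites. You also make explicit two points the paper leaves implicit: the identification $p(x)\simeq b$ via exactness of $p$, and the converse characterisation via uniqueness of cartesian lifts.
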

\begin{proof}
The fact that arrows with these properties are (co)cartesian follows immediately from the mapping space criterion~\cite[2.4.4.3]{HTT}. It is also clear that there is a sufficient supply of lifts of this form since one can always form the pullback/pushout along the unit/counit since these are projections and inclusions, respectively, by assumption (where we have also used that $p$ and $q$ have fully-faithful adjoints).
\end{proof}

\begin{definition}\label{def:additive}
Let $\cal{E}$ be an additive $\infty$-category. We shall say that a functor
\[ \cal{F}\colon \exact \to \cal{E} \]
is \emph{additive} if it preserves zero objects and for every semi-orthogonal decomposition of exact $\infty$-categories
    \[\begin{tikzcd}
        \cal{A} & \cal{C} \ar[l, shift right=.6ex, bend right=2ex, "j"', "\rotatebox{90}{$\vdash$}", hookleftarrow] \ar[l, shift left=.6ex, bend left=2ex, "q"] \ar[r, bend right=2ex, shift right=.6ex, "i"', hookleftarrow] \ar[r, shift left=.6ex, bend left=2ex, "p", "\rotatebox{90}{$\vdash$}"'] & \cal{B}.
    \end{tikzcd}\]
the maps
\[ \K(\cal{A}) \oplus \K(\cal{B}) \xrightarrow{(\K(i), \K(j))} \K(\cal{C}) \xrightarrow{(\K(p),\K(q))} \K(\cal{A}) \oplus \K(\cal{B}) \]
are equivalences.
\end{definition}

\begin{remark}\label{rem:additive-S}
By Example~\ref{ex:universal-semi-orthogonal} we have that if $\cal{F}\colon \exact \to \cal{E}$ is additive then $\cal{F}(\rS_2(\cal{C})) \simeq \K(\cal{C})\oplus \K(\cal{C})$. More generally, arguing by induction we obtain that
\[ \cal{F}(\rS_n(\cal{C})) \simeq \cal{F}(\cal{C}) \oplus ... \oplus \cal{F}(\cal{C}) \]
is a direct sum of $n$ copies of $\cal{F}(\cal{C})$. Noting that objects in $\rS_n(\C)$ correspond to objects in $\cal{C}$ equipped with a length $n$ filtration and that the equivalence above is induced by passing to the associated graded pieces it follows by standard arguments that if $f\colon \cal{C}' \to \cal{C}$ is an exact functor that admits a length $n$ filtration $0 = f_0 \mono ... \mono f_{n} = f$ in the associated exact functor category then the induced map $\cal{F}(f)\colon \cal{F}(\cal{C}') \to \cal{F}(\cal{C})$ is equivalent to the sum of the induced maps $\cal{F}(\cofib[f_i \to f_{i+1}])$ for $i=0,...,n-1$.
\end{remark}

\begin{proposition}[Additivity]\label{prop:additivity}
The algebraic $\K$-theory functor $\K\colon \exact \to \Sp$ is additive.
\end{proposition}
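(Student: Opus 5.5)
Let $(\cal{C};\cal{A},\cal{B})$ be a semi-orthogonal decomposition. The composite $(\K(p),\K(q))\circ(\K(j),\K(i))$ is the identity of $\K(\cal{A})\oplus\K(\cal{B})$, because $qj\simeq\id$, $pi\simeq\id$, $pj\simeq 0$ and $qi\simeq 0$. It therefore suffices to show that $\K(j)\K(q)+\K(i)\K(p)\simeq \id_{\K(\cal{C})}$. The identity functor carries the natural exact sequence $jq\mono\id\epi ip$, so I would reduce to the additivity theorem in its standard form: for exact functors $f',f,f''\colon\cal{C}\to\cal{D}$ sitting in a pointwise exact sequence $f'\mono f\epi f''$, one has $\K(f)\simeq\K(f')+\K(f'')$. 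This follows from the universal case $\cal{D}=\rS_2(\cal{E})$ (Example~\ref{ex:universal-semi-orthogonal}), since any such sequence is an exact functor $\cal{C}\to\rS_2(\cal{D})$. So the heart of the proof is: for every exact $\infty$-category $\cal{E}$, the functor $(q,p)\colon\rS_2(\cal{E})\to\cal{E}\times\cal{E}$ induces an equivalence on $\K$.

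I would prove this with the $Q$-construction, following Quillen's original argument in the $\infty$-categorical form of Barwick. Since $|-|$ and $\Omega$ commute with finite products, it is enough to show that
\[ \Span^{\dagger}(\rS_2(\cal{E}))\to\Span^{\dagger}(\cal{E})\times\Span^{\dagger}(\cal{E}) \]
is a weak homotopy equivalence. I would factor this through the projection to one factor, using Quillen's Theorem A (in Lurie's form: a functor whose comma $\infty$-categories are weakly contractible is cofinal). Concretely, consider $\Span^{\dagger}(p)\colon \Span^{\dagger}(\rS_2(\cal{E}))\to\Span^{\dagger}(\cal{E})$. Lemma~\ref{lem:p-cartesian} says $p$ is a cartesian fibration whose cartesian edges are pullbacks along the unit projections. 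Using this, I would show that $\Span^{\dagger}(p)$ is a cocartesian or cartesian fibration in the appropriate sense, with fibre over $z$ equivalent to $\Span^{\dagger}(\cal{E})$ via $q$. Here I would use that the fibre of $p$ over $z$, restricted to maps that are isomorphisms on the $p$-part, is the span category of extensions of $z$. Pulling back along a projection gives a contravariant transport between fibres, and pushing out along an inclusion gives a covariant one, and these must be shown to induce weak equivalences of fibres. Then Quillen's Theorem B (or its $\infty$-version: a map of realisations of a fibration whose transport maps are weak equivalences) identifies the homotopy fibre of $|\Span^{\dagger}(p)|$ with $|\Span^{\dagger}(\cal{E})|$ via $q$. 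Since $p$ and $q$ admit sections $i,j$, this splits as a product and gives the claimed equivalence.

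The main obstacle is the fibre/transport analysis. Span categories are not fibrations in a naive sense, so one needs either the two-sided fibration technology for spans (Barwick's unfurling) or a direct Theorem A comparison. In the latter, one shows that the comma category of $\Span^{\dagger}(p)$ over $z$ deformation retracts, via the pullback and pushout constructions, onto the fibre over $z$, and that the fibres over $z$ and over $0$ are equivalent. My first attempt would use the symmetry $\Span^{\dagger}(\cal{C})\simeq\Span^{\dagger}(\cal{C}^{\op})$ of Remark~\ref{rem:opposite}, so that only cartesian lifts along projections need to be treated, supplying the inclusion legs of spans by the dual statement for $q$ in Lemma~\ref{lem:p-cartesian}. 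Alternatively, I could bypass all this by citing the additivity theorem of Barwick~\cite{Bar13} for exact $\infty$-categories and then performing the reduction of the first paragraph.
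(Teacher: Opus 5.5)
Your first paragraph is fine. The reduction to the universal case $\rS_2(\cal{E})$ is standard, though the paper does not need it, since its argument applies to an arbitrary semi-orthogonal decomposition.

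The gap is in the universal case itself, which is the whole content of additivity. You name the fibre/transport analysis as the main obstacle and leave it open, and the claims you do make about it are not right.

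First, the fibre of $\Span^{\dagger}(p)$ over $z$ is not equivalent to $\Span^{\dagger}(\cal{E})$ via $q$. Its objects are all extensions $x\rightarrowtail y\twoheadrightarrow z$. Already for an ordinary exact category, the shear automorphisms of $x\rightarrowtail x\oplus z\twoheadrightarrow z$ given by nonzero maps $z\to x$ lie over $\id_z$, and $q$ sends them to $\id_x$. At most $q$ could be a weak homotopy equivalence there, and that is exactly what has to be proved.

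Second, there is no transport by pushout. By Lemma~\ref{lem:p-cartesian} the available lifts are pullbacks along maps into $z$, giving cocartesian lifts of backward spans and cartesian lifts of forward spans only. So neither the fibration form of Theorem~B is available, nor your alternative of retracting the comma category over $z$ onto the fibre over $z$; the latter would require moving objects forward along forward structure maps, i.e.\ cocartesian lifts of forward maps, which you do not have. The symmetry of Remark~\ref{rem:opposite} does not help either: it trades statements about $p$ for statements about $q$, not one kind of lift of $\Span^{\dagger}(p)$ for the other. Even the one-sided lifts need proof. This is Lemma~\ref{lem:lifts-for-span}, via the two-variable fibration criterion and the fact that $q$ inverts $p$-cartesian edges, so that Remark~\ref{rem:semiorthogonal-decompositions-detect} applies.

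The missing idea is how to exploit exactly these one-sided lifts. The paper does this in Proposition~\ref{prop:product-criteria}, using the backward--forward orthogonal factorisation system on $\Span^{\dagger}(\cal{B})$ and comparing comma categories with the fibre over $0$, not over $z$.

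1. Factor a structure map $p(X)\to z$ as a backward span followed by a forward one, and transport cocartesianly along the backward part. This exhibits the full subcategory of $\Span^{\dagger}(\cal{C})_{/z}$ on objects with forward structure map as reflective; the mapping-space check uses orthogonality, Remark~\ref{rem:factorisation-system}.
2. Cartesian lifting of the unique forward map $0\rightarrowtail p(X)$ then exhibits $\Span^{\dagger}(\cal{A})$, the fibre over $0$, as coreflective in that subcategory.
3. Since $\Span^{\dagger}(q)$ is the identity on $\Span^{\dagger}(\cal{A})$, it is a weak homotopy equivalence $\Span^{\dagger}(\cal{C})_{/z}\to\Span^{\dagger}(\cal{A})$ for every $z$.
4. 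A fibrewise comparison of cocartesian fibrations over $\Span^{\dagger}(\cal{B})$ then shows that $(\Span^{\dagger}(q),\Span^{\dagger}(p))$ is a weak homotopy equivalence, with no separate Theorem~B or splitting step.

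Falling back on Barwick's additivity theorem would establish the statement, but it replaces the argument rather than supplying one.
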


Additivity for $\K$-theory can be expressed in various forms; it is in fact enough to prove it for Example~\ref{ex:universal-semi-orthogonal}, a statement which itself is sometimes referred to as additivity. It is due to Waldhausen in the context of Waldhausen categories, and extended by Barwick to Waldhausen $\infty$-categories in~\cite{Bar16}. A self-contained proof for stable $\infty$-categories can be found in~\cite[Corollary 4.2]{HLS24}. In a similar spirit, we shall henceforth give a self-contained proof for additivity in the setting of exact $\infty$-categories.

\begin{lemma}\label{lem:lifts-for-span}

Let 
    \[\begin{tikzcd}
        \cal{A} & \cal{C} \ar[l, shift right=.6ex, bend right=2ex, "j"', "\rotatebox{90}{$\vdash$}", hookleftarrow] \ar[l, shift left=.6ex, bend left=2ex, "q"] \ar[r, bend right=2ex, shift right=.6ex, "i"', hookleftarrow] \ar[r, shift left=.6ex, bend left=2ex, "p", "\rotatebox{90}{$\vdash$}"'] & \cal{B}.
    \end{tikzcd}\]
be a semi-orthogonal decomposition.
Then a $p$-cartesian inclusion in $\cal{C}$ is also a $\Span^{\dagger}(p)$-cartesian arrow in $\Span^{\dagger}(\cal{C})$ when viewed as a purely forward span and a $p$-cartesian projection is also a $\Span^{\dagger}(p)$-cocartesian arrow in $\Span^{\dagger}(\cal{C})$ when viewed as a purely backward span.

Dually, any $q$-cocartesian inclusion in $\cal{C}$ is also $\Span^{\dagger}(q)$-cocartesian when viewed as a purely forward span and any $q$-cocartesian projection is $\Span^{\dagger}(q)$-cartesian when viewed as a purely backward span.
\end{lemma}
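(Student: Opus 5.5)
The plan is to verify the defining mapping-space criterion for (co)cartesian arrows directly in $\Span^{\dagger}(\cal{C})$, using the explicit description of its mapping spaces. Recall that $\Map_{\Span^{\dagger}(\cal{C})}(z,y)$ is the $\infty$-groupoid of admissible spans $z \twoheadleftarrow w \rightarrowtail y$. Composition with a purely forward span $y \rightarrowtail y'$ is just postcomposition of the right leg. Composition with a purely backward span is computed by a pullback along an identity, hence is also given by composing the leg. The functor $\Span^{\dagger}(p)$ acts by applying $p$ to spans. We first record the key input: if $e\colon y' \to y$ is $p$-cartesian, then by Lemma~\ref{lem:p-cartesian} the square with vertical maps $y' \epi ip(y')$ and $y \epi ip(y)$ is a pullback. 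Hence the fibres of these vertical projections agree, that is, $q(e)\colon q(y') \to q(y)$ is an equivalence (using the exact sequence $jq \mono \id \epi ip$).

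\emph{Forward case.} Let $e\colon y' \mono y$ be a $p$-cartesian inclusion. For every $z$ we must show that
\[ \Map(z,y') \to \Map(z,y)\times_{\Map(p z,p y)}\Map(pz,py') \]
is an equivalence. We do this fibrewise over a point $z \twoheadleftarrow w \rightarrowtail y$ of $\Map(z,y)$. The fibre on the right is the space of spans $pz \twoheadleftarrow u \rightarrowtail py'$ together with an equivalence of spans between their composite with $pe$ and $p(w)$. This identifies with the space of factorizations of $p(w) \to p(y)$ through $p(e)$ whose first map $p(w) \to p(y')$ is an inclusion. The fibre on the left is the space of factorizations of $w \mono y$ through $e$ by a map $w \to y'$ that is an inclusion. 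By $p$-cartesianness of $e$ in $\cal{C}$, factorizations of $w \to y$ through $e$ correspond exactly to factorizations of $p(w)\to p(y)$ through $p(e)$. It therefore remains to show that a factorization $w \to y'$ is an inclusion as soon as $p(w) \to p(y')$ is. By Remark~\ref{rem:semiorthogonal-decompositions-detect}, $p$ and $q$ jointly detect inclusions. Moreover, $q(w) \to q(y')$ identifies, via the equivalence $q(e)$, with $q(w \mono y)$, which is an inclusion since $q$ is exact. This finishes the forward case.

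\emph{Backward case.} Let $e\colon y' \epi y$ be a $p$-cartesian projection, viewed as the backward span $y \twoheadleftarrow y' = y'$ from $y$ to $y'$. For cocartesianness we compare
\[ \Map(y',z) \to \Map(y,z)\times_{\Map(py,pz)}\Map(py',pz) .\]
Precomposition with $e$ sends $y' \twoheadleftarrow v \rightarrowtail z$ to $y \twoheadleftarrow v \rightarrowtail z$, where the left leg is the composite $v \epi y' \epi y$. So over a given span $y \twoheadleftarrow v \rightarrowtail z$, the fibres are, respectively:
\begin{itemize}
\item factorizations of $v \epi y$ through $e$ by projections;
\item factorizations of $p(v) \to p(y)$ through $p(e)$ by projections.
\end{itemize}
These agree by the same argument as before: $p$-cartesianness matches factorizations in $\cal{C}$ and $\cal{B}$, and joint detection of projections by $p,q$ combined with $q(e)$ being an equivalence shows that the lift $v \to y'$ is a projection.

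\emph{Dual statements.} The statements for $q$ follow by passing to opposite exact $\infty$-categories. This uses Remark~\ref{rem:opposite}, which identifies $\Span^{\dagger}(\cal{C})\simeq\Span^{\dagger}(\cal{C}^{\op})$ and exchanges forward and backward spans, together with the fact that $(\cal{C}^{\op};\cal{B}^{\op},\cal{A}^{\op})$ is a semi-orthogonal decomposition in which $q^{\op}$ plays the role of $p$.

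I expect the main obstacle to be the bookkeeping identifying the fibres of mapping spaces in $\Span^{\dagger}$. In particular, one must check that an equivalence of spans in $\cal{B}$ between $p$ of a span and a composite with $p(e)$ amounts precisely to a factorization of the relevant leg. This relies on the fact that equivalences in span mapping groupoids are equivalences of the middle object compatible with both legs.
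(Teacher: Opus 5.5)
Your argument is correct, and its essential input is the same as the paper's. By the pullback description in Lemma~\ref{lem:p-cartesian}, $q$ inverts $p$-cartesian arrows. Hence, by the joint detection of Remark~\ref{rem:semiorthogonal-decompositions-detect}, if an inclusion (resp.\ projection) factors through a $p$-cartesian arrow, the factorising map is an inclusion (resp.\ projection) as soon as its image under $p$ is.

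Where you differ is in passing from this to (co)cartesianness in $\Span^{\dagger}(\cal{C})$.

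\emph{The paper's route.} It invokes \cite[Theorem 3.1]{two-variable-fibrations} for the map of adequate triples $(\cal{C},\cal{C}^{\pr},\cal{C}^{\inc})\to(\cal{B},\cal{B}^{\pr},\cal{B}^{\inc})$ and for its swap. This reduces the claim to showing that $p$-cartesian projections (resp.\ inclusions) remain cartesian for the restricted functor $p^{\pr}$ (resp.\ $p^{\inc}$) on wide subcategories. That is exactly the detection statement above.

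\emph{Your route.} You verify the mapping-space criterion directly. You exploit that composing with a purely forward or purely backward span only involves a pullback along an identity. Consequently the fibres of the relevant maps of span mapping spaces are unions of components of the fibres of $\Map_{\cal{C}}(w,y')\to\Map_{\cal{C}}(w,y)$ (and of their images in $\cal{B}$), cut out by the condition that the factorising map is admissible. Cartesianness of $e$ in $\cal{C}$ matches the full fibres, and detection matches the admissible components.

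\emph{Comparison.} Your approach is self-contained and elementary. Its cost is reliance on the standard description of mapping spaces and of composition in $\Span^{\dagger}$, which you correctly flag as the point needing care. The paper's proof is shorter and hides that bookkeeping inside a general theorem. It also makes the structural reason visible: the arrows are cartesian already within the wide subcategories of inclusions and projections.

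Your treatment of the $q$-statements, via $(\cal{C}^{\op};\cal{B}^{\op},\cal{A}^{\op})$ and Remark~\ref{rem:opposite} exchanging forward and backward spans, is the same as the paper's.
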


\begin{proof}
The dual variant for $q$ can be obtained from the statement for $p$ by replacing $(\cal{C};\cal{A},\cal{B})$ with $(\cal{C}^{\op};\cal{B}^{\op},\cal{A}^{\op})$ and using Remark~\ref{rem:opposite}. We now prove the statement for $p$.
 
We apply~\cite[Theorem 3.1]{two-variable-fibrations} with $f$ being an equivalence (a case for which the two assumptions of that theorem clearly hold), once to the map of adequate triples $(\cal{C},\cal{C}^{\pr},\cal{C}^{\inc}) \to (\cal{B},\cal{B}^{\pr},\cal{B}^{\inc})$ and once to the map of adequate triples $(\cal{C},\cal{C}^{\inc},\cal{C}^{\pr}) \to (\cal{B},\cal{B}^{\inc},\cal{B}^{\pr})$ (noting that switching the role of ingressive and egressive arrows replaces the associated span $\infty$-category with its opposite). To obtain the desired result it will now suffice to show that any $p$-cartesian projection in $\C$ is also $p^{\pr}$-cartesian and any $p$-cartesian inclusion in $\C$ is also $p^{\inc}$-cartesian, where $p^{\pr}$ and $p^{\inc}$ are the functors induces on the wide subcategories of projections and inclusions, respectively. Concretely, we need to show that in any commutative triangle
\[
\begin{tikzcd}
& y\ar[dr,"g"] & \\
x\ar[ur,"f"]\ar[rr,"h"] && z
\end{tikzcd}
\]
in $\C$ such that $g$ is $p$-cartesian, if $g,h$ and $p(f)$ are projections then so is $f$, and if $g,h$ and $p(f)$ are inclusion then so is $f$. Indeed, this follows from Remark~\ref{rem:semiorthogonal-decompositions-detect} since $q\colon \cal{C} \to \cal{A}$ sends all $p$-cartesian arrows to equivalences (as can be seen by their explicit description appearing in Lemma~\ref{lem:p-cartesian}). 
\end{proof}

Recall that an orthogonal factorisation system \cite[Definition 5.2.8.8]{HTT} on an $\infty$-category $\cal{D}$ is the data of collections $(\cal{D}^l, \cal{D}^r)$ of arrows in $\cal{D}$, each closed under retracts in $\mrm{Ar}(\cal{D})$, and such that any map $x \to y$ factors as $x \epi z \hookrightarrow y$ with $x \epi z$ in $\cal{D}^l$ and $z \hookrightarrow y$ in $\cal{D}^r$, and for each square
\[\begin{tikzcd}
    a \ar[d, \head] \ar[r] & x \ar[d, \hook] \\
    b \ar[r] & y,
\end{tikzcd}\]
the mapping space $\Map_{\cal{D}_{a//y}}(b, x)$ is contractible. Here, the notation is inspired by the example of sets (or abelian groups), which admits an orthogonal factorization system in which the left class consists of the surjective maps and the right class of injective maps.
Another example, which is the one we will use in the present context, is the collection of purely forward and purely backward spans, which give rise to an orthogonal factorisation system on the span $\infty$-category associated to a given adequate triple by \cite[Proposition 4.9]{two-variable-fibrations}. 

\begin{remark}\label{rem:factorisation-system}
    Suppose $\cal{D}$ has an orthogonal factorisation system.
    Observe that for any commutative square
    \[
    \begin{tikzcd}
    a \ar[d,"g"']\ar[r,"i"] & x\ar[d,"f"] \\
    b \ar[r,"j"] & y
    \end{tikzcd}
    \]
    the mapping space $\Map_{\cal{D}_{a//y}}(b, x)$ is obtained as a pullback
    \[\begin{tikzcd}
    \Map_{\cal{D}_{a//y}}(b, x) \ar[r] \ar[d] & \Map_\cal{D}(b, x) \ar[d] \\
    * \ar[r, "{(i, j)}"] & \Map_\cal{D}(a,x) \times_{\Map_\cal{D}(a,y)}\Map_\cal{D}(b,y).
\end{tikzcd}\]
It then follows that for maps $g\colon a \epi b$ and $f\colon x \mono y$ in $\cal{D}^l$ and $\cal{D}^r$ respectively, all the total fibres of the square
\[\begin{tikzcd}
    \Map_\cal{D}(b,x) \ar[r] \ar[d] & \Map_\cal{D}(a,x) \ar[d] \\
    \Map_\cal{D}(b,y) \ar[r] & \Map_\cal{D}(a,y)
\end{tikzcd}\]
are contractible, and so the natural map $\Map(b,x) \to \Map_{\cal{D}^{\Delta^1}}(g, f)$ is an equivalence.
\end{remark}

\begin{proposition}\label{prop:product-criteria}
    Let $\cal{D}$ be an $\infty$-category equipped with an orthogonal factorisation system $(\cal{D}^l,\cal{D}^r)$, such that there is some object $0 \in \cal{D}$ which is initial for maps in $\cal{D}^r$.
    Let $p\colon\cal{E} \to \cal{D}$ be a functor satisfying the following:
    \begin{enumerate}
        \item Maps in $\cal{D}^l$ admit $p$-cocartesian lifts given any lift of their domain and maps in $\cal{D}^r$ admit $p$-cartesian lifts for any lift of their target.
        \item The fibre inclusion $\cal{F} := p^{-1}(0) \to \cal{E}$ admits a retraction $r\colon \cal{E} \to \cal{F}$. 
    \end{enumerate}
    Then the functor 
    \[ (r,p)\colon \cal{E} \to  \cal{F} \times \cal{D} \]
    is a weak homotopy equivalence.
\end{proposition}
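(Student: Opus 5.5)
The plan is to apply Quillen's Theorem A to $(r,p)$, or more precisely a fibrewise version: show that $p$ is a ``realization fibration'' whose fibres are all weakly equivalent to $\cal{F}$ via $r$, and that $\cal{D}$ is contractible. That $\cal{D}$ is weakly contractible is immediate. The object $0$ is initial for $\cal{D}^r$, so every $d$ receives a $\cal{D}^r$-map $0 \to d$. Factoring any map $0\to d$ as $0 \to z \to d$ in $\cal{D}^l$ followed by $\cal{D}^r$ shows $0$ is connected to every object. The cleaner route is to prove directly that $r$ and $p$ jointly induce the equivalence by comparing $|\cal{E}|$ with $|\cal{F}|\times|\cal{D}|$.

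\textbf{Step 1.} I would first reduce to showing that $r$ itself is a weak homotopy equivalence and $|\cal{D}|\simeq \ast$; then $(r,p)$ is a weak equivalence, since $|\cal{F}\times\cal{D}| \simeq |\cal{F}|\times|\cal{D}| \simeq |\cal{F}|$. To see $|\cal{D}|\simeq\ast$, use that $0$ is initial in $\cal{D}^r$. By orthogonality, the full subcategory of $\cal{D}_{/d}$ on maps in $\cal{D}^r$ is coreflective (the factorization supplies a right adjoint), and it has the initial object $0 \to d$. Hence each $\cal{D}_{/d}$ is weakly contractible, which forces $|\cal{D}| \simeq |\cal{D}_{/d}|$ type contractibility via Theorem A applied to $\cal{D}\to\ast$. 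Since $r$ is a retraction of the inclusion $\iota\colon\cal{F}\to\cal{E}$, it suffices to show $\iota$ is a weak equivalence, or equivalently that $\iota r$ is homotopic to $\id_{\cal{E}}$ on realizations.

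\textbf{Step 2.} To produce the homotopy I would build a zig-zag of natural transformations connecting $\id_{\cal{E}}$ to a functor landing in $\cal{F}$. For $e\in\cal{E}$, factor the unique-ish $\cal{D}^r$ map $0 \to p(e)$ and take a $p$-cartesian lift $e_0 \to e$ of $0\to p(e)$ with target $e$, using hypothesis (1). Then $e_0\in\cal{F}$. Making this functorial is the crux. Maps $e \to e'$ have images $p(e)\to p(e')$, and the composite $0\to p(e)\to p(e')$ need not lie in $\cal{D}^r$. So I would factor it as $0\to z\to p(e')$ and also use cocartesian lifts of the $\cal{D}^l$ part. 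Concretely, I would replace $\cal{E}$ by an auxiliary category $\cal{E}'$ of pairs ($e$, a factorization datum of $0\to p(e)$). I would then show that the projection $\cal{E}'\to\cal{E}$ has weakly contractible fibres, which are categories of factorizations, contractible by orthogonality as in Remark~\ref{rem:factorisation-system}. This yields, on $\cal{E}'$, natural transformations $e \leftarrow e_z \rightarrow$ (something in $\cal{F}$). These are obtained by cartesian lifting along $\cal{D}^r$ and cocartesian pushing along $\cal{D}^l$, exhibiting the identity of $|\cal{E}|$ as factoring through $|\cal{F}|$.

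\textbf{Step 3.} Combining, $\iota$ and $r$ are mutually inverse on realizations, so $(r,p)$ is an equivalence. The main obstacle is Step 2: arranging the cartesian and cocartesian lifts functorially, and checking that the auxiliary category of factorizations has contractible fibres. I would attack it first using the unique-lifting property from the factorization system together with the straightening of the partial (co)cartesian fibration structure.
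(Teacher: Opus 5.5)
There is a genuine gap, and it sits in Step 1. The claim $|\cal{D}|\simeq\ast$ is false. So is the reduction target, namely that $r$ (equivalently, the fibre inclusion $\cal{F}\to\cal{E}$) is a weak homotopy equivalence.

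Your justification is circular:
\begin{itemize}
\item every slice $\cal{D}_{/d}$ has the terminal object $\id_d$, so its contractibility carries no information;
\item Theorem A for $\cal{D}\to\ast$ would need $\cal{D}$ itself to be weakly contractible;
\item the fact that $0$ maps to every object only shows that $|\cal{D}|$ is connected.
\end{itemize}

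For a counterexample, take $\cal{D}=B\mathbb{N}$ with the factorisation system (all maps, equivalences), $\cal{E}=\cal{D}$, $p=\id$ and $\cal{F}=\ast$. All hypotheses hold and $(r,p)$ is the identity, but $r\colon B\mathbb{N}\to\ast$ is not a weak equivalence, since $|B\mathbb{N}|\simeq S^1$. In the paper's own application $\cal{D}=\Span^{\dagger}(\cal{B})$ and $\Omega|\cal{D}|=\cal{K}(\cal{B})$, so contractibility of $|\cal{D}|$ would kill K-theory.

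This also shows that the functoriality obstruction you flag in Step 2 is not a technicality. A zig-zag of natural transformations from $\id_{\cal{E}}$ to a functor landing in $\cal{F}$ would make $|\cal{E}|$ a retract of $|\cal{F}|$, which fails in the example above. Recording factorisation data for the $\cal{D}^r$-map $0\to p(e)$ cannot help, since such a map has an essentially unique, trivial factorisation.

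The missing idea is to keep $\cal{D}$ and argue fibrewise over it using comma categories rather than $\cal{E}$ itself. Your opening Theorem-B instinct was closer, but the strict fibres $p^{-1}(d)$ are the wrong objects to compare with $\cal{F}$.

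The paper replaces $\cal{E}$ by $\cal{E}_{/\cal{D}}=\cal{E}\times_{\cal{D}}\fun(\Delta^1,\cal{D})$, into which $\cal{E}$ includes coinitially. Evaluation at the target makes this a cocartesian fibration over $\cal{D}$ whose fibres are the comma categories $\cal{E}_{/x}$. The map $(r',q)\colon\cal{E}_{/\cal{D}}\to\cal{F}\times\cal{D}$ preserves cocartesian edges, so it suffices to show that each $r'_x\colon\cal{E}_{/x}\to\cal{F}$ is a weak equivalence.

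That is where your Step 2 ingredients belong:
\begin{itemize}
\item Factoring $p(a)\to x$ as a $\cal{D}^l$-map followed by a $\cal{D}^r$-map, and taking a cocartesian lift of the first part, gives a left adjoint to the inclusion $\cal{E}^r_{/x}\subseteq\cal{E}_{/x}$ (by orthogonality).
\item Cartesian lifts of $0\to p(a)$ give a right adjoint to $\cal{F}\subseteq\cal{E}^r_{/x}$.
\end{itemize}
The second construction is functorial precisely because every morphism of $\cal{E}^r_{/x}$ lies over a $\cal{D}^r$-map, and $0$ is initial for those. This is exactly the property that fails for arbitrary morphisms of $\cal{E}$ and blocks your global construction.
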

\begin{proof}
    Set $\cal{E}_{/\cal{D}} := \cal{E} \times_{\cal{D}^{\Delta^{\{0\}}}} \cal{D}^{\Delta^1}$, so that the composite
    \[ q\colon \cal{E}_{/\cal{D}} \to \cal{D}^{\Delta^1} \xrightarrow{\ev_{1}} \cal{D} \]
    is a cocartesian fibration whose fibre over $x \in \cal{D}$ is the comma category $\cal{E}_{/x}$. Note that the inclusion $\cal{E} \to \cal{E}_{/\cal{D}}$ sending $a$ to $(a,\id\colon p(a)\to p(a))$ is co-initial and in particular a weak homotopy equivalence, and so it will suffice to show that the functor
    \[ (r',q)\colon \cal{E}_{/\cal{D}} \to \cal{F} \times \cal{D} \]
    is a weak homotopy equivalence, where $r'\colon \cal{E}_{/\cal{D}} \to \cal{F}$ is given by $r'(a,p(a) \to x) = r(a)$. Viewing $(r',q)$ as a map among cocartesian fibrations over $\cal{D}$ (where on the right hand side the map to $\cal{D}$ is the projection on the second factor) it preserves cocartesian arrows over $\cal{D}$ (since the $q$-cocartesian arrows $(a,p(a) \to x) \to (a',p(a') \to x')$ are exactly those for which the map $a \to a'$ is an equivalence). To show that $(r',q)$ is a weak homotopy equivalence it will hence suffice to show that the induced map
    \[ r'_x\colon \cal{E}_{/x} \to \cal{F} \]
    on fibres over $x \in \cal{D}$ is a weak homotopy equivalence for every $x$.

    Let $\cal{E}^r := \cal{E} \times_{\cal{D}} \cal{D}^r$ be the wide subcategory of $\cal{E}$ spanned by those morphisms whose image in $\cal{D}$ lies in $\cal{D}^r$, and let $\cal{E}^r_{/x}$ be the comma category of $\cal{E}^r \to \cal{D}^r$ over $x$. Then the induced functor
    \[ \cal{E}^r_{/x} \to \cal{E}_{/x} \]
    is fully-faithful: indeed, by uniqueness of factorizations arrows in $\cal{D}^r$ have the right cancellation property in the sense that if $f,g$ are two composable arrows in $\cal{D}$ such that $g$ and $g\circ f$ are in $\cal{D}^r$, then $f$ is in $\cal{D}^r$.     
    Now, given $(a, f\colon p(a) \to x) \in \cal{E}_{/x}$, we may factor $f$ as $p(a) \stackrel{g}\epi y \stackrel{i}\hookrightarrow x$, and choose a $p$-cocartesian lift $a \to \tilde{y}$ of $g$.
    We claim that the collection of maps 
    \[ 
    (a, p(a)\xrightarrow{f} x) \to (\tilde{y},  p(\tilde{y})=y \stackrel{i}{\hookrightarrow} x)
    \] 
    assemble into the unit of an adjunction $\cal{E}_{/x} \adj \cal{E}^r_{/x}$. Indeed, given $(a', f'\colon p(a') \mono x) \in \cal{E}^r_{/x}$, the map
    \begin{equation}\label{mapping-spaces}
        \Map_{\cal{E}^r_{/x}}((\tilde{y}, y \hookrightarrow x), (a', f')) \to \Map_{\cal{E}_{/x}}((a, p(a) \to x), (a', f'))
    \end{equation}
    is induced on pullbacks by the map of cospans
    \[\begin{tikzcd}
    		&& * \ar[rd] \ar[dd, "i"] \\
    		&&& * \ar[dd, "f"] \\
    		\Map_\cal{E}(\tilde{y},a') \ar[rd] \ar[rr] && \Map_\cal{D}(y,x) \ar[rd] \\
    		& \Map_\cal{E}(a, a') \ar[rr] && \Map_\cal{D}(p(a),x),
    \end{tikzcd}\]
    in which the bottom face is a composition of squares
    \[\begin{tikzcd}
    		\Map_\cal{E}(\tilde{y},a') \ar[r] \ar[d] & \Map_\cal{D}(y, p(a')) \ar[r] \ar[d] & \Map_\cal{D}(y, x) \ar[d] \\
    		\Map_\cal{E}(a,a') \ar[r] & \Map_\cal{D}(p(a),p(a')) \ar[r] & \Map_\cal{D}(p(a), x).
    \end{tikzcd}\]
    We now observe that the left square is cartesian by virtue of the facts that $a \to \tilde{y}$ is $p$-cocartesian and the right square is cartesian by applying Remark \ref{rem:factorisation-system} to $g\colon f(a) \epi y$ and $f'\colon p(a')\mono x$.
    It then follows by pasting that the external rectangle is cartesian and hence the map (\ref{mapping-spaces}) is an equivalence.
 
    Let $\cal{F} \to \cal{E}^r_{/x}$ be the fully-faithful functor sending $b \in \cal{F} = p^{-1}(0)$ to the pair $(b,0 \hookrightarrow x)$, where $0 \hookrightarrow x$ is the essentially unique arrow in $\cal{D}^r$ from $0$ to $x$.     
    Given $(a, f) \in \cal{E}^r_{/x}$, write $b \to a$ for a cartesian lift of the unique $\cal{D}^r$-map $0 \hookrightarrow p(a)$; then by an analogous argument, the collection of maps $(b, 0 \hookrightarrow x) \to (a, p(a) \hookrightarrow x)$ assemble into the counit of an adjunction $\cal{F} \adj \cal{E}^r_{/x}$.

    Since adjunctions induce inverse equivalences on geometric realizations the inclusions $\cal{F} \subseteq \cal{E}^r_{/x} \subseteq \cal{E}_{/x}$ are both weak homotopy equivalences. Since $r'_x\colon \cal{E}_{/x} \to \cal{F}$ restricts to the identity on $\cal{F}$ by construction we conclude that $r'_x$ is indeed a weak homotopy equivalence, as desired.    
\end{proof}

\begin{proof}[Proof of Proposition~\ref{prop:additivity}]
Apply Proposition~\ref{prop:product-criteria} to the induced functor
\[ \Span^{\dagger}(p)\colon \Span^{\dagger}(\cal{C}) \to \Span^{\dagger}(\cal{B}) \]
using the backward-forward unique factorization system on $\Span^{\dagger}(\cal{B})$ of \cite[Proposition 4.9]{two-variable-fibrations}, where we note that the first assumption is satisfied by Lemma~\ref{lem:lifts-for-span} and the second assumption is satisfied since the exact functor $q\colon \cal{C} \to \cal{A}$ induces a retraction $\Span^{\dagger}(q)\colon \Span^{\dagger}(\cal{C}) \to \Span^{\dagger}(\cal{A})$ to the inclusion of the fibre of $\Span^{\dagger}(p)$ over $0 \in \Span^{\dagger}(\cal{B})$.
\end{proof}

\subsection{Localisation for K-theory}\label{sec:k-theory}

\begin{definition}
Let $\cal{E}$ be an additive $\infty$-category with finite limits and $\cal{F}\colon \exact \to \cal{E}$ a functor. We shall say that $\cal{F}$ is \emph{Keller localising} if it is additive (see Definition~\ref{def:additive}) and sends Keller sequences to fibre sequences.
\end{definition}

Our goal in this subsection is to give a self-contained proof of the fact that algebraic $\K$-theory is Keller localising: 

\begin{theorem}\label{thm:localisation}
The algebraic $\K$-theory functor $\K\colon \exact \to \Sp$ is Keller localising.
\end{theorem}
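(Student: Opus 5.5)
Additivity is Proposition~\ref{prop:additivity}, so it remains to show that for a Keller sequence $\cal{A} \xrightarrow{i} \cal{C} \xrightarrow{p} \cal{B}$ the induced sequence $\K(\cal{A}) \to \K(\cal{C}) \to \K(\cal{B})$ is a fibre sequence. The plan follows Quillen's localisation argument in the $\Span^{\dagger}$ model. We apply Quillen's Theorem B to the functor $\Span^{\dagger}(p)\colon \Span^{\dagger}(\cal{C}) \to \Span^{\dagger}(\cal{B})$ to obtain a fibre sequence $|F| \to |\Span^{\dagger}(\cal{C})| \to |\Span^{\dagger}(\cal{B})|$, where $F$ is a homotopy fibre (comma) category. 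We then identify $|F|$ with $|\Span^{\dagger}(\cal{A})|$. Looping gives the claim on $\Omega^\infty$. Since all terms are connective and $\K(\cal{C}) \to \K(\cal{B})$ is surjective on $\pi_0$ (every object lifts), this upgrades to a fibre sequence of spectra.

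The input is the equivalence $|\cal{C}^{\cal{A}}_{\inc}| \simeq |\cal{B}^{\simeq}| \simeq |\cal{C}^{\cal{A}}_{\pr}|$ of Proposition~\ref{prop:weak-equivalence}. Rather than Theorem B with fibres over arbitrary objects, I would run a two-variable argument analogous to the proof of Proposition~\ref{prop:product-criteria}. Consider the comma category $\cal{E}_{/\cal{D}}$ for $\cal{E} = \Span^{\dagger}(\cal{C})$ and $\cal{D} = \Span^{\dagger}(\cal{B})$. Its fibre over $b \in \cal{B}$ is $\Span^{\dagger}(\cal{C})_{/b}$, and we must show that the transition functors along spans in $\cal{B}$ are weak equivalences (Theorem B hypothesis) and that the fibre over $0$ is weakly equivalent to $\Span^{\dagger}(\cal{A})$.

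For the fibres, use the backward/forward factorisation of spans together with the lifting properties of Keller projections in Remark~\ref{rem:calculus}: inclusions in $\cal{B}$ lift to inclusions with prescribed domain, projections lift to projections with prescribed codomain, and there is a left calculus of fractions for $\cal{C}^{\cal{A}}_{\inc}$ and a right one for $\cal{C}^{\cal{A}}_{\pr}$. Mimicking the two adjunctions in the proof of Proposition~\ref{prop:product-criteria}, one first reduces $\Span^{\dagger}(\cal{C})_{/b}$ to the full subcategory of objects $(c, p(c) \hookrightarrow b)$ with a purely forward structure map. One then reduces further to pairs $(c, p(c) \simeq b)$, i.e.\ to $\Span^{\dagger}$ of the category of objects of $\cal{C}$ over $b$. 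Via the calculus of fractions and the strengthened factorisations of Corollary~\ref{cor:stronger-ii}, this category is a ``$\Span$'' built from $\cal{C}^{\cal{A}}_{\pr}$ and $\cal{C}^{\cal{A}}_{\inc}$ over the fibre of $\cal{C}\times_{\cal{B}}\{b\}$. Its realisation should be the realisation of $\Span^{\dagger}(\cal{A})$-torsors over $\{b\}$. Concretely, the fibre over $b$ is equivalent to the realisation of $\Span(\cal{C}_b,\cal{C}^{\cal{A}}_{\pr},\cal{C}^{\cal{A}}_{\inc})$, where $\cal{C}_b$ is the preimage of $b$. By Proposition~\ref{prop:weak-equivalence}, applied componentwise and fibred over $\cal{B}^{\simeq}$, each such category is weakly equivalent to the one for $b=0$, which is $\Span^{\dagger}(\cal{A})$ because $\ker(p)=\cal{A}$ and the maps with fibre or cofibre in $\cal{A}$ between objects of $\cal{A}$ are exactly the projections and inclusions of $\cal{A}$ (Corollary~\ref{cor:detects}).

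The main obstacle is the transition-invariance step, i.e.\ that base change along a span of $\cal{B}$ induces equivalences of fibres. Here I would mirror Lemma~\ref{lem:lifts-for-span}. A forward arrow $b \hookrightarrow b'$ lifts to an inclusion $c \mono c'$ in $\cal{C}$, and pushout along it gives a functor between fibres. Proposition~\ref{prop:preserves-cocartesian} shows that this functor is compatible with $\cal{C}^{\cal{A}}_{\inc}$ and $\cal{C}^{\cal{A}}_{\pr}$. Using Proposition~\ref{prop:weak-equivalence}, it is a weak equivalence because both sides have realisation $|\Span^{\dagger}(\cal{A})|$ compatibly with the $\Span^{\dagger}(\cal{A})$-action by direct sum. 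Group completion then makes this action invertible. Backward arrows are treated dually by pullback. If this direct approach stalls, the fallback is to work with the $\rS$-construction: apply additivity (Remark~\ref{rem:additive-S}) levelwise and use Proposition~\ref{prop:reedy-keller} with $\cal{I}=[n]$ to get levelwise Keller sequences $\rS_n\cal{A} \to \rS_n\cal{C} \to \rS_n\cal{B}$, to which the $n=1$ realisation statement applies uniformly.
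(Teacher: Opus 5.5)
\textbf{There is a genuine gap in your main route, and it sits exactly where the content of the theorem lies.}

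First, you cannot reduce the comma category $\Span^{\dagger}(\cal{C})_{/b}$ to objects with purely forward (and then invertible) structure map by mimicking Proposition~\ref{prop:product-criteria}. The two adjunctions there are built from $\Span^{\dagger}(p)$-(co)cartesian lifts. Lemma~\ref{lem:lifts-for-span} supplies these only because the projection of a semi-orthogonal decomposition is a cartesian fibration (Lemma~\ref{lem:p-cartesian}). The lifts of Remark~\ref{rem:calculus} are mere lifts, not (co)cartesian ones. Take a Verdier projection of stable $\infty$-categories (Example~\ref{ex:verdier-projection}). A cocartesian lift, starting at $0$, of the backward span given by $y \to 0$ would be an object $\tilde y$ with $\Map(u,\tilde y)\simeq\Map(p(u),y)$ for all $u$. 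That is a value of a right adjoint of $p$, which exists for all $y$ only in the split case.

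Second, and more seriously, Proposition~\ref{prop:weak-equivalence} only says that the fibres of $\cal{C}^{\cal{A}}_{\inc}\to\cal{B}^{\simeq}$ and of $\cal{C}^{\cal{A}}_{\pr}\to\cal{B}^{\simeq}$ over each $b$ are weakly contractible. This carries no information about the mixed category $\Span(\cal{C}_b,\cal{C}^{\cal{A}}_{\pr},\cal{C}^{\cal{A}}_{\inc})$. Already for $b=0$, both leg categories $\cal{A}^{\pr}$ and $\cal{A}^{\inc}$ are contractible, while $|\Span^{\dagger}(\cal{A})|$ has $\pi_1=\K_0(\cal{A})$. So your claim that every fibre is a $|\Span^{\dagger}(\cal{A})|$-torsor is not derived from anything you cite. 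Your transition-invariance argument via the direct-sum action presupposes it, and it is essentially a Quillen-type localisation theorem in its own right.

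\textbf{Your fallback names the right inputs but not the mechanism that produces a fibre sequence.} The inputs are Lemma~\ref{lem:S-n-exact} and Proposition~\ref{prop:weak-equivalence}, applied to $\rS_n\cal{A}\to\rS_n\cal{C}\to\rS_n\cal{B}$. The levelwise equivalences $|(\rS_n\cal{C})^{\rS_n\cal{A}}_{\inc}|\simeq(\rS_n\cal{B})^{\simeq}$ identify $\K(\cal{B})$ with the $\K$-theory of $\cal{C}$ relative to the weak equivalences $\cal{C}^{\cal{A}}_{\inc}$. You still need to show that this relative $\K$-theory is the cofibre of $\K(\cal{A})\to\K(\cal{C})$.

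The paper does this with the simplicial exact $\infty$-category $P_\bullet(\cal{C};\cal{A})$ of chains of inclusions with cofibres in $\cal{A}$. Additivity, via the filtration of Remark~\ref{rem:additive-S}, shows that the bar-construction maps $\cal{C}\oplus\cal{A}\oplus\cdots\oplus\cal{A}\to P_n(\cal{C};\cal{A})$ are $\K$-equivalences. Hence $|\K(P_\bullet(\cal{C};\cal{A}))|\simeq\cofib[\K(\cal{A})\to\K(\cal{C})]$. Meanwhile $|\rS_n(P_\bullet(\cal{C};\cal{A}))^{\simeq}|$ is the realisation of $(\rS_n\cal{C})^{\rS_n\cal{A}}_{\inc}$, to which Proposition~\ref{prop:weak-equivalence} applies.

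Once you supply that construction, your fallback becomes the paper's proof. As written, neither route is complete.
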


In light of Corollary~\ref{st}, Theorem~\ref{thm:localisation} follows from the $\infty$-categorical version of the Gillet-Waldhausen theorem as established in \cite[Theorem 1.8]{SW25}, together with the localisation theorem for K-theory of stable $\infty$-categories (see, e.g., \cite[Theorem 6.1]{HLS24}). In this section we will show that Theorem~\ref{thm:localisation} can also be deduced directly from the axioms of Keller sequences. In \S\ref{sec:resolution} below we will then use this to give an alternative proof of the $\infty$-categorical Gillet-Waldhausen theorem, therefore generalizing the latter to all finitary Keller localising functors.

For what follows, let us fix a Keller sequence $\cal{A} \to \cal{C} \xrightarrow{p} \cal{B}$ and let us denote by $\cal{C}^{p}$ the wide subcategory of $\cal{C}$ whose morphisms are the morphisms $f$ such that $p(f)$ is an equivalence. We note that $\cal{C}^p$ contains $\cal{C}^{\cal{A}}_{\inc}$ and $\cal{C}^{\cal{A}}_{\pr}$ as wide subcategories. 

\begin{proposition}\label{prop:weak-equivalence}
The maps
\[ \cal{C}^{\cal{A}}_{\inc} \rightarrow \cal{B}^{\simeq} \leftarrow \cal{C}^{\cal{A}}_{\pr} \]
are weak homotopy equivalences.
\end{proposition}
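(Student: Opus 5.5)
We prove the claim for $\cal{C}^{\cal{A}}_{\inc} \to \cal{B}^{\simeq}$; the claim for $\cal{C}^{\cal{A}}_{\pr}$ then follows by passing to opposite exact $\infty$-categories, since $(-)^{\op}$ sends Keller sequences to Keller sequences and interchanges the two wide subcategories. The functor $\pi\colon \cal{C}^{\cal{A}}_{\inc} \to \cal{B}^{\simeq}$ is the restriction of $p$; it lands in $\cal{B}^{\simeq}$ because $p$ inverts inclusions with cofibre in $\cal{A}$. Since $\cal{B}^{\simeq}$ is an $\infty$-groupoid, Quillen's Theorem A (in its $\infty$-categorical form) reduces the claim to the following: for every $b \in \cal{B}$, the fibre $\pi^{-1}(b)$, equivalently the comma $\infty$-category $\cal{C}^{\cal{A}}_{\inc}\times_{\cal{B}^{\simeq}}(\cal{B}^{\simeq})_{/b}$, is weakly contractible. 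Its objects are pairs $(x,\alpha\colon p(x)\simeq b)$, and its morphisms are inclusions $x \mono x'$ with cofibre in $\cal{A}$ that are compatible with the identifications.

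The plan is to show that this fibre is non-empty and filtered. Non-emptiness is immediate from essential surjectivity of $p$, which holds because $p$ is a localisation. For filteredness, the main input is the left calculus of fractions of Lemma~\ref{lem:calculus} and Remark~\ref{rem:calculus}. By Proposition~\ref{prop:keller-cof}, every map inverted by $p$ lies in $W$ and factors as $x \mono x\oplus a \epi y$. The argument then goes through the standard filteredness conditions.
\begin{enumerate}
\item Given two objects $(x,\alpha)$ and $(y,\beta)$, the equivalence $\beta^{-1}\alpha\colon p(x)\to p(y)$ is represented by a zig-zag $x \to w \hookleftarrow y$, where the second map lies in $\cal{C}^{\cal{A}}_{\inc}$. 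The first map $x\to w$ is inverted by $p$, and we want to replace it by an inclusion with cofibre in $\cal{A}$. Using the factorisation $x \mono w\oplus b \epi w$ of~\ref{iiR}, we replace $w$ by $w\oplus b$ and $y \mono w$ by $y\mono w \mono w\oplus b$. This produces a cospan in the fibre.
\item Given two parallel morphisms, the left calculus of fractions gives a coequalising further inclusion. Concretely, $\colim_{y\rtimes\cal{A}}\Map(x,-)$ computes $\Map_{\cal{B}}$, and since $y\rtimes \cal{A}$ is filtered (it is equivalent to $\cal{A}_{/\Sigma y}$, which has pushouts), any two maps that become equal in $\cal{B}$ become homotopic after composing with some $y\mono y'$.
\end{enumerate}
A cleaner route for the higher coherences is to identify $\pi^{-1}(b)$, for $b=p(x_0)$, with a colimit-type category. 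Because $\Map_{\cal{B}}(p(x_0),p(y)) \simeq \colim_{y\rtimes \cal{A}}\Map(x_0,y')$, the category of pairs $(y\mono y', x_0\to y')$ should receive a cofinal functor, and the weak contractibility of the fibre then reduces to the statement $|x_0\rtimes\cal{A}|\simeq\ast$. That statement holds because $x_0\rtimes\cal{A}$ has an initial object.

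The main obstacle is making the fibre identification coherent. The fibre $\pi^{-1}(b)$ consists of objects $x$ equipped with an equivalence in $\cal{B}$, not with a map from $x_0$ in $\cal{C}$. I would therefore first show that the full subcategory of $\pi^{-1}(p(x_0))$ spanned by objects admitting a morphism from $x_0$, namely the image of $x_0\rtimes\cal{A}$, is cofinal. The comma categories involved are categories of such zig-zags, and the argument above with~\ref{iiR} and the calculus of fractions should show that they are filtered. This last step carries the real content of the proof.
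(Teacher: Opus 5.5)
\textbf{There is a genuine gap: the fibres are not filtered in general.} Your reduction via Quillen's Theorem A to weak contractibility of the fibres $\pi^{-1}(b)$ is fine, and so is step (1). However, the claim that these fibres are filtered is false, so the strategy cannot be completed as proposed.

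\emph{Counterexample.} Take $\cal{C}=\cal{A}$ to be finite-dimensional $\mathbb{Q}$-vector spaces with the split exact structure, and $\cal{B}=0$. Then $\cal{A}\xrightarrow{=}\cal{C}\to 0$ is a Keller sequence, $\cal{B}^{\simeq}\simeq\ast$, and the fibre is the ordinary category of finite-dimensional vector spaces and injective linear maps. This category is weakly contractible, since $0$ is initial. It is not filtered: $\mathrm{id}$ and $2\cdot\mathrm{id}\colon\mathbb{Q}\to\mathbb{Q}$ become equal in $\cal{B}$, yet no injection $h$ satisfies $h=2h$.

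\emph{Consequences for your steps.} The same example refutes step (2) as stated. Your justification there fails because $y\rtimes\cal{A}\simeq\cal{A}_{/\Sigma y}$ need not have pushouts. With $y=\mathbb{Q}$, the maps $(\mathrm{id},\mathrm{id}),(\mathrm{id},0)\colon\mathbb{Q}\oplus\mathbb{Q}\to\mathbb{Q}$ under $y$ cannot be coequalised by a map keeping $y\to(-)$ injective. Lemma~\ref{lem:calculus} only says that the colimit over this non-filtered category computes $\Map_{\cal{B}}$, with identifications made through zig-zags. Finite colimits appear only after enlarging to $y\rtimes\pst(\cal{A})$, which is why the proof of Proposition~\ref{prop:preserves-cocartesian} first transfers the colimit there via Corollary~\ref{fact}.

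\emph{The ``cleaner route''.} Taking $x_0=y=0$ in the example shows that the comma categories there are not filtered either. Weak contractibility of these categories of cospans $x_0\mono x\hookleftarrow y$, with both legs and all morphisms in $\cal{C}^{\cal{A}}_{\inc}$, is essentially a restatement of the proposition. Lemma~\ref{lem:calculus} does not supply it, because it concerns zig-zags whose forward leg and connecting morphisms are arbitrary maps of $\cal{C}$.

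\textbf{What the paper does instead.} The missing idea is to leave $\cal{C}$ and work in the prestable envelope, where every map is an inclusion and the needed colimits exist. The paper proves the $\cal{C}^{\cal{A}}_{\pr}$ case and gets the inclusion case by passing to opposites. It proceeds in three steps.
\begin{enumerate}
\item Proposition~\ref{prop:pass-to-pst} shows that the square comparing $|\cal{C}^{\cal{A}}_{\pr}|\to\cal{B}^{\simeq}$ with $|\pst(\cal{C})^{\pst(\cal{A})}_{\pr}|\to\pst(\cal{B})^{\simeq}$ is cartesian. This is an induction through $\st(\cal{C})_{[0,n]}$. The comma categories there are shown non-empty using Keller lifting of inclusions, and weakly contractible by a fibre-product version of Quillen's trick, not by filteredness.
\item Proposition~\ref{prop:remove-pr}, via Lemma~\ref{lem:wide}, enlarges $\pst(\cal{C})^{\pst(\cal{A})}_{\pr}$ to the category $\pst(\cal{C})^{\pst(p)}$ of all maps inverted by $\pst(p)$.
\item In $\pst(\cal{C})$ every map is a cofibration and the $\pst(p)$-equivalences are stable under cobase change. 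Cisinski's theory of categories with weak equivalences and cofibrations then shows that $\pst(\cal{C})^{\pst(p)}\to\pst(\cal{B})^{\simeq}$ is a weak homotopy equivalence.
\end{enumerate}
None of these steps has an analogue inside $\cal{C}$ itself, which is exactly where your direct approach breaks down.
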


The proof of Proposition~\ref{prop:weak-equivalence} will require a few intermediate results.

\begin{proposition}\label{prop:pass-to-pst}
Let $\cal{A} \to \cal{C} \to \cal{B}$ be a Keller sequence. Then the square of $\infty$-groupoids
\[
\begin{tikzcd}
	{|\cal{C}^{\cal{A}}_\pr|} \ar[r] \ar[d] & \cal{B}^\simeq \ar[d] \\
	{|\pst(\cal{C})^{\pst(\cal{A})}_\pr|} \ar[r] & \pst(\cal{B})^\simeq
\end{tikzcd}
\]
is cartesian.
\end{proposition}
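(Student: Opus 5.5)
The plan is to check cartesianness fibrewise over $\cal{B}^\simeq$. Since $\cal{B}^\simeq$ and $\pst(\cal{B})^\simeq$ are $\infty$-groupoids, the fibre of $|\cal{C}^{\cal{A}}_\pr| \to \cal{B}^\simeq$ over a point $b$ should be computed by the realisation of the fibre category. Concretely, for a functor $\cal{D} \to X$ to an $\infty$-groupoid, Quillen's Theorem B applies trivially: transport along morphisms of $X$ acts by equivalences on fibres. Hence $|\cal{D}| \to X$ has fibre $|\cal{D}\times_X\{b\}|$ over $b$.

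Write $\cal{C}_b := \cal{C}^{\cal{A}}_\pr \times_{\cal{B}^\simeq}\{b\}$. Its objects are pairs $(x,\alpha\colon p(x)\simeq b)$, and its morphisms are projections with fibre in $\cal{A}$ compatible with the identifications. Define $\pst(\cal{C})_b$ analogously, with $b$ viewed in $\pst(\cal{B})$. With this notation, the proposition reduces to showing that the inclusion $\cal{C}_b \to \pst(\cal{C})_b$ is a weak homotopy equivalence for every $b \in \cal{B}$. Here we use that $\cal{C} \to \pst(\cal{C})$ sends projections with fibre in $\cal{A}$ to projections with fibre in $\pst(\cal{A})$, and that $\cal{B} \to \pst(\cal{B})$ is fully faithful. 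The latter makes the square of groupoids well defined and identifies the fibres of the right vertical map appropriately.

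To prove the weak equivalence I will use Quillen's Theorem A in its coinitial form. For each object $X$ of $\pst(\cal{C})_b$, I will show that the comma category $(\cal{C}_b)_{/X}$ is weakly contractible. Its objects are $x \in \cal{C}$ together with a projection $x \epi X$ in $\pst(\cal{C})$ with fibre in $\pst(\cal{A})$. Non-emptiness comes from Corollary~\ref{cor:heartwise-keller}, which says that $\pst(\cal{C}) \to \pst(\cal{B})$ is a Keller projection with kernel $\pst(\cal{A})$. Together with Remark~\ref{rem:calculus}, this lets me lift the identity of $b$ to a projection $\tilde x \epi X$ in $\pst(\cal{C})$ with prescribed codomain $X$ and domain a chosen lift $\tilde x\in\cal{C}$ of $b$. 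By Corollary~\ref{cor:stronger-ii}, such a map can be rearranged so that its fibre lies in $\pst(\cal{A})$.

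For contractibility I will imitate the proof of Corollary~\ref{cor:beck-chevalley-2}. Shifting exact sequences in $\st(\cal{C})$ identifies projections $x\epi X$ with fibre in $\pst(\cal{A})$ with maps $\Sigma^{-1}X \to A$ together with data in $\cal{C}$. The comma category should then become a category of factorisations $\cal{A}_{c//A}$ as in Lemma~\ref{catoffact}, applied to the right-special composite $\cal{A}\to\cal{C}\to\st(\cal{C})_{[-1,0]}$ of Example~\ref{ex:resolving-is-special}.

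The main obstacle is exactly this identification. The comma category involves objects $x$ of $\cal{C}$ that are not in $\cal{A}$ and merely map to $X$, so it is not literally a factorisation category of the form covered by Lemma~\ref{catoffact}.

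If the identification does not go through directly, my fallback is to show that $(\cal{C}_b)_{/X}$ is cofiltered. Two projections $x\epi X \ipe x'$ have a fibre product $x\times_X x'$ in $\pst(\cal{C})$, whose projections to $x$ and $x'$ have fibres in $\pst(\cal{A})$. This fibre product need not lie in $\cal{C}$. I would then use the left-specialness of $\pst(\cal{A})\subseteq\pst(\cal{C})$ from Lemma~\ref{lem:square-left-special}, together with Lemma~\ref{lem:factor} and Corollary~\ref{cor:detects}, to find a projection from an object of $\cal{C}$ onto $x\times_X x'$ with fibre in $\pst(\cal{A})$. This would supply cones on finite diagrams, and hence weak contractibility.
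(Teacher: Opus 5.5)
\textbf{There is a genuine gap: the weak contractibility of the comma categories, which is the whole content, is not established.}

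Your reduction to fibres is fine. Realisation commutes with base change over $\infty$-groupoids, and $\cal{B}^\simeq\to\pst(\cal{B})^\simeq$ is an inclusion of components, so it suffices that $\cal{C}_b\to\pst(\cal{C})_b$ be coinitial.

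\emph{The factorisation identification.} As you say, $(\cal{C}_b)_{/X}$ is not a factorisation category as in Lemma~\ref{catoffact}. Morphisms must be projections with fibre in $\cal{A}$, so it is not even a full subcategory of a slice.

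\emph{The fallback.} It only produces binary cones, via a non-functorial choice of an object of $\cal{C}$ projecting onto $x\times_X x'$. That does not imply weak contractibility. Cofilteredness requires cones on all finite diagrams, and here the mapping spaces are far from discrete: maps $y\to x$ over $X$ form, when non-empty, a copy of $\Map(y,F)$ with $F=\fib(x\epi X)\in\pst(\cal{A})$ of arbitrary height. Already for a parallel pair the obvious equaliser fails, because the diagonal $x\to x\times_X x$ has fibre $\Sigma^{-1}F\notin\pst(\cal{A})$. Quillen's trick would need a \emph{functorial} binary operation with natural maps to both factors, and $x\times_X x'$ leaves $\cal{C}$.

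\emph{Non-emptiness.} This step is not justified either. Remark~\ref{rem:calculus} lets you prescribe only one end of a lift, not both. Corollary~\ref{cor:stronger-ii} produces a domain $\tilde x\oplus a$ with $a\in\pst(\cal{A})$, which is not an object of $\cal{C}$.

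\textbf{The missing idea} is to descend one heart level at a time rather than from $\pst(\cal{C})$ to $\cal{C}$ in one go. The paper shows that each square comparing $(\st(\cal{C})_{[0,n-1]})^{\st(\cal{A})_{[0,n-1]}}_\pr\to(\st(\cal{B})_{[0,n-1]})^\simeq$ with $(\st(\cal{C})_{[0,n]})^{\st(\cal{A})_{[0,n]}}_\pr\to(\st(\cal{B})_{[0,n]})^\simeq$ is cartesian. It then pastes these squares and takes the colimit over $n$; in your language, this amounts to composing coinitial functors.

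At one step, the comma category $\cal{I}_C$ consists of projections $D\epi C$ with $D\in\st(\cal{C})_{[0,n-1]}$ and fibre in $\st(\cal{A})_{[0,n-1]}$. Now $D\times_C D'$ does stay in $\st(\cal{C})_{[0,n-1]}$, being an extension of $D$ by $\fib(D'\epi C)$. This gives a functorial operation $\cal{I}_C\times\cal{I}_C\to\cal{I}_C$ with natural projections to both factors, so Quillen's trick applies once $\cal{I}_C$ is non-empty.

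Non-emptiness at one step goes as follows. Lift the inclusion $p(C)\mono 0$ of $\st(\cal{B})_{[0,n]}$ to $C\mono A$ with $A\in\st(\cal{A})_{[0,n]}$. Compose with $A\mono A'$ coming from a heart decomposition $a\to A\to A'$, and take $D=\fib(C\to A')$. Iterating this step is also the correct way to obtain the one-shot non-emptiness you wanted.
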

\begin{proof}

We show that the square
\[
\begin{tikzcd}
	{|(\st(\cal{C})_{[0,n-1]})^{\st(\cal{A})_{[0,n-1]}}_\pr|} \ar[r] \ar[d] & \pst(\cal{B})^\simeq_{[0,n-1]}\ar[d] \\
	{|(\st(\cal{C})_{[0,n]})^{\st(\cal{A})_{[0,n]}}_\pr|} \ar[r] & \pst(\cal{B})^\simeq_{[0,n]}
\end{tikzcd}
\]
is cartesian for every $n \geq 1$, thus implying the result upon pasting the squares and passing to the colimit. Equivalently, since geometric realizations commute with base change along maps among $\infty$-groupoids, what we need to show is that the induced functor
\[ (\st(\cal{C})_{[0,n-1]})^{\st(\cal{A})_{[0,n-1]}}_\pr \to (\st(\cal{C})_{[0,n]})^{\st(\cal{A})_{[0,n]}}_\pr \times_{\pst(\cal{B})^\simeq_{[0,n]}} \pst(\cal{B})^\simeq_{[0,n-1]} =: \cal{P} 
\]
is an equivalence on geometric realizations. We claim that this functor is coinitial, hence in particular an equivalence on geometric realizations. For this, let $C$ be an object of $\cal{P}$, which we can view as an object of $\st(\cal{C})_{[0,n]}$ whose image in $\pst(\cal{B})_{[0,n]}$ lies in $\pst(\cal{B})_{[0,n-1]}$, and let $\cal{I}_C$ be the comma category of the above functor over $C$. We need to show that $\cal{I}_{C}$ is weakly contractible. We note that if $D \epi C$ is a projection in $\st(\cal{C})_{[0,n]}$ with domain in $\st(\cal{C})_{[0,n-1]}$ then $\fib(D \epi C)$ also lies in $\st(\cal{C})_{[0,n-1]}$. We hence describe $\cal{I}_C$ as the category whose objects are projections $D \epi C$ in $\st(\cal{C})_{[0,n]}$ with domain in $\st(\cal{C})_{[0,n-1]}$ and fibre in $\st(\cal{A})_{[0,n-1]}$, and whose morphisms are commutative triangles
\[
\begin{tikzcd}
D \ar[dr,->>]\ar[rr,->>] && D' \ar[dl,->>] \\
& C & 
\end{tikzcd}
\]
of projections. We first show that $\cal{I}_{C}$ is not empty. Since $p(C)$ belong to $\pst(\cal{B})_{[0,n-1]}$ we have that the map $C \to 0$ in $\st(\cal{C})_{[0,n]}$ is sent to an inclusion in $\pst(\cal{B})_{[0,n]}$. Now $\st(\cal{C})_{[0,n]} \to \pst(\cal{B})_{[0,n]}$ is a Keller projection by Corollary~\ref{cor:heartwise-keller} and hence inclusions in $\pst(\cal{B})_{[0,n]}$ can be lifted to inclusions in $\st(\cal{C})_{[0,n]}$ with prescribed lift of their domain, see Remark~\ref{rem:calculus}. We conclude that there exists an inclusion $C \mono A$ in $\st(\cal{C})_{[0,n]}$ with $A \in \st(\cal{A})_{[0,n]}$.
Choose a heart decomposition $a \to A \to A'$ with $A' \in \st(\cal{A})_{[1,n]}$ and $a \in \cal{A}$, so that $A \to A'$ is in particular an inclusion in $\st(\cal{A})_{[0,n]}$ (with cofibre $\Sigma a \in \st(\cal{A})_{[1,1]} \subseteq \st(\cal{C})_{[0,n]}$). Finally, consider the composite inclusion
\[ C \mono A \mono A' .\]
Since $A' \in \st(\cal{A})_{[1,n]}$ the cofibre of the inclusion $C \mono A'$ must lie in $\st(\cal{C})_{[1,n]}$ and so this inclusion is also a projection and its fibre $D = \fib[C \to A']$ lies in $\st(\cal{C})_{[0,n-1]}$. Since the cofibre of $D \to C$ lies in $\st(\cal{A})_{[1,n]}$ this map is also a projection with fibre in $\st(\cal{A})_{[0,n-1]})$, and hence determines an object of $\cal{I}_C$. We have thus shown that $\cal{I}_C$ is non-empty. Now we note that $\cal{I}_C$ almost has cartesian products. More precisely, we may embed $\cal{I}_C$ as a wide subcategory of $\cal{J}_C$, where $\cal{J}_C$ is the full subcategory of the comma category $(\st(\cal{C})_{[0,n]})_{/C}$ spanned by the projections $D \epi C$ with fibre in $\st(\cal{A})_{[0,n-1]}$, then $\cal{J}_C$ has cartesian products, given by taking fibre product over $\cal{C}$. This cartesian product restricts to a functorial binary operation $(-)\times_C(-)\colon\cal{I}_C \times \cal{I}_C \to \cal{I}_C$, though this operation is not the cartesian product in $\cal{I}_C$. Nonetheless, the natural projections from $(-)\times_C(-)$ to each of the two factors are pointwise in $\cal{I}_C$, and can hence be viewed as natural transformations of functors $\cal{I}_C \times \cal{I}_C \to \cal{I}_C$. 
Since $\cal{I}_C$ is non empty, this is enough to deduce that $\cal{I}_C$ is weakly contractible using Quillen's classical trick: given an $X \in \cal{I}_C$ the maps $Y \leftarrow X \times Y \rightarrow X$ determine a zig-zag of natural transformations from the identity to a constant functor.
\end{proof}

\begin{lemma}\label{lem:wide}
Let $\cal{E}$ be an $\infty$-category which admits pushouts and $\cal{E}^{\circ}\hookrightarrow \cal{E}$ a wide subcategory satisfying the following properties:
\begin{enumerate}
\item\label{item:cobase-change}
The arrows in $\cal{E}^{\circ}$ are closed under cobase change.
\item\label{item:cancellation}
Given a composable sequence $x \xrightarrow{f} y \xrightarrow{g} z$ in $\cal{E}$ such that $f$ and $g\circ f$ belong to $\cal{E}^{\circ}$, then $g$ belongs to $\cal{E}^{\circ}$.
\item\label{item:supply}
For every map $f\colon x \to y$ in $\cal{E}$ there exists a map $g\colon y \to z$ in $\cal{E}^\circ$ such that $g\circ f$ belongs to $\cal{E}^{\circ}$.
\end{enumerate}
Then the inclusion $\cal{E}^{\circ}\hookrightarrow \cal{E}$ is a weak homotopy equivalence.
\end{lemma}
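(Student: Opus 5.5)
The strategy is Quillen's Theorem A: we show that the inclusion $\iota\colon \cal{E}^{\circ} \hookrightarrow \cal{E}$ is cofinal, i.e.\ that for every $y \in \cal{E}$ the comma $\infty$-category $\cal{E}^{\circ} \times_{\cal{E}} \cal{E}_{y/}$ is weakly contractible. Cofinal functors induce equivalences on geometric realisations, so this gives the claim. Write $\cal{K}_y := \cal{E}^{\circ}\times_{\cal{E}}\cal{E}_{y/}$. Its objects are pairs $(x, f\colon y \to x)$ with $f$ an arbitrary map of $\cal{E}$. Its morphisms $(x,f) \to (x',f')$ are maps $g\colon x \to x'$ in $\cal{E}^{\circ}$ together with a homotopy $g \circ f \simeq f'$.

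The main step is to show that the full subcategory $\cal{K}^{\circ}_y \subseteq \cal{K}_y$, spanned by those pairs $(x,f)$ with $f \in \cal{E}^{\circ}$, is weakly contractible.
\begin{itemize}
\item By the cancellation property~\eqref{item:cancellation}, every morphism in $\cal{E}_{y/}$ between two objects of $\cal{K}^{\circ}_y$ automatically lies in $\cal{E}^{\circ}$. Hence $\cal{K}^{\circ}_y$ is the full subcategory of $\cal{E}_{y/}$ spanned by the arrows $y \to x$ in $\cal{E}^{\circ}$.
\item This subcategory contains $\id_y$, which is initial in $\cal{E}_{y/}$ and therefore initial in the full subcategory.
\item Hence $\cal{K}^{\circ}_y$ has an initial object and is weakly contractible.
\end{itemize}

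It then remains to show that $\cal{K}^{\circ}_y \hookrightarrow \cal{K}_y$ is a weak homotopy equivalence. I would show that it is a right adjoint, i.e.\ coreflective. Axiom~\eqref{item:supply} only provides a non-canonical replacement of each $f$, so I expect this step to be the main obstacle. I would try two approaches, in this order.

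\emph{First attempt: cofinality again.} Show that $\cal{K}^{\circ}_y \hookrightarrow \cal{K}_y$ is cofinal. For an object $(x,f)$ of $\cal{K}_y$, the relevant comma category has as objects the maps $g\colon x \to z$ in $\cal{E}^{\circ}$ with $g\circ f \in \cal{E}^{\circ}$. Call this category $\cal{L}_{(x,f)}$.
\begin{itemize}
\item $\cal{L}_{(x,f)}$ is non-empty by~\eqref{item:supply}.
\item It is closed under pushouts formed in $\cal{E}_{x/}$. Given $g\colon x\to z$ and $g'\colon x \to z'$ in $\cal{L}_{(x,f)}$, the maps $z \to z\sqcup_x z'$ and $z' \to z\sqcup_x z'$ are cobase changes of $g'$ and $g$, hence lie in $\cal{E}^{\circ}$ by~\eqref{item:cobase-change}. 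Composites of maps in $\cal{E}^{\circ}$ are in $\cal{E}^{\circ}$ since it is a subcategory, so $x \to z\sqcup_x z'$ and the composite from $y$ both lie in $\cal{E}^{\circ}$.
\item The structure maps $z \to z\sqcup_x z' \leftarrow z'$ are morphisms of $\cal{L}_{(x,f)}$, since they lie in $\cal{E}^{\circ}$.
\end{itemize}
So $\cal{L}_{(x,f)}$ is a non-empty category with binary coproducts, in fact the coproducts of $\cal{E}_{x/}$. By~\eqref{item:cancellation} every morphism of $\cal{E}_{x/}$ between objects of $\cal{L}_{(x,f)}$ lies in $\cal{E}^{\circ}$, so $\cal{L}_{(x,f)}$ is a full subcategory of $\cal{E}_{x/}$ and these pushouts are honest coproducts in it. A non-empty $\infty$-category with binary coproducts is sifted, hence weakly contractible. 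By Quillen's Theorem A, $\cal{K}^{\circ}_y \to \cal{K}_y$ is then a weak homotopy equivalence.

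\emph{Fallback.} If the comma category in the first attempt does not come out in exactly this form, I would run the same argument directly on the comma categories of $\iota$. That is, I would show that $\cal{K}_y$ itself is non-empty (it contains $(y,\id_y)$) and closed under pushouts along $\cal{E}^{\circ}$-maps, and that Quillen's zig-zag trick applies. Here axiom~\eqref{item:supply} would be used to produce the required natural comparison maps. In either approach, the delicate point is checking that the pushouts stay inside the relevant comma category; this is exactly what axioms~\eqref{item:cobase-change} and~\eqref{item:cancellation} are for.

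Combining the two steps, $\cal{K}_y$ is weakly contractible for every $y\in\cal{E}$. Hence $\iota$ is cofinal, and $\cal{E}^{\circ} \hookrightarrow \cal{E}$ is a weak homotopy equivalence.
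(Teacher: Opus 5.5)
Your proof is correct; the first attempt already completes it, so the fallback is unnecessary. Both applications of Quillen's Theorem A go through.

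\begin{itemize}
\item $(y,\id_y)$ is initial in $\cal{K}^{\circ}_y$.
\item By (2), the comma category of $\cal{K}^{\circ}_y \subseteq \cal{K}_y$ under $(x,f)$ is exactly the full subcategory $\cal{L}_{(x,f)} \subseteq \cal{E}_{x/}$ you describe.
\item $\cal{L}_{(x,f)}$ is non-empty by (3), and by (1) it is closed under the coproducts $z \sqcup_x z'$ of $\cal{E}_{x/}$.
\end{itemize}

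The only slip is in the framing. You announce an adjoint, which would need initial objects in $\cal{L}_{(x,f)}$, and the axioms do not supply these. (Also, a full inclusion that is a right adjoint is \emph{reflective}, not coreflective.) What you actually prove, and all you need, is cofinality.

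The paper takes a different route. It factors the inclusion as $\cal{E}^{\circ} \xrightarrow{j} \tilde{\cal{E}} \xrightarrow{q} \cal{E}$. Here $\tilde{\cal{E}}$ has the $\cal{E}^{\circ}$-arrows $x \to y$ as objects, and squares whose target leg lies in $\cal{E}^{\circ}$ as morphisms.

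\begin{itemize}
\item The functor $j(y)=\id_y$ is a weak equivalence. Its retraction $[x\to y]\mapsto y$ is a cocartesian fibration whose fibres have terminal objects.
\item The source functor $q$ is shown to be a Dwyer--Kan localisation via Rezk's criterion. This means checking that the fibres of $\fun^{q}(\Delta^m,\tilde{\cal{E}}) \to \fun(\Delta^m,\cal{E})^{\simeq}$ are non-empty and have binary coproducts.
\item Non-emptiness comes from an inductive construction along $x_0\to\dots\to x_m$ using (1) and (3). Binary coproducts come from (1) and (2).
\end{itemize}

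The core trick is shared: non-empty with binary coproducts, hence sifted, hence weakly contractible. Your version needs only Theorem A and a single-step form of that fibre analysis. It also gives the stronger conclusion that $\cal{E}^{\circ} \hookrightarrow \cal{E}$ is cofinal. The paper's version instead yields the extra structural fact that $\cal{E}$ is a localisation of $\tilde{\cal{E}}$, which the lemma does not need.
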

\begin{proof}
Let $\tilde{\cal{E}} \subseteq \cal{E}^{\Delta^1} \times_{\cal{E}^{\Delta^{\{1\}}}} \cal{E}^{\circ}$ be the full subcategory spanned by the arrows $x \to y$ in $\cal{E}^{\circ}$. In other words, the objects of $\tilde{\cal{E}}$ are the arrows in $\cal{E}^{\circ}$ and the morphisms from $[x \to y]$ to $[x' \to y']$ are the commutative squares
\[
\begin{tikzcd}
x \ar[r]\ar[d] & y \ar[d] \\
x'\ar[r] & y'
\end{tikzcd}
\]
such that $y \to y'$ is in $\cal{E}^{\circ}$. 
We may then factor the map $\cal{E}^{\circ} \to \cal{E}$ as a composite
\[ \cal{E}^{\circ} \xrightarrow{j} \tilde{\cal{E}} \xrightarrow{q} \cal{E} \]
where $j(y) = [\id_y\colon y \to y]$ and $q(x \to y) = x$.
We claim that both $j$ and $q$ are weak homotopy equivalences. For $j$, note that it admits a retraction 
\[ r\colon \tilde{\cal{E}} \to \cal{E}^{\circ} \]
sending $[x \to y]$ to $y$. Furthermore, $r$ is a cocartesian fibration with fibre over $y$ given by the full subcategory of $\cal{E}_{/y}$ given by the arrows in $\cal{E}^{\circ}$ and cocartesian transition functors given by post-composition. Since the fibres of $r$ have terminal objects we see that $r$ is a weak homotopy equivalence, and hence $j$ is a weak homotopy equivalence as well.

We now claim that $q$ is a localisation functor, and so in particular a weak homotopy equivalence.
To see this, we use Rezk's sufficient criterion for localisation, namely, we show that for every $[m] \in \Delta$ the map
\[ q_m\colon \fun^{q}(\Delta^m,\tilde{\cal{E}}) \rightarrow \fun(\Delta^m,\cal{E})^{\simeq} \]
is a weak homotopy equivalence, where on the right we have the core $\infty$-groupoid of the $\infty$-category of functors $\Delta^m \to \cal{E}$ and on the left the wide subcategory of $\fun(\Delta^m,\tilde{\cal{E}})$ whose morphisms are the natural transformations sent to equivalences in $\fun(\Delta^m,\cal{E})$. In particular, the right hand side is an $\infty$-groupoid and hence it suffices to check that the fibres of $q_m$ are weakly contractible. Now given an $\alpha = (x_0 \to ... \to x_m)\in \fun(\Delta^m,\cal{E})$,
the fibre of $q_m$ over $\alpha$ has objects diagrams
\begin{equation}\label{eq:typical-object}
\begin{tikzcd}
x_0\ar[r]\ar[d] & ... \ar[r]\ar[d] & x_m\ar[d]\\
y_0\ar[r] & ... \ar[r] & y_m
\end{tikzcd}
\end{equation}
where all vertical morphisms and all horizontal morphisms in the bottom row belong to $\cal{E}^{\circ}$,
and morphisms natural transformations of such diagrams which are the identity on the top row and pointwise in $\cal{E}^{\circ}$ on the bottom row. To finish the proof we now show that $\cal{I}_{\alpha}$ is non-empty and admits binary coproducts; this implies that $\cal{I}$ is sifted, and in particular weakly contractible. 
Concerning coproducts, we note that by Assumption~\ref{item:cancellation} we may identify $\cal{I}_C$ with a full subcategory of the slice $\infty$-category $\fun(\Delta^m,\cal{E})_{[x_0 \to ... \to x_m]/}$ spanned by diagrams as above. 
Now $\fun(\Delta^m,\cal{E}_{[x_0 \to ... \to x_m]/})$ admits coproducts given by pushouts under $[x_0 \to ... \to x_m]$ and by Assumptions~\ref{item:cobase-change} and~\ref{item:cancellation} we see that 
$\cal{I}_{\alpha}$ is closed under binary coproducts in $\fun(\Delta^m,\cal{E})_{[x_0 \to ... \to x_m]/}$ 
and hence inherits itself binary coproducts. To finish the proof we now show that
$\cal{I}_{\alpha}$ is not empty. For this, we construct the $y_i$'s by induction on $i$. For $i=0$ we set $y_0=x_0$. Now suppose that for $0 \leq i < m$ we have constructed a partial diagram of the form
\begin{equation}
\begin{tikzcd}
x_0\ar[r]\ar[d] & ... \ar[r]\ar[d] & x_i\ar[d]\ar[r] & x_{i+1}\ar[r] & ... \ar[r] & x_m\\
y_0\ar[r] & ... \ar[r] & y_i &&&
\end{tikzcd}
\end{equation}
such that the vertical arrows and the horizontal arrows in the bottom row are in $\cal{E}^{\circ}$. Let $z_{i+1} := y_{i} \coprod_{x_i} x_{i+1}$, so that the map $x_{i+1} \to z_{i+1}$ is in $\cal{E}^{\circ}$ by Assumption~\ref{item:cobase-change}. Then by Assumption~\ref{item:supply} there exists a map $z_{i+1} \to y_{i+1}$ in $\cal{E}^{\circ}$ such that the composite $y_i \to z_{i+1} \to y_{i+1}$ is also in $\cal{E}^{\circ}$. We can then use the $\cal{E}^{\circ}$ maps $y_i \to y_{i+1}$ and $x_{i+1} \to y_{i+1}$ to extend the above diagram from $i$ to $i+1$. Arriving at $i=m$ we have thus constructed an object of $\cal{I}_{\alpha}$, showing that the latter is non-empty, as desired.
\end{proof}

\begin{proposition}\label{prop:remove-pr}
The wide subcategory inclusion 
\[ \iota \colon \pst(\cal{C})^{\pst(\cal{A})}_{\pr} \hookrightarrow \pst(\cal{C})^{\pst(p)}\]
is a weak homotopy equivalence.
\end{proposition}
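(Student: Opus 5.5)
We will apply the dual of Lemma~\ref{lem:wide}, obtained by passing to opposite $\infty$-categories, with $\cal{E} = \pst(\cal{C})^{\pst(p)}$ and $\cal{E}^{\circ} = \pst(\cal{C})^{\pst(\cal{A})}_{\pr}$. Here $\pst(p)\colon \pst(\cal{C}) \to \pst(\cal{B})$ is a Keller projection with kernel $\pst(\cal{A})$ by Corollary~\ref{cor:heartwise-keller}. The dual lemma requires three things: that $\cal{E}$ admits pullbacks, that $\cal{E}^{\circ}$ is closed under base change, and the dual cancellation and supply conditions. In $\pst(\cal{C})$ every map is an inclusion, but projections are only the maps whose fibre, computed in $\st(\cal{C})$, lies in $\pst(\cal{C})$.

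\textbf{Pullbacks.} Given a cospan $x \to z \leftarrow y$ in $\pst(\cal{C})^{\pst(p)}$, we first use the supply condition (below) to replace $y \to z$ by a projection with fibre in $\pst(\cal{A})$. Strictly, the lemma needs genuine pullbacks in $\cal{E}$. We will check the proof of Lemma~\ref{lem:wide}: it only forms pushouts (dually, pullbacks) along maps of $\cal{E}^{\circ}$, namely $z_{i+1} := y_i\coprod_{x_i}x_{i+1}$ with $x_i \to y_i \in \cal{E}^\circ$, and coproducts in $\cal{I}_\alpha$, which are again pushouts along $\cal{E}^{\circ}$-maps. So it suffices that pullbacks along projections with fibre in $\pst(\cal{A})$ exist in $\pst(\cal{C})$ and lie in $\cal{E}$. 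Such pullbacks exist because $\pst(\cal{C})$ is exact. They stay in $\cal{E}$ because $\pst(p)$ is exact and sends these maps to equivalences. Base change preserves the fibre, which gives closure of $\cal{E}^{\circ}$ under base change.

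\textbf{Cancellation.} Given $x \xrightarrow{f} y \xrightarrow{g} z$ with $g$ and $g\circ f$ in $\cal{E}^\circ$ and $f\in\cal{E}$, we must show $f\in\cal{E}^{\circ}$. The octahedral axiom in $\st(\cal{C})$ gives a fibre sequence $\fib(f) \to \fib(gf) \to \fib(g)$. Hence $\fib(f)$ lies in $\st(\cal{A})$, being an extension of objects in $\st(\cal{A})$, and it lies in $\st(\cal{C})_{\geq -1}$. We then need $\fib(f) \in \pst(\cal{A})$. For this we use heart-conservativity of $\st(\cal{A})\subseteq\st(\cal{C})$ (Corollary~\ref{conservativity} and its dual) together with the fact that $\fib(f)$ lies in $\st(\cal{C})_{\geq 0}$. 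The latter should follow from the long exact sequence of truncations, since $\fib(gf)\to\fib(g)$ is a map in $\pst(\cal{A})$. If this fails, we will restrict to a more convenient $\cal{E}^{\circ}$.

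\textbf{Supply.} For any $f\colon y \to x$ in $\cal{E}$ we need $g\colon w\to y$ in $\cal{E}^{\circ}$ with $f\circ g \in \cal{E}^{\circ}$. This is the main obstacle, and we will derive it from the right calculus of fractions and the factorisation properties of Keller projections. First, Corollary~\ref{cor:stronger-ii}, or directly property~\ref{iiL} for the Keller projection $\pst(p)$, lets us factor $f$ as $y \mono y\oplus a \epi x$, with $a\in\pst(\cal{A})$ and the second map a projection $q$. Since $\pst(p)(f)$ is an equivalence, $\fib(q)\in\pst(\cal{A})$. We then take $w$ to be the pullback of $q$ along a projection $b \epi a$ with $b\in\pst(\cal{A})$ covering $a$ (take $b=a$), arranged so that the composite to $x$ is a projection with fibre in $\pst(\cal{A})$. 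Concretely, since $f = q\circ \iota_y$, we set $w := y\oplus \fib(q)$, mapping to $y$ by the summand projection (which lies in $\cal{E}^\circ$) and to $x$ by a map whose fibre is an extension inside $\pst(\cal{A})$. Verifying that this composite really is a projection is the delicate point. We will check it via fibres in $\st(\cal{C})$ and Corollary~\ref{cor:detects}, or alternatively by using the right calculus of fractions of Remark~\ref{rem:calculus} to lift the inverse of $\pst(p)(f)$. With the three hypotheses in place, Lemma~\ref{lem:wide} applied in its dual form gives the weak homotopy equivalence.
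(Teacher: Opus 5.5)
\textbf{Gap: dualising Lemma~\ref{lem:wide} is the wrong direction.} For $\cal{E}^{\circ}=\pst(\cal{C})^{\pst(\cal{A})}_{\pr}$, the dual cancellation and dual supply hypotheses are false, so no care with pullbacks can rescue the argument.

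Take $\cal{C}=\cal{A}$ the category of finite-dimensional vector spaces and $\cal{B}=0$. Then $\pst(\cal{C})$ is the connective part of $D^b$, $\cal{E}$ is all of $\pst(\cal{C})$, and $\cal{E}^{\circ}$ consists of all projections. For $a\neq 0$, the sequence $0\xrightarrow{f}a\xrightarrow{g}0$ has $g$ and $g\circ f=\id_0$ in $\cal{E}^{\circ}$. But $f$ is not a projection, since $\fib(f)=\Omega a=a[-1]\notin\pst(\cal{C})$.

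Your octahedral argument only gives $\fib(f)\in\st(\cal{C})_{\geq -1}$, and this example shows that bound is sharp. Heart-conservativity cannot help: the obstruction is connectivity in $\st(\cal{C})$ itself, not membership in $\st(\cal{A})$.

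Dual supply fails in the same example. For $f\colon 0\to a$ and any $g\colon w\epi 0$ in $\cal{E}^{\circ}$, the composite $f\circ g$ is the zero map $w\to a$. Its fibre is $w\oplus\Omega a\notin\pst(\cal{C})$. Your concrete choice $w=y\oplus\fib(q)$ has the same defect in general: $f\circ g=(f,0)$ has fibre $\fib(f)\oplus\fib(q)$, which is a projection only if $f$ already was. Falling back on ``a more convenient $\cal{E}^{\circ}$'' is not available, because the statement fixes $\cal{E}^{\circ}$.

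\textbf{The fix is to apply Lemma~\ref{lem:wide} as stated.} The asymmetry comes from the fact that every map in $\pst(\cal{C})$ is an inclusion:
\begin{itemize}
\item $\pst(\cal{C})$ has all pushouts, preserved by $\pst(p)$, so $\cal{E}$ has them.
\item Projections are stable under cobase change along arbitrary maps, with unchanged fibre.
\item Cancellation holds in the direction the lemma actually asks for ($f, g\circ f\in\cal{E}^{\circ}\Rightarrow g\in\cal{E}^{\circ}$). The cofibre sequence $\cofib(f)\to\cofib(gf)\to\cofib(g)$ puts $\cofib(g)$ in $\Sigma\pst(\cal{A})$, which is closed under cofibres.
\end{itemize}
Supply is then obtained by post-composition rather than pre-composition. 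Let $f\colon C\to D$ be in $\cal{E}$. Then $\cofib(f)$ lies in $\ker(\pst(p))=\pst(\cal{A})$, so $D\epi\cofib(f)$ is a projection onto an object of $\pst(\cal{A})$. Left-speciality of $\pst(\cal{A})\subseteq\pst(\cal{C})$ (Corollary~\ref{cor:heartwise-keller}) gives $A\to D$ with $A\to\cofib(f)$ a projection in $\pst(\cal{A})$. Set $E:=\cofib(A\to D)$. Then $D\to E$ lies in $\cal{E}^{\circ}$, and so does $C\to E$, by the exact sequence $A\to\cofib(C\to D)\to\cofib(C\to E)$. This is exactly the paper's proof.
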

\begin{proof}
We verify that $\iota$ satisfies the assumptions of Lemma~\ref{lem:wide}. First note that $\pst(\cal{C})$ admits pushouts which are preserved by $\pst(\cal{C}) \to \pst(\cal{B})$, and $\pst(\cal{B})^{\simeq}$ has pushouts (since any $\infty$-groupoid admits colimits indexed by weakly contractible categories), so that $\pst(\cal{C})^{\pst(p)}$ admits pushouts. Next we note that that projections in $\pst(\cal{C})$ are closed under cobase change. Indeed, all maps in $\pst(\cal{C})$ are inclusions and projections are closed under cobase change along inclusions in any exact $\infty$-category. It follows that $\iota$ satisfies Assumption~\eqref{item:cobase-change} holds. In addition, if $C \xrightarrow{f} D \xrightarrow{g} E$ is a composable sequence in $\pst(\cal{C})$ such that $f$ and $g\circ f$ are projections then $g$ is a projection: indeed, the cofibres of $f,g$ and $g\circ f$ fit into an exact sequence
\[ \cofib(f) \to \cofib(g\circ f) \to \cofib(g) \]
and a map in $\pst(\cal{C})$ is a projection if and only if its cofibre is in the full subcategory $\pst(\cal{C})_{[1,\infty]} \subseteq \pst(\cal{C})$ which is closed under cofibres. We conclude that $\iota$ satisfies Assumption~\eqref{item:cancellation}. Finally, suppose that $f\colon C \to D$ is a map in $\pst(\cal{C})$ lying over an equivalence in $\pst(\cal{B})$. 
The map $D \to \cofib[C \to D]$ to the cofibre is a projection in $\pst(\cal{C})$ with target in $\pst(\cal{A})$. Since $\pst(\cal{A}) \to \pst(\cal{C})$ is a Keller inclusion (see Corollary~\ref{cor:heartwise-keller}) there exists an $A \in \pst(\cal{A})$ and a map $A \to D$ such that the composite $A \to D \to \cofib[C \to D]$ is a projection in $\pst(\cal{A})$. Set $E := \cofib[A \to D]$. Then the map $D \to E$ is a projection with kernel in $\pst(\cal{A})$, hence a morphism in $\pst(\cal{C})^{\pst(\cal{A})}_{\pr}$, and the composite $C \to D \to E$ is also a projection with kernel in $\pst(\cal{A})$, as can be seen from the exact sequence
\[ A \to \cofib[C \to D] \to \cofib[C \to E] \]
and the fact that $A \to \cofib[C \to D]$ is a projection in $\pst(\cal{A})$. We conclude that $\iota$ satisfies Assumption~\eqref{item:supply}, and so the proof is complete.
\end{proof}

\begin{proof}[Proof of Proposition~\ref{prop:weak-equivalence}]
We prove the claim for $\cal{C}^{\cal{A}}_{\pr}$, the claim for $\cal{C}^{\cal{A}}_{\inc}$ can be inferred by replacing $\cal{C}$ with its opposite (equipped with the opposite exact structure). Applying Proposition~\ref{prop:pass-to-pst} it will suffice to show that $\pst(\cal{C})^{\pst(\cal{A})}_{\pr} \to \pst(\cal{B})^{\simeq}$ is a weak homotopy equivalence, and applying Proposition~\ref{prop:remove-pr} we may further replace the domain by $\pst(\cal{C})^{\pst(p)}$.
Now the map 
\[ \pst(p)\colon \pst(\cal{C}) \to \pst(\cal{B})\] 
is a Keller projection (Corollary~\ref{cor:heartwise-keller}) and so in particular exhibits $\pst(\cal{B})$ as the localisation of $\pst(\cal{C})$ by the collection of maps belonging to $\pst(\cal{C})^{\pst(p)}$. At the same time, since $\pst(\cal{C})$ has finite colimits and these are preserved by $\pst(p)$ we see that the collection of maps belonging to $\pst(\cal{C})^{\pst(p)}$ is closed under cobase change. The $\infty$-category $\pst(\cal{C})$ is then an $\infty$-category with weak equivalences and cofibrations in the sense of \cite[Definition 7.4.12]{Cis19}, where all maps are cofibrations and the weak equivalences are the maps belonging to $\pst(\cal{C})^{\pst(p)}$. Applying 
\cite[Corollary 7.6.9]{Cis19} we now conclude that the map
\[ \pst(\cal{C})^{\pst(p)} \to \pst(\cal{B})^{\simeq} \]
is a weak homotopy equivalence, as desired.
\end{proof}

\begin{lemma}\label{lem:S-n-exact}
	Suppose given a Keller sequence $\cal{A} \to \cal{C} \to \cal{B}$. Then for each $q \ge 0$, the induced sequence $\rS_q(\cal{A}) \to \rS_q(\cal{C}) \to \rS_q(\cal{B})$ on $\rS$-constructions is also Keller.
\end{lemma}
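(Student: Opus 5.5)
The plan is to reduce to Proposition~\ref{prop:reedy-keller}. Recall that $\rS_q(\cal{E})$ is the $\infty$-category of filtered objects $0=x_0 \mono x_1 \mono \dots \mono x_q$ in $\cal{E}$, together with chosen quotients. Up to equivalence it is just the full subcategory of $\fun([q-1],\cal{E})$ spanned by sequences of inclusions. Since the quotients are determined by cofibres in $\st(\cal{E})$, its exact structure is the one in which inclusions and projections are the pointwise ones whose induced maps on the quotients $x_{k}/x_{k-1}$ are again exact. For the linear poset $\cal{I}=[q-1]$ we have $\colim_{j<i}\vphi(j) = \vphi(i-1)$. Hence a diagram is Reedy cofibrant exactly when all its transition maps are inclusions, so $\rS_q(\cal{E}) \simeq \fun^{\reedy}([q-1],\cal{E})$. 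By Lemma~\ref{lem:reedy-inc-pr}, this equivalence respects the exact structures.

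The steps, in order, are as follows.
\begin{enumerate}
\item Identify $\rS_q(\cal{E})$ with $\fun^{\reedy}([q-1],\cal{E})$ as exact $\infty$-categories, naturally in exact functors $\cal{E} \to \cal{E}'$. For this I will check the description of inclusions via Lemma~\ref{lem:reedy-inc-pr}: a Reedy cofibration $\vphi \Rightarrow \psi$ means that each $\vphi(i)\coprod_{\vphi(i-1)}\psi(i-1) \to \psi(i)$ is an inclusion. This matches the standard exact structure on $\rS_q$, which is the one induced from $\fun([q-1],\st(\cal{E}))$ via extension-closure, as in the paper's treatment of $\rS_2$.
\item Apply Proposition~\ref{prop:reedy-keller} with $\cal{I}=[q-1]$. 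This immediately gives that $\fun^{\reedy}([q-1],\cal{A}) \to \fun^{\reedy}([q-1],\cal{C}) \to \fun^{\reedy}([q-1],\cal{B})$ is a Keller sequence.
\item Transport this sequence along the natural equivalences of step~1. The case $q=0$ is the trivial sequence $0\to0\to0$.
\end{enumerate}

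The main obstacle is step~1. Specifically, I must make sure that the exact structure on $\rS_q$ used in the K-theory section agrees with the induced extension-closed structure on Reedy cofibrant diagrams. The point needing care is that $\fun^{\reedy}(\cal{I},\cal{E})$ is defined via cofibres in $\st(\cal{E})$, while $\rS_q(\cal{E})$ might be defined intrinsically. I would argue this through the common embedding into $\fun([q-1],\st(\cal{E}))$, using that $\cal{E} \subseteq \st(\cal{E})$ is extension-closed and detects exact sequences.

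Some hypotheses of the earlier results also need attention. Proposition~\ref{prop:reedy-keller} is stated without weak idempotent completeness assumptions, though Corollary~\ref{cor:envelop-reedy} used them. If needed, I would first pass to weak idempotent completions, or observe that only the cofibre-sequence and Keller-inclusion properties of Proposition~\ref{prop:reedy-keller} are used.

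As an alternative check, the sequence for $\rS_q$ could be verified by induction on $q$ via the splitting $\rS_q(\cal{E}) \simeq E(\cal{E},\rS_q(\cal{E}),\rS_{q-1}(\cal{E}))$. However, the Reedy route is more direct.
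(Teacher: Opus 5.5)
Your proposal is correct and is exactly the paper's argument: identify $\rS_q(-)$, with its standard exact structure, with the category of Reedy cofibrant diagrams on a finite linear poset, then apply Proposition~\ref{prop:reedy-keller}. Your indexing poset $[q-1]$ is in fact the accurate one (the paper writes $\Delta^q$, an inessential off-by-one), and the paper's one-line proof does not address the weak idempotent completeness point you raise either.
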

\begin{proof}
This is a special case of Proposition~\ref{prop:reedy-keller}, as we can identify $\rS_q(-) \simeq \fun^{\reedy}(\Delta^q,-)$ with the induced exact structure.
\end{proof}

\begin{proof}[Proof of Theorem~\ref{thm:localisation}]
Given an extension-closed full inclusion $\cal{D} \subseteq \cal{E}$ of exact $\infty$-categories, and $[n] \in \bbDelta$ let us denote by $P_n(\cal{E};\cal{D}) \subseteq \fun(\Delta^n,\cal{E})$ the full subcategory spanned by those sequences $e_0 \to ... \to e_n$ such that each $e_i \to e_{i+1}$ is an inclusion with cofibre in $\cal{D}$. We note that $P_n(\cal{E};\cal{D})$ is closed under extensions in $\fun(\Delta^n,\cal{E})$ and hence inherits an exact structure. We also note that for every $\rho\colon [n] \to [m]$ in $\bbDelta$ the restriction functor $\fun(\Delta^m,\cal{E}) \to \fun(\Delta^n,\cal{E})$ sends $P_m(\cal{E};\cal{D})$ to $P_n(\cal{E};\cal{D})$, so that the various $P_n(\cal{E};\cal{D})$ assemble to give a simplicial object $P_\bullet(\cal{E};\cal{D})$. Given a Keller sequence
\[ \cal{A} \xrightarrow{i} \cal{C} \xrightarrow{p} \cal{B} \]
we have an induced fibre sequence
\[ P_\bullet(\cal{A};\cal{A}) \to P_\bullet(\cal{C};\cal{A}) \to P_\bullet(\cal{B};0) \]
of simplicial objects in $\exact$. We note that at $\bullet=0$ this is just the original Keller sequence. Applying K-theory and taking geometric realizations we hence obtain a commutative diagram
\[
\begin{tikzcd}
\K(\cal{A}) \ar[r]\ar[d] & \K(\cal{C}) \ar[r]\ar[d] & \K(\cal{B}) \ar[d] \\
{|\K(P_\bullet(\cal{A};\cal{A}))|} \ar[r] & {|\K(P_\bullet(\cal{C};\cal{A}))|} \ar[r] & {|\K(P_\bullet(\cal{B};0)|}
\end{tikzcd}
\]
in $\Sp$.
We now prove the following three claims, which together imply that the top row is an exact sequence in $\Sp$:
\begin{enumerate}
\item\label{item:right-most}
The rightmost vertical map is an equivalence.
\item\label{item:left-square}
The left square is cocartesian.
\item\label{item:bottom-row}
The bottom row is a cofibre sequence.
\end{enumerate}
The first statement is clear, since the simplicial object $P_\bullet(\cal{B};0)$ is homotopy constant. For the second statement, note that the $\infty$-category $\exact$ is semi-additive, and hence we can take the Bar construction $B_\bullet(\cal{C};\cal{A})$ of the exact functor $i\colon \cal{A} \to \cal{C}$, which by definition is the simplicial object obtained by left Kan extending the diagram $0 \leftarrow \cal{A} \xrightarrow{i} \cal{C}$
along the diagram $[0] \xleftarrow{d_0} [1] \xrightarrow{d_0} [0]$ in $\bbDelta^{\op}$. Explicitly, $B_n(\cal{C};\cal{A})$ is given by $\cal{C} \oplus \cal{A} \oplus ... \oplus \cal{A}$ with the summand $\cal{A}$ appearing $n$ times. We have a map of simplicial objects
\[ B_\bullet(\cal{C};\cal{A}) \to P_\bullet(\cal{C};\cal{A}) \]
which at the $n$'th level is the exact functor
\[ f_n\colon \cal{C} \oplus \cal{A} \oplus ... \oplus \cal{A} \to P_n(\cal{C};\cal{A}) \quad\quad (c,a_1,...,a_n) \mapsto [c \to c\oplus a_1 \to c\oplus a_1\oplus a_2 \to ... \to c \oplus a_1\oplus ...\oplus a_n] .\]
We claim that the exact functor $f_n$ induces an equivalence on K-theory for every $n$. Indeed, each $f_n$ admits a retraction 
\[ g_n\colon P_n(\cal{C};\cal{A}) \to \cal{C} \oplus \cal{A} \oplus ... \oplus \cal{A} \quad\quad [c_0 \xrightarrow{i_1} ... \xrightarrow{i_n} c_n] \mapsto (c_0,\cofib(i_1),...,\cofib(i_n)), \]
though, to avoid confusion, let us point out that these retractions do not assemble to a map of simplicial objects. Nonetheless, $g_n\circ f_n = \id$, while the functor $\K(P_n(\cal{C};\cal{A})) \to \K(P_n(\cal{C};\cal{A}))$ induced by $f_n \circ g_n$ is homotopic to the identity by additivity of $\K$, as witnessed by the natural filtration 
\[ 0 \mono [c_0 = c_0 = ... = c_0] \mono (c_0 \xrightarrow{i_1} c_1 = c_1 ... = c_1) \mono ... \mono (c_0 \xrightarrow{i_1} c_1 \xrightarrow{i_2} ... \xrightarrow{i_n} c_n) \]
in $P_n(\cal{C};\cal{A})$ whose associated graded pieces are $[c_0 = ... = c_0], [0 \to \cofib(i_1) = ... = \cofib(i_1)], ... [0 \to ... \to 0 \to \cofib(i_n)]$, the direct sum of which is $f_ng_n(c_0 \to ... \to c_n)$, see Remark~\ref{rem:additive-S}. It hence follows that $f_n$ is indeed an equivalence on K-theory, as claimed. To prove~\eqref{item:left-square} above it will hence suffice to show that the square
\[
\begin{tikzcd}
\K(\cal{A}) \ar[r]\ar[d] & \K(\cal{C}) \ar[d]  \\
{|\K(B_\bullet(\cal{A};\cal{A}))|} \ar[r] & {|\K(B_\bullet(\cal{C};\cal{A}))|}  
\end{tikzcd}
\]
is cocartesian in $\Sp_{\geq 0}$. Now since $\K$-theory preserves direct sums it sends the Bar construction of $\cal{A} \to \cal{C}$ to the Bar construction of $\K(\cal{A}) \to \K(\cal{C})$ in $\Sp_{\geq 0}$ and the Bar construction of $\cal{A} \to \cal{A}$ to the Bar construction of $\K(\cal{A}) \to \K(\cal{A})$. But Bar constructions are left Kan extensions, so their geometric realizations are just the cofibres of the arrows from which they are generated. In other words, the desired statement is equivalent to saying that the square
\[
\begin{tikzcd}
\K(\cal{A}) \ar[r]\ar[d] & \K(\cal{C}) \ar[d]  \\
\cofib[\K(\cal{A}) \to \K(\cal{A})] \ar[r] & \cofib[\K(\cal{A}) \to \K(\cal{C})]  
\end{tikzcd}
\]
is cocartesian, which is clear. It is hence left to prove~\eqref{item:bottom-row}. Now the above argument also shows that $|\K(P_\bullet(\cal{A};\cal{A}))| \simeq |\K(B_\bullet(\cal{A};\cal{A}))| \simeq 0$, and so the desired claim is equivalent to the map
\[ {|\K(P_\bullet(\cal{C};\cal{A}))|} \to {|\K(P_\bullet(\cal{B};0)|} \]
being an equivalence. For this, it will suffice to show that for each $n$ the map
\[ |S_n(P_\bullet(\cal{C};\cal{A}))^{\simeq}| \to |S_n(P_\bullet(\cal{B};0))^{\simeq}| \]
is an equivalence of spaces. We now observe that we can commute $S_n$ and $P_\bullet$, that is, the above map can be rewritten as 
\[ |P_\bullet(S_n(\cal{C});S_n(\cal{A}))^{\simeq}| \to |P_\bullet(S_n(\cal{B});0)^{\simeq}| .\]
This map, in turn, can be identified with the map induced on geometric realizations by the functor
\[ S_n(\cal{C})^{S_n(\cal{A})}_{\inc} \to S_n(\cal{B})^{\simeq}. \]
It will hence suffice to show that this functor is a weak homotopy equivalence. Indeed, this follows from Proposition~\ref{prop:weak-equivalence} applied to the induced Keller sequence on $S_n$ established in Lemma~\ref{lem:S-n-exact}.
\end{proof}

\subsection{The Gillet-Waldhausen theorem}\label{sec:resolution}

In this section we demonstrate how Proposition~\ref{prop:verdier} applies to give a generalization of the Gillet-Waldhausen theorem to all finitary (that is, filtered colimits preserving) Keller localising functors:

\begin{theorem}\label{thm:gillet-waldhausen}
Let $\cal{A}$ be an additive $\infty$-category with finite limits and filtered colimits and let $\cal{F}\colon \exact \to \cal{A}$ be a finitary Keller localising functor. Then the induced map
\[ \cal{F}(\cal{C}) \to \cal{F}(\st(\cal{C})) \]
is an equivalence for any exact $\infty$-category $\cal{C}$.
\end{theorem}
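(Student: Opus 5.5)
The plan is to factor $\cal{C} \to \st(\cal{C})$ through the filtration by the bounded pieces of the canonical heart structure and treat one degree at a time. First we may assume $\cal{C}$ is weakly idempotent complete. Indeed, $\cal{C} \to \st(\cal{C})^{\heartsuit}$ is a weak idempotent completion, and a cofinal inclusion of this kind should be handled by a Keller sequence $\cal{C} \to \st(\cal{C})^{\heartsuit} \to \cal{Q}$ in which $\cal{Q}$ is a split-exact category with vanishing $\cal{F}$. If this turns out to be awkward, we instead run the argument below directly with $\st(\cal{C})^{\heartsuit}$ in place of $\cal{C}$ and compare the two afterwards. Since $\cal{F}$ is finitary and
\[ \st(\cal{C}) = \colim_n \st(\cal{C})_{[-n,n]} \]
is a filtered union of exact subcategories (filtered colimits in $\exact$ are computed on underlying categories, Corollary~\ref{cor:compactly-generated}), it suffices to show two things. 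First, that $\cal{F}(\cal{C}) \to \cal{F}(\st(\cal{C})_{[0,n]})$ is an equivalence for all $n$. Second, the dual statement for $[-n,0]$, which follows by passing to opposites, since being Keller localising and additive is invariant under $(-)^{\op}$. Combining the two, and shifting by $\Sigma^{-n}$, handles $[-n,n]$: one applies the $[0,2n]$ statement to the exact category $\st(\cal{C})_{[-n,-n]}\simeq\cal{C}$, whose stable envelope has $[0,2n]$-part equal to $\Sigma^{-n}$ of $\st(\cal{C})_{[-n,n]}$ by \cite{Sau23}.

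For the step $[0,n-1] \subseteq [0,n]$ we proceed as follows. Set $\cal{E}_n := \st(\cal{C})_{[0,n]}$. We look for a Keller sequence
\[ \cal{E}_{n-1} \to \cal{E}_n \to \cal{Q}_n \]
together with a proof that $\cal{F}(\cal{Q}_n)=0$. The natural candidate comes from the semi-orthogonal-type decomposition $\Sigma^{n}\cal{C}$ versus $\cal{E}_{n-1}$ provided by heart decompositions. It is cleaner, though, to use the resolution-style sequence: the inclusion $\cal{E}_{n-1}\subseteq \cal{E}_n$ should be left-special by Example~\ref{ex:resolving-is-special} and Lemma~\ref{lem:square-left-special}, and right-special and retract-closed by Lemma~\ref{lem:closed-under-retracts}. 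Its cofibre $\cal{Q}_n$ is then described by Proposition~\ref{prop:keller-cof}, with stable envelope the Verdier quotient $\st(\cal{C})/\st(\cal{C}) = 0$ by Corollary~\ref{st}, since $\st(\cal{E}_{n-1})=\st(\cal{E}_n)=\st(\cal{C})$. Keller localisation then reduces the claim to $\cal{F}(\cal{Q}_n)=0$.

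The key remaining input, and the main obstacle, is showing that $\cal{F}$ vanishes on the cofibre $\cal{Q}$ of such an inclusion, where $\st(\cal{Q})=0$. My first attempt: an exact category with zero stable envelope must have $\cal{Q}\subseteq\st(\cal{Q})^\heartsuit=0$ up to weak idempotent completion. It is therefore split-exact with all objects of the form "$x$ with $x\oplus y$ trivial", and an Eilenberg swindle argument is expected to kill $\cal{F}(\cal{Q})$. If this idempotent subtlety obstructs, a more robust fallback is an explicit swindle one level up. The functor $\cal{E}_{n-1}\to\cal{E}_n$ should be made part of a Keller sequence whose quotient carries a countable-sum-like endofunctor $T$ with $T\simeq \id\oplus T$ realised by an exact filtration. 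Remark~\ref{rem:additive-S} then gives $\cal{F}(T)=\cal{F}(\id)+\cal{F}(T)$, so $\cal{F}(\id)=0$. Finitarity of $\cal{F}$ is what allows such a swindle to be constructed on a filtered colimit of finite stages, and making this precise is where I expect most of the work to lie.
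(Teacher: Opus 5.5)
Your reduction to the single step ``$\cal{E}_{n-1}\subseteq\cal{E}_n$ induces an $\cal{F}$-equivalence'' (with $\cal{E}_n=\st(\cal{C})_{[0,n]}$, using finitarity, opposites and shifts) matches the paper. The way you propose to prove that step, however, does not work.

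\textbf{The inclusion is not Keller.} The inclusion $\cal{E}_{n-1}\subseteq\cal{E}_n$ is left-special and retract-closed, but it is \emph{not} right-special. Lemma~\ref{lem:closed-under-retracts} only gives retract-closure. A counterexample already occurs for $n=1$ and $\cal{C}$ an ordinary exact category. Take $0\neq a\in\cal{C}$. The map $a\to 0$ is an inclusion in $\st(\cal{C})_{[0,1]}$, since its cofibre $\Sigma a$ lies there. Right-specialness would give a factorisation $a\to 0\to b$ with $a\to b$ an inclusion of $\cal{C}$. But this composite is zero, and inclusions in an ordinary exact category are monomorphisms.

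\textbf{A structural obstruction.} Your own computation exposes why this cannot be repaired. Since $\st(\cal{E}_{n-1})\to\st(\cal{E}_n)$ is an equivalence and $\st$ preserves cofibres, the cofibre $\cal{Q}_n$ in $\exact$ has $\st(\cal{Q}_n)=0$. Since $\cal{Q}_n\to\st(\cal{Q}_n)$ is fully faithful, $\cal{Q}_n=0$.
\begin{itemize}
\item So $\cal{F}(\cal{Q}_n)=0$ is automatic. The step you call the main obstacle, and the swindle fallback, are beside the point.
\item By Proposition~\ref{prop:verdier}, $\cal{E}_{n-1}\to\cal{E}_n\to 0$ could only be a Keller sequence if $\cal{E}_{n-1}=\ker(\cal{E}_n\to 0)=\cal{E}_n$.
\end{itemize}
All the content has thus been placed in a false assertion. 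The same objection applies to your sequence $\cal{C}\to\st(\cal{C})^{\heartsuit}\to\cal{Q}$: its first map is not retract-closed unless $\cal{C}$ is already weakly idempotent complete.

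\textbf{The missing idea.} Use the resolving property of $\cal{E}_{n-1}\subseteq\cal{E}_n$ to build a Keller sequence that \emph{ends} in $\cal{E}_n$, with a middle term whose value under $\cal{F}$ is computable by additivity.
\begin{itemize}
\item The paper takes the exact category of sequences $x\mono y\epi z$ in $\cal{E}_n$ with $x,y\in\cal{E}_{n-1}$. This is equivalent to $\Ar(\cal{E}_{n-1})$, because every map of $\cal{E}_{n-1}$ is an inclusion in $\cal{E}_n$.
\item The functor $x\mapsto(x=x\to 0)$ is a Keller inclusion by Example~\ref{ex:arrow}.
\item Resolving shows that $\ev_2\colon(x\mono y\epi z)\mapsto z$ is the localisation at projections with fibre in $\cal{E}_{n-1}$. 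Hence $\cal{E}_{n-1}\to\Ar(\cal{E}_{n-1})\to\cal{E}_n$ is a Keller sequence (Proposition~\ref{cofibre}).
\item Additivity identifies $\cal{F}(\Ar(\cal{E}_{n-1}))\simeq\cal{F}(\cal{E}_{n-1})\oplus\cal{F}(\cal{E}_{n-1})$, via the two inclusions $x\mapsto(x=x\to 0)$ and $x\mapsto(0\to x=x)$.
\item Comparing with the split Keller sequence $\cal{E}_{n-1}\to\cal{E}_{n-1}\oplus\cal{E}_{n-1}\to\cal{E}_{n-1}$ then shows that $\cal{E}_{n-1}\to\cal{E}_n$ is an $\cal{F}$-equivalence (Corollary~\ref{cor:resolution}).
\end{itemize}
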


\begin{corollary}\label{cor:universal}
The algebraic $\K$-theory functor $\K\colon \exact \to \Sp$ is the initial Keller localising functor under $\Sigma^{\infty}(-)^{\simeq}$.
\end{corollary}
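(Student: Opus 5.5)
The statement to prove is Corollary~\ref{cor:universal}. The plan is to combine Theorem~\ref{thm:localisation}, Theorem~\ref{thm:gillet-waldhausen}, and the known universal property of $\K$ on $\catex$ (the initial additive, equivalently Verdier localising, functor under $\Sigma^\infty(-)^\simeq$; cf.~\cite{BGT13, HLS24}).

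\textbf{Step 1: $\K$ is an object of the relevant category.} By Theorem~\ref{thm:localisation}, $\K$ is Keller localising. It receives a natural map $\Sigma^\infty(-)^\simeq \to \K$ from the inclusion of the core, viewed as objects and identity spans in $\Span^{\dagger}(-)$, and adjoining along the group completion. Moreover, $\K$ is finitary: it commutes with filtered colimits because the forgetful functor $\exact \to \cat$ preserves filtered colimits (Corollary~\ref{cor:compactly-generated}), and $\Span^{\dagger}$ together with geometric realisation commute with these colimits.

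\textbf{Step 2: reduce the comparison to $\catex$.} Let $\cal{F}\colon \exact \to \Sp$ be Keller localising with a map $\eta\colon \Sigma^\infty(-)^\simeq \to \cal{F}$. Restricting to $\catex \subseteq \exact$, Keller sequences of stable $\infty$-categories are exactly Verdier sequences, so $\cal{F}|_{\catex}$ is Verdier localising. The universal property on $\catex$ then gives an essentially unique transformation $\K|_{\catex} \to \cal{F}|_{\catex}$ under $\Sigma^\infty(-)^\simeq$.

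\textbf{Step 3: extend back to $\exact$.} Here I would use Theorem~\ref{thm:gillet-waldhausen}, applied to $\K$ (finitary by Step 1), which shows that $\K \to \K\circ\st$ is an equivalence. Precomposition with the left adjoint $\st$ identifies natural transformations $\K\circ\st \to \cal{F}\circ\st$ with transformations on $\catex$. Composing $\K \simeq \K\circ\st \to \cal{F}\circ\st$ with the unit-induced map $\cal{F} \to \cal{F}\circ\st$ would, however, go in the wrong direction. The cleanest formulation is to note that the space of maps $\K \to \cal{F}$ under $\Sigma^\infty(-)^\simeq$ equals the space of maps $\K|_{\catex}\circ\st \to \cal{F}$ under $\Sigma^\infty(-)^\simeq$, since $\K \simeq \K|_{\catex}\circ \st$. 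Because $\st$ is left adjoint to the inclusion $\iota$, left Kan extension along $\iota$ is given by precomposition with $\st$: $\iota_!G = G\circ\st$. Hence
\[ \Map(\K|_{\catex}\circ\st,\cal{F}) \simeq \Map(\K|_{\catex},\cal{F}|_{\catex}). \]

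\textbf{Main obstacle: the ``under'' condition.} Maps under $\Sigma^\infty(-)^\simeq$ must be compared with maps under $\Sigma^\infty(\st(-))^\simeq$ after the Kan extension. I would express the structure map of $\K$ as the composite $\Sigma^\infty(-)^\simeq \to \Sigma^\infty\st(-)^\simeq \to \K|_{\catex}\circ\st$. The mapping space under $\Sigma^\infty(-)^\simeq$ is then a fibre of restriction maps, and it reduces to the one on $\catex$ computed under $\Sigma^\infty(-)^\simeq|_{\catex}$, which is contractible by Step 2. Checking that these fibres match uses that $\Sigma^\infty(-)^\simeq\to\Sigma^\infty\st(-)^\simeq$ is itself the unit of the Kan extension restricted to the core. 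I expect this bookkeeping to be the only delicate point. Finally, if the intended class of targets $\cal{F}$ is not assumed finitary, I would rely on the fact that Theorem~\ref{thm:gillet-waldhausen} is applied only to $\K$, not to $\cal{F}$.
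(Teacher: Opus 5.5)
Your Step 3 has a genuine gap: the Kan extension goes the other way. From $\st\dashv\iota$ one obtains $\iota^*\dashv\st^*$ on functor categories, with unit $\cal{G}\to\cal{G}\circ\iota\circ\st$ induced by $\id\to\iota\st$. So precomposition with $\st$ is the \emph{right} Kan extension $\iota_*$, not $\iota_!$.

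The left Kan extension is genuinely different. Let $\cal{C}$ be an ordinary exact category. Any exact functor from a stable $\infty$-category to $\cal{C}$ sends each $x\to 0$ to an inclusion, hence to a monomorphism, so it is zero. Hence $(\iota_!G)(\cal{C})\simeq G(0)$. For $G=\K|_{\catex}$ this gives $0$, whereas $\K(\cal{C})\neq 0$ in general.

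Consequently the displayed equivalence $\Map(\K|_{\catex}\circ\st,\cal{F})\simeq\Map(\K|_{\catex},\cal{F}|_{\catex})$ fails for general $\cal{F}$; by Yoneda it would force $\K\simeq\iota_!\K|_{\catex}$. Your argument uses no property of $\cal{F}$ that could rescue it. What the correct adjunction controls is maps \emph{into} functors of the form $G\circ\st$, namely $\Map(\cal{G},G\circ\st)\simeq\Map(\iota^*\cal{G},G)$. So knowing $\K\simeq\K\circ\st$ says nothing about maps out of $\K$, and the ``under'' bookkeeping you single out is not the real obstacle. In particular, your closing remark that Theorem~\ref{thm:gillet-waldhausen} need only be applied to $\K$ is exactly backwards.

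The missing idea, which is how the paper argues, is to apply Gillet--Waldhausen to the \emph{target}. Since Theorem~\ref{thm:gillet-waldhausen} needs finitary functors, the paper first reduces to finitary targets. Because $\exact$ is compactly generated (Corollary~\ref{cor:compactly-generated}), the inclusion of finitary functors has a right adjoint $\cal{F}\mapsto\cal{F}^{\omega}$, given by left Kan extension of the restriction to compact objects. Since $\K$ and $\Sigma^\infty(-)^\simeq$ are finitary, maps from them into $\cal{F}$ are the same as maps into $\cal{F}^{\omega}$. This step also needs $\cal{F}^{\omega}$ to be Keller localising again, which the paper does not spell out.

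For a finitary Keller localising $\cal{F}$, Theorem~\ref{thm:gillet-waldhausen} gives $\cal{F}\simeq\cal{F}|_{\catex}\circ\st=\iota_*\iota^*\cal{F}$. Hence $\Map(\K,\cal{F})\simeq\Map(\iota^*\K,\iota^*\cal{F})$, naturally in the source. The fibres over the structure maps from $\Sigma^\infty(-)^\simeq$ and from its restriction to $\catex$ therefore correspond automatically. Contractibility then follows from the universal property on $\catex$ together with your Step 2. Note that this argument uses Theorem~\ref{thm:localisation} only to know that $\K$ is Keller localising, and never uses $\K\simeq\K\circ\st$.
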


While the statement of Corollary~\ref{cor:universal} is pleasing from the point of view of the present paper, we in fact expect $\K\colon \exact \to \Sp$ to be initial already as an additive functor under $\Sigma^{\infty}(-)^{\simeq}$.

\begin{proof}[Proof of Corollary~\ref{cor:universal}]
Since $\exact$ is compactly generated (Corollary~\ref{cor:compactly-generated}) the inclusion of finitary functors $\exact \to \Sp$ inside all functors admits a right adjoint, sending $\cal{G}\colon \exact \to \Sp$ to the left Kan extension of its restriction to the full subcategory of compact objects. It will hence suffice to show that $\K$ is the initial finitary Keller localising functor under $\Sigma^{\infty}(-)^{\simeq}$. But by Theorem~\ref{thm:gillet-waldhausen} all finitary Keller localising functors $\exact \to \Sp$ are right Kan extended from their restriction to $\catex \subseteq \exact$. In addition, any Keller localising functor on $\exact$ restricts to a Verdier localising functor on $\catex$. The desired result hence follows from the universal property of algebraic K-theory as a functor on $\catex$, see~\cite{BGT13} or~\cite[Theorem 5.1]{HLS24}.
\end{proof}

The remainder of this subsection is devoted to the proof of Theorem~\ref{thm:gillet-waldhausen}.
Fix a stable category with a bounded heart structure $(\cal{C}_{\ge 0},\cal{C}_{\le 0})$, and write as usual $\cal{C}_{[a, b]} := \Sigma^a\cal{C}_{\ge 0} \cap \Sigma^b\cal{C}_{\le 0}$ for integers $a, b$. For $n\ge 1,$ consider the extension-closed full subcategory $\cal{E}_n \subseteq S_2(\cal{C}_{[0,n]})$ spanned by those exact sequences $x \mono y \epi z$ in $\cal{C}_{[0,n]}$ with $x, y \in \cal{C}_{[0,n-1]}$, considered as an exact subcategory equipped with the induced exact structure. Explicitly, a map $(x\mono y \epi z) \to (x' \mono y' \epi z')$ between exact sequences in $\cal{E}_n$ is a projection if it is so pointwise in $\cal{C}_{[0,n]}$, and $\fib(y \epi y') \in \cal{C}_{[0,n-1]}$, and dually for inclusions. There are exact subcategory inclusions $i_0,i_1:\cal{C}_{[0,n-1]} \to \cal{E}_n$ given respectively by $x \mapsto (x \eeq x \to 0), (0 \to x \eeq x)$, and an exact inclusion $j:\cal{C}_{[0,n-1]} \to \cal{E}_n$ given by $x \mapsto (x \to 0 \to \Sigma x)$.

Recall from \cite[Theorem 2.9]{Sau23} that the inclusion $\cal{C}_{[0,n-1]} \subseteq \cal{C}_{[0,n]}$ is \emph{resolving}, in the following sense:
\begin{enumerate}[leftmargin=*, label=(\roman*)]
	\item any object of $\cal{C}_{[0,n]}$ admits a projection from some object of $\cal{C}_{[0,n-1]}$;
	\item given an exact sequence $x \mono y\epi z$ in $\cal{C}_{[0,n]}$ with $y \in \cal{C}_{[0,n-1]}$, then necessarily $x \in \cal{C}_{[0,n-1]}$.
\end{enumerate}
In addition, any map in $\cal{C}_{[0,n-1]}$ is an inclusion when considered in $\cal{C}_{[0,n]}$. We hence see that forgetting $z$ gives an equivalence of exact $\infty$-categories $\cal{E}_n \simeq \mathrm{Ar}(\cal{C}_{[0,n-1]})$ to the arrow category of $\cal{C}_{[0,n-1]}$ with its pointwise exact structure.
\begin{proposition}\label{cofibre}
	The sequence
	\[
		\cal{C}_{[0,n-1]} \xto{i_0} \cal{E}_n \xto{\ev_2} \cal{C}_{[0,n]}
	\]
	is a Keller sequence, where the second functor is given by the evaluation $(x \mono y \epi z) \mapsto z$.
\end{proposition}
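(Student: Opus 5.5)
The plan is to use the characterisation of Proposition~\ref{prop:verdier}: show that $i_0$ is a Keller inclusion, then identify $\ev_2$ with the cofibre of $i_0$ in $\exact$. The cofibre of a Keller inclusion is constructed in Proposition~\ref{prop:keller-cof}.

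\emph{Step 1: $i_0$ is a Keller inclusion.} Under the equivalence $\cal{E}_n \simeq \Ar(\cal{C}_{[0,n-1]})$ obtained by forgetting $z$, the functor $i_0$ becomes the diagonal $x \mapsto [\id\colon x \to x]$. By Example~\ref{ex:arrow} this is an extension-closed inclusion that is both left- and right-special, since its source and target projections are exact adjoints (Example~\ref{ex:adjoints}). It is closed under retracts, because a retract of an equivalence in $\Ar(-)$ is an equivalence. Finally, $\cal{C}_{[0,n-1]}$ is weakly idempotent complete by Lemma~\ref{lem:closed-under-retracts}, and hence so is $\Ar(\cal{C}_{[0,n-1]})$. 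So Proposition~\ref{prop:keller-cof} applies: $i_0$ has a cofibre $\cal{E}_n \to \cal{B}$ which is a Keller projection with fibre $\cal{C}_{[0,n-1]}$.

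\emph{Step 2: compare the cofibre with $\ev_2$.} The functor $\ev_2$ is exact and vanishes on the image of $i_0$, so it factors through an exact functor $\bar p\colon \cal{B} \to \cal{C}_{[0,n]}$. I will show $\bar p$ is an equivalence by the same pattern as in the proof of Corollary~\ref{cor:heartwise-keller}:
\begin{itemize}
\item Show that $\st(\bar p)$ is an equivalence.
\item Deduce from \cite[Proposition 4.25]{Kle22} that $\bar p$ is an extension-closed full inclusion.
\item Check essential surjectivity. It is immediate: every $z \in \cal{C}_{[0,n]}$ admits a projection $y \epi z$ from some $y \in \cal{C}_{[0,n-1]}$ by the resolving property (i), and by (ii) its fibre $x$ lies in $\cal{C}_{[0,n-1]}$. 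So $(x \mono y \epi z) \in \cal{E}_n$ lifts $z$.
\end{itemize}

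\emph{Step 3: the stable-envelope comparison.} This is the main obstacle. Since $\st$ preserves colimits, $\st(\cal{B}) \simeq \st(\cal{E}_n)/\st(\cal{C}_{[0,n-1]})$. By \cite[Theorem 2.9]{Sau23} both $\st(\cal{C}_{[0,n-1]})$ and $\st(\cal{C}_{[0,n]})$ are identified with $\cal{C}$.

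It remains to identify $\st(\Ar(\cal{C}_{[0,n-1]}))$ with $\Ar(\cal{C})$. For this I would use the semi-orthogonal decomposition of Example~\ref{ex:arrow-semi-orthogonal}. Its two pieces $[x \to 0]$ and $[0 \to y]$ have exact adjoints, and $\st$ carries this decomposition to a semi-orthogonal decomposition of $\st(\Ar(\cal{C}_{[0,n-1]}))$ with both pieces equal to $\cal{C}$. The comparison functor to $\Ar(\cal{C})$ is compatible with both decompositions. It is fully faithful on the two pieces, and on the mapping spaces from the first piece into the second it is the identity of $\Map_{\cal{C}}$. This last point can be checked on generators $x \to 0$ and $0 \to y$ with $x,y \in \cal{C}_{[0,n-1]}$, using that $\cal{C}_{[0,n-1]} \to \cal{C}$ is fully faithful. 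Hence it is an equivalence.

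Under this identification, $i_0$ becomes the diagonal $\cal{C} \to \Ar(\cal{C})$, whose Verdier quotient is the cofibre functor $[x \to y] \mapsto \cofib(x \to y)$. This functor restricts on $\cal{E}_n$ to $\ev_2$, since $z \simeq \cofib(x \to y)$. Therefore $\st(\bar p)$ is an equivalence, which completes the argument.
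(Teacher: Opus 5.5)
\paragraph{Different route, with a gap.} Your route differs from the paper's. The paper proves directly that $\ev_2$ is the Dwyer--Kan localisation of $\cal{E}_n$ at projections with fibre in $\cal{C}_{[0,n-1]}$. It uses the right calculus of fractions and exhibits a terminal object, built from the pullback $y_1\times_{z_1}y_0$, in each category of factorisations. You instead identify the abstract cofibre $\cal{B}$ with $\cal{C}_{[0,n]}$ via stable envelopes, as in Corollary~\ref{cor:heartwise-keller}. Steps~1 and~2 are fine, including essential surjectivity via the resolving property. So is the reduction of Step~3 to an equivalence $\st(\Ar(\cal{C}_{[0,n-1]}))\simeq\Ar(\cal{C})$ compatible with the two semi-orthogonal decompositions.

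\paragraph{The gap.} It lies in your verification of the gluing term.
\begin{itemize}
\item What must be compared is the mapping \emph{spectrum} $\mathrm{map}(\st(i)x,\st(j)y)$, a bi-exact functor of $x,y\in\st(\cal{C}_{[0,n-1]})$.
\item To reduce to generators $x,y\in\cal{C}_{[0,n-1]}$, you need an equivalence of spectra on generators. This includes the groups $\pi_0\Map(\st(i)x,\Sigma^k\st(j)y)$ for $k\geq 1$.
\item Full faithfulness of $\Ar(\cal{C}_{[0,n-1]})\subseteq\st(\Ar(\cal{C}_{[0,n-1]}))$ and of $\cal{C}_{[0,n-1]}\subseteq\cal{C}$ only controls the mapping spaces, i.e.\ the connective part. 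The negative homotopy groups are Ext groups in the stable envelope, which is exactly the non-formal content.
\item Working with mapping spaces alone, you can only propagate from generators along finite colimits in $x$ and finite limits in $y$. That does not reach all of $\st(\cal{C}_{[0,n-1]})$.
\end{itemize}
A smaller slip: the gluing term in $\Ar(\cal{C})$ is $\Omega\,\mathrm{map}_{\cal{C}}(x,y)$, not $\mathrm{map}_{\cal{C}}(x,y)$.

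\paragraph{A repair.} You already have the ingredients.
\begin{itemize}
\item In $\Ar(\cal{C}_{[0,n-1]})$ there is a natural exact sequence $j(x)\mono i_0(x)\epi i(x)$.
\item The diagonal $i_0$ has the exact right adjoint $\ev_0$ (Example~\ref{ex:arrow}), and $\ev_0\circ j=0$.
\item Since $\st$ is $2$-functorial, this adjunction survives, giving $\mathrm{map}(\st(i_0)x,\st(j)y)\simeq\mathrm{map}(x,\st(\ev_0)\st(j)y)=0$.
\item Hence $\mathrm{map}(\st(i)x,\st(j)y)\simeq\Omega\,\mathrm{map}(\st(j)x,\st(j)y)\simeq\Omega\,\mathrm{map}(x,y)$ for all $x,y\in\st(\cal{C}_{[0,n-1]})$.
\item The same computation holds in $\Ar(\cal{C})$. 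Your comparison functor induces a map between the two resulting fibre sequences whose middle terms both vanish and whose right-hand terms are identified via $\st(\cal{C}_{[0,n-1]})\simeq\cal{C}$. It is therefore an equivalence on the gluing term.
\end{itemize}

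\paragraph{Comparison once repaired.} With this fix your argument goes through. It is more conceptual than the paper's. However, it rests on heavier inputs: Saunier's identification of the stable envelopes of $\cal{C}_{[0,n-1]}$ and $\cal{C}_{[0,n]}$, $2$-functoriality of $\st$, and \cite[Proposition 4.25]{Kle22}. The paper's proof is elementary and uses only the calculus of fractions of Lemma~\ref{lem:calculus}.
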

\begin{proof}
The inclusion $i_0$ is Keller by Example~\ref{ex:arrow}.
By Proposition \ref{prop:verdier}, it suffices to show that the functor $\ev_2:\cal{E}_n \to \cal{C}_{[0,n]}$ coincides with the Dwyer-Kan localisation at the projections in $\cal{E}_n$ with fibre in $\cal{C}_{[0,n-1]}$. Since $\ev_2$ certainly inverts maps in $w := (\cal{E}_n)_\mathrm{pr}^{\cal{C}_{[0,n-1]}}$ and is essentially surjective since $\cal{C}_{[0,n-1]} \subseteq \cal{C}_{[0,n]}$ is resolving, it remains to show that the induced functor $\cal{E}_n[w^{-1}] \to \cal{C}_{[0,n]}$ is fully faithful. For $X$ an object of $\cal{E}_n$, write $w_{/X} \subseteq (\cal{E}_n)_{/X}$ for the full subcategory of the slice on those projections $W \epi X$ with fibre in $\cal{C}_{[0,n-1]}$. Given $X_i := (x_i \mono y_i \epi z_i) \in \cal{E}_n$ for $i=0,1$, consider the natural map
	\[
		\underset{w_{/X_0}^{\op}}\colim\ \Map_{\cal{E}_n}(-,X_1) \to \Map_{\cal{C}_{[0,n]}}(z_0,z_1).
	\]
	By universality of colimits, it suffices to show for each map $f:z_0 \to z_1$ that the functor
	\[
		\phi_f:w_{/X_0}^{\op} \to \cal{S}, \quad (W \epi X_0) \mapsto \Map_{\cal{E}_n}(W, X_1) \times_{\Map_{\cal{C}_{[0,n]}}(z_0,z_1)} \{f\}
	\]
	has contractible colimit, and by \cite[Corollary 3.3.4.6]{HTT}, we may show that the unstraightening $\mrm{Un}(\phi_f)$ is weakly contractible. But $\mrm{Un}(\phi_f)$ is equivalent to the $\infty$-category of spans of the form
	\[\begin{tikzcd}
		x_0 \ar[r, rightarrowtail] & y_0 \ar[r, twoheadrightarrow] & z_0 \\
		x' \ar[u, twoheadrightarrow] \ar[r, rightarrowtail] \ar[d] & y' \ar[r, twoheadrightarrow] \ar[u, twoheadrightarrow] \ar[d] & z_0 \ar[u,equal] \ar[d,"f"] \\
		x_1 \ar[r, rightarrowtail] & y_1 \ar[r, twoheadrightarrow] & z_1,
	\end{tikzcd}\]
	with $\fib(y' \epi y_0) \in \cal{C}_{[0,n-1]}$.
	Consider the pullback
	$Q := \lim(y_1 \epi z_1 \leftarrow y_0)$ where the map on the right is the composite $y_0 \to z_0 \xrightarrow{f} z_1$.  
	We note that $Q$ is an extension of $y_0$ by $x_1$, and hence lies in $\cal{C}_{[0,n-1]}$, and moreover $\fib(Q \epi z_0)$ is an extension of $x_0$ by $x_1$, and hence lies in $\cal{C}_{[0,n-1]}$ as well. The object
	\[\begin{tikzcd}
		x_0 \ar[r, rightarrowtail] & y_0 \ar[r, twoheadrightarrow] & z_0 \\
		x' \ar[r, rightarrowtail] \ar[u, twoheadrightarrow] \ar[d] & Q \ar[r, twoheadrightarrow] \ar[d] \ar[u, twoheadrightarrow] & z_0 \ar[u, equal] \ar[d, "f"] \\
		x_1 \ar[r, rightarrowtail] & y_1 \ar[r, twoheadrightarrow] & z_1,
	\end{tikzcd}\]
	is then easily seen to be terminal in $\mrm{Un}(\phi_f)$, so that the latter is weakly contractible, as claimed.
\end{proof}

\begin{corollary}\label{cor:resolution}
The inclusion $\cal{C}_{[0,n-1]} \to \cal{C}_{[0,n]}$ induces an equivalence on $\K$-theory, and in fact on any Keller localising functor. Consequently, the inclusion
\[ \cal{C}^{\heartsuit} \to \cal{C} \]
induces an equivalence on $\K$-theory, and in fact on any finitary 
Keller localising functor.
\end{corollary}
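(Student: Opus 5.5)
Apply a Keller localising functor $\cal{F}$ (for instance $\K$, Keller localising by Theorem~\ref{thm:localisation}) to the Keller sequence of Proposition~\ref{cofibre}. This gives a fibre sequence
\[ \cal{F}(\cal{C}_{[0,n-1]}) \xrightarrow{\cal{F}(i_0)} \cal{F}(\cal{E}_n) \xrightarrow{\cal{F}(\ev_2)} \cal{F}(\cal{C}_{[0,n]}). \]
The middle term is computed by additivity. Under $\cal{E}_n \simeq \Ar(\cal{C}_{[0,n-1]})$ we have the semi-orthogonal decomposition of Example~\ref{ex:arrow-semi-orthogonal}. Its two pieces are $x \mapsto [x \to 0]$, which corresponds to $j$, and $y \mapsto [0 \to y]$, which corresponds to $i_1$. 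Hence $(\cal{F}(j),\cal{F}(i_1))\colon \cal{F}(\cal{C}_{[0,n-1]})^{\oplus 2} \to \cal{F}(\cal{E}_n)$ is an equivalence.

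Next I determine $\cal{F}(i_0)$ in these coordinates using Remark~\ref{rem:additive-S}. The functor $i_0(x) = (x = x \to 0)$, viewed as $[x = x]$ in $\Ar$, has a natural two-step filtration $[0 \to x] \mono [x = x]$ with graded pieces $i_1(x)$ and $[x \to 0] = j(x)$. Therefore $\cal{F}(i_0) = \cal{F}(j) + \cal{F}(i_1)$. Composing with $\ev_2$: $\ev_2 \circ i_1 = 0$, and $\ev_2 \circ j = \Sigma\colon \cal{C}_{[0,n-1]} \to \cal{C}_{[0,n]}$, where $\Sigma$ is viewed as an exact functor landing in $\cal{C}_{[1,n]} \subseteq \cal{C}_{[0,n]}$.

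The cofibre of $\cal{F}(i_0) = (\id,\id)$ on $\cal{F}(\cal{C}_{[0,n-1]})^{\oplus 2}$ is identified with $\cal{F}(\cal{C}_{[0,n-1]})$ through the complementary summand $\cal{F}(j)$. Hence $\cal{F}(\ev_2)\circ\cal{F}(j) = \cal{F}(\Sigma)$ is an equivalence $\cal{F}(\cal{C}_{[0,n-1]}) \to \cal{F}(\cal{C}_{[0,n]})$. The statement, however, concerns the inclusion $\iota\colon \cal{C}_{[0,n-1]} \to \cal{C}_{[0,n]}$, not $\Sigma$. To compare them I use the sequence $\iota(x) \to 0 \to \Sigma x$ in $\cal{C}_{[0,n]}$. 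Applying additivity to $\ev$ on $\rS_2(\cal{C}_{[0,n]})$ along the exact functor $\cal{C}_{[0,n-1]} \to \rS_2(\cal{C}_{[0,n]})$, $x \mapsto (x \mono 0 \epi \Sigma x)$, gives $\cal{F}(0) = \cal{F}(\iota) + \cal{F}(\Sigma)$, so $\cal{F}(\iota) = -\cal{F}(\Sigma)$. This is an equivalence because $\cal{F}$ takes values in an additive category.

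The main obstacle is this sign/shift bookkeeping: checking that $x\mapsto (x\mono 0\epi\Sigma x)$ really lands in $\rS_2(\cal{C}_{[0,n]})$ (it does, since every map in $\cal{C}_{[0,n-1]}$ is an inclusion in $\cal{C}_{[0,n]}$), and that the cofibre identification is valid in a general additive target. For the latter, a direct argument works: split the fibre sequence via the section $\cal{F}(\ev_{0}\text{-type})$ retraction, using that in an additive $\infty$-category with finite limits a fibre sequence whose first map is split injective has third term the complement.

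The consequences then follow. Induction on $n$ gives $\cal{F}(\cal{C}^{\heartsuit}) \simeq \cal{F}(\cal{C}_{[0,n]})$ for all $n$, and dually (via opposites) $\cal{F}(\cal{C}_{[0,n]}) \simeq \cal{F}(\cal{C}_{[-m,n]})$. Boundedness writes $\cal{C} = \colim_{m,n}\cal{C}_{[-m,n]}$ as a filtered union in $\exact$, and finitarity of $\cal{F}$ concludes. For $\K$ this filtered colimit preservation is standard, since $\Span^\dagger$ and $|{-}|$ commute with filtered colimits, using Corollary~\ref{cor:compactly-generated}.
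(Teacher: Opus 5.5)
Your argument is essentially the paper's: apply $\cal{F}$ to the Keller sequence of Proposition~\ref{cofibre}, split the middle term by additivity, and note that $\cal{F}(i_0)$ is the diagonal. For the splitting you use the semi-orthogonal decomposition of $\Ar(\cal{C}_{[0,n-1]})$ directly, whereas the paper shows that $(\ev_0,\ev_1)$ is inverse to $j\oplus i_1$. Two points need correcting.

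\textbf{The value of $\ev_2\circ i_1$.} It is not zero. Since $i_1(x)=(0\to x = x)$, we have $\ev_2\circ i_1=\iota$, which is exactly the inclusion in question.
\begin{itemize}
\item As written, your computations are inconsistent. From $\ev_2\circ i_0=0$, $\cal{F}(i_0)=\cal{F}(j)+\cal{F}(i_1)$ and $\ev_2\circ i_1=0$ you would get $\cal{F}(\Sigma)=0$.
\item Your conclusion still holds. Write $F:=\cal{F}(\cal{C}_{[0,n-1]})$; the cofibre of the diagonal $F\to F\oplus F$ is identified with $F$ through either summand.
\item With the correct value, $\cal{F}(\ev_2)=(\cal{F}(\Sigma),\cal{F}(\iota))$ in your coordinates. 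Using the $i_1$-summand then gives that $\cal{F}(\iota)$ is an equivalence directly.
\item This is precisely the paper's proof. The functor $i_0\oplus i_1$ maps the split Keller sequence $\cal{C}_{[0,n-1]}\to\cal{C}_{[0,n-1]}^{\oplus 2}\to\cal{C}_{[0,n-1]}$ to that of Proposition~\ref{cofibre}, and its rightmost component is $\ev_2\circ i_1=\iota$.
\item Your $\rS_2$ sign argument $\cal{F}(\iota)=-\cal{F}(\Sigma)$ is correct but no longer needed. It does usefully give $\cal{C}_{[1,n]}\subseteq\cal{C}_{[0,n]}$ without passing to opposites.
\end{itemize}

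\textbf{The lemma for general additive targets.} The lemma you invoke for an additive target with finite limits is false. A fibre sequence whose first map is split injective need not have the complement as its third term. For example, $0\to 0\to H\mathbb{Z}$ is a fibre sequence in $\Sp_{\geq 0}$, because the fibre there is $\tau_{\geq 0}\Sigma^{-1}H\mathbb{Z}=0$.
\begin{itemize}
\item In your situation, the fibre condition alone only gives $\fib(\cal{F}(\iota))\simeq 0$.
\item What you need is that $\cal{F}$ also sends the Keller sequence to a cofibre sequence.
\item For $\K$ this is part of Theorem~\ref{thm:localisation}, which gives exact sequences of spectra. It also holds for any stable-valued $\cal{F}$.
\item The paper's five-lemma step tacitly uses the same property. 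So the clause about arbitrary Keller localising functors should be read with such a target.
\end{itemize}

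The final step, passing to $\cal{C}^{\heartsuit}\to\cal{C}$ via finitarity and the filtered union $\cal{C}=\bigcup_{m,n}\cal{C}_{[-m,n]}$ in $\exact$, is fine.
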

\begin{proof}
Write $\ev_0$, $\ev_1:\cal{E}_n \to \cal{C}_{[0,n-1]}$ for the evaluations $(x \mono y \epi z) \mapsto x$, $y$. Then the composite
\[
	\cal{C}_{[0,n-1]}^{\times 2} \xto{j \oplus i_1} \cal{E}_n \xto{\ev_0,\ev_1} \cal{C}_{[0,n-1]}^{\times 2}
\]
is the identity on $\cal{C}_{[0,n-1]}^{\times 2}$, and the composite
\[
	\cal{E}_n \xto{\ev_0,\ev_1} \cal{C}_{[0,n-1]}^{\times 2} \xto{j \oplus i_1} \cal{E}_n
\]
sends $(x \mono y \epi z)$ to $(x \xto{0} y \xto{(\id,0)} y \oplus \Sigma x)$. Its action on K-theory is then homotopic to the identity by virtue of additivity and the diagram
\[\begin{tikzcd}
	0 \ar[r] \ar[d] & y \ar[r, equal] \ar[d, equal] & y \ar[d] \\
	x \ar[r, rightarrowtail] \ar[d, equal] & y \ar[r, twoheadrightarrow] \ar[d] & z \ar[d] \\
	x \ar[r] & 0 \ar[r] & \Sigma x,
\end{tikzcd}\]
and so we conclude that $(\ev_0,\ev_1)$ induces an equivalence on K-theory. At the same time, the composite
\[ \cal{C}_{[0,n-1]} \oplus \cal{C}_{[0,n-1]} \xto{i_0 \oplus i_1} \cal{E}_n \xto{(\ev_0,\ev_1)} \cal{C}_{[0,n-1]} \oplus \cal{C}_{[0,n-1]} \]
is given by $(x,y) \mapsto (x, x \oplus y)$, and hence induces an equivalence on K-theory by direct sum preservation of the latter. We conclude that $i_0 \oplus i_1$ is an equivalence on K-theory. 
Finally, consider the commutative diagram
\[
\begin{tikzcd}[column sep = 50pt]
\cal{C}_{[0,n-1]} \ar[r,"{x \mapsto (x,0)}"]\ar[d,equal] & \cal{C}_{[0,n-1]} \oplus \cal{C}_{[0,n-1]} \ar[d,"i_0\oplus i_1"]\ar[r,"{(x,y) \mapsto y}"] & \cal{C}_{[0,n-1]} \ar[d] \\
\cal{C}_{[0,n-1]} \ar[r,"i_0"] & \cal{E}_n \ar[r,"\ev_2"] & \cal{C}_{[0,n]}
\end{tikzcd}
\]
where the top row is clearly a Keller sequence and the bottom row is a Keller sequence by Proposition~\ref{cofibre}. Since the left-most and middle vertical maps are equivalences on K-theory we conclude from Theorem~\ref{thm:localisation} that the rightmost vertical map is an equivalence, as desired.
\end{proof}

\begin{proof}[Proof of Theorem~\ref{thm:gillet-waldhausen}]
Combine Corollary~\ref{cor:resolution} with \cite[Proposition 5.4]{SW25}.
\end{proof}

\bibliographystyle{amsalpha}
\bibliography{keller-bib}

\end{document}